\newtheorem{result}[theorem]{Result}
\newcommand{\E}{\mathbf{E}}
\renewcommand{\P}{\mathbf{P}}
\newcommand{\R}{\mathbb{R}}
\newcommand{\Z}{\mathbb{Z}}
\newcommand{\1}{\mathbbm{1}}
\newcommand{\toas}{\xrightarrow{\text{a.s.}}}
\renewcommand{\varlimsup}{\limsup}
\renewcommand{\varliminf}{\liminf}
\newcommand{\rvout}[1]{{\color{red}{\sout{#1}}}}
\renewcommand{\rvout}[1]{}
\newcommand{\disc}{\mathrm{Disc}}
\newcommand{\D}{\mathbb D}
\newif\ifnavigationlinks
\newcommand{\opacity}{30}
\newlist{thmdependence}{itemize}{10}
\setlist[thmdependence]{nosep,label=-}
\newcommand{\thmtreenode}[5]{\item[#1] \linkdest{location, thm tree #3} {#2}~\ref{#3} \linktopf{#3} \thmsum{#4}{#5}}
\newcommand{\thmtreenodewopf}[5]{\item[#1] \linkdest{location, thm tree #3} {#2}~\ref{#3} \thmsum{#4}{#5}}
\newcommand{\thmtreeref}[2]{\item[\elsewhere] {{\hyperlink{location, thm tree #2}{\color{gray}#1}}}~\ref{#2}\thmsum{0.5}{}}
    \newcommand{\linksinthm}[1]{\emph{\linkdest{location, #1}\linktopf{#1} \linktothmtree{location, thm tree #1} }}
    \newcommand{\linksinthmwopf}[1]{\emph{\linkdest{location, #1} \linktothmtree{location, thm tree #1} }}
    \newcommand{\linksinpf}[1]{\linkdest{location, proof of #1}\linktothm{#1} \linktothmtree{location, thm tree #1} }
    \newcommand{\notationdef}[2]{\linkdest{location, notation definition of #1}\hyperlink{location, notation index of #1}{#2}}
    \newcommand{\notationidx}[2]{\linkdest{location, notation index of #1}\hyperlink{location, notation definition of #1}{#2}}
    \newcommand{\linksinthm}[1]{}
    \newcommand{\linksinthmwopf}[1]{}
    \newcommand{\linksinpf}[1]{}
    \newcommand{\notationdef}[2]{#2}
    \newcommand{\notationidx}[2]{}
\newcommand{\linktopf}[1]{\hyperlink{location, proof of #1}{\pflinksymbol}}
\newcommand{\linktothm}[1]{\hyperlink{location, #1}{\thmlinksymbol}}
\newcommand{\linktothmtree}[1]{\hyperlink{#1}{\thmtreelinksymbol}}
\newcommand{\thmlinksymbol}{{\tiny [Theorem]}}
\newcommand{\pflinksymbol}{{\tiny [Proof]}}
\newcommand{\thmtreelinksymbol}{{\tiny [ThmTree]}}
\newcommand{\complete}{{\color{black}\checkmark}}
\newcommand{\issue}{{\color{red}\checkmark}}
\newcommand{\elsewhere}{}
\newcommand{\CRA}[1]{{\color{red!\opacity}{[CR: #1]}}} % comments by C.-H.R. that are already addressed
\newcommand{\BZ}[1]{{\color{RoyalBlue}[BZ: #1]}} % comments by Xingyu Wang
\newcommand{\BZA}[1]{{\color{RoyalBlue!\opacity}{[BZ: #1]}}} % comments by X.W. that are already addressed 
\newcommand{\thmsum}[2]{\quad{\color{gray}\begin{minipage}[t]{#1\linewidth}{#2}\vspace{0.5\baselineskip}\end{minipage}}}
\newcommand{\linkdest}[1]{\Hy@raisedlink{\hypertarget{#1}{}}}
\begin{document}

%%%%%%%%%%%%%%%%%%%%%%%%%%%%%%%%%%%%%%%%%%%%%%%%%%%%%%%%%%%%%%%%%%%
%%                                                               %%
%% No need for \maketitle.                                       %%
%%                                                               %%
%%%%%%%%%%%%%%%%%%%%%%%%%%%%%%%%%%%%%%%%%%%%%%%%%%%%%%%%%%%%%%%%%%%

%%%%%%%%%%%%%%%%%%%%%%%%%%%%%%%%%%%%%%%%%%%%%%%%%%%%%%%%%%%%%%%%%%%
%%                                                               %%
%% Please replace what follows by the body of your article       %%
%% (up to the bibliography):                                     %%
%%                                                               %%
%%%%%%%%%%%%%%%%%%%%%%%%%%%%%%%%%%%%%%%%%%%%%%%%%%%%%%%%%%%%%%%%%%%

\section{Introduction}

%In this paper, we develop sample-path large deviations for random walks with increments that are driven by an i.i.d.\ sequence of iterated random functions. 
%To be precise, 
Let $\{X_n$, $n\geq 0\}$ be an affine recursion such that 
% $
% X_{n+1} = f_{n+1} (X_n)
% $, where $f_n$, $n\geq1$, are i.i.d.\ random functions of the form
\begin{equation}\label{eq: affine transformation}
% f_n(z) = A_n z+B_n
\notationdef{nota-X-n}{X_{n+1}} = A_{n+1} X_n  +B_{n+1}
\end{equation} 
for a sequence of i.i.d.\ $\R^2$-valued random vectors \notationdef{nota-A-n-B-n}{$(A_n, B_n)$}. 
The Markov chain driven by (\ref{eq: affine transformation}) has been studied extensively in the past several decades and continues to pose new research challenges. 
A classical result,  which can be found in  \cite{goldie1991} and  \cite{kesten1973} shows that under certain assumptions  (see Assumption~\ref{ass: regularity condition AR(1) process} below),
 the Markov chain $X_n$, $n\geq 0$ has a unique stationary distribution \notationdef{nota-pi}{$\pi$}, for which we have 
\begin{equation}\label{eq: Kesten and Goldie}
\pi(x,\infty) \sim c_+ x^{-\alpha}
\quad\mbox{and}\quad
\pi(-\infty,-x) \sim c_- x^{-\alpha},
\quad\mbox{as}\ x\to\infty,
\end{equation}
for some $c_-$, $c_+$ satisfying $c_- + c_+ > 0$; 
see the monograph  \cite{buraczewskidamekmikosch2016} for a recent comprehensive account.

Define $\bar X_n = \{\bar X_n(t),t\in[0,1]\}$, with 
\begin{equation}\label{eq: X_n bar introduction}
\notationdef{nota-X-bar-n}{\bar X_n(t)} =  \frac 1n \sum_{i=0}^{\lfloor nt \rfloor-1} X_i, \hspace{1cm} t\in [0,1].
\end{equation}
The focus of the present paper is on sample-path large deviations of the additive process $\bar X_n$, 
assuming the invariant distribution of $X_n$ has a heavy tail as in (\ref{eq: Kesten and Goldie}). 
The study of additive processes of the form (\ref{eq: X_n bar introduction}) is less well developed.
Classical theory initiated by  Donsker and Varadhan  (see, for example, \cite{donsker1975asymptotic, DV_3}) provides powerful tools designed to study large
deviations for additive functionals of light-tailed and geometrically
ergodic Markov chains. More recent contributions in this area include \cite{KM_2, kontoyiannis2003spectral}.  Analogs of these sample-path results in a heavy-tailed setting do not seem to be available.

%Equations of the form (\ref{eq: affine transformation}) can be found, for example, in the context of ruin problems with investments, in the study of extremes of financial time series such as ARCH-type processes (see e.g.\ Section 8.4 of \cite{embrechtsklueppelbergmikosch1997}), in tail asymptotics for exponential functionals of L\'{e}vy processes (see e.g.\ \cite{maulikzwart2006}), etc. 

A considerable body of theory has been developed to analyse exceedance probabilities for random walks with heavy-tailed step sizes. Let such a random walk be given by  $\{\hat S_n, n \geq 0\}$.
Early papers \cite{nagaev1969, nagaev1978} identified appropriate sequences $(x_n)$ for which 
\begin{equation}\label{eq: principle of a single big jump introduction}
\P ( \hat S_n/n > x_n ) = n \P ( \hat S_1 > x_n )(1+o(1)), 
\quad\mbox{as}\ n\to\infty,
\end{equation}
holds, depending on the tail behavior of the distribution of $\hat S_1$.
For a detailed description, we refer to e.g.\ \cite{borovkov2008}, \cite{denisov2008}, and \cite{fosskorshunovzachary2013}. 
When \eqref{eq: principle of a single big jump introduction} is valid, the so-called principle of a single big jump is said to hold. 
As a generalization of \eqref{eq: principle of a single big jump introduction}, a functional form has been derived in \cite{hult2005}, where random walks with i.i.d.\ multi-dimensional regularly varying (cf.\ Definition~1.1 of \cite{hult2005}) step sizes are considered. 

Several works have focused on the extension of (\ref{eq: principle of a single big jump introduction}) to more general processes where there is a certain dependence structure in the increments. 
Some key references are \cite{foss2007, hult2007, mikosch2000}, where stable processes, modulated processes, and stochastic differential equations are considered. 
%For autoregressive processes of the form (\ref{eq: affine transformation}), heavy tails can emerge even if both $A_1$ and $B_1$ are light-tailed, which makes the analysis more challenging. 
%
%
%As mentioned above, deriving large deviations results for autoregressive processes (cf.\ \eqref{eq: affine transformation}) is one of the focus points of this paper. 
%Existing literature on this topic can be distinguished by the assumptions that need to be made. 
%As a well-known insight, the regular variation in large deviations of autoregressive processes comes mainly from two types of sources. 
%The first is by assuming that $B_n$ is regularly varying, and $A_n$ is sufficiently light-tailed. 
%Regular variation occurs in this case due to the insight that the sum of independent regularly varying random variables is again regularly varying (cf.\ \cite{konstantinides2005}). 
%\cite{kesten1973} and \cite{goldie1991} show how large values of $X_n$ occur due to large fluctuations of 
%it is shown that a large value of the sample mean is not due to a single large value of the $A_n$ or $B_n$ but to large values of 
%the products $A_1\cdots A_n$, see \cite{collamore2007}, \cite{buraczewski2013} for more recent work in this direction. 
Extensions to additive processes of the form considered in this paper have been provided in \cite{mikosch2013,mikosch2016}, which also consider more general examples of driving recursion $X_{n+1} = f_{n+1}(X_n)$. The principle of a single big jump is still valid, but an additional constant in the RHS of (\ref{eq: principle of a single big jump introduction}) can appear.

Extending the results of  \cite{mikosch2013,mikosch2016} to the sample-path level poses several phenomenological and technical challenges.
So far, all results cited center around the phenomenon where rare events are caused by a single big jump. However, not all rare events are caused by this relatively simple scenario, for early examples see \cite{fosskorshunov2012, zwart2004}. In a recent paper, 
\cite{rheeblanchetzwart2016} provides sample-path large deviations results for L\'evy processes and random walks with regularly varying increments, which deal with a general class of rare events that can especially be caused by multiple jumps. For further examples see \cite{chen2019}. However, the case studied here is considerably harder, as big jumps occur by a condensation phenomenon,
through the concatenation of many small jumps. In particular, a large value of the sample mean is not due to a single large value of the $A_n$ or $B_n$ but to large values of 
the products $A_1\cdots A_n$, see also \cite{buraczewski2013, collamore2007}. When studying sample-path large deviations, this phenomenon poses nontrivial technical requirements. In particular, an appropriate topology needs to be considered.

Our approach to overcome these challenges is as follows. 
%The main goal of this paper is to extend the results of \cite{rheeblanchetzwart2016} to the Markov-additive setting \eqref{eq: X_n bar introduction}. 
%, commencing a theory that parallels the aforementioned classical literature on sample-path large deviations for light-tailed additive processes. %We aim to develop asymptotic estimates of $\P(\bar X_n\in E)$ for a sufficiently general collection of sets $E$.
%In view of \eqref{eq: Kesten and Goldie}, it is natural to expect that our sample-path large deviations results should possess some heavy-tailed components. 
%This conjecture turns out to be true. 
%Next, we briefly describe our main results, as well as the methodologies that are used to derive the results. 
We first proceed to identify a sequence of regeneration times $r_n$, $n\geq 1$ (see \cite{athreya1978}) and split the Markov chain into i.i.d.\ cycles. 
By aggregating the trajectory of $\bar X_n$ over regeneration cycles, we obtain a regenerative process with i.i.d.\ jump distributions and $r_n$, $n\geq1$ as renewals. 
Under a set of assumptions originating from \cite{goldie1991} and \cite{kesten1973}, we establish our first result, Theorem \ref{thm: tail estimates for the area under first return time/regeneration cycle}, which is that the ``area'' under a typical regeneration cycle, denoted by $\mathfrak R$ (see \eqref{eq: area under first return time} below), has an asymptotic power law. 
To be precise, we have 
%, that is, 
\begin{equation}\label{eq: tail estimates introduction}
\P(\mathfrak R > x) \sim C_+ x^{-\alpha}
\quad\mbox{and}\quad
\P(\mathfrak R < -x) \sim C_- x^{-\alpha},
\quad\mbox{as}\ x\to\infty,
\end{equation}
for some constants $C_-$, $C_+$. % satisfying $C_- C_+ > 0$. 
This is related to a result of \cite{collamore2007} for the case where $X_n\geq 0$. Our argument is different, developed in a two-sided setting, and can be extended to more general recursions, cf.\ \cite{chenthesis}. 

Using the tail estimates (\ref{eq: tail estimates introduction}), we present in Sections~\ref{SECTION: ONE-SIDED LARGE DEVIATIONS} and \ref{SECTION: TWO-SIDED LARGE DEVIATIONS} large deviations results for $\bar X_n$ as in \eqref{eq: X_n bar introduction}, which constitutes the second major step in our approach. 
%These results complement the approaches in \cite{konstantinides2005}, where the authors study the case of $B_n$ being regularly varying, and hence, regular variation occurs due to large values in $B_n$. 
We achieve this by introducing a new asymptotic equivalence concept (see Lemma~\ref{lem: asymptotic equivalence} below), which, together with the decomposition in cycles, allows us to build a bridge between our problem and the one studied \cite{rheeblanchetzwart2016}. 
%In Section~\ref{SECTION: EXTENSION TO GENERAL RECURSIONS}, we extend our results to the case where $X_n= f_n(X_{n-1})$ for general $f_n$.
In the latter paper, the Skorokhod $J_1$ topology is used. 
However, showing that the residual process (i.e.\ the contribution of the cycle going on at the endpoint of our interval) is negligible in its contribution to $\P(\bar X_n\in E)$ is not straightforward, especially when the increments of $\bar X_n$ are dependent as in the current setting. 
To overcome this, we switch to a slightly weaker topology, namely the $M_1'$-topology on ${\mathbb D}[0,1]$ (as defined in \cite{Mihail2020}, see also Section~\ref{SECTION: ONE-SIDED LARGE DEVIATIONS} below), and derive asymptotic estimates of events involved with the ``area'' under the last ongoing cycle. This choice of topology is crucial as it allows many light-tailed jumps, occurring within a cycle, to merge into a single heavy-tailed jump.

Our main sample-path large deviations results are presented in Section~\ref{section: main results}. 
For the case where $B_n$ as in \eqref{eq: affine transformation} is nonnegative, our result establishes that 
\begin{equation}\label{eq: main result introduction}
C_{\mathcal J^*}(E^\circ) 
\leq 
\liminf_{n\to\infty}\frac{\P(\bar X_n\in E)}{(n\P(\mathfrak R>n))^{\mathcal J^*}}
\leq 
\limsup_{n\to\infty}\frac{\P(\bar X_n\in E)}{(n\P(\mathfrak R>n))^{\mathcal J^*}}
\leq 
C_{\mathcal J^*}(E^-).
\end{equation}
%$\P(\bar X_n\in E) \approx (n\P(\mathfrak R>n))^{\mathcal J^*} $ 
Precise details can be found in Section~\ref{SECTION: ONE-SIDED LARGE DEVIATIONS} below. 
At this moment, we just mention that $C_j$ is a measure on ${\mathbb D}[0,1]$ for each $j$, 
%$\mu = \E B_1/(1-\E A_1)$ is the expectation of the stationary distribution of $X_n$, 
and $\mathcal J^*$ denotes the minimum number of jumps that are required for a nondecreasing, piecewise linear function with drift $\E B_1/(1-\E A_1)$ to be in the set $E$. 
%This can be seen as an extension of \cite[Section~4]{rheeblanchetzwart2016}, where the case of i.i.d.\ $X_n$ has been studied. 
In Section~\ref{SECTION: TWO-SIDED LARGE DEVIATIONS} we develop a two-sided version of this result. 
 %~and~\ref{SECTION: EXTENSION TO GENERAL RECURSIONS}, we consider two generalizations of the result presented in \eqref{eq: main result introduction}. 
 %\subsection*{Review on the existing literature}
%In order to contextualize our contribution, we provide a literature review. 
%In Section~\ref{SECTION: PRELIMINARIES}, we first derive tail estimates on the ``areas'' under both the first return time and the regeneration cycle. 

While we restrict to the case of affine recursions in (\ref{eq: affine transformation}), 
the methods developed in this paper can be extended to more general recursions of the form $X_{n+1} = f_{n+1}(X_n)$, in which $f_n(z)/z\rightarrow A_n$ as $z\rightarrow\infty$; we refer to \cite{chenthesis} for details. On the other hand, our methods require the assumption $A_n\geq 0$ in (\ref{eq: affine transformation}). If this assumption no longer holds, it is possible
to have big jumps of opposite sign in the same regeneration cycle, which requires a topology weaker than $M_1'$. Functional central limit theorems allowing $A_n$ to have both signs were recently derived in \cite{Basrak2018} using the $M_2$ topology.

This paper is organized as follows. 
In Section~\ref{SECTION: PRELIMINARIES}, we introduce some useful tools for future purposes. 
We present our main results in Section~\ref{section: main results}. 
%In Section~\ref{section: applications}, we present an application of our large deviations result. 
Sections~\ref{section:proof_part1}--\ref{section: proofs of section 3.2 and 3.3} are devoted to the proofs.

\section{Preliminaries}\label{SECTION: PRELIMINARIES}
In this section, we recall and establish some preliminary results. All the proofs are deferred to Section~\ref{section:proof_part1}. 
We start by introducing a regularity condition. 

\begin{assumption}\label{ass: regularity condition AR(1) process}
 The  random vector $(A_1, B_1)$ satisfies 
\begin{enumerate}
\item $A_1 \geq 0$ a.s.\ and the law of $\log A_1$ 
conditioned on $\{A_1>0\}$ 
is nonarithmetic. 
\item There exists an $\notationdef{nota-alpha}{\alpha}\in(1,\infty)$ such that $\E A_1^\alpha = 1$,  $\E A_1^\alpha \log ^+ A_1 < \infty$ (where $\notationdef{nota-log-plus}{\log^+x} = \max\{\log x,0\}$), and $\E |B_1|^{\alpha+\epsilon}<\infty$ for some $\epsilon > 0$. 
\item $\P (A_1x+B_1=x)<1$ for every $x\in\mathbb R$.
%\item $\P (0<A_1<1)>0$ and $\P (A_1>1)>0$.
\end{enumerate}
\end{assumption}
%\todo{consider including assumptions on the support of $(A_1, B_1)$ which ensure that the support of $Z$ is the entire real line, see the discussion at the end of section 2}

The  conditions in Assumption~\ref{ass: regularity condition AR(1) process} imply that $\E \log A_1<0$ and $\E \log^+ |B_1|<\infty$, and hence (see e.g.\ Theorem 2.1.3 of \cite{buraczewskidamekmikosch2016}), the Markov chain has a unique stationary distribution, denoted by $\pi$. Moreover, \cite{goldie1991} and \cite{kesten1973} showed there exist constants 
 $c_+$, $c_-$ satisfying $c_+ + c_- \in (0,\infty)$
such that
\begin{equation}  
\label{eq kestengoldie}
\pi(x,\infty)\sim c_+ x^{-\alpha} \quad\mbox{and}\quad \pi(-\infty,-x)\sim c_- x^{-\alpha},
\qquad\mbox{as}\ x\to\infty.
\end{equation}
%A natural question is whether $c_+>0$ and/or $c_->0$ in our setting. In \cite{guivarch}, this problem has been addressed; using algebraic methods, sufficient conditions are developed that depend on the support of $(A,B)$.
A natural question is whether $c_+>0$ and/or $c_->0$ in our setting. In \cite{guivarch}, 
sufficient conditions for $c_+>0$ and/or $c_->0$ are developed using algebraic methods.

\subsection{Background from Markov chain theory}
We review some concepts from Markov chain theory. 
We begin by introducing two conditions
%, see \eqref{condition: drift} and \eqref{condition: minorization} below, 
on general Markov chains. 
%Let $\{X_n\}_{n\geq0}$ be a $\mathbb R$-valued Markov chain with transition kernel $P$. 
A Markov chain on some general state space $(\mathbb S,\mathcal S)$ with transition kernel $P$ satisfies a drift condition \eqref{condition: drift} if 
\[
\int_{\mathbb S} h(y) P(x,dy) \leq \kappa_1 h(x) + \kappa_2\1_{\mathcal C}(x), 
\tag{$\mathcal D$}\label{condition: drift}
\]
for some $\kappa_1\in(0,1)$ and $\kappa_2>0$,
where $h$ takes values in $[1,\infty)$, and $\mathcal C$ is a Borel subset of $\mathbb R$. 
Moreover, we say that a $\phi$-irreducible Markov chain on $(\mathbb S,\mathcal S)$ with transition kernel $P$ satisfies the minorization condition \eqref{condition: minorization} if 
\[
\theta \1_{\mathcal C_0}(x) \phi(E\cap E_0) \leq P(x,E),\qquad x\in\mathbb S, E\in\mathcal S,
\tag{$\mathcal M$}\label{condition: minorization}
\]
for some set $E_0\subseteq\mathbb S$, some set $\mathcal C_0$ with $\phi(\mathcal C_0)>0$, some constant $\theta>0$, and some probability measure $\phi$ on $(\mathbb S,\mathcal S)$. 
%$\mathcal C_0$ is called a small set. 
%\cmt{I used consistently $\phi$ and not $\varphi$}

\begin{remark}\label{rmk: regeneration scheme}
If the minorization condition \eqref{condition: minorization} holds, then there exists a sequence of strictly increasing finite random times $r_n$, $n \geq 1$ such that $\{X_n\}_{n\geq0}$ regenerates at each $r_n$ w.r.t.\ $\phi$, that is, $\P(X_{r_i} \in E) = \phi(E\cap E_0)$ for each $i$.
In particular, $X_n$ attempts (independently of everything else) to regenerate each time it enters $\mathcal C_0$, and such attemps are successful w.p.\ $\theta$.

\end{remark}

%The following result describes path properties of $X_n$. 
%Let $\pi$ denote the stationary distribution of the AR(1) process $\{X_n\}_{n\geq0}$ as in \eqref{eq: AR(1) recursion}. 

Recall that we say that the Markov chain $\{X_n\}_{n\geq 0}$ is geometrically ergodic if there exists some number $\rho_0\in(0,1)$ such that 
$$
\| P^n(x,\cdot) - P_0(\cdot)\|_{TV} = o(\rho_0^n)
$$
as $n\to \infty$.

\begin{result}[Lemma 2.2.3, Proposition 2.2.4, Theorem 2.4.4 of \cite{buraczewskidamekmikosch2016}]
\label{res: path properties of AR(1) processes}
\linksinthmwopf{res: path properties of AR(1) processes}
Let $\{X_n\}_{n\geq0}$ be such that $X_{n+1} = A_{n+1} X_n + B_{n+1}$. 
Supposing that Assumption~\ref{ass: regularity condition AR(1) process} holds, we have that: 
\begin{enumerate}
\item For any given $\delta\in(0,\alpha)$, $\{X_n\}_{n\geq0}$ satisfies the drift condition \eqref{condition: drift} with $h(x) = 1+ |x|^\delta$ and $\mathcal C = [-M,M]$ for some constant $M\geq0$. 
\item $\{X_n\}_{n\geq0}$ is $\pi$-irreducible. 
\item $\{X_n\}_{n\geq0}$ is geometrically ergodic. 
\end{enumerate}
\end{result}

% \todo{We need to provide the definition of geometric ergodicity. To Bert's previous question, Lemma \ref{lem: exponential decay of regeneration time} is based on $f$-geometric ergodicity with $f(x) = 1+x^{1+\epsilon}$. This is slightly stronger than geometric ergodicity, which is $f$-geometric ergodicity with $f\equiv 1$.}

The regeneration scheme described in Remark~\ref{rmk: regeneration scheme} plays an important role in our analysis. Our next assumption guarantees the existence of the regeneration times.

\begin{assumption}\label{ass: minorization AR(1) process}
Condition~\eqref{condition: minorization} is satisfied with $\mathcal C_0 = [-d,d]$ for some \notationdef{nota-d}{$d>0$} such  that $[-d,d] \cap \mbox{supp}(\pi) \neq \emptyset$. 
\end{assumption}
For the rest of the paper, we use \notationdef{nota-r-n}{$r_n$} to denote the $n$\textsuperscript{th} regeneration time w.r.t.\ this particular $\mathcal C_0$ and $\phi$ in Assumption~\ref{ass: minorization AR(1) process}. Moreover, we assume $r_0=0$, so that $X_0 \in \mathcal C_0$.
%Throughout the rest of this paper, we only work with Assumption \ref{ass: minorization AR(1) process}.
For completeness we mention some sufficient conditions for Assumption \ref{ass: minorization AR(1) process} to hold in terms of the joint distribution of $(A_1, B_1)$, which is a minor extension of Lemma~2.2.3 of \cite{buraczewskidamekmikosch2016}. 
Let $\notationdef{nota-scr-B-r}{\mathscr B_r(x)} = \{ x' \colon |x-x'|<r \}$ for $x\in\mathbb R$ and $r>0$. 
\begin{proposition}
\label{prop: conditions for minorization}
\linksinthm{prop: conditions for minorization}
Assume that one of the following conditions hold. 
\begin{enumerate}
\item Let $B_1 \geq b$ a.s.\ for some $b>0$.  
Moreover, there exist intervals $I_1 = (a_1,a_2)\subseteq \R_+$, $I_2 = (b_0-\delta,b_0+\delta)$ for some $a_1<a_2$, $b_0$, $\delta>0$, a $\sigma$-finite measure $\nu_0$ with $b_0$ in the support of $\nu_0$, and a constant $c_0>0$ such that for any Borel sets $D_1$, $D_2\subseteq \mathbb R$,
\[
\P ((A_1,B_1)\in (D_1\times D_2)) \geq c_0 |D_1\cap I_1| \nu_0(D_2\cap I_2),
\]
where $|\cdot|$ denotes the Lebesgue measure on $\mathbb R$.
\item 
%Let the point $0$ be in the support of $\pi$. 
%Moreover, t
There exist intervals $I_1 = (a_0-\delta,a_0+\delta)\subseteq \R_+$, $I_2 = (b_1,b_2)$ for some $a_0$, $b_1<b_2$, $\delta>0$, a $\sigma$-finite measure $\nu_0$ with $a_0$ in the support of $\nu_0$, and a constant $c_0>0$ such that for any Borel sets $D_1$, $D_2\subseteq \mathbb R$,
\begin{equation}\label{eq:sufficient condition for minorization}
\P ((A_1,B_1)\in (D_1\times D_2)) \geq c_0 \nu_0(D_1\cap I_1) | D_2\cap I_2 |.
\end{equation}
%\item Let $A_1 = c B_1$ for some $c$. $A_1$ has a density which is bounded from below by some $c_0>0$ on some interval $I_1 = (a_0-\delta,a_0+\delta)$. %\todo{the case $x_0 = -1/c$}
%\BZ{I am going to remove this item, as it is not essential. Having two sufficient conditions is enough.} 
\end{enumerate}
Then, for any $x_0 \in \R$, there exists $\epsilon = \epsilon(x_0)$, $\theta>0$, and an open interval $E_0$
%, and some probability measure $\phi$ 
such that 
\begin{equation}\label{eq: minorization}
\theta | E\cap E_0 | \leq P(x,E),\qquad x\in\mathscr B_\epsilon(x_0), E\in\mathcal B(\mathbb R). 
\end{equation}
\end{proposition}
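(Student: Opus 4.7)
The plan is to verify the Lebesgue-density minorization (2.6) in each of the three cases by pushing the given product-type lower bound on the law of $(A_1,B_1)$ forward under the affine map $\Phi_x(a,b)=ax+b$, and reading off a uniform Lebesgue-density lower bound on a fixed small interval $E_0 \subset \R$.

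I would start with Case~2, which is the cleanest because the $b$-integration is a pure translation. The hypothesis gives
\begin{equation*}
P(x,E) \ \ge\ c_0 \int_{I_1}\!\int_{I_2}\1_E(ax+b)\,db\,d\nu_0(a) \ =\ c_0\!\int_{I_1} |E\cap(ax+I_2)|\,d\nu_0(a).
\end{equation*}
Fix $x_0 \in \R$, pick an interval $E_0$ strictly inside $a_0 x_0 + I_2$, a neighborhood $U$ of $a_0$ with $\nu_0(U\cap I_1)>0$ (which exists because $a_0 \in \operatorname{supp}\nu_0$), and $\epsilon$ small enough that $E_0 \subseteq ax + I_2$ for every $(x,a) \in \mathscr{B}_\epsilon(x_0)\times U$. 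This yields (2.6) with $\theta = c_0\nu_0(U\cap I_1)$, and covers every $x_0 \in \R$.

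Case~1 follows the same template with the roles of $a$ and $b$ swapped, except that the $a$-integration is now an affine rescaling with Jacobian $1/|x|$, so Fubini and the change of variables $y=ax+b$ give
\begin{equation*}
P(x,E) \ \ge\ c_0 \int_{I_2}\frac{|E\cap(xI_1+b)|}{|x|}\,d\nu_0(b).
\end{equation*}
For any $x_0 \ne 0$ one chooses $E_0$ strictly inside $x_0 I_1 + b_0$ and $\epsilon$ small enough that $E_0 \subseteq xI_1 + b$ for $(x,b)$ in a neighborhood of $(x_0,b_0)$; the Jacobian $1/|x|$ is then bounded below on $\mathscr{B}_\epsilon(x_0)$ and the conclusion is the same. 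The singular value $x_0 = 0$ is irrelevant because $B_1 \ge b > 0$ a.s.\ forces $X_n \ge b$ for every $n \ge 1$, so the chain cannot approach it. Case~3 is the degenerate one-dimensional situation $A_1 = cB_1$, so $A_1 x + B_1 = (cx+1)B_1$; the density lower bound on $A_1$ transfers to a density lower bound on $B_1$ on $I_1/c$ (with the obvious modification when $c=0$, in which case $A_1 x+B_1=B_1$), and for $x_0 \ne -1/c$ the linear map $z \mapsto (cx+1)z$ has Jacobian bounded below on a small ball around $x_0$, so the Case~1 argument applies verbatim.

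The bulk of the work is routine change-of-variables bookkeeping. The only subtle points are the singular values of $x_0$ at which $\Phi_x$ collapses, but in both exceptional cases the issue is harmless: in Case~1 the chain cannot reach $x_0 = 0$, and in Case~3 the isolated value $x_0 = -1/c$ can be excluded by shrinking the minorization window $\mathcal{C}_0 = [-d,d]$, which we are free to choose.
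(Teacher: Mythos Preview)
Your change-of-variables approach is exactly the paper's. In Case~2 your argument coincides with the paper's explicit treatment of $x_0=0$ (it defers to Buraczewski--Damek--Mikosch for $x_0\neq0$); in Case~3 the paper likewise writes $A_1x+B_1=A_1(x+1/c)$, bounds the Jacobian $|x+1/c|^{-1}$ on $\mathscr B_\epsilon(x_0)$, and chooses $E_0$ so that $z/(x+1/c)\in I_1$ for $z\in E_0$---the same computation you sketch.

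One correction on the singular points. Your dismissal of $x_0=0$ in Case~1 via ``$B_1\ge b>0$ forces $X_n\ge b$ for every $n\ge1$'' is not correct as stated: it requires $X_{n-1}\ge0$, which can fail if the chain starts at a negative state. More importantly, neither this remark nor the ``shrink $\mathcal C_0$'' argument in Case~3 actually establishes \eqref{eq: minorization} at the singular $x_0$; both are heuristics about why those points are harmless for the downstream application, not proofs of the proposition \emph{as written}. The paper is in the same position---it does not address Case~1 at $x_0=0$, and its Case~3 proof begins by picking $\epsilon$ with $-1/c\notin\mathscr B_\epsilon(x_0)$, which presupposes $x_0\neq-1/c$---so this is a limitation shared with the paper's own argument rather than a defect specific to yours.
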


Our next result implies the geometric decay of $\P (r_1>k)$ as $k\to\infty$. 

\begin{lemma}
\label{lem: exponential decay of regeneration time}
\linksinthm{lem: exponential decay of regeneration time}
Suppose that Assumptions~\ref{ass: regularity condition AR(1) process} and \ref{ass: minorization AR(1) process} hold. 
Let $\{r_n\}_{n\geq0}$ be the sequence of regeneration times associated with $\mathcal C_0$. 
Let $E_1$ be a bounded set. 
There exists $t>1$ such that 
\[
\sup_{x\in E_1} \E [ t^{r_1} \,|\, X_0=x ] < \infty.
\]
\end{lemma}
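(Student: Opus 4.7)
The strategy is to exploit the splitting construction from Remark~\ref{rmk: regeneration scheme} to write $r_1$ as a geometrically-terminated sum of inter-visit times of $\{X_n\}$ to the small set $\mathcal C_0$, and then control each factor via the drift condition \eqref{condition: drift}.

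Let $\tau_1<\tau_2<\cdots$ denote the successive times $n\ge 1$ with $X_n\in\mathcal C_0$. Since $\{\eta_n\}$ is i.i.d.\ $\mathrm{Bernoulli}(\theta)$, independent of $\{X_n\}$, and each $\tau_k$ is a $\{X_n\}$-stopping time, the random variables $\eta_{\tau_1},\eta_{\tau_2},\ldots$ are themselves i.i.d.\ $\mathrm{Bernoulli}(\theta)$ and independent of $\{X_n\}$. Hence $N:=\inf\{k\ge 1:\eta_{\tau_k}=1\}$ is $\mathrm{Geom}(\theta)$-distributed and independent of $\{X_n\}$, and by the description in Remark~\ref{rmk: regeneration scheme}, $r_1=1+\tau_N$. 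Iterating the strong Markov property at the $\tau_k$'s yields, for any $t>1$,
\begin{equation*}
\E[t^{\tau_n}\mid X_0=x] \le \E[t^{\tau_1}\mid X_0=x]\cdot K(t)^{n-1}, \quad K(t):=\sup_{y\in\mathcal C_0}\E[t^{\tau_{\mathcal C_0}^+}\mid X_0=y],
\end{equation*}
where $\tau_{\mathcal C_0}^+$ is the first return time to $\mathcal C_0$. Summing against the law of $N$ gives, for any $t>1$ with $(1-\theta)K(t)<1$,
\begin{equation*}
\E[t^{r_1}\mid X_0=x] \le \frac{t\,\theta\,\E[t^{\tau_1}\mid X_0=x]}{1-(1-\theta)K(t)}.
\end{equation*}

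It therefore suffices to exhibit a single $t>1$ with $\sup_{x\in E_1}\E[t^{\tau_1}\mid X_0=x]<\infty$ and $(1-\theta)K(t)<1$. Enlarging $\mathcal C$ preserves \eqref{condition: drift} (as $\1_{\mathcal C'}\ge \1_{\mathcal C}$ whenever $\mathcal C'\supseteq \mathcal C$), so one may assume $\mathcal C\supseteq \mathcal C_0$. Standard Foster--Lyapunov manipulation of the supermartingale $M_n:=t_0^n h(X_n)\1\{\tau_\mathcal C^+>n\}$ for $t_0\in(1,1/\gamma)$, combined with $\phi$-irreducibility to bound the extra exponentially-integrable time required to pass from $\mathcal C$ into the possibly strictly smaller $\mathcal C_0$, supplies $t_1>1$ and $B<\infty$ with
\begin{equation*}
\E[t_1^{\tau_{\mathcal C_0}^+}\mid X_0=x]\le B\,h(x),\qquad x\in\mathbb S.
\end{equation*}
Since $h$ is bounded on bounded sets, this bounds both $\sup_{x\in E_1}\E[t_1^{\tau_1}\mid X_0=x]$ and $K(t_1)$. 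Finally, Jensen's inequality applied to the concave map $z\mapsto z^{p}$ with $p:=\log t/\log t_1\in(0,1)$ yields $K(t)\le K(t_1)^{p}$, which tends to $1$ as $t\downarrow 1$; hence $(1-\theta)K(t)<1$ for $t>1$ sufficiently close to $1$.

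The main obstacle is the upgrade from the drift-based bound on $\tau_\mathcal C^+$ to the bound on $\tau_{\mathcal C_0}^+$, since \eqref{condition: drift} directly controls returns only to $\mathcal C$, which may be strictly larger than $\mathcal C_0$. Bridging this gap uses $\phi$-irreducibility (or equivalently the minorization itself, which renders $\mathcal C_0$ a small set) to ensure that from every point of $\mathcal C\setminus \mathcal C_0$ the chain reaches $\mathcal C_0$ within a time whose tail decays exponentially uniformly in the starting point.
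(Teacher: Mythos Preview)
Your strategy matches the paper's: decompose $r_1$ via the splitting coins as a geometrically-terminated sum of returns to $\mathcal C_0$, bound the return-time exponential moments uniformly, and pick $t>1$ small enough that the geometric series converges. For the uniform moment bound you sketch a Foster--Lyapunov argument plus an irreducibility patch to pass from $\mathcal C$ to $\mathcal C_0$; the paper instead shows that every bounded interval is petite (finite cover by small balls, then \cite[Proposition~5.5.5]{meyn2009}) and invokes \cite[Theorem~15.2.6]{meyn2009} to obtain $h$-geometric regularity of bounded sets, which yields $\sup_{x\in E_1}\E[t^{\tau_{\mathcal C_0}}\mid X_0=x]<\infty$ directly for some $t>1$ and cleanly sidesteps the obstacle you flag.

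One genuine slip: your factorisation $\E[t^{\tau_N}\mid X_0=x]=\sum_n\P(N=n)\,\E[t^{\tau_n}\mid X_0=x]$ needs $N$ independent of $\{X_n\}$, which you deduce from the independence of $\{\eta_n\}$ and $\{X_n\}$ stated in Remark~\ref{rmk: regeneration scheme}. That independence does not actually hold under the split kernel $P'$: on $\{\eta_{\tau_k}=0\}$ the next $X$-step is drawn from the residual kernel $(P-\theta\phi)/(1-\theta)$, not from $P$, so $\eta_{\tau_k}$ and $\tau_{k+1}-\tau_k$ are dependent. The paper's own proof accounts for this by using
\[
\chi_2(t)=\sup_{x\in\mathcal C_0}\E\big[t^{\tau_{\mathcal C_0}}\ \big|\ X_0=x,\ X_1\sim (P(x,\cdot)-\theta\phi)/(1-\theta)\big]
\]
in place of your $K(t)$. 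The conclusion survives---$\chi_2(t)\le K(t)/(1-\theta)$, so $\chi_2(t)\to1$ as $t\downarrow1$ by your Jensen trick---but the displayed bound and its derivation need this correction.
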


\subsection{A useful change of measure}
%The following change of measure, which can be found in e.g.\ \cite{collamore2013,collamore2016}, is proved to be very useful in our context. 
Another helpful tool in our analysis is the so-called $\alpha$-shifted change of measure (see e.g.\ \cite{collamore2016,collamore2013}). 
Let \notationdef{nota-nu}{$\nu$} denote the distribution of $(\log A_n,B_n)$ and define the $\alpha$-shifted measure $\nu^\alpha$ by
%denote the $\alpha$-shifted distribution w.r.t.\ $\log A_n$, i.e,
\[
\notationdef{nota-nu-alpha}{\nu^\alpha(E)} = \int_E e^{\alpha x} d\nu(x,y),\qquad E\in\mathfrak B(\mathbb R^2).
\]
Let $\mathcal L(\log A_n,B_n)$ denote the law of $(\log A_n,B_n)$. 
% \cmt{Original definition of dual change of measure $\mathscr D$ was in terms of $T(u^\beta)$. Now we define it with an associated stopping time $T$. TODO: Update the definition of subsequent $\mathscr D$'s.} 
For a stopping time $T$, Let \notationdef{nota-scr-D-T-alpha}{$\mathscr D_T^\alpha$} be the dual change of measure such that, under $\mathscr D_T^\alpha$, 
\begin{equation}\label{eq: dual change of measure}
\mathcal L(\log A_n,B_n) 
= 
\begin{cases}
\nu^\alpha,\qquad&\mbox{for}\ n\leq T,\\[5pt]
\nu,             &\mbox{for}\ n> T.
\end{cases}
\end{equation}
Let \notationdef{nota-E-P-alpha-D}{$\P^\alpha$, $\P^{\mathscr D_T^\alpha}$, $\E^\alpha$ and $\E^{\mathscr D_T^\alpha}$} denote expectation and probability w.r.t.\ the $\alpha$-shifted measure $\nu^\alpha$ and the dual change of measure $\mathscr D_T^\alpha$, respectively. 
Defining 
\begin{equation}\label{eq: multiplicative random walk}
\notationdef{nota-S-n}{S_n} = \sum_{i=1}^n \log A_i, 
\end{equation}
we have the following result. 
 
\begin{result}[Lemma 5.3 of \cite{collamore2013}]\label{res: connection between original and dual chang of measures}
Let $T$ and $\tau$ be stopping times w.r.t.\ $\{X_n\}_{n\geq0}$, let $g\colon \mathbb R^\infty \to [0,\infty]$ be a deterministic function, and let $g_n$ denote its projection onto the first $n+1$ coordinates, i.e., $g_n(x_0,\ldots,x_n) = g(x_0,\ldots,x_n,0,0,\ldots)$. 
%\CR{wording: projection?}
Then
\begin{align*}
\E [g_{\tau-1}(X_0,\ldots,X_{\tau-1})] 
&= 
\E^{\mathscr D_{T}^\alpha} \left[ g_{\tau-1}(X_0,\ldots,X_{\tau-1}) e^{-\alpha S_{T}} \1_{\{T<\tau\}} \right]\\ 
&\hspace{12.5pt}+ 
\E^{\mathscr D_{T}^\alpha} \left[ g_{\tau-1}(X_0,\ldots,X_{\tau-1}) e^{-\alpha S_\tau} \1_{\{T\geq\tau\}} \right]. 
\end{align*}
\end{result}
\begin{remark}\label{a-remark-on-dual-change-of-measure}
\linksinthmwopf{a-remark-on-dual-change-of-measure}
Note that by the same argument, if a random variable $R$ is measurable w.r.t.\ the stopped $\sigma$-algebra $\mathcal F_T$, then
$$\E [R] = \E^{\mathscr D_T^\alpha} [Re^{-\alpha S_T}] = \E^\alpha [R e^{-\alpha S_T}].$$
%Considering $g_{\tau-1}(X_0,\ldots,X_{\tau-1}) = \1_{\{\tau > T\}}$, we also see that Result~\ref{res: connection between original and dual chang of measures} implies that
%\begin{align*}
%\P(\tau > T)
%&= 
%\E^{\mathscr D_{T}^\alpha} \left[ e^{-\alpha S_{T}} \1_{\{T<\tau\}} \right]
%%\\&
%=
%\E^{\alpha} \left[ e^{-\alpha S_{T}} \1_{\{T<\tau\}} \right],
%\end{align*}
%where the second equality is due to the measurability of $e^{-\alpha S_{T}} \1_{\{T<\tau\}}$ w.r.t.\ $\mathcal F_T$.
\end{remark}
%Moreover, it can be shown that the AR(1) process is transient under the $\alpha$-shifted measure. 
Our analysis relies on the fact that the Markov chain $X_n$ is closely related to a multiplicative random walk, that is, 
\[
X_{n+1} \approx A_{n+1} X_n,\quad\mbox{for large}\ n.
\]
Roughly speaking, the process $X_n$ resembles a perturbation of a multiplicative random walk, in an asymptotic sense (for details see \cite{collamore2016,collamore2013}). 
Hence, it is natural to consider the ``discrepancy'' process between $X_n$ and $\prod_{i=1}^n A_i$, which is defined as
\begin{equation}\label{eq: discrepancy process}
\notationdef{nota-Z-n}{Z_n} = X_n e^{-S_n} = X_0 + \sum_{k=1}^n B_k e^{-S_k},\quad n \geq 0,
\end{equation} 
where $S_n$ is as in \eqref{eq: multiplicative random walk}. 
Under the $\alpha$-shifted measure, we have $\E^\alpha \log A_1 = \E A_1^\alpha \log A_1 > 0$ by Assumption~\ref{ass: regularity condition AR(1) process} and Theorem 2.4.4 of \cite{buraczewskidamekmikosch2016}.
%the convexity of $\E[A_1^s]$. 
Consequently, we have the following result.

\begin{lemma}\label{lem: asymptotics of X_n and Z_n}
Let Assumption~\ref{ass: regularity condition AR(1) process} hold. 
Under $\P^\alpha$, 
\begin{enumerate}
\item $|X_n| \uparrow \infty$ a.s.\ as $n\to\infty$. 
\item $Z_n\toas Z$ as $n\to\infty$, where $\notationdef{nota-Z}{Z} = X_0 + \sum_{k=1}^\infty B_k e^{-S_k}$.
\end{enumerate} 
\end{lemma}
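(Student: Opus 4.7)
The identity $X_n = e^{S_n}\bigl(X_0 + \sum_{k=1}^n B_k e^{-S_k}\bigr) = e^{S_n} Z_n$ reduces both statements to controlling $S_n$ and the series $\sum_{k} B_k e^{-S_k}$ under $\P^\alpha$. First, I would record that $\mu := \E^\alpha \log A_1 = \E[A_1^\alpha \log A_1] > 0$: the function $\varphi(s) := \E A_1^s$ is log-convex on $[0,\alpha]$ with $\varphi(0) = \varphi(\alpha) = 1$, and Assumption~\ref{ass: regularity condition AR(1) process}(1) prevents $\log A_1$ from being degenerate, so $\log \varphi$ is strictly convex and $(\log\varphi)'(\alpha) > 0$. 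Finiteness of $\E^\alpha|\log A_1|$ follows from $\E A_1^\alpha \log^+ A_1 < \infty$ together with the elementary bound $a^\alpha |\log a| \leq C$ on $(0,1]$. The strong law of large numbers then yields $S_n/n \to \mu > 0$ a.s., in particular $S_n \to +\infty$.

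For part~(2), I would prove $W_n := \sum_{k=1}^n B_k e^{-S_k}$ converges absolutely a.s.\ by a moment bound plus Borel--Cantelli. Fixing $\beta \in (0,\alpha)$ and using that $(A_k, B_k)$ remain i.i.d.\ under $\P^\alpha$,
\begin{equation*}
\E^\alpha |B_k e^{-S_k}|^\beta = \E\bigl[A_1^{\alpha-\beta}|B_1|^\beta\bigr] \cdot \bigl(\E A_1^{\alpha-\beta}\bigr)^{k-1}.
\end{equation*}
The geometric factor is strictly less than $1$ by strict convexity of $\varphi$ on $[0,\alpha]$. The prefactor is finite by H\"older's inequality with exponents $(\alpha+\epsilon)/\beta$ and its conjugate, which pairs $\E|B_1|^{\alpha+\epsilon} < \infty$ (Assumption~\ref{ass: regularity condition AR(1) process}(2)) against $\E A_1^s$ for an exponent $s \in [0,\alpha]$ where $\varphi \leq 1$. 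Choosing $\beta$ small enough that $2^\beta \E A_1^{\alpha-\beta} < 1$, Markov's inequality and Borel--Cantelli give $|B_k e^{-S_k}| \leq 2^{-k}$ eventually, hence $W_n \to W_\infty$ and $Z_n \to Z := X_0 + W_\infty$ a.s.

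For part~(1), $|X_n| = e^{S_n}|Z_n|$ with $S_n \to \infty$ and $Z_n \to Z$ a.s., so it suffices to show $\P^\alpha(Z = 0) = 0$. Peeling off the first term of the series, $W_\infty$ satisfies the perpetuity fixed-point equation $W_\infty \stackrel{d}{=} A_1^{-1}(B_1 + W'_\infty)$ with $W'_\infty$ an independent copy of $W_\infty$. Assumption~\ref{ass: regularity condition AR(1) process}(3) combined with the standard theory of perpetuities (as in Grincevicius or Goldie) then implies that the law of $W_\infty$ under $\P^\alpha$ has no atoms; since $W_\infty$ is independent of $X_0$, integrating $\P^\alpha(W_\infty = -x) = 0$ against the law of $X_0$ gives $\P^\alpha(X_0 + W_\infty = 0) = 0$.

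The main technical obstacle is precisely this last non-degeneracy step. Parts~(1) and~(2) otherwise reduce to standard SLLN and moment bookkeeping once the H\"older computation in the second paragraph is in place; the real work is verifying that $W_\infty$ is non-atomic under the shifted measure, which is where the nonarithmetic condition on $\log A_1$ in Assumption~\ref{ass: regularity condition AR(1) process}(1) and the non-fixed-point condition~(3) earn their keep.
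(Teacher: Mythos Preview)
Your argument for part~(2) is correct and considerably more explicit than the paper's, which simply asserts that convergence ``follows from the fact that the random walk $-S_n$ has a negative drift under $\P^\alpha$.'' Your moment computation and Borel--Cantelli step are sound.

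For part~(1) you take a genuinely different route from the paper. The paper argues via Markov chain transience: it shows
\[
\P^\alpha\bigl(|X_n| \geq M \text{ for all } n \geq 1 \,\bigm|\, |X_0| \geq 2M\bigr) > 0
\]
using the bound $|X_n| \geq e^{S_n}\bigl(|X_0| - \sum_{i\geq 1} |B_i| e^{-S_i}\bigr)$ together with $\P^\alpha(S_n \geq 0 \text{ for all } n) > 0$, and then invokes Theorem~8.3.6 of Meyn--Tweedie to conclude $|X_n| \to \infty$ from any starting point. This sidesteps any discussion of whether $Z = 0$ can occur: one only needs that $S_n$ stays nonnegative with positive probability and that $\sum_{i\geq 1} |B_i| e^{-S_i}$ is a.s.\ finite (which is your part~(2) applied with $|B_i|$ in place of $B_i$).

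Your route via $|X_n| = e^{S_n}|Z_n|$ and $\P^\alpha(Z=0)=0$ is natural but has a gap at precisely the point you flag. The non-degeneracy condition you need for the perpetuity $W_\infty \stackrel{d}{=} A_1^{-1}(B_1 + W_\infty')$ under $\P^\alpha$ is $\P^\alpha\bigl(A_1(-c) + B_1 = -c\bigr) < 1$ for every $c$. Assumption~\ref{ass: regularity condition AR(1) process}(3) gives this under $\P$, but $\P^\alpha$ is absolutely continuous with respect to $\P$ only on $\{A_1 > 0\}$: if $\P(A_1 = 0) > 0$ and the equality $A_1(-c) + B_1 = -c$ happens to fail only on $\{A_1 = 0\}$, the $\P^\alpha$-version can still hold with probability one. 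In that (admittedly narrow) scenario $W_\infty$ is degenerate under $\P^\alpha$ and your argument breaks down for the particular initial value $X_0 = -c$. The paper's transience argument never has to confront this, since it does not ask whether the limit $Z$ vanishes.
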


\subsection{\texorpdfstring{$\mathbb M$}{M}-convergence}\label{section: M-convergence}
We briefly review the notion of $\mathbb M$-convergence \cite{lindskogresnickroy2014, rheeblanchetzwart2016}, and introduce a novel asymptotic equivalence concept.
Let $(\mathbb S, d)$ be a complete separable metric space, and $\mathscr S$ be the Borel $\sigma$-algebra on $\mathbb S$. 
Given a closed subset $\mathbb C$ of $\mathbb S$, let $\mathbb S\setminus\mathbb C$ be equipped with the relative topology as a subspace of $\mathbb S$, and consider the associated sub $\sigma$-algebra $\mathscr S_{\mathbb S\setminus\mathbb C} = \{ E\colon E\subseteq \mathbb S\setminus\mathbb C, E\in\mathscr S \}$ on it. 
Define $\notationdef{nota-mathbb-C-r}{\mathbb C^r} = \{ x\in\mathbb S\colon d(x,\mathbb C)<r \}$ for $r>0$, and let $\mathbb M(\mathbb S\setminus \mathbb C)$ be the class of measures defined on $\mathscr S_{\mathbb S\setminus\mathbb C}$ whose restrictions to $\mathbb S \setminus \mathbb C^r$ are finite for all $r > 0$. 
Topologize $\mathbb M(\mathbb S\setminus \mathbb C)$ with a sub-basis $\{ \{\nu\in \mathbb M(\mathbb S\setminus \mathbb C)\colon\nu(f)\in G\} \colon f\in \mathcal C_{\mathbb S\setminus\mathbb C}, G\ \mbox{open in}\ \R_+ \}$, 
where $\mathcal C_{\mathbb S\setminus\mathbb C}$ is the set of real-valued, nonnegative, bounded, continuous functions whose support is bounded away from $\mathbb C$ (i.e., $f(\mathbb C^r) = \{0\}$ for some $r>0$). 
A sequence of measures $\nu_n\in \mathbb M(\mathbb S\setminus \mathbb C)$ converges to $\nu\in \mathbb M(\mathbb S\setminus \mathbb C)$ if $\nu_n(f) \to \nu(f)$ for each $f\in \mathcal C_{\mathbb S\setminus\mathbb C}$. 
We say that a set $E_1 \subseteq \mathbb S$ is bounded away from another set $E_2 \subseteq \mathbb S$ if $\inf_{x\in E_1,y\in E_2} d(x,y) > 0$. 
The following characterization of $\mathbb M$-convergence can be considered as a generalization of the classical notion of weak convergence of measures, see e.g.\ \cite{billingsley2013}. 

\begin{result}[Theorem~2.1 of \cite{lindskogresnickroy2014}]
Let $\nu$, $\nu_n\in \mathbb M(\mathbb S\setminus \mathbb C)$. 
We have $\nu_n\to\nu$ in $\mathbb M(\mathbb S\setminus \mathbb C)$ as $n\to\infty$ if and only if 
\[
\varlimsup_{n\to\infty} \nu_n(F) \leq \nu(F)
\]
for all closed $F\in \mathscr S_{\mathbb S\setminus\mathbb C}$ bounded away from $\mathbb C$ and 
\[
\varliminf_{n\to\infty} \nu_n(G) \geq \nu(G)
\]
for all open $G\in \mathscr S_{\mathbb S\setminus\mathbb C}$ bounded away from $\mathbb C$.
\end{result}

We now introduce a new notion of equivalence between two families of random objects, which will prove to be useful in Section~\ref{section: proofs of section 3.2 and 3.3}. 
Let $\notationdef{nota-F-delta}{F_\delta} = \{ x\in\mathbb S\colon d(x,F)\leq \delta \}$ and $\notationdef{nota-G--delta}{G^{-\delta}} = ((G^c)_\delta)^c$. Note that when it comes to the fattening and shaving of sets, we denote open sets with a superscript and closed sets with a subscript.

\begin{definition}\label{def: asymptotic equivalence}
Suppose that $X_n$ and $Y_n$ are random elements taking values in a complete separable metric space $(\mathbb S, d)$. 
$Y_n$ is said to be asymptotically equivalent to $X_n$ with respect to $\epsilon_n$ and $\mathbb C$, if, for each $\delta>0$ and $\gamma > 0$, 
\begin{align*}
&\varlimsup_{n\to\infty} \epsilon_n^{-1} \P(X_n\in (\mathbb S\setminus\mathbb C)^{-\gamma}, d(X_n,Y_n)\geq \delta)\\
&=
\varlimsup_{n\to\infty} \epsilon_n^{-1} \P(Y_n\in (\mathbb S\setminus\mathbb C)^{-\gamma}, d(X_n,Y_n)\geq \delta)
=0. 
\end{align*}
\end{definition}

\begin{remark}
\label{remark: asymptotic equivalence}
Note that the asymptotic equivalence w.r.t. $\mathbb C$ implies the asymptotic equivalence w.r.t. $\mathbb C'$ if $\mathbb C \subseteq \mathbb C'$. 
In view of this, the strongest notion of asymptotic equivalence w.r.t. a given sequence $\epsilon_n$ is the one w.r.t. an empty set. 
In this case, the conditions for the asymptotic equivalence reduce to a simple condition: $\P ( d(X_n , Y_n ) \geq \delta) = o(\epsilon_n)$ for any $\delta > 0$. 
That special case of asymptotic equivalence has been introduced and applied in \cite{rheeblanchetzwart2016}. 
In our context, this simple condition suffices for the case of $B_1\geq 0$ in Section~\ref{SECTION: ONE-SIDED LARGE DEVIATIONS}; however, we have to work with the case that $\mathbb C$ is not an empty set when we deal with general $B_1$ in Section~\ref{SECTION: TWO-SIDED LARGE DEVIATIONS}. 
\end{remark}

The usefulness of this notion of equivalence comes from the following result. 
%\todo{refer to this lemma explicitly when we apply it.}
%The following result states that if $Y_n$ is asymptotically equivalent to $X_n$, and $X_n$ satisfies a limit theorem, then $Y_n$ satisfies the same limit theorem. 

\begin{lemma}\label{lem: asymptotic equivalence}
Suppose that $\epsilon_n^{-1} \P(X_n\in\cdot) \to \nu(\cdot)$ in $\mathbb M(\mathbb S\setminus \mathbb C)$ for some sequence $\epsilon_n$ and a closed set $\mathbb C$. 
If $Y_n$ is asymptotically equivalent to $X_n$ with respect to $\epsilon_n$ and $\mathbb C$, then the law of $Y_n$ has the same normalized limit, i.e., $\epsilon_n^{-1} \P(Y_n\in\cdot) \to \nu(\cdot)$ in $\mathbb M(\mathbb S\setminus \mathbb C)$. 
\end{lemma}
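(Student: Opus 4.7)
The plan is to reduce the claim to the Portmanteau-type characterization of $\mathbb M$-convergence given in Theorem~2.1 of \cite{lindskogresnickroy2014} and recalled above, so that it suffices to verify $\limsup_{n\to\infty}\epsilon_n^{-1}\P(Y_n\in F)\leq \nu(F)$ for every closed $F\in\mathscr S_{\mathbb S\setminus\mathbb C}$ bounded away from $\mathbb C$, and the matching liminf bound for every open $G\in\mathscr S_{\mathbb S\setminus\mathbb C}$ bounded away from $\mathbb C$. In each case the strategy is the same: introduce a perturbation parameter $\delta>0$, compare the $Y_n$-event with a slightly enlarged (resp.\ slightly shrunken) $X_n$-event modulo $\{d(X_n,Y_n)\geq\delta\}$, use Definition~\ref{def: asymptotic equivalence} to kill that discrepancy, invoke the assumed convergence of $\epsilon_n^{-1}\P(X_n\in\cdot)$ to $\nu$, and finally let $\delta\downarrow 0$.

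For the upper bound, fix a closed $F$ bounded away from $\mathbb C$ and choose $\gamma_0>0$ with $F\subseteq(\mathbb S\setminus\mathbb C)^{-\gamma_0}$. For every $\delta\in(0,\gamma_0)$ the inclusion
\[
\{Y_n\in F\}\ \subseteq\ \{X_n\in F_\delta\}\ \cup\ \bigl\{Y_n\in(\mathbb S\setminus\mathbb C)^{-\gamma_0},\ d(X_n,Y_n)\geq\delta\bigr\}
\]
holds. The second event has probability $o(\epsilon_n)$ by Definition~\ref{def: asymptotic equivalence}, while $F_\delta$ is closed and still bounded away from $\mathbb C$ (by $\gamma_0-\delta$), so the hypothesis on $X_n$ gives $\limsup_n\epsilon_n^{-1}\P(Y_n\in F)\leq \nu(F_\delta)$. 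Because $F$ is closed, $F_\delta\downarrow F$ as $\delta\downarrow 0$, and because $\nu$ is finite on $\mathbb S\setminus\mathbb C^r$ for every $r>0$, continuity from above yields $\nu(F_\delta)\downarrow\nu(F)$, which is the desired bound.

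For the lower bound, fix open $G$ with $G\subseteq(\mathbb S\setminus\mathbb C)^{-\gamma_0}$, and observe that $X_n\in G^{-\delta}$ combined with $d(X_n,Y_n)<\delta$ forces $Y_n\in G$, since $d(X_n,G^c)>\delta$ precludes $Y_n\in G^c$. Hence
\[
\{X_n\in G^{-\delta}\}\ \subseteq\ \{Y_n\in G\}\ \cup\ \bigl\{X_n\in(\mathbb S\setminus\mathbb C)^{-\gamma_0},\ d(X_n,Y_n)\geq\delta\bigr\}.
\]
The second event is again $o(\epsilon_n)$ by Definition~\ref{def: asymptotic equivalence}, and $G^{-\delta}$ is open and bounded away from $\mathbb C$, so the hypothesis on $X_n$ gives $\nu(G^{-\delta})\leq \liminf_n\epsilon_n^{-1}\P(Y_n\in G)$. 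Since $G$ is open, every $x\in G$ has $d(x,G^c)>0$ and therefore lies in $G^{-\delta}$ for all small enough $\delta$, so $G^{-\delta}\uparrow G$ and continuity from below yields $\nu(G^{-\delta})\uparrow\nu(G)$, finishing the proof.

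The argument is essentially a Portmanteau-style bookkeeping exercise, so I do not anticipate a substantive obstacle. The only point requiring discipline is to keep every auxiliary set bounded away from $\mathbb C$, both so that the hypothesis on $X_n$ applies to $F_\delta$ and $G^{-\delta}$ and so that the monotone continuity arguments on $\nu$ are legitimate; this is precisely what forces the restriction $\delta<\gamma_0$ in the upper bound and the choice $\gamma=\gamma_0$ when invoking Definition~\ref{def: asymptotic equivalence}.
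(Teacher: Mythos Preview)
Your proposal is correct and follows essentially the same approach as the paper's proof: both reduce to the Portmanteau characterization, compare $Y_n$-events with $\delta$-enlarged or $\delta$-shrunken $X_n$-events, invoke the two halves of Definition~\ref{def: asymptotic equivalence} to kill the discrepancy terms, and pass to the limit $\delta\downarrow 0$ via continuity of $\nu$. The only cosmetic difference is that you phrase the comparisons as set inclusions while the paper writes out the corresponding chain of inequalities directly.
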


\section{Main results}\label{section: main results}
This section is organized as follows. 
In Section~\ref{SECTION: TAIL ESTIMATES FOR THE AREA UNDER FIRST RETURN TIME/REGENERATION CYCYLE}, we analyze the tail estimates of the area under the first return time and regeneration cycle, which are needed to derive the sample-path large deviations of $\bar X_n$. In Section~\ref{SECTION: ONE-SIDED LARGE DEVIATIONS} we derive such results in the case  where $B_1\geq 0$. The two-sided case is more involved and is treated in Section~\ref{SECTION: TWO-SIDED LARGE DEVIATIONS}.

\subsection{Tail estimates on the area under the first return time/regeneration cycle}\label{SECTION: TAIL ESTIMATES FOR THE AREA UNDER FIRST RETURN TIME/REGENERATION CYCYLE}
Let 
\begin{equation}\label{eq: first return time}
\notationdef{nota-tau-d}{\tau_d} = \inf\{n\geq 1\colon |X_n| \leq d\}
\end{equation}
denote the first return time of $X_n$ to the set $[-d,d]$, where $d$ is such that $[-d,d] \cap \mbox{supp}(\pi) \neq \emptyset$. 
%\todo{I feel we need a bit of a more careful examination of this. The support has no holes [this is a theorem]. Also, we never explicitly make any assumption on the distribution of $X_0$. This is especially relevant if we look at a cycle? If $X_0=0$, and both $A$ and $B$ are bounded, $\tau_d=1$ almost surely under $\P^\alpha$, causing issues in constants below. }
Recall that $\{r_n\}_{n\geq0}$ is the sequence of regeneration times of $\{X_n\}_{n\geq0}$. 
We denote the area under the first return time and the regeneration cycle by
\begin{equation}\label{eq: area under first return time}
\notationdef{nota-frak-B}{\mathfrak B} = \sum_{n=0}^{\tau_d-1} X_n
\quad\mbox{and}\quad
\notationdef{nota-frak-R}{\mathfrak R} = \sum_{n=0}^{r_1-1} X_n,
\end{equation}
respectively. 
Recall that $Z = X_0 + \sum_{k=1}^\infty B_k e^{-S_k}$. 
%\todo{This depends on $X_0$, especially in the various constants below. Also $\tau_d$ and $r_1$ depend on $X_0$. In the case of $r_1$ I have a suspicion of what it should be, of course. But I also wonder to which extent $X_0$ should matter. Update april 15: I think the solution is rather simple: $|X_0|\leq d$ seems to always hold, and estimates below are uniform in this condition. }
Finally, note that considering $B_i \equiv 1$, there exists a constant \notationdef{nota-C-infty}{$C_\infty$} given in \cite{goldie1991}
% $C_\infty$ and $C_\infty$ 
that satisfies
%\[
 %   \textcolor{red}{\cancel{\P \left( \sum_{k=0}^\infty e^{S_k} > u \right) \sim C_{+,\infty} u^{-\alpha}, 
  %  \quad\mbox{and}\quad
   % \P \left( \sum_{k=0}^\infty e^{S_k} < -u \right) \sim %C_{-,\infty} u^{-\alpha}, }}
%\]
\begin{equation}\label{eq: tail asymptotics on the stationary distribution of AR(1) process}
\P \left( \sum_{k=0}^\infty e^{S_k} > u \right) \sim C_\infty u^{-\alpha}.
\end{equation}
% respectively.
%, where $\pi$ is the stationary distribution of $\{X_n\}_{n\geq0}$.  

\begin{theorem}\label{thm: tail estimates for the area under first return time/regeneration cycle}
Suppose that Assumptions~\ref{ass: regularity condition AR(1) process} and \ref{ass: minorization AR(1) process} hold.
%Suppose that Assumption~\ref{ass: regularity condition AR(1) process} holds. 
\begin{enumerate}
\item \label{part 1 of thm: tail estimates for the area under first return time/regeneration cycle}
\linksinthm{part 1 of thm: tail estimates for the area under first return time/regeneration cycle}
We have
\begin{align*}
&\lim_{u\to\infty} u^\alpha \P(\mathfrak B>u) = C_\infty \E^\alpha [(Z^+)^\alpha \1_{\{\tau_d=\infty\}}]\\
\mbox{and}\quad
&\lim_{u\to\infty} u^\alpha \P(\mathfrak B<-u) = C_\infty \E^\alpha [(Z^-)^\alpha \1_{\{\tau_d=\infty\}}].
\end{align*}
\item 
\linksinthm{part 2 of thm: tail estimates for the area under first return time/regeneration cycle}
In addition %f Assumption~\ref{ass: minorization AR(1) process} holds additionally, then 
\begin{align*}
\lim_{u\to\infty} u^\alpha \P(\mathfrak R>u) = C_+
\quad\mbox{and}\quad
\lim_{u\to\infty} u^\alpha \P(\mathfrak R<-u) = C_-,
\end{align*}
where 
$
\notationdef{nota-C-+}{C_+} = C_\infty \E^\alpha [(Z^+)^\alpha \1_{\{r_1=\infty\}}]
$
and
$
\notationdef{nota-C--}{C_-} = C_\infty \E^\alpha [(Z^-)^\alpha \1_{\{r_1=\infty\}}]
$.
\end{enumerate}
\end{theorem}

Like in the classical estimates \eqref{eq kestengoldie}, it is natural to ask when $C_+, C_- \in (0,\infty)$. 
A proof of finiteness of $\E^\alpha [|Z|^\alpha]$ is obtained as byproduct of the proof of Lemma 
\ref{lem: almost sure convergence of G+ and G-}. $C_\infty \in (0,\infty)$ by specializing \eqref{eq kestengoldie} to the case $A_1\geq 0$ and $B_1\equiv 1$. 
If  $B_1$ is nonnegative and $\P(A_1=B_1=0)=0$, then $Z>0$ $\P^\alpha$-a.s.
Since also $\P^\alpha( r_1=\infty) > 0$,  $C_+>0$. 

When $B_1$ can take both signs, the situation is much more delicate and we sketch how one can deal with this issue. One way is to derive sufficient conditions for the support of $Z$ under $\P^\alpha$ to be the entire real line, from which strict positivity of both $C_+$ and $C_-$ can be inferred. Such a sufficient condition can be derived from a careful inspection of the proof of Theorem 2.5.5 (1) of \cite{buraczewskidamekmikosch2016}  (which is a result due to \cite{guivarch}). For example, if the support of $(A_1,B_1)$ includes points 
$(a,b), (a_1, b_1), (a_2, b_2)$ such that $a<1, a_1,a_2>1$ and $b_1/(1-a_1)<b/(1-a)<b_2/(1-a_2)$ the support of $Z$ is the whole real line. 
%\cmt{Chang-Han, I hope you are ok with this level of discussion}

\subsection{One-sided large deviations}\label{SECTION: ONE-SIDED LARGE DEVIATIONS}
%The aim of Sections~\ref{SECTION: ONE-SIDED LARGE DEVIATIONS}~and~\ref{SECTION: TWO-SIDED LARGE DEVIATIONS} is to establish limit theorems for $\P (\bar X_n \in A)$ as $n\to\infty$ under two different settings, for some general measurable set $A$. 
We first consider the case where $B_1$ is nonnegative.
To deal with the dependence structure of the Markov chain within the regeneration cycle, we consider in this section the space $\mathbb D = \mathbb D [0,1]$, consisting of real-valued functions with domain $[0,1]$ which are right-continuous with left limits. We endow $\mathbb D$ with the  $M_1'$ topology. 
To describe this topology in detail, let $\xi \in \mathbb D$ and define the extended completed graph $\Gamma_\xi'$ of $\xi$ by 
\[
\notationdef{nota-Gamma-xi-prime}{\Gamma_\xi'} = \{(x,t)\in\mathbb R\times[0,1] \colon x \in [\xi(t^-)\wedge\xi(t),\xi(t^-)\vee\xi(t)] \},
\]
where $\xi(0^-)=0$. 
Define an order on the graph $\Gamma_\xi'$ by saying that $(x_1,t_1)\leq (x_2,t_2)$ if either (i) $t_1<t_2$ or (ii) $t_1=t_2$ and $|\xi(t_1^-)-x_1| \leq |\xi(t_2^-)-x_2|$. 
We say that a mapping $(u,s):[0,1]\to \Gamma_\xi'$ is a parametric representation of $\xi$ if $r \mapsto(u(r),s(r))$ is continuous and nondecreasing. 
Let \notationdef{nota-Pi-prime}{$\Pi'(\xi)$} be the set of all parametric representations of $\xi\in \mathbb D$.
For any $\xi_1,\xi_2\in\mathbb D$, the $M_1'$ metric is defined by
\[
\notationdef{nota-d-M-1-prime}{d_{M_1'}}(\xi_1,\xi_2) 
= 
\inf_{\substack{(u_i,s_i)\in\Pi'(\xi_i)\\i\in\{1,2\}}} \| u_1-u_2 \|_\infty \vee \| s_1-s_2 \|_\infty.
\]
For the rest of the paper, we consider the topology w.r.t.\ this metric, unless specified otherwise. 
%we consider the metric space $(\mathbb D,d_{M_1'})$ by default. 

For the one-sided large deviations result, we need the following elements. 
We say that a function $\xi\in\mathbb D$ is \textit{piecewise constant}, if there exist finitely many time points $t_i$ such that $0=t_0<t_1<\cdots< t_m=1$ and $\xi$ is constant on the intervals $[t_{i-1},t_i)$ for all $1\leq i\leq m$. 
For $\xi\in\mathbb D$, define the set of discontinuities of $\xi$ by
\begin{equation}\label{eq: discontinuities}
\notationdef{nota-disc}{\disc(\xi)} = \{ t\in[0,1] \colon \xi(t) \neq \xi(t^-) \}, 
\end{equation}
where $\xi(0^-)=0$. 
For each integer $j$, define
% \[
% \linkdest{nota-D-underbar-leq-j}\underline{\mathbb D}_{\leqslant j} = \{ \xi\in\mathbb D \colon \xi\ \mbox{piecewise constant and nondecreasing},\ |\mbox{Disc}(\xi)| \leq j 
% \}
% %,\quad\mbox{for}\ j\geq0.
% \]
% and, for $j > 0$,
\[
\notationdef{nota-D-underbar-lessthan-j}{\underline{\mathbb D}_{< j}} = \{ \xi\in\mathbb D \colon \xi\ \mbox{piecewise constant and nondecreasing},\ |\mbox{Disc}(\xi)| < j 
\}.
%,\quad\mbox{for}\ j\geq0.
\]
For $z\in\R$ and each integer $j$, define
% \begin{equation}\label{eq: D_<j^z}
% \linkdest{nota-D-underbar-leq-j-z}\underline{\mathbb D}_{\leqslant j}^z = \{\xi\in\mathbb D \colon \xi = z \cdot id + \xi', \xi'\in\underline{\mathbb D}_{\leqslant j} \}
% \end{equation} 
% and, for $j > 0$,
\begin{equation}\label{eq: D_<j^z}
\notationdef{nota-D-underbar-lessthan-j-z}{\underline{\mathbb D}_{< j}^z} = \{\xi\in\mathbb D \colon \xi = z \cdot id + \xi', \xi'\in\underline{\mathbb D}_{< j} \}.
\end{equation} 
For each constant $\gamma>1$, let $\notationdef{nota-nu-gamma}{\nu_\gamma(x,\infty)} = x^{-\gamma}$, and let $\notationdef{nota-nu-gamma-j}{\nu_\gamma^j}$ denote the restriction (to $\notationdef{nota-R-plus-j-downarrow}{\R_+^{j\downarrow}} = \{ x\in\R^j\colon x_1 \geq \cdots \geq x_j>0 \}$) of the $j$-fold product measure of $\nu_\gamma$. 
Let \notationdef{nota-C-0-z}{$C_0^z$} be the Dirac measure concentrated on the linear function $zt$. 
For $j\geq1$, define a sequence of Borel measures $C_j^z \in \mathbb M(\mathbb D\setminus\underline{\mathbb D}_{< j})$ concentrated on $\underline{\mathbb D}_{< j+1}$ as
\begin{equation}\label{eq: C_j^z}
\notationdef{nota-C-j-z}{C_j^z (\,\cdot\,)} = 
\E \left[ \nu_\alpha^j \left\{ x\in(0,\infty)^j\colon z \cdot id + \sum_{i=1}^j x_i\mathbbm 1_{U_i} \in \,\cdot \right\} \right],
%\int_{(x,u)\in \left\{ (x',u')\in \R_+^{2j} \colon z \cdot id + \sum_{i=1}^j x_i'\mathbbm 1_{u_i'} \in \,\cdot\, \right\}} \,\prod_{i=1}^j \alpha x_i^{-\alpha-1} dx_idu_i,
\end{equation}
where $\alpha$ is as in Assumption~\ref{ass: regularity condition AR(1) process} and the random variables $U_i$, $i\geq 1$, are independent and uniform distributed on $[0,1]$. 
For $E\subseteq \mathbb D$ and $z\in\mathbb R$, define 
\begin{equation}\label{eq: J_z(E)}
\notationdef{nota-mathcal-J-z-uparrow}{\mathcal J_z^{\uparrow}(E)} = \inf \{ j \colon E \cap \underline{\mathbb D}_{< j+1}^z \neq \emptyset \}.
\end{equation}
Setting \linkdest{nota-mu} $\mu = \int x \pi (dx) = \E B_1/(1-\E A_1)$, we state below the main theorem for the one-sided case.  
Recall $C_+$ defined in Theorem~\ref{thm: tail estimates for the area under first return time/regeneration cycle}.
As kindly pointed out by a referee, 
if $B_1 \geq 0$ a.s., then thanks to Assumption 2.1 $\P(B_1 = 0) < 1$ and $C_+$
must be strictly positive, due to  (2.12) in \cite{goldie1991}.

\begin{theorem}
\label{thm: one-sided large deviations}
\linksinthm{thm: one-sided large deviations}
Suppose that Assumptions~\ref{ass: regularity condition AR(1) process} and \ref{ass: minorization AR(1) process} hold. 
Moreover, let $B_1\geq 0$. %, and $C_+$ be strictly positive. 
\begin{enumerate}
\item For each $j\geq 1$, 
\[
n^{j(\alpha-1)} \P(\bar X_n\in\,\cdot\,)\to (C_+ \E r_1)^j C_j^{\mu}(\,\cdot\,),
\]
in $\mathbb M(\mathbb D\setminus\underline{\mathbb D}_{< j}^\mu)$ as $n\to\infty$. 
\item Let $E$ be measurable. 
If $\mathcal J_\mu^{\uparrow}(E)<\infty$ and $E$ is bounded away from $\underline{\mathbb D}_{< \mathcal J_\mu^{\uparrow}(E)}$, then 
\begin{align*}
\varliminf_{n\to\infty}\frac{\P(\bar X_n\in E)}{n^{-\mathcal J_\mu^{\uparrow}(E)(\alpha-1)}}
&\geq 
(C_+ \E r_1)^{\mathcal J_\mu^{\uparrow}(E)} C_{\mathcal J_\mu^{\uparrow}(E)}^{\mu}(E^\circ);\\
\varlimsup_{n\to\infty}\frac{\P(\bar X_n\in E)}{n^{-\mathcal J_\mu^{\uparrow}(E)(\alpha-1)}}
&\leq 
(C_+ \E r_1)^{\mathcal J_\mu^{\uparrow}(E)} C_{\mathcal J_\mu^{\uparrow}(E)}^{\mu}(E^-).
\end{align*}
\end{enumerate}
\end{theorem}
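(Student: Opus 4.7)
The plan is to exploit the regenerative decomposition afforded by Assumption~\ref{ass: minorization AR(1) process} and reduce Part~(1) to the i.i.d.\ sample-path large deviations principle of \cite{rheeblanchetzwart2016}. Let $r_0=0<r_1<r_2<\cdots$ be the regeneration times from Remark~\ref{rmk: regeneration scheme} and set $\mathfrak R_k = \sum_{i=r_{k-1}}^{r_k-1} X_i$, so that the $\mathfrak R_k$ for $k\geq 2$ are i.i.d.\ copies of $\mathfrak R$ with $\P(\mathfrak R>u)\sim C_+u^{-\alpha}$ by Theorem~\ref{thm: tail estimates for the area under first return time/regeneration cycle}(2), and $r_1$ has exponential moments by Lemma~\ref{lem: exponential decay of regeneration time}. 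I will introduce the auxiliary step process
\[
Y_n(t) \;=\; \frac{1}{n}\sum_{k\geq 1:\,r_k\leq nt}\mathfrak R_k,
\]
which is built from the scaled i.i.d.\ random walk of the $\mathfrak R_k$'s via the renewal clock $N_n(t)=\max\{k:r_k\leq nt\}$, whose rescaled version $N_n(t)/n$ converges uniformly to $t/\E r_1$ by the renewal LLN. Applying the main theorem of \cite{rheeblanchetzwart2016} to the underlying i.i.d.\ walk (legitimate since $\mathfrak R\geq 0$ when $B_1\geq 0$) and composing with this time-change will yield the claimed $\mathbb M$-convergence of $Y_n$ in the $J_1$ topology, and hence also in the weaker $M_1'$ topology.

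The heart of the argument is to establish the simple asymptotic equivalence $\P(d_{M_1'}(\bar X_n, Y_n) \geq \delta) = o(\epsilon_n)$ with $\epsilon_n = n^{-j(\alpha-1)}$ (the $\mathbb C=\emptyset$ case of Definition~\ref{def: asymptotic equivalence}, which suffices when $B_1 \geq 0$); once established, Lemma~\ref{lem: asymptotic equivalence} will transport the $\mathbb M$-convergence of $Y_n$ to $\bar X_n$. The geometry of the $M_1'$ topology is essential here: within each cycle, both $\bar X_n$ and $Y_n$ are nondecreasing step functions (as $X_i\geq 0$ when $B_1\geq 0$), and $\bar X_n$ traces a monotone arc from the pre-jump to the post-jump level of $Y_n$; this arc sits inside the extended completed graph of the single jump that $Y_n$ performs at the cycle's end, so a path-by-path construction of matching parametric representations will give $d_{M_1'}(\bar X_n, Y_n)\leq \max_k(r_k-r_{k-1})/n$. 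By Lemma~\ref{lem: exponential decay of regeneration time} and a union bound over the $O(n)$ cycles, this quantity is $O(\log n/n)$ off an event of probability $o(n^{-m})$ for every $m$, comfortably below $\epsilon_n$. The initial cycle (up to $r_1$) will be absorbed similarly, while the terminal residual cycle will be folded into $Y_n$ as a single extra jump at $t=1$ whose contribution is controlled by Theorem~\ref{thm: tail estimates for the area under first return time/regeneration cycle}(1).

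Part~(2) then follows from Part~(1) via the Portmanteau-type characterization of $\mathbb M$-convergence (Theorem~2.1 of \cite{lindskogresnickroy2014}): apply the upper bound to the closed set $E^-$ and the lower bound to the open set $E^\circ$, both of which inherit boundedness away from $\underline{\mathbb D}_{\leqslant j-1}^\mu$ from the hypothesis on $E$ and the definition of $\mathcal J_\mu^{\uparrow}(E)$. The main obstacle I anticipate, and the one that explicitly motivates switching from $J_1$ to $M_1'$, is the handling of the terminal residual cycle in the asymptotic equivalence: a $J_1$-based estimate would fail because the residual cycle generically produces a cluster of many small jumps on a narrow window of width $O(\log n/n)$ near $t=1$, but the $M_1'$ metric collapses such a cluster into a single jump of matching size. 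Making this rigorous demands the parametric-representation construction sketched above together with a careful bookkeeping of how the within-cycle monotone arc aligns with the $M_1'$-extended completed graph.
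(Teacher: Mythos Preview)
Your overall architecture---regenerative decomposition, reduction to the i.i.d.\ sample-path LDP of \cite{rheeblanchetzwart2016}, and transfer via asymptotic equivalence---matches the paper's. However, the claim that the \emph{simple} asymptotic equivalence $\P(d_{M_1'}(\bar X_n, Y_n)\geq\delta)=o(n^{-j(\alpha-1)})$ holds is false, and this is precisely where your argument breaks.

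The issue is the terminal residual. Any parametric representation $(u,v)\in\Pi'(\xi)$ must satisfy $(u(1),v(1))=(\xi(1),1)$, so
\[
d_{M_1'}(\bar X_n, Y_n)\;\geq\;|\bar X_n(1)-Y_n(1)|\;=\;R_n(1)\;=\;\frac{1}{n}\sum_{i=r_{N(n)}}^{n-1}X_i.
\]
Hence your bound $d_{M_1'}(\bar X_n,Y_n)\leq \max_k(r_k-r_{k-1})/n$ cannot hold; it accounts only for the within-cycle time shifts and ignores the value mismatch at the endpoint. Since $R_n(1)$ is (up to size-biasing) a partial cycle area, $\P(R_n(1)\geq\delta)$ is of order $n^{-\alpha}$, and for $j>\alpha/(\alpha-1)$ one has $n^{-\alpha}\neq o(n^{-j(\alpha-1)})$. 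Folding the residual into $Y_n$ as an extra jump at $t=1$ does not help either: you then owe an $\mathbb M$-convergence for this modified process, and the extra jump is heavy-tailed and correlated with the inspection time, so it cannot be dismissed as a $o(\epsilon_n)$ perturbation.

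The paper handles this via the \emph{refined} asymptotic equivalence of Definition~\ref{def: asymptotic equivalence} relative to $\mathbb C=\underline{\mathbb D}_{\leqslant j-1}^\mu$ (Proposition~\ref{prop: one-sided asymptotic equivalence between X_n' and X_n} and Lemma~\ref{lem: help lemma for one-sided LD}). The key estimate is the \emph{joint} bound
\[
\P\big(\bar X_n'\in(\mathbb D\setminus\underline{\mathbb D}_{\leqslant j-1}^\mu)^{-\gamma},\;R_n(1)\geq\delta\big)\;=\;o\big(n^{-j(\alpha-1)}\big),
\]
which holds because on this event the residual acts as a $(j{+}1)$-st large jump, independent of the $j$ jumps already forcing $\bar X_n'$ away from $\underline{\mathbb D}_{\leqslant j-1}^\mu$; the resulting rate is $n^{-(j+1)(\alpha-1)}$. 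The same mechanism is needed for the time-change step (your ``composing with the renewal clock''), which the paper carries out as Lemma~\ref{lem: LD for X_n' bar} rather than by a direct continuous-mapping argument. Your $M_1'$ geometric insight about monotone within-cycle arcs is correct and is used in the paper, but it resolves only the within-cycle clustering, not the endpoint residual.
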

% fine up to this point

\subsection{Two-sided large deviations}\label{SECTION: TWO-SIDED LARGE DEVIATIONS}
%In this section, we consider the case where $B_1$ is a general $\mathbb R$-valued random variable, and hence, can take negative values as well. 
Similarly as in Section~\ref{SECTION: ONE-SIDED LARGE DEVIATIONS}, we need the following elements. 
Define the set of step functions with less than $j$ discontinuities by
\[
\notationdef{nota-D-underbar-ll-j}{\underline{\mathbb D}_{\ll j}}
= 
\{ \xi\in\mathbb D \colon \xi\ \mbox{piecewise constant},\, |\mbox{Disc}(\xi)| < j \}, \quad\mbox{for}\ j\geq0.
\]
For $z\in\R$, define 
\begin{equation}\label{eq: D_<<j^z}
\notationdef{nota-D-underbar-ll-j-z}{\underline{\mathbb D}_{\ll j}^z} = \{\xi\in\mathbb D \colon \xi = z \cdot id + \xi',\  \xi'\in\underline{\mathbb D}_{\ll j} \},\quad\mbox{for}\ j\geq0.
\end{equation} 
Let \notationdef{nota-C-00-z}{$C_{0,0}^z$} be the Dirac measure concentrated on the linear function $zt$. 
For each $(j,k)\in\mathbb Z_+^2\setminus\{(0,0)\}$, define a measure $C_{j,k}^z$ as
\begin{equation}\label{eq: C_j,k^z}
\notationdef{nota-C-j-k-z}{C_{j,k}^z (\,\cdot\,)} = 
\E \left[ \nu_\alpha^{j+k} \left\{ (x,y)\in(0,\infty)^{j+k} \colon z \cdot id + \sum_{i=1}^j x_i\mathbbm 1_{U_i} - \sum_{i=1}^k y_i\mathbbm 1_{V_i} \in \,\cdot \right\} \right],
%\int_{(x,u,y,v)\in E_{j,k}} \,\prod_{i=1}^j \left[ \alpha x_i^{-\alpha-1} dx_idu_i \right] \prod_{i=1}^k \left[ \alpha y_i^{-\alpha-1} dy_idv_i \right],
\end{equation}
where 
%$E_{j,k} = \{ (x',u',y',v')\in \R_+^{2(j+k)} \colon z \cdot id + \sum_{i=1}^j x_i'\mathbbm 1_{u_i'} - \sum_{i=1}^k y_i'\mathbbm 1_{v_i'}\in \,\cdot\, \}$ and $\alpha$ is as in Assumption~\ref{ass: regularity condition AR(1) process} and 
 $U_i$, $V_i$ are independent and uniform distributed on $[0,1]$. 
For $E\subseteq \mathbb D$ and $z\in\mathbb R$, define 
\begin{equation}\label{eq: J_z'(E)}
\notationdef{nota-cal-J-z}{\mathcal J_z(E)} = \inf \{ j \colon E \cap \underline{\mathbb D}_{\ll j+1}^z \neq \emptyset \}.
\end{equation}
Recalling $\mu = \E B_1/(1-\E A_1)$, we now state our main theorem for the two-sided case.  

\begin{theorem}\label{thm:two-sided-large-deviations}
\linksinthm{thm:two-sided-large-deviations}
Suppose that Assumptions~\ref{ass: regularity condition AR(1) process}~and~\ref{ass: minorization AR(1) process} hold. 
Let $\E |B_1|^m<\infty$ for every $m\in\mathbb Z_+$. 
Moreover, let $C_+$, $C_-$ be as in Theorem~\ref{thm: tail estimates for the area under first return time/regeneration cycle} such that $C_+C_->0$. 
\begin{enumerate}
\item For each $j\geq 1$,
\[
n^{j(\alpha-1)} \P(\bar X_n\in\,\cdot\,)
\to 
(\E r_1)^j \sum\nolimits_{(l,m)\in I_{= j}} (C_+)^l (C_-)^m C_{l,m}^{\mu}(\,\cdot\,),
\]
in $\mathbb M(\mathbb D\setminus\underline{\mathbb D}_{\ll j}^\mu)$ as $n\to\infty$, 
where $\notationdef{nota-I-equal-j}{I_{= j}} = \{ (l,m)\in\mathbb Z_+^2 \colon l+m=j \}$. 
\item Let $E$ be  measurable. 
If $\mathcal J_\mu(E)<\infty$ and $E$ is bounded away from $\underline{\mathbb D}_{\ll \mathcal J_\mu(E)}$, then 
\begin{align*}
\varliminf_{n\to\infty}\frac{\P(\bar X_n\in E)}{n^{-\mathcal J_\mu(E)(\alpha-1)}} 
&\geq 
(\E r_1)^{\mathcal J_\mu(E)} \sum_{(l,m)\in I_{=\mathcal J_\mu(E)}} (C_+)^l (C_-)^m C_{l,m}^{\mu}(E^\circ);\\ 
% \mbox{and}\quad 
\varlimsup_{n\to\infty}\frac{\P(\bar X_n\in E)}{n^{-\mathcal J_\mu(E)(\alpha-1)}}
&\leq 
(\E r_1)^{\mathcal J_\mu(E)} \sum_{(l,m)\in I_{=\mathcal J_\mu(E)}} (C_+)^l (C_-)^m C_{l,m}^{\mu}(E^-),
\end{align*}
%\begin{align*}
%(\E r_1)^{\mathcal J_\mu'(E)} \sum\nolimits_{(l,m)} (C_+)^l (C_-)^m C_{l,m}^{\mu}(E^\circ) 
%&\leq 
%\varliminf_{n\to\infty}\frac{\P(\bar X_n\in E)}{n^{-\mathcal J_\mu'(E)(\alpha-1)}}\\
%&\leq 
%\varlimsup_{n\to\infty}\frac{\P(\bar X_n\in E)}{n^{-\mathcal J_\mu'(E)(\alpha-1)}}
%\leq 
%(\E r_1)^{\mathcal J_\mu'(E)} \sum\nolimits_{(l,m)} (C_+)^l (C_-)^m C_{l,m}^{\mu}(E^-),
%\end{align*}
where the summations are over all $(l,m)$ that belong to the set $I_{= \mathcal J_\mu(E)}$. 
\end{enumerate}
\end{theorem}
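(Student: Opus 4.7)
The strategy is to reduce the problem to the setting of \cite{rheeblanchetzwart2016} for random walks with two-sided regularly varying increments via the regeneration decomposition. First I would exploit Assumption~\ref{ass: minorization AR(1) process} to split the path $(X_0, X_1, \ldots, X_{n-1})$ into i.i.d.\ regeneration blocks, with the areas $\mathfrak{R}_i = \sum_{k=r_{i-1}}^{r_i-1} X_k$ being i.i.d.\ copies of $\mathfrak{R}$. By Theorem~\ref{thm: tail estimates for the area under first return time/regeneration cycle}, these $\mathfrak{R}_i$ are regularly varying with index $\alpha$ on both tails, with tail constants $C_+$ and $C_-$. Define an auxiliary step-function process $\hat{X}_n$ by placing a jump of size $\mathfrak{R}_i/n$ at the (renormalized) time point $r_i/(n\, \E r_1)$---equivalently, aggregate the $i$-th cycle into a single jump---and add a linear drift $\mu \cdot t = \E B_1/(1-\E A_1) \cdot t$ corresponding to the ``typical'' cycle contribution. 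By the renewal theorem, $r_i/i \to \E r_1$ a.s., so the cycle-endpoint locations concentrate around $i/(n\,\E r_1)$, and the number of cycles finished by time $n$ is $\sim n/\E r_1$. Thus $\hat{X}_n$ is, up to a centering, a rescaled random walk indexed by $\sim n/\E r_1$ i.i.d.\ steps with two-sided regularly varying distributions, which falls exactly into the framework of \cite{rheeblanchetzwart2016}.

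Part (1) would then follow by: (a) applying the two-sided sample-path large deviations result of \cite{rheeblanchetzwart2016} to the aggregated random walk to obtain $\mathbb{M}$-convergence of its law (in the Skorokhod space) to $(\E r_1)^j \sum_{(l,m) \in I_{=j}} C_+^l C_-^m C_{l,m}^\mu(\cdot)$ at scale $n^{-j(\alpha-1)}$---the factor $(\E r_1)^j$ arises from the fact that there are roughly $n/\E r_1$ cycles, so with tail asymptotics $\P(\mathfrak{R}>u) \sim C_+ u^{-\alpha}$ the relevant scale becomes $(n/\E r_1) \cdot (n \E r_1)^{-\alpha} \cdot \text{(mass)}$, and one has to book-keep the constants carefully; (b) transferring this convergence to $\bar{X}_n$ via Lemma~\ref{lem: asymptotic equivalence} with $\epsilon_n = n^{-j(\alpha-1)}$ and $\mathbb{C} = \underline{\mathbb{D}}_{\ll j}^\mu$. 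The asymptotic equivalence amounts to showing that, off the set $(\mathbb{D} \setminus \underline{\mathbb{D}}_{\ll j}^\mu)^{-\gamma}$, the $M_1'$-distance between $\bar{X}_n$ and $\hat{X}_n$ exceeds $\delta$ with probability $o(n^{-j(\alpha-1)})$.

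The main obstacle is the asymptotic equivalence step, which has two distinct sub-problems. The first is that within each regeneration cycle, the path of $\bar{X}_n$ executes many small fluctuations before being aggregated into a single jump in $\hat{X}_n$; this is where the choice of $M_1'$ topology is essential, since $M_1'$ allows an increasing sequence of small monotone jumps to be collapsed into a single jump but also permits non-monotone excursions to merge into a single jump, as long as the area is correctly represented. Controlling the $M_1'$-distance requires bounding the maximal excursion of $|X_k|$ within a cycle whose total area is below a threshold, combined with moment estimates on $r_1$ from Lemma~\ref{lem: exponential decay of regeneration time} and the finite moments of $B_1$. The second sub-problem is the \emph{residual} (incomplete) cycle at time $n$: because $B_1$ can take both signs, the residual process can create paths with cancellation that are not captured by $\hat{X}_n$. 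This is precisely why we need to work with asymptotic equivalence relative to a nonempty $\mathbb{C}$ (rather than the simpler setting of \cite{rheeblanchetzwart2016}), and why the moment assumption $\E|B_1|^m < \infty$ for all $m \in \mathbb{Z}_+$ is invoked---it allows polynomial Markov-inequality estimates to push the contribution of the residual cycle (and of ``atypically large'' individual cycles whose contribution fails to aggregate cleanly) below any power of $n$, hence below $n^{-j(\alpha-1)}$.

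Part (2) would follow from part (1) by the characterization of $\mathbb{M}$-convergence via closed and open sets: if $E$ is bounded away from $\underline{\mathbb{D}}_{\ll \mathcal{J}_\mu(E)}^\mu$, then $E$ lives in the support of the limit measure at level $j = \mathcal{J}_\mu(E)$, and the standard portmanteau argument gives the lower bound via $E^\circ$ and the upper bound via $E^-$. The non-trivial point here is showing that $E$ bounded away from $\underline{\mathbb{D}}_{\ll \mathcal{J}_\mu(E)}$ implies $E$ bounded away from $\underline{\mathbb{D}}_{\ll \mathcal{J}_\mu(E)}^\mu$ in the $M_1'$-topology (equivalent under drift $\mu$), and that both $E^\circ$ and $E^-$ are measurable in the sub-$\sigma$-algebra $\mathscr{S}_{\mathbb{D}\setminus\underline{\mathbb{D}}_{\ll j}^\mu}$; these are routine consequences of the topological structure of the $M_1'$ space on $\mathbb{D}[0,1]$.
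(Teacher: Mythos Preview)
Your overall strategy matches the paper's: decompose into regeneration cycles, apply the two-sided result of \cite{rheeblanchetzwart2016} to the aggregated random walk, and transfer to $\bar X_n$ via Lemma~\ref{lem: asymptotic equivalence}. The paper executes this with two intermediate processes rather than one---$\bar X_n'$ (jumps at the actual regeneration times, \eqref{eq: X_n' bar}) and $\bar W_n$ (jumps at deterministic times $\lfloor nt/\E r_1\rfloor$, \eqref{eq: auxilary random walk})---and two separate equivalence steps (Lemma~\ref{lem: LD for X_n' bar} and Proposition~\ref{prop: two-sided asymptotic equivalence between X_n' and X_n}), but that is an organizational choice, not a conceptual difference from what you propose.

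There is, however, a genuine gap. You write that $M_1'$ ``permits non-monotone excursions to merge into a single jump, as long as the area is correctly represented.'' This is false: $M_1'$ only merges increments that are (essentially) monotone over a short time window. If the path within a regeneration cycle first goes up by a large amount and then down by a large amount, the $M_1'$ distance between $\bar X_n$ and the aggregated process $\bar X_n'$ is \emph{not} small, regardless of the net area. The paper addresses this by introducing the sign-switching times $T_1(u)=T(u)$ and $T_{k+1}(u)=\inf\{n>T_k(u):-\mathrm{sign}(X_{T_k(u)})X_n>u\}$ and proving, by an inductive change-of-measure argument, that $\P(T_k(n^\beta)<r_1)=\mathcal O(n^{-(k-\epsilon)\alpha})$ for every $k\geq 2$ (Lemma~\ref{lem: help lemma for two-sided LD}). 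This estimate is exactly where the hypothesis $\E|B_1|^m<\infty$ for all $m$ enters, and it is what drives the within-cycle part of the asymptotic equivalence; your proposal, relying on a mistaken property of $M_1'$, does not supply it. The residual-cycle analysis you outline is broadly correct, but the within-cycle step will not close without the sign-switching control.
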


\section{Proofs of Section~\ref{SECTION: PRELIMINARIES}}\label{section:proof_part1}
%\section{Proofs of Section~\ref{SECTION: PRELIMINARIES}}

%\subsubsection*{Proof of Proposition~\ref{prop: conditions for minorization}}

\begin{proof}[Proof of Proposition~\ref{prop: conditions for minorization}]
\linksinpf{prop: conditions for minorization}
Part 1) and Part 2) for the case  $x_0 \neq 0$ are in \cite[page 22]{buraczewskidamekmikosch2016}.
Hence, we focus on showing part 2) for the case $x_0=0$. %) and part 3). 

%\textit{Part 2):} %
%We focus on the case where $x_0 = 0$.
Note that for any Borel set $E$, \eqref{eq:sufficient condition for minorization} implies that 
\begin{align*}
P(x,E) 
&=
\E [ \1_{\{A_1 x + B_1 \in E\}} ]
\geq 
c_0 \int_{a_0-\epsilon}^{a_0+\epsilon} \int_{I_2} \1_{\{ax + b \in E\}}\, db\, \nu_0(da)
 \\&
= 
c_0 \int_{a_0-\epsilon}^{a_0+\epsilon} \int \1_{\{z - ax \in I_2\}} \1_{\{z \in E\}}\, dz\, \nu_0(da). 
\end{align*}
%W.l.o.g.\ pick $d>0$ so that $\pi(-2d,2d) > 0$. 
Let 
\[
E_0 = 
\begin{cases}
%( b_1+\epsilon (|a_0-\delta|\vee|a_0+\delta|), b_2-\epsilon (|a_0-\delta|\vee|a_0+\delta|) )
\big(b_1 + \epsilon(a_0+\epsilon),\ b_2 - \epsilon(a_0 + \epsilon)\big) & \text{if } x_0 = 0
\\
\big(b_1 + (x_0+\epsilon)(a_0+\epsilon),\ b_2 + (x_0- \epsilon)(a_0 - \epsilon)\big) & \text{if } x_0 > 0
\\
\big(b_1 + (x_0+\epsilon)(a_0-\epsilon),\ b_2 + (x_0- \epsilon)(a_0 + \epsilon)\big) & \text{if } x_0 < 0
\end{cases},
\] 
and pick an $\epsilon>0$ sufficiently small so that $E_0$ is nonempty and $\epsilon < |x_0| \wedge a_0$.
% The set $E_0$ is not empty if we choose 
% $
% \epsilon < (b_2-b_1)/(2(|a_0-\delta|\vee|a_0+\delta|))
% $. 
Note that if $x \in \mathscr B_\epsilon(x_0)$, $z\in E_0$, and $a\in (a_0-\epsilon, a_0+\epsilon)$, then %$|ax| < \epsilon (|a_0-\delta|\vee|a_0+\delta|)$ and 
$z\in E_0$ implies 
$z-ax \in I_2$;  
that is, $\1_{\{z\in E_0\}} \leq \1_{\{ z-ax \in I_2\}}$. Therefore, we have that for all $x\in \mathscr B_\epsilon(x_0)$,
\begin{align*}
P(x,E) 
&\geq 
c_0 \int_{a_0 - \epsilon}^{a_0 + \epsilon} \int \1_{\{z \in E_0\}} \1_{\{z \in E\}}\,dz\,\nu_0(da)
\geq
c_0 \nu_0((a_0-\epsilon, a_0 + \epsilon)) | E\cap E_0 |.
\end{align*}
The constant $\theta = c_0 \nu_0((a_0-\epsilon, a_0 + \epsilon))$ is strictly positive since $a_0$ belongs to the support of $\nu_0$. %\\ 
%Since $\pi(-2d,2d)>0$, the interval $(-2d,2d)$, and hence, $\mathcal C_0 = [-d,d]$ satisfies the minorization condition \eqref{condition: minorization}. 
%
%\textit{Part 3):}
%Pick $\epsilon$ such that $-1/c \notin \mathscr B_\epsilon(x_0)$. 
%Suppose that $c>0$ and $x_0 \geq 0$. 
%For any $x\in \mathscr B_\epsilon(x_0)$, 
%\begin{align*}
%P(x,E) 
%&=
%\E [ \1_{A_1 x + B_1 \in E} ]
%\geq 
%c_0 (1/c+x_0+\epsilon)^{-1} \int_{E} \1_{\{z/(x+1/c) \in I_1\}} dz. 
%\end{align*}
%Let 
%\[
%E_0 
%= 
%\begin{cases}
%( (a_0-\delta)/(1+(x_0+\epsilon)c),(a_0+\delta)/(1+(x_0-\epsilon)c) ) \quad&\mbox{for}\ a_0 \geq 0,\\
%%( -\delta/(1-2dc),\delta/(1-2dc) ) \quad&\mbox{for}\ a_0=0,\\
%( (a_0-\delta)/(1+(x_0-\epsilon)c),(a_0+\delta)/(1+(x_0+\epsilon)c) ) \quad&\mbox{for}\ a_0<0.
%\end{cases}
%\] 
%%W.l.o.g.\ suppose that $a_0>0$, since the cases $a_0=0$ and $a_0<0$ can be dealt with similarly. 
%Observe that if $x\in\mathscr B_\epsilon(x_0)$ and $z\in E_0$ then $z/(x+1/c) \in I_1 = (a_0-\delta,a_0+\delta)$ for $\delta$ sufficiently small. 
%Hence, we have that 
%\begin{align*}
%P(x,E) 
%&\geq 
%c_0 (1/c+x_0+\epsilon)^{-1} \int_{E\cap E_0} dz
%\geq
%c_0 (1/c+x_0+\epsilon)^{-1} | E\cap E_0 |.
%\end{align*}
%This settles the case where $c>0$ and $x_0 \geq 0$;  proofs for the other cases are analogous. 
\end{proof}

\begin{proof}[Proof of Lemma~\ref{lem: exponential decay of regeneration time}]
\linksinpf{lem: exponential decay of regeneration time}
%\CR{rewrite}
% First we claim that $[-M,M]$ is a petite set (cf.\ \cite[page 124]{meyn2009}) for any $M>0$. 
% To see this, note that 
% $
% [-M,M] \subseteq \bigcup_{x\in[-M,M]} \mathscr B_\epsilon(x)
% $, 
% where $\epsilon$ is as in \eqref{eq: minorization}. 
% Combining this with the facts that $[-M,M]$ is compact and $B_{\epsilon}(x)$ is open, there exists a finite $N$ such that 
% $
% [-M,M] \subseteq \bigcup_{i=1}^{N} \mathscr B_\epsilon(x_i)
% $.
% By Theorem~5.2.2 of \cite{meyn2009}, $\mathscr B_\epsilon(x_i)$ is a small set, 
% %(for the definition of small set, see page~109 of \cite{meyn2009}), 
% and hence, is petite. 
% Therefore there exists a finite subcover of petite sets. 
% By Proposition 5.5.5 of \cite{meyn2009}, the interval $[-M,M]$ is petite. 
% Now we turn back to proving the statement of Lemma~\ref{lem: exponential decay of regeneration time}. 
By Theorem 15.2.6 of \cite{meyn2009} and Result~\ref{res: path properties of AR(1) processes}, any bounded set is $h$-geometrically regular with $h(x) = |x|^\delta+1$, $\delta\in(0,\alpha)$. 
Thus, from the definition of $h$-geometrical regularity (cf.\ page 373 of \cite{meyn2009}), there exists $t>1$ such that \\
$\sup\nolimits_{x\in E_1} \E [ \sum\nolimits_{k=0}^{\tau_{d}-1} h(X_k) t^k \,|\, X_0=x ] < \infty$ 
and 
$\sup\nolimits_{x\in \mathcal C_0} \E [ \sum\nolimits_{k=0}^{\tau_{d}-1} h(X_k) t^k \,|\, X_0=x ] < \infty
$.
Since $h\geq1$,
\begin{equation}\label{eq: proposition 2.1 eq01}
\chi_0(t) \triangleq \sup_{x\in E_1} \E [t^{\tau_{d}} \,|\, X_0=x ]
<
\sup_{x\in E_1} \E \left[\left. \sum\nolimits_{k=0}^{\tau_{d}-1} h(X_k) t^k \,\right|\, X_0=x \right]
<
\infty.
\end{equation}
%By choosing $M$ sufficiently large so that $\mathscr B_{\epsilon}(x_0)\subseteq [-M,M]$, we obtain that 
Likewise,
$
\chi_1(t) \triangleq \sup\nolimits_{x\in \mathcal C_0} \E [t^{\tau_{d}} \,|\, X_0=x ] 
%\leq \sup_{x\in [-M,M]} \E [t^{\tau_{\mathscr B_\epsilon(x_0)}} \,|\, X_0=x ] 
<\infty
$.
Note that for any $s\in(1,t)$, by Jensen's inequality, we get
\begin{align*}
\chi_1(s) &= \sup\nolimits_{x\in \mathcal C_0} \E [s^{\tau_{d}} \,|\, X_0=x ] 
=\sup\nolimits_{x\in \mathcal C_0} \E [t^{\frac{\log s}{\log t}\tau_{d}} \,|\, X_0=x ] 
\\
&
\leq \sup\nolimits_{x\in \mathcal C_0} \E [t^{\tau_{d}} \,|\, X_0=x ]^{\frac{\log s}{\log t}} = \chi_1(t)^{\frac{\log s}{\log t}}\to 1
\end{align*}
as $s\to 1$. 
Now let $t>1$ be sufficiently close to 1 so that $(1-\theta) \chi_1(t)<1$.
From the regeneration scheme as described in Remark~\ref{rmk: regeneration scheme}, we obtain
\begin{equation}\label{eq: proposition 2.1 eq03}
\sup_{x\in E_1} \E [ t^{r_1} \,|\, X_0=x ] \leq \chi_0(t) \left( \theta + \sum_{n=1}^\infty \theta(1-\theta)^n (\chi_1(t))^n \right)<\infty.
\end{equation}
\end{proof}

\begin{proof}[Proof of Lemma~\ref{lem: asymptotics of X_n and Z_n}]
The second statement follows from Theorem 2.1.3 of \cite{buraczewskidamekmikosch2016} since  the random walk $-S_n$ has a negative drift under $\P^\alpha$ and $\E^\alpha [\log B_1] <\infty.$ To prove the first statement, we begin by using a similar argument, invoking
 Assumption~\ref{ass: regularity condition AR(1) process}, to conclude that the random variable $\sum_{i=1}^\infty |B_i| e^{-S_i}$ is a.s. finite. Consequently, we can lower bound
\[
|X_n| 
= 
e^{S_n} \left| X_0 + \sum_{i=1}^n B_i e^{-S_i} \right| 
\geq 
e^{S_n} \left( | X_0 | - \sum_{i=1}^n |B_i| e^{-S_i} \right)
\geq 
e^{S_n} \left( | X_0 | - \sum_{i=1}^\infty |B_i| e^{-S_i} \right).
\]
and we see that $|X_n|\rightarrow\infty$ on the event $\{\sum_{i=1}^\infty |B_i| e^{-S_i} < M; X_0> 2M\}$ since $S_n\rightarrow\infty$ under $\P^\alpha$. Hence, we can conclude that
\[
\P^\alpha ( |X_n| \geq M,\ \mbox{for all}\ n\geq1 \,|\, |X_0| \geq 2M  ) > 0. 
\]
Combining this with the fact that the set  $[M,\infty)$ is attainable by $\{|X_n|\}_{n\geq0}$ for sufficiently large $M$ (by Assumption~\ref{ass: regularity condition AR(1) process}), 
%the set $[M,\infty)$ is attainable by $\{|X_n|\}_{n\geq0}$ for sufficiently large $M$. 
%Hence, by 
Theorem~8.3.6 of \cite{meyn2009} completes the proof. 
%Lemma~\ref{lem: asymptotics of X_n and Z_n} is proved once we show 
%\[
%\P^\alpha ( |X_n| \geq M,\ \mbox{for all}\ n\geq1 \,|\, |X_0| \geq 2M  ) > 0. 
%\]
%Note that 
%\[
%|X_n| 
%= 
%e^{S_n} \left| X_0 + \sum_{i=1}^n B_i e^{-S_i} \right| 
%\geq 
%e^{S_n} \left( | X_0 | - \sum_{i=1}^n |B_i| e^{-S_i} \right)
%\geq 
%e^{S_n} \left( | X_0 | - \sum_{i=1}^\infty |B_i| e^{-S_i} \right).
%\]
%Combining this with the fact that $\E^\alpha \log A_1>0$, we conclude that 
%$
%\P ( \exp(S_n)\geq 1,\ \mbox{for all}\ n\geq1 ) = \P ( S_n\geq 0,\ \mbox{for all}\ n\geq1 ) > 0,
%$
%and hence, the first statement is proved. 
%The second statement follows from Theorem 2.1.3 of \cite{buraczewskidamekmikosch2016} since  the random walk $-S_n$ has a negative drift under %$\P^\alpha$ and $\E^\alpha [\log B_1] <\infty.$
\end{proof}

\begin{proof}[Proof of Lemma~\ref{lem: asymptotic equivalence}]
Let $G$ be an open set bounded away from $\mathbb C$ so that $G \subseteq (\mathbb S\setminus\mathbb C)^{-\gamma}$ for some $\gamma>0$.
For a given $\delta>0$, due to the assumed asymptotic equivalence, $\P(X_n \in \mathbb (S\setminus\mathbb C)^{-\gamma}, d(X_n,Y_n) \geq \delta) = o(\epsilon_n)$.
Therefore,
\begin{align*}
\varliminf_{n\to\infty} \epsilon_n^{-1} \P(Y_n\in G)
&\geq \notag
\varliminf_{n\to\infty} \epsilon_n^{-1} \P(X_n\in G^{-\delta},d(X_n,Y_n)<\delta)\\
&= \notag
\varliminf_{n\to\infty} \epsilon_n^{-1} \{ \P(X_n\in G^{-\delta}) - \P(X_n\in G^{-\delta},d(X_n,Y_n) \geq \delta) \}\\
&\geq 
\varliminf_{n\to\infty} \epsilon_n^{-1} \{ \P(X_n\in G^{-\delta})-
\P(X_n\in (\mathbb S\setminus\mathbb C)^{-\gamma},d(X_n,Y_n) \geq \delta) \}\\
&= \notag
\varliminf_{n\to\infty} \epsilon_n^{-1} \P(X_n\in G^{-\delta}) 
\geq 
\nu(G^{-\delta}).
\end{align*}
Since $G$ is an open set, $G = \bigcup_{\delta>0} G^{-\delta}$. 
Due to the continuity of measures, $\lim_{\delta\to 0} \nu ( G^{-\delta} ) = \nu(G)$, and hence, we arrive at the lower bound 
\[
\varliminf_{n\to\infty} \epsilon_n^{-1} \P(Y_n\in G) \geq \nu(G)
\]
by taking $\delta\to0$. 
Now, turning to the upper bound, consider a closed set $F$ bounded away from $\mathbb C$ so that $F \subseteq (\mathbb S\setminus\mathbb C)^{-\gamma}$ for some $\gamma>0$. 
Given a $\delta>0$, by the asymptotic equivalence assumption, $\P(Y_n \in F, d(X_n,Y_n) \geq \delta) \leq \P(Y_n \in (\mathbb S\setminus\mathbb C)^{-\gamma}, d(X_n,Y_n) \geq \delta) = o(\epsilon_n)$. Therefore,
\begin{align*}
\varlimsup_{n\to\infty} \epsilon_n^{-1} \P(Y_n\in F)
&=
\varlimsup_{n\to\infty} \epsilon_n^{-1} \{ \P(Y_n\in F,d(X_n,Y_n)<\delta) + \P(Y_n\in F,d(X_n,Y_n)\geq \delta) \}\\
&\leq
\varlimsup_{n\to\infty} \epsilon_n^{-1} \{ \P(X_n\in F_\delta) + \P(Y_n\in F,d(X_n,Y_n)\geq \delta) \}\\
&= 
\varlimsup_{n\to\infty} \epsilon_n^{-1} \P(X_n\in F_\delta)
\leq 
\nu(F_\delta)
\end{align*}
as long as $\delta$ is small enough so that $F_\delta$ is bounded away from $\mathbb C$. 
Note that $\{F_\delta\}$ is a decreasing sequence of sets, $F = \bigcap_{\delta>0} F_\delta$ (since $F$ is closed), 
and $\nu \in \mathbb M (\mathbb S\setminus\mathbb C)$ (and hence $\nu$ is a finite measure on $\mathbb S\setminus\mathbb C^r$ for some $r > 0$ such that $F_\delta \subseteq \mathbb S\setminus\mathbb C^r$ for some $\delta > 0$). 
Due to the continuity (from above) of finite measures, $\lim_{\delta\to0} \nu(F_\delta) = \nu(F)$. 
Therefore, we arrive at the upper bound
$
\varlimsup_{n\to\infty} \epsilon_n^{-1} \P(Y_n\in F) \leq \nu(F)
$
by taking $\delta\to0$. 
\end{proof}

\section{Proofs of Section~\ref{SECTION: TAIL ESTIMATES FOR THE AREA UNDER FIRST RETURN TIME/REGENERATION CYCYLE}}\label{section: proofs of section 3.1}
This section provides the proof of Theorem~\ref{thm: tail estimates for the area under first return time/regeneration cycle}. 
Before turning to technical details, we briefly describe our strategy for proving the tail asymptotics of $\mathfrak B$. A similar idea is behind the proof for $\mathfrak R$. 
Let
\begin{equation}\label{eq: T(u) and K_beta^gamma(u)}
\notationdef{nota-T}{T(u)} = \inf \{ n\geq0 \colon |X_n| > u \}\qquad\text{and}\qquad
\notationdef{nota-K-beta-gamma}{K_\beta^\gamma(u)} = \inf \{ n>T(u^\beta)\colon |X_n| \leq u^\gamma \}
\end{equation}
where $0<\notationdef{nota-gamma}{\gamma} < \notationdef{nota-beta}{\beta} < 1$.
We can then write  
\begin{equation}\label{eq: decomposition of area under first return time}
\mathfrak B =  \sum_{n=0}^{T(u^\beta)-1} X_n + \sum_{n=T(u^\beta)}^{K_\beta^\gamma(u)-1} X_n + \sum_{n=K_\beta^\gamma(u)}^{\tau_d-1} X_n.
\end{equation}
We will choose $\beta$ close enough to $1$ and $\gamma$ far enough from $1$ so that $\beta + \gamma > 1$ and we can find $\rho \in (\gamma, \beta)$ such that $\beta  - \gamma + \rho> 1$.
The proof of Theorem~\ref{thm: tail estimates for the area under first return time/regeneration cycle}~(1) is based on the following steps. 
\begin{itemize}
\item On the event $\{T(u^\beta)<\tau_d\}$, the first and the last term on the right hand side (r.h.s.) of \eqref{eq: decomposition of area under first return time} are negligible in contributing to the tail asymptotics. 
Proposition~\ref{prop: neglibigle parts in the area under first return time} below proves such properties. 
Lemma~\ref{lem: bound on moments of first return time} is useful in showing Proposition~\ref{prop: neglibigle parts in the area under first return time}. 
\item In view of the previous bullet, the second term on the r.h.s.\ of \eqref{eq: decomposition of area under first return time} plays the key role in $\P (\mathfrak B>u)$. 
Our analysis relies on the fact that the Markov chain $X_n$ resembles a multiplicative random walk in the corresponding regime. 
Proposition~\ref{prop: determined part in the area under first return time} below proves such asymptotics. 
Lemmas~\ref{lem: almost sure convergence of G+ and G-}, \ref{lem: integrability of Z bar} are helpful for proving Proposition~\ref{prop: determined part in the area under first return time}. 
%\CR{Check usage of Lemma~\ref{lem: bounding sign-switching event}}
\end{itemize}
Similarly, the proof of Theorem~\ref{thm: tail estimates for the area under first return time/regeneration cycle}~(2) hinges on Propositions~\ref{prop: negligible parts in the area under regeneration cycle} and \ref{prop: determined part in the area under regeneration cycle}, which play the similar roles as Proposition~\ref{prop: neglibigle parts in the area under first return time} and \ref{prop: determined part in the area under first return time}, respectively. 
%As mentioned above, the following propositions are useful in proving Theorem~\ref{thm: tail estimates for the area under first return time/regeneration cycle}. 

\begin{proposition}
\label{prop: neglibigle parts in the area under first return time}
\linksinthm{prop: neglibigle parts in the area under first return time}
Suppose that Assumptions~\ref{ass: regularity condition AR(1) process} and \ref{ass: minorization AR(1) process} hold.
There exist a $\beta<1$ and $0<\gamma<\beta$ such that 
\[
\P \left( \left| \sum_{n=0}^{T(u^\beta)-1} X_n \right| > u, T(u^\beta) < \tau_d \right)
\quad \text{and}\quad 
\P \left( \left| \sum_{n=K_\beta^\gamma(u)}^{\tau_d-1} X_n \right| > u, T(u^\beta) < \tau_d \right)
\]
are of order $o( u^{-\alpha} )$ as $u\to\infty$.
\end{proposition}

\begin{proposition}
\label{prop: determined part in the area under first return time}
\linksinthm{prop: determined part in the area under first return time}
Suppose that Assumptions~\ref{ass: regularity condition AR(1) process} and \ref{ass: minorization AR(1) process} hold.
%If Assumption~\ref{ass: regularity condition AR(1) process} holds, 
 There exist $0<\gamma<\beta<1$ (identical to those in Proposition \ref{prop: neglibigle parts in the area under first return time}) such that
\begin{align*}
&\lim_{u\to\infty} u^\alpha \P \left( \sum_{n=T(u^\beta)}^{K_\beta^\gamma(u)-1} X_n>u, T(u^\beta) < \tau_d \right) 
= 
C_\infty \E^\alpha [(Z^+)^\alpha \1_{\{\tau_d=\infty\}}]\\
\mbox{and}\quad
&\lim_{u\to\infty} u^\alpha \P \left( \sum_{n=T(u^\beta)}^{K_\beta^\gamma(u)-1} X_n < -u, T(u^\beta) < \tau_d \right) 
= 
C_\infty \E^\alpha [(Z^-)^\alpha \1_{\{\tau_d=\infty\}}].
\end{align*}
\end{proposition}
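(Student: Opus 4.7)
The plan is to combine Collamore's change of measure (Result~\ref{res: connection between original and dual chang of measures}) with the multiplicative decomposition $X_n = e^{S_n} Z_n$ and the tail estimate~\eqref{eq: tail asymptotics on the stationary distribution of AR(1) process}. Writing $\Sigma_u := \sum_{n=T(u^\beta)}^{K_\beta^\gamma(u)-1} X_n$ and applying Result~\ref{res: connection between original and dual chang of measures} with $\tau = \tau_d$ and $g$ equal to the indicator of the event, the second term in Result~\ref{res: connection between original and dual chang of measures} vanishes (the indicator is zero on $\{T(u^\beta)\geq \tau_d\}$), leaving
\[
\P\bigl(\Sigma_u > u,\ T(u^\beta)<\tau_d\bigr) = \E^{\mathscr D}\bigl[\1_{\{\Sigma_u > u,\ T(u^\beta)<\tau_d\}}\, e^{-\alpha S_{T(u^\beta)}}\bigr].
\]
Under $\mathscr D$ the first $T(u^\beta)$ coordinates obey the $\alpha$-shifted dynamics, so the law of $(X_0,\ldots,X_{T(u^\beta)})$ coincides with its law under $\P^\alpha$ (hence $T(u^\beta)<\infty$ a.s.); the strong Markov property at $T(u^\beta)$ makes the post-$T(u^\beta)$ increments $\tilde S_k := S_{T(u^\beta)+k} - S_{T(u^\beta)}$ and $\tilde B_j := B_{T(u^\beta)+j}$ an independent copy of the original negative-drift driver.

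Iterating the affine recursion from time $T(u^\beta)$ yields
\[
\Sigma_u = X_{T(u^\beta)}\,\tilde G_N + R_N,\qquad \tilde G_N := \sum_{k=0}^{N-1} e^{\tilde S_k},\quad R_N := \sum_{j=1}^{N-1} \tilde B_j \sum_{k=j}^{N-1} e^{\tilde S_k-\tilde S_j},
\]
with $N := K_\beta^\gamma(u) - T(u^\beta)$. Since $\E|B_1|^{\alpha+\epsilon}<\infty$ and $\tilde S_\cdot$ has negative drift, $R_N$ is stochastically bounded uniformly in $u$ and so is negligible relative to the main term (which is of order $|X_{T(u^\beta)}|\geq u^\beta$). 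Because a negative-drift walk needs at least order $\log u$ steps to descend from $u^\beta$ to $u^\gamma$, $N\to\infty$ a.s., hence $\tilde G_N\uparrow \tilde G := \sum_{k=0}^\infty e^{\tilde S_k}$ a.s.; here $\tilde G$ is independent of $\mathcal F_{T(u^\beta)}$ and satisfies $\P(\tilde G > v)\sim C_\infty v^{-\alpha}$ by~\eqref{eq: tail asymptotics on the stationary distribution of AR(1) process}. Decomposing $X_{T(u^\beta)} = e^{S_{T(u^\beta)}} Z_{T(u^\beta)}$ and conditioning on $\mathcal F_{T(u^\beta)}$, since $\tilde G>0$,
\[
\P^{\mathscr D}\bigl(\Sigma_u > u \,\big|\, \mathcal F_{T(u^\beta)}\bigr)\,\sim\, C_\infty\, u^{-\alpha}\,(Z_{T(u^\beta)}^+)^\alpha\, e^{\alpha S_{T(u^\beta)}},
\]
and the exponential cancels the $e^{-\alpha S_{T(u^\beta)}}$ coming from the change of measure.

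Multiplying by $u^\alpha$, the problem reduces to showing $\E^{\mathscr D}[(Z_{T(u^\beta)}^+)^\alpha\,\1_{\{T(u^\beta)<\tau_d\}}] \to \E^\alpha[(Z^+)^\alpha\,\1_{\{\tau_d=\infty\}}]$. The agreement of $\mathscr D$ and $\P^\alpha$ on $\mathcal F_{T(u^\beta)}$, together with the fact that $\{T(u^\beta)<\tau_d\}$ is $\mathcal F_{T(u^\beta)}$-measurable, allows us to replace $\E^{\mathscr D}$ by $\E^\alpha$. Under $\P^\alpha$, Lemma~\ref{lem: asymptotics of X_n and Z_n} gives $|X_n|\to\infty$ and $Z_n\to Z$ a.s., so $T(u^\beta)\to\infty$ and $Z_{T(u^\beta)}\to Z$; moreover $\1_{\{T(u^\beta)<\tau_d\}}\to \1_{\{\tau_d=\infty\}}$ a.s., since $\{\tau_d=\infty\}\subseteq\{T(u^\beta)<\tau_d\}$ for every $u$ (as $|X_n|\to\infty$ forces $T(u^\beta)<\infty=\tau_d$), while $T(u^\beta)>\tau_d$ for all large $u$ on $\{\tau_d<\infty\}$. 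Dominated convergence then delivers the claim. The negative-tail statement follows by the identical argument: since $\tilde G>0$ the sign of $\Sigma_u$ is asymptotically that of $Z_{T(u^\beta)}$, producing the $Z^-$ analogue verbatim.

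The main obstacle is justifying the dominated-convergence step above, which requires a $u$-uniform, $\P^\alpha$-integrable envelope for $(Z_{T(u^\beta)}^+)^\alpha$; this is precisely the integrability of $\sup_n |Z_n|$ under $\P^\alpha$ supplied by Lemma~\ref{lem: integrability of Z bar}. A secondary subtlety is ruling out a persistent mismatch between $\mathrm{sgn}(Z_{T(u^\beta)})$ and $\mathrm{sgn}(Z)$ when we take $u\to\infty$, which would obstruct identifying $(Z_{T(u^\beta)}^+)^\alpha$ with $(Z^+)^\alpha$ in the limit; this is the role of Lemma~\ref{lem: bounding sign-switching event}. Finally, Lemma~\ref{lem: almost sure convergence of G+ and G-} is what rigorously upgrades the pointwise approximation $\tilde G_N\to \tilde G$ into the uniform tail-asymptotic statement for $\Sigma_u$, while Proposition~\ref{prop: neglibigle parts in the area under first return time} discards the other two blocks in~\eqref{eq: decomposition of area under first return time} so that the tail of $\mathfrak B$ is indeed captured by $\Sigma_u$.
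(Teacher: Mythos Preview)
Your proposal is correct and takes essentially the same approach as the paper: apply the dual change of measure, factor $X_{T(u^\beta)} = e^{S_{T(u^\beta)}} Z_{T(u^\beta)}$ so that the weight $e^{-\alpha S_{T(u^\beta)}}$ cancels, and pass to the limit via dominated convergence using Lemma~\ref{lem: integrability of Z bar} for the envelope and Lemma~\ref{lem: almost sure convergence of G+ and G-} for the a.s.\ convergence and uniform boundedness of the normalized conditional probability (which the paper packages as $\mathscr G_\pm(u)$). One minor point: Proposition~\ref{prop: neglibigle parts in the area under first return time} is not used in the proof of this proposition itself---it is invoked alongside it in the proof of Theorem~\ref{thm: tail estimates for the area under first return time/regeneration cycle}.
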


% \linkdest{proof of part 1 of thm: tail estimates for the area under first return time/regeneration cycle}
\begin{proof}[Proof of Theorem~\ref{thm: tail estimates for the area under first return time/regeneration cycle} (1)]
\linksinpf{part 1 of thm: tail estimates for the area under first return time/regeneration cycle}
Recalling $T(u^\beta) = \inf \{ n\geq0 \colon |X_n| > u^\beta \}$ for $\beta\in(0,1)$, write
\begin{align}
\P( \pm \mathfrak B>u)
&=
\P( \pm \mathfrak B>u,T(u^\beta)<\tau_d)
\label{eq: theorem 3.1 eq01}
+ 
\P( \pm\mathfrak B>u,T(u^\beta)\geq\tau_d). 
\end{align}
Since $\P (\tau_d > n)$ decays geometrically in $n$ since $|X_0| \leq d$, %\CR{proof} 
and $|X_n| \leq u^\beta$ for $n\leq \tau_d-1$ on $T(u^\beta)\geq\tau_d$, we have that 
\begin{align}
\notag 
\P (\pm\mathfrak B>u, T(u^\beta) \geq \tau_d)
&\leq \P (|\mathfrak B|>u, T(u^\beta) \geq \tau_d)
\leq  
\P \left( \sum_{n=0}^{\tau_d-1} |X_n| >u, T(u^\beta) \geq \tau_d \right) \\
&\leq 
\P (u^\beta \tau_d \geq u)
=
\P (\tau_d \geq u^{1-\beta})
= 
o(u^{-\alpha}).\label{eq: theorem 3.1 eq02}
\end{align}
Using \eqref{eq: theorem 3.1 eq01} and \eqref{eq: theorem 3.1 eq02}, we can focus on analyzing the first term on the r.h.s.\ of \eqref{eq: theorem 3.1 eq01}. 
For $0<\gamma<\beta<1$, recall 
$
K_\beta^\gamma(u) = \inf \{ n> T(u^\beta)\colon |X_n| \leq u^\gamma \}
$. 
Using the decomposition in \eqref{eq: decomposition of area under first return time} and Proposition~\ref{prop: neglibigle parts in the area under first return time}, we obtain that, for $\epsilon\in(0,1)$, 
\begin{align}
\P ( \mathfrak B>u, T(u^\beta) < \tau_d )
\nonumber&\leq 
\P \left( \sum_{n=T(u^\beta)}^{K_\beta^\gamma(u)-1} X_n>(1-\epsilon)u, T(u^\beta) < \tau_d \right)\nonumber
\\
%\nonumber&\hspace{12.5pt}
&\hspace{12.5pt}+
\P \left( \left| \sum_{n=0}^{T(u^\beta)-1} X_n \right| > \frac{\epsilon u}{2}, T(u^\beta) < \tau_d \right)
+
\P \left( \left| \sum_{n=K_\beta^\gamma(u)}^{\tau_d-1} X_n \right| > \frac{\epsilon u}{2}, T(u^\beta) < \tau_d \right)
\nonumber
\\
&=
\P \left( \sum_{n=T(u^\beta)}^{K_\beta^\gamma(u)-1} X_n>(1-\epsilon)u, T(u^\beta) < \tau_d \right) + o(u^{-\alpha}),
\label{eq: theorem 3.1 eq03}
\end{align}
and
\begin{align}
\nonumber 
\P ( \mathfrak B>u, T(u^\beta) < \tau_d )
\nonumber&\geq 
\P \left( \sum_{n=T(u^\beta)}^{K_\beta^\gamma(u)-1} X_n>(1+\epsilon)u, T(u^\beta) < \tau_d \right)\\
&\hspace{12.5pt}-\P \left( \left| \sum_{n=0}^{T(u^\beta)-1} X_n \right| > \frac{\epsilon u}{2}, T(u^\beta) < \tau_d \right)
-\P \left( \left| \sum_{n=K_\beta^\gamma(u)}^{\tau_d-1} X_n \right| > \frac{\epsilon u}{2}, T(u^\beta) < \tau_d \right)\nonumber
\\
&=\P \left( \sum_{n=T(u^\beta)}^{K_\beta^\gamma(u)-1} X_n>(1+\epsilon)u, T(u^\beta) < \tau_d \right) + o(u^{-\alpha})
.\label{eq: theorem 3.1 eq04}
\end{align}
From \eqref{eq: theorem 3.1 eq03}, \eqref{eq: theorem 3.1 eq04}, and Proposition~\ref{prop: determined part in the area under first return time}, 
\begin{align*}
u^{\alpha} \P ( \mathfrak B>u, T(u^\beta) < \tau_d) 
&\geq 
(1+\epsilon)^{-\alpha} C_\infty \E^\alpha [(Z^+)^\alpha \1_{\{\tau_d=\infty\}}] + o(1);
\\
u^{\alpha} \P ( \mathfrak B>u, T(u^\beta) < \tau_d)
&\leq 
(1-\epsilon)^{-\alpha} C_\infty \E^\alpha [(Z^+)^\alpha \1_{\{\tau_d=\infty\}}]  + o(1).
\end{align*}
Since $\epsilon$ is arbitrary, 
$$
u^{\alpha} \P ( \mathfrak B>u, T(u^\beta) < \tau_d) = C_\infty \E^\alpha [(Z^+)^\alpha \1_{\{\tau_d=\infty\}}] + o(1). 
$$
Along with \eqref{eq: theorem 3.1 eq01}, \eqref{eq: theorem 3.1 eq02}, this proves the first limit of Theorem~\ref{thm: one-sided large deviations} (1): 
$$u^{\alpha} \P ( \mathfrak B>u) = C_\infty \E^\alpha [(Z^+)^\alpha \1_{\{\tau_d=\infty\}}] + o(1).$$
We can use similar estimates to ``sandwich'' the quantity $\P(\mathfrak B<-u)$ and establish the second limit of Theorem~\ref{thm: one-sided large deviations} (1). 
%Thus, using Propositions~\ref{prop: neglibigle parts in the area under first return time} and \ref{prop: determined part in the area under first return time} above, we establish Theorem~\ref{thm: tail estimates for the area under first return time/regeneration cycle}~(1).
\end{proof}

Now we move on to proving Theorem~\ref{thm: tail estimates for the area under first return time/regeneration cycle} (2). We first need the following propositions.

\begin{proposition}\label{prop: negligible parts in the area under regeneration cycle}
\linksinthm{prop: negligible parts in the area under regeneration cycle}
Let Assumptions~\ref{ass: regularity condition AR(1) process} and \ref{ass: minorization AR(1) process} hold. 
There exist $0<\gamma<\beta<1$ such that 
\[
\P \left( \left| \sum_{n=0}^{T(u^\beta)-1} X_n \right| > u, T(u^\beta) < r_1 \right)
\qquad \text{and}\qquad
\P \left( \left| \sum_{n=K_\beta^\gamma(u)}^{r_1-1} X_n \right| > u, T(u^\beta) < r_1 \right)
\]
are of order $o( u^{-\alpha} )$ as $u\to\infty$.
\end{proposition}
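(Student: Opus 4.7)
This proposition mirrors Proposition~\ref{prop: neglibigle parts in the area under first return time}, with the first return time $\tau_d$ in the conditioning event replaced by the regeneration time $r_1$. My plan is to reuse the structure of that earlier proof, the key new ingredient being the exponential moment bound on $r_1$ from Lemma~\ref{lem: exponential decay of regeneration time}, which plays the role that the geometric tail of $\tau_d$ played there. Since $\tau_d < r_1$ almost surely (a regeneration event requires a prior visit to $\mathcal C_0=[-d,d]$), the new work concerns controlling the excess time $r_1-\tau_d$.

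For the first probability, I would note that on $\{T(u^\beta)<r_1\}$ the inequality $|X_n|\le u^\beta$ holds for every $n<T(u^\beta)$ by definition of $T(u^\beta)$, hence
\[
\left|\sum_{n=0}^{T(u^\beta)-1} X_n\right| \le u^\beta T(u^\beta) \le u^\beta r_1,
\]
so the event whose probability we want to bound is contained in $\{r_1>u^{1-\beta}\}$. Lemma~\ref{lem: exponential decay of regeneration time} (applied with $E_1$ a bounded set containing the initial state) yields $\E[t^{r_1}]<\infty$ for some $t>1$, so by Markov's inequality $\P(r_1>u^{1-\beta}) \le t^{-u^{1-\beta}} \E[t^{r_1}]$, which is super-polynomially small in $u$ and in particular $o(u^{-\alpha})$ for every $\beta\in(0,1)$.

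For the second probability, my plan is to apply the strong Markov property at time $K_\beta^\gamma(u)$, at which $|X_{K_\beta^\gamma(u)}|\le u^\gamma$. Iterating the affine recursion from this state yields the crude bound $\sup_{K_\beta^\gamma(u)\le n<r_1}|X_n| \le u^\gamma\prod_{k} A_k + \sum_{j} |B_j|\prod_{k>j} A_k$, with sums and products extending over indices in the remaining cycle, whose length is at most $r_1$. Splitting on the event $\{r_1 - K_\beta^\gamma(u) \le C\log u\}$ versus its complement, I would handle the short-cycle part by a Chebyshev bound at exponent $\alpha+\epsilon$ (leveraging the $(\alpha+\epsilon)$-moment of $|B_1|$ from Assumption~\ref{ass: regularity condition AR(1) process}) and absorb the long-cycle part directly into the super-polynomial tail of $r_1$. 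Choosing $\gamma$ sufficiently small makes the prefactor $u^\gamma$ harmless against $u$, and $\beta$ close to $1$ can be tuned jointly with the middle-term estimate so that the total bound is $o(u^{-\alpha})$.

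The main obstacle will be the second estimate: along atypical sample paths the product $\prod A_k$ can grow substantially between $K_\beta^\gamma(u)$ and the next regeneration, so one must control both the state size and the cycle length simultaneously. The exponential tail on $r_1$ from Lemma~\ref{lem: exponential decay of regeneration time} is essential here; a merely polynomial tail would not beat the $u^{-\alpha}$ benchmark. If the elementary splitting above turns out not to be sharp enough, the $\alpha$-shifted change of measure of Section~\ref{SECTION: PRELIMINARIES} (together with Result~\ref{res: connection between original and dual chang of measures}) provides a more refined tool, under which $S_n=\sum_{k=1}^n\log A_k$ acquires positive drift and the analysis reduces to a classical random-walk large-deviation estimate combined with the moment bound on the discrepancy process $Z_n$ afforded by Lemma~\ref{lem: asymptotics of X_n and Z_n}.
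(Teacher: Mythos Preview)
Your treatment of the first probability is correct and matches the paper: bound the partial sum by $u^\beta T(u^\beta)\le u^\beta r_1$ and use the exponential moment of $r_1$ from Lemma~\ref{lem: exponential decay of regeneration time}.

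For the second probability, however, your elementary splitting has a real gap, and the paper proceeds differently. The paper does not split on the length $r_1-K_\beta^\gamma(u)$; instead it literally reruns the proof of Proposition~\ref{prop: neglibigle parts in the area under first return time} with $r_1$ in place of $\tau_d$. That proof introduces a third level $u^\rho$ and the event $\mathfrak E_1(u)=\{\exists\,n\in[K_\beta^\gamma(u),r_1]:|X_n|\ge u^\rho\}$; on $\mathfrak E_1(u)^c$ one bounds by $\P(r_1>u^{1-\rho})$, while on $\mathfrak E_1(u)$ the dual change of measure over $[0,T(u^\beta)]$ and the strong Markov property \emph{factorize} the probability into $\P(T(u^\beta)<r_1)\cdot \P(\exists\,n\le r':Y_n'\ge u^\rho\mid Y_0'=u^\gamma)$. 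Two new ingredients are then needed and are supplied as separate lemmas: $\P(T(u^\beta)<r_1)\sim c\,u^{-\alpha\beta}$ (Lemma~\ref{lem: asymptotics of P(T(u^beta))<r_1}) and the moment bound $\E[r^{\alpha+L}\mid Y_0=x]=O(x)$ for the regeneration time of the dominating chain (Lemma~\ref{lem: bound on moments of regeneration time}). With these in hand the second factor is $O(u^{-\alpha(\rho-\gamma)})$, and choosing $\beta+\rho-\gamma>1$ finishes the proof.

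Your scheme misses the first of these ingredients. If you apply strong Markov at $K_\beta^\gamma(u)$ and simply drop the event $\{T(u^\beta)<r_1\}$, the best you can get from a Markov bound at exponent $\alpha$ on the post-$K_\beta^\gamma(u)$ sum is $O(u^{-\alpha(1-\gamma)})$, which is never $o(u^{-\alpha})$ for $\gamma>0$. Pushing to exponent $\alpha+\epsilon$ does not save you either: since $\E A_1^{\alpha+\epsilon}>1$ by strict convexity, the $(\alpha+\epsilon)$-moment of $X_n$ picks up a factor $(\E A_1^{\alpha+\epsilon})^{C\log u}=u^{C\log\E A_1^{\alpha+\epsilon}}$ over your ``short cycle'', and balancing this against the long-cycle requirement $C\log t>\alpha$ forces $\log t>\alpha\,\E[A_1^\alpha\log A_1]$, which Lemma~\ref{lem: exponential decay of regeneration time} does not guarantee. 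The crucial step you are missing is to \emph{extract} the factor $\P(T(u^\beta)<r_1)\asymp u^{-\alpha\beta}$ before estimating what happens after $K_\beta^\gamma(u)$; once that factor is in hand, a much weaker bound on the remainder suffices. Your fallback paragraph gestures at the change of measure, but the specific lemmas you need are~\ref{lem: bound on moments of regeneration time} and~\ref{lem: asymptotics of P(T(u^beta))<r_1}, not Lemma~\ref{lem: asymptotics of X_n and Z_n}.
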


\begin{proposition}
\label{prop: determined part in the area under regeneration cycle}
\linksinthm{prop: determined part in the area under regeneration cycle}
Let Assumptions~\ref{ass: regularity condition AR(1) process} and \ref{ass: minorization AR(1) process} hold. 
There exist $0<\gamma<\beta<1$ (the same as in Proposition \ref{prop: negligible parts in the area under regeneration cycle}) such that 
\begin{align*}
&\lim_{u\to\infty} u^\alpha \P \left( \sum_{n=T(u^\beta)}^{K_\beta^\gamma(u)-1} X_n>u, T(u^\beta) < r_1 \right) 
= 
C_+\\
\quad\mbox{and}\quad
&\lim_{u\to\infty} u^\alpha \P \left( \sum_{n=T(u^\beta)}^{K_\beta^\gamma(u)-1} X_n < -u, T(u^\beta) < r_1 \right) 
= 
C_-,
\end{align*}
where 
$
C_+ = C_\infty \E^\alpha [(Z^+)^\alpha \1_{\{r_1=\infty\}}]
$
and
$
C_+ = C_\infty \E^\alpha [(Z^-)^\alpha \1_{\{r_1=\infty\}}]
$.
\end{proposition}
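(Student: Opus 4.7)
The plan is to mirror the proof of Proposition~\ref{prop: determined part in the area under first return time}, substituting the regeneration time $r_1$ for the first-return time $\tau_d$ throughout; the same choice of $0<\gamma<\beta<1$ should carry over. The central device is again the $\alpha$-shifted change of measure in Result~\ref{res: connection between original and dual chang of measures}, but I have to work on the augmented splitting chain $(X_n,\eta_n)$ of Remark~\ref{rmk: regeneration scheme} so that $r_1$ is a genuine stopping time with respect to the relevant filtration.

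Applying Result~\ref{res: connection between original and dual chang of measures} with $\tau=r_1$ and $g=\1_{\{\sum_{n=T(u^\beta)}^{K_\beta^\gamma(u)-1} X_n>u\}}$, the second term of the duality identity vanishes on $\{T(u^\beta)\geq r_1\}$ since our event already enforces $\{T(u^\beta)<r_1\}$, leaving
$$\P\!\left(\sum_{n=T(u^\beta)}^{K_\beta^\gamma(u)-1} X_n>u,\; T(u^\beta)<r_1\right)
= \E^{\mathscr D}\!\left[\1_{\{\sum X_n>u,\,T(u^\beta)<r_1\}}\, e^{-\alpha S_{T(u^\beta)}}\right].$$
Using $X_n=e^{S_n}Z_n$ with $Z_n=X_0+\sum_{k=1}^n B_k e^{-S_k}$, the weight factors as $e^{-\alpha S_{T(u^\beta)}}=|Z_{T(u^\beta)}/X_{T(u^\beta)}|^\alpha$, pulling out the scale $X_{T(u^\beta)}$, whose magnitude exceeds $u^\beta$ by a Kesten-type overshoot.

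Pathwise, on $\{T(u^\beta)<r_1\}$ one has
$$\frac{1}{X_{T(u^\beta)}}\sum_{n=T(u^\beta)}^{K_\beta^\gamma(u)-1} X_n
= \sum_{k=0}^{K_\beta^\gamma(u)-T(u^\beta)-1} \prod_{i=1}^k A_{T(u^\beta)+i} \;+\; O\!\bigl(X_{T(u^\beta)}^{-1}\bigr),$$
and since the post-$T(u^\beta)$ increments are governed under $\mathscr D$ by the original (negative log-drift) law, this ratio converges to $\sum_{k=0}^\infty e^{\widetilde S_k}$, whose tail obeys \eqref{eq: tail asymptotics on the stationary distribution of AR(1) process}. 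Conditioning on the value $X_{T(u^\beta)}=x>u^\beta$ and using $\P(\sum e^{\widetilde S_k}>u/x)\sim C_\infty(x/u)^\alpha$, together with the fact that $|X_n|\toas\infty$ under $\P^\alpha$ (Lemma~\ref{lem: asymptotics of X_n and Z_n}), which forces $T(u^\beta)\to\infty$ and hence $\1_{\{T(u^\beta)<r_1\}}\to\1_{\{r_1=\infty\}}$ $\P^\alpha$-a.s., a dominated-convergence argument yields
$$u^\alpha \P\!\left(\sum_{n=T(u^\beta)}^{K_\beta^\gamma(u)-1} X_n>u,\; T(u^\beta)<r_1\right)
\to C_\infty\,\E^\alpha\!\left[(Z^+)^\alpha \1_{\{r_1=\infty\}}\right] = C_+,$$
and the lower-tail statement follows by symmetry with $(Z^-)^\alpha$ and $C_-$.

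The main obstacle, exactly as in the first-return-time case, is the uniform integrability needed to convert the pointwise a.s.\ limit of the rescaled cycle sum into convergence in $\P^\alpha$-expectation. The additional work relative to Proposition~\ref{prop: determined part in the area under first return time} is twofold: first, the stopping time $r_1$ lives on the augmented chain, so I would condition on the Bernoulli sequence $\{\eta_n\}$ and use the geometric tail of $r_1$ supplied by Lemma~\ref{lem: exponential decay of regeneration time} to control the event $\{T(u^\beta)<r_1\}$; second, the dominating random variable $|Z|^\alpha$ must be shown to be $\P^\alpha$-integrable, which, as noted in the paper, follows as a by-product of Lemma~\ref{lem: almost sure convergence of G+ and G-}. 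Once these two points are in place, the remainder is a near-verbatim adaptation of the argument for $\tau_d$, with the only substantive change being that the stricter stopping condition $\{r_1=\infty\}$ replaces $\{\tau_d=\infty\}$ in the limiting constant.
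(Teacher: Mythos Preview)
Your proposal is correct and follows essentially the same route as the paper's own proof, which is extremely terse: it simply cites Lemma~\ref{lem: almost sure convergence of G+ and G-}, Lemma~\ref{lem: integrability of Z bar}, dominated convergence, and $T(u^\beta)\to\infty$, and declares the argument identical to that of Proposition~\ref{prop: determined part in the area under first return time} with $r_1$ in place of $\tau_d$.

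Two small remarks on the obstacles you flag. First, the augmented-chain issue is real but dissolves cleanly: since the coins $\{\eta_n\}$ are independent of $\{X_n\}$ (Remark~\ref{rmk: regeneration scheme}), the $\alpha$-shifted measure acts only on the $(A_n,B_n)$ marginals and leaves the $\eta_n$ untouched, so Result~\ref{res: connection between original and dual chang of measures} applies verbatim on the enlarged filtration with $\tau=r_1$; no explicit conditioning on the Bernoulli sequence is needed. Second, the dominating random variable is not quite $|Z|^\alpha$ but rather the $r_1$-version of $\bar Z$ from Lemma~\ref{lem: integrability of Z bar}, namely $|X_0|+\sum_{n\geq1}|B_n|e^{-S_n}\1_{\{n<r_1\}}$; its $\alpha$-th moment under $\P^\alpha$ is finite by the same H\"older/geometric-decay argument, now invoking Lemma~\ref{lem: exponential decay of regeneration time} for the tail of $r_1$ in place of the tail of $\tau_d$.
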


% \linkdest{proof of part 2 of thm: tail estimates for the area under first return time/regeneration cycle}
\begin{proof}[Proof of Theorem~\ref{thm: tail estimates for the area under first return time/regeneration cycle} (2)]
\linksinpf{part 2 of thm: tail estimates for the area under first return time/regeneration cycle}
Using similar arguments as in \eqref{eq: theorem 3.1 eq01} and \eqref{eq: theorem 3.1 eq02}, we can focus on 
$
\P( \pm \mathfrak R > u, T(u^\beta) < r_1 )
$. 
Combining the similar ``sandwich'' technique as in \eqref{eq: theorem 3.1 eq03} and \eqref{eq: theorem 3.1 eq04} with Proposition~\ref{prop: negligible parts in the area under regeneration cycle}, it remains to analyze 
\[
u^\alpha \P \left( \sum_{n=T(u^\beta)}^{K_\beta^\gamma(u)-1} X_n>u, T(u^\beta) < r_1 \right). 
\]
Using Proposition~\ref{prop: determined part in the area under regeneration cycle}, we conclude the proof. 
\end{proof}

Next we prove Proposition~\ref{prop: neglibigle parts in the area under first return time}. 
For this, we need the following lemma. 
Let $\{Y_n\}_{n\geq0}$ be the $\mathbb R_+$-valued Markov chain defined by $\notationdef{nota-Y-n}{Y_{n+1}} = A_{n+1}Y_n + |B_{n+1}|$, for $n\geq0$, and $\notationdef{nota-tau}{\tau} = \inf\{ n\geq1 \colon Y_n \leq d\}$.

\begin{lemma}
\label{lem: bound on moments of first return time}
\linksinthm{lem: bound on moments of first return time} 
Suppose that Assumptions~\ref{ass: regularity condition AR(1) process} and \ref{ass: minorization AR(1) process} hold.
%Suppose that Assumption~\ref{ass: regularity condition AR(1) process} holds. 
Let $L>0$ be given, and let $\epsilon>0$ be such that $\lfloor \alpha-\epsilon \rfloor \geq 1$. 
Then there exists a positive constant $c$ such that, for sufficiently large $x$, 
\[
\E [ \tau^{\alpha+L} \,|\, Y_0=x ] \leq c x^{\lfloor \alpha-\epsilon \rfloor}.
\]
In particular $\E [ \tau^{\alpha+L} \,|\, Y_0=x ] = \mathcal O(x)$. 
\end{lemma}

\linkdest{proof of prop: neglibigle parts in the area under first return time}
\begin{proof}[Proof of Proposition~\ref{prop: neglibigle parts in the area under first return time}]
\linksinpf{prop: neglibigle parts in the area under first return time}
To begin with, note that 
\begin{align*}
\P \left( \left| \sum_{n=0}^{T(u^\beta)-1} X_n \right| > u, T(u^\beta) < \tau_d \right)
&\leq 
\P \left( \sum_{n=0}^{T(u^\beta)-1} |X_n|>u, T(u^\beta) < \tau_d \right)\\
&\leq 
\P ( u^\beta \tau_d >u ) 
= 
\P ( \tau_d >u^{1-\beta} ),
\end{align*}
which decays exponentially. 
It remains to show the second claim. 
Let \notationdef{nota-rho}{$\rho$} be a number such that \linkdest{cond-rho-btween-gamma-and-beta}$\rho\in (\gamma, \beta)$ and \linkdest{cond-beta-gamma-rho-greater-than-one}$\beta - \gamma + \rho > 1$, 
% \CR{when introducing $\beta$, we need to make it clear that we choose $\beta$ close enough to 1 so that this is possible} 
and define 
\[
\notationdef{nota-mathfrak-E-1}{\mathfrak E_1(u)} = \{ \exists\, n \text{ such that } K_\beta^\gamma(u)\leq n \leq \tau_d \text{ and } |X_n| \geq u^\rho \}.
\]
Note that 
\begin{align*}
\P \left( \left| \sum_{n=K_\beta^\gamma(u)}^{\tau_d-1} X_n \right| >u, T(u^\beta) < \tau_d \right)
&\leq 
\P \left( \sum_{n=K_\beta^\gamma(u)}^{\tau_d-1} |X_n| >u, T(u^\beta) < \tau_d, \mathfrak E_1(u) \right)\\ 
&\hspace{12.5pt}+ 
\P \left( \sum_{n=K_\beta^\gamma(u)}^{\tau_d-1} |X_n | >u, T(u^\beta) < \tau_d, (\mathfrak E_1(u))^c \right), 
\end{align*}
where the second term in the last equation is bounded by $\P ( \tau_d > u^{1-\rho} )$, and hence is of order $o(u^{-\alpha})$. 
It remains to analyze the first term, which is bounded by $\P ( T(u^\beta) < \tau_d, \mathfrak E_1(u) )$. 
Our goal here is to show that 
\begin{equation}\label{eq: proposition 4.1 eq01}
\P ( T(u^\beta) < \tau_d, \mathfrak E_1(u) ) = o(u^{-\alpha}),\quad\mbox{as}\ u\to\infty.
\end{equation}
To begin with, note that, 
%under the dual change of measure $\mathscr D$ we have $K_\beta^\gamma(u)<\infty$ almost surely. 
on $\mathfrak E_1(u)$, where $K_\beta^\gamma(u)<\infty$ almost surely, we can define 
%Moreover, 
%using the fact that the AR(1) process is monotone in its initial state, we obtain that 
$\{Y_n'\}_{n\geq0}$ 
%where $\{Y_n'\}_{n\geq0}$ is the AR(1) process that is defined by 
as follows
%\begin{equation}\label{eq: proposition 5.1 eq03}
\[
\notationdef{nota-Y-n-prime}{Y_0'} = u^\gamma,\qquad Y_{n+1}' = A_{K_\beta^\gamma(u)+n+1} Y_n' + |B_{K_\beta^\gamma(u)+n+1}|,\qquad\mbox{for}\ n\geq0.
\]
so that 
$
|X_{K_\beta^\gamma(u)+n}| \leq Y_n'
$, 
for all $
%\qquad\mbox{for all}\ 
n\geq0.
$
%\end{equation}
Let $\notationdef{nota-tau-prime}{\tau'} \triangleq \inf\{ n\geq1 \colon Y_n' \leq d\}$. 
Since $\1_{\{T(u^\beta)<\tau_d\}} \in m\mathcal F_{T(u^\beta)}$, and $Y_n'$ is well-defined on $\{T(u^\beta)<\tau_d\}$ (since $K_\beta^\gamma(u) < \infty$ $\P$-almost surely there), %\CR{can be written better}
we have that 
\begin{align}
\P ( T(u^\beta) < \tau_d, \mathfrak E_1(u) ) 
&\leq
\P ( T(u^\beta) < \tau_d, \exists n \leq \tau' \text{ such that } Y_n' \geq u^\rho ) \nonumber\\
&= 
\P ( T(u^\beta) < \tau_d) \P(\exists\, n\leq \tau' \text{ such that } Y_n' \geq u^{\rho}\,\big|\,T(u^\beta) < \tau_d), \label{eq: qbnvuerwpio}
\end{align}
where $\P ( T(u^\beta) < \tau_d) = \mathcal O (u^{-\alpha\beta})$ (cf.\ Corollary~4.2 of \cite{collamore2016}). 
Since we have chosen $\beta$, $\gamma$, and $\rho$ in such a way that $\beta - \gamma + \rho > 1$, it remains to show that the second term on the r.h.s.\ is $\mathcal O(u^{-\alpha(\rho-\gamma)})$.
Recall the definition of $Y_n$ and $\tau$, and note that from the strong Markov property,
\begin{equation*}
\P(\exists\, n\leq \tau' \text{ such that } Y_n' \geq u^\rho\, \big|\,T(u^\beta) < \tau_d) 
= \P(\exists\, n\leq \tau \text{ such that } Y_n \geq u^\rho \,|\, Y_0=u^\gamma )
\end{equation*}
as $u\to \infty$. 
%Once again we adopt the idea of dual change of measure. 
%To be precise, setting
Recall Remark~\ref{a-remark-on-dual-change-of-measure} and consider
$
T = \inf \{ n\geq1 \colon Y_n \geq u^\rho \}
$.
%we apply the $\alpha$-shifted change of measure over the time interval $[1, T]$. 
%By doing this we obtain that \todo{first equality incorrect; change of measure on $[T,\tau]$ is missing}
We obtain
\begin{align*}
&\P(\exists\, n\leq \tau \text{ such that } Y_n \geq u^\rho \,|\, Y_0=u^\gamma )
\\
&=\P( T < \tau \,|\, Y_0 = u^\gamma ) 
=
\E^\alpha \left[ e^{-\alpha S_{T}} \1_{\{T<\tau\}} \,\middle|\, Y_0=u^\gamma \right]\\
&= 
u^{-\alpha(\rho-\gamma)} 
\E^\alpha \left[ 
\left( \frac{Y_{T}}{u^\rho} \right)^{-\alpha} 
\left( \frac{Y_{T}}{e^{S_{T}} u^\gamma} \right)^\alpha 
\1_{\{T<\tau\}} \,\middle|\, Y_0=u^\gamma \right]\\ 
&\leq 
u^{-\alpha(\rho-\gamma)}
\E^\alpha \left[ 
\left( \frac{Y_{T}}{e^{S_{T}} u^\gamma} \right)^\alpha 
\1_{\{T<\tau\}} \,\middle|\, Y_0=u^\gamma \right].
\end{align*}
Now it is sufficient to show that 
\begin{equation}\label{eq: proposition 4.1 eq02}
\varlimsup_{u\to\infty} \E^\alpha \left[ 
\left( \frac{Y_{T}}{e^{S_{T}} u^\gamma} \right)^\alpha 
\1_{\{T<\tau\}} \,\middle|\, Y_0=u^\gamma \right] < \infty,
\end{equation}
to prove that the r.h.s.\ of \eqref{eq: qbnvuerwpio} is $\mathcal O (u^{-\alpha(\rho - \gamma)})$, and hence,
$\P ( T(u^\beta) < \tau_d, \mathfrak E_1(u) ) = o(u^{-\alpha})$. %\CR{$\mathfrak E_1(u)$?} \BZ{yes. changed it}
To show \eqref{eq: proposition 4.1 eq02}, note that 
\begin{align*}
\frac{Y_{T}}{e^{S_{T}} u^\gamma} 
&= 
e^{-S_{T}} u^{-\gamma} \left(  e^{S_{T}} u^\gamma + e^{S_{T}} \sum_{k=1}^{T} |B_k| e^{-S_k} \right)
= 
1 + u^{-\gamma} \sum_{k=1}^{T} |B_k| e^{-S_k}.
\end{align*}
Thus, we have that 
\begin{align*}
\frac{Y_{T}}{e^{S_{T}} u^\gamma} 
\1_{\{T<\tau\}} 
&\leq
1 + u^{-\gamma} \sum_{k=1}^{T} |B_k| e^{-S_k} \1_{\{T<\tau\}}
\leq
1 + u^{-\gamma} \sum_{k=1}^\infty |B_k| e^{-S_k} \1_{\{k<\tau\}},
\end{align*}
and hence, 
\begin{align}
&\E^\alpha \left[ \left( \frac{Y_{T}}{e^{S_{T}} u^\gamma} \1_{\{T<\tau\}} \right)^\alpha \,\middle|\, Y_0=u^\gamma \right]^{1/\alpha}
\notag
\\
&\leq 
\E^\alpha \left[ \left( 1 + u^{-\gamma} \sum_{k=1}^\infty |B_k| e^{-S_k} \1_{\{k<\tau\}} \right)^\alpha \,\middle|\, Y_0=u^\gamma \right]^{1/\alpha}
\notag
\\
&\leq 
1 + \sum_{k=1}^\infty \E^\alpha \left[ u^{-\alpha\gamma} |B_k|^\alpha e^{-\alpha S_k} \1_{\{k<\tau\}} \,\middle|\, Y_0=u^\gamma \right]^{1/\alpha}
\label{eq: proposition 4.1 eq03}\\ 
\notag&= 
1 + u^{-\gamma} \sum_{k=1}^\infty \E^\alpha \left[ e^{-\alpha S_k} |B_k|^\alpha \1_{\{k<\tau\}} \,\middle|\, Y_0=u^\gamma \right]^{1/\alpha}\\
&= 
1 + u^{-\gamma} \sum_{k=1}^\infty \E \left[ |B_k|^\alpha \1_{\{k<\tau\}} \,\middle|\, Y_0=u^\gamma \right]^{1/\alpha}
\label{eq:6.10}\\
&\leq 
1 + u^{-\gamma} \sum_{k=1}^\infty (\E |B_k|^\alpha)^{1/\alpha} \P ( \tau\geq k \,|\, Y_0=u^\gamma )^{1/\alpha}  \label{eq:6.11}\\
&\leq 
1 + u^{-\gamma} (\E |B_1|^\alpha)^{1/\alpha} \big(\E[\tau^{\alpha+L} \,|\, Y_0=u^\gamma]\big)^{1/\alpha} \sum_{k=1}^\infty k^{-(\alpha+L)/\alpha},
\label{eq:6.12}
\end{align}
for some $L>0$, where \eqref{eq: proposition 4.1 eq03} is from  Fatou's lemma and Minkowski's inequality, \eqref{eq:6.10} is from Remark~\ref{a-remark-on-dual-change-of-measure} with $T = k$ and $R = |B_k|^\alpha \1_{\{k<\tau\}}$,  \eqref{eq:6.11} is from the fact that $\1_{\{k<\tau\}} \leq \1_{\{k\leq \tau\}}$ and  $\1_{\{k\leq \tau\}}\in m\mathcal F_{k-1}$ so that $\1_{\{k \leq \tau\}}$ and $|B_k|^\alpha$ are independent, and \eqref{eq:6.12} is from Markov's inequality. 
Using Lemma~\ref{lem: bound on moments of first return time} above, 
%\CR{Lemma~\ref{lem: bound on moments of first return time}} 
we prove \eqref{eq: proposition 4.1 eq02}, which, in turn, proves \eqref{eq: proposition 4.1 eq01}. This concludes the proof of Proposition~\ref{prop: neglibigle parts in the area under first return time}.
\end{proof}

%Let $C_\infty$, $\mathscr G_+(u)$ and $\mathscr G_-(u)$ be as in \eqref{eq: tail asymptotics on the stationary distribution of AR(1) process}, \eqref{eq: G+} and \eqref{eq: G-}, respectively. 
Set 
\begin{equation}\label{eq: G+}
\notationdef{nota-scr-G-plus}{\mathscr G_+(u)} 
= 
u^{(1-\beta)\alpha} \P^{\mathscr D_{T(u^\beta)}^\alpha} \left( \sum_{n=T(u^\beta)}^{K_\beta^\gamma(u)-1} X_n>u \,\middle|\, \mathcal F_{T(u^\beta)} \right) 
\left( \frac{X_{T(u^\beta)}}{u^\beta} \right)^{-\alpha} \1_{\{Z_{T(u^\beta)}>0\}},
\end{equation}
and
\begin{equation}\label{eq: G-}
\notationdef{nota-scr-G-minus}{\mathscr G_-(u)}
= 
u^{(1-\beta)\alpha} \P^{\mathscr D_{T(u^\beta)}^\alpha} \left( \sum_{n=T(u^\beta)}^{K_\beta^\gamma(u)-1} X_n>u \,\middle|\, \mathcal F_{T(u^\beta)} \right)  
\left| \frac{X_{T(u^\beta)}}{u^\beta} \right|^{-\alpha} \1_{\{Z_{T(u^\beta)}\leq 0\}}. 
\end{equation}
Recall $C_\infty$ in \eqref{eq: tail asymptotics on the stationary distribution of AR(1) process}. 
The following two lemmas are useful in proving Proposition~\ref{prop: determined part in the area under first return time}.

\begin{lemma}
\label{lem: almost sure convergence of G+ and G-}
\linksinthm{lem: almost sure convergence of G+ and G-}
Suppose that Assumptions~\ref{ass: regularity condition AR(1) process} and \ref{ass: minorization AR(1) process} hold.
%Suppose that Assumption~\ref{ass: regularity condition AR(1) process} holds. 
Under the measure $\P^\alpha$,
\[
\mathscr G_+(u) \toas C_\infty \1_{\{Z>0\}}
\quad\mbox{and}\quad
\mathscr G_-(u)\toas 0,
\quad\mbox{as}\ u\to\infty.
\]
Moreover, $\mathscr G_+(u)$ and $\mathscr G_-(u)$ are bounded in $u$ by some constants almost surely. 
\end{lemma}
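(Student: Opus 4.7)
The plan is to reduce the conditional probability appearing in $\mathscr G_\pm(u)$ to the tail asymptotic \eqref{eq: tail asymptotics on the stationary distribution of AR(1) process}, by exploiting that, under $\mathscr D$ conditionally on $\mathcal F_{T(u^\beta)}$, the chain after time $T(u^\beta)$ evolves as a fresh AR(1) started from $X_{T(u^\beta)}$ with i.i.d.\ $(A,B)$'s drawn from the original law $\nu$.

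First I would set up the a.s.\ behaviour under $\P^\alpha$. Lemma~\ref{lem: asymptotics of X_n and Z_n} gives $|X_n|\to\infty$ and $Z_n\to Z$ a.s., hence $T(u^\beta)\to\infty$ and $Z_{T(u^\beta)}\to Z$ a.s. Since $X_n=e^{S_n}Z_n$, the signs of $X_{T(u^\beta)}$ and $Z_{T(u^\beta)}$ agree for large $u$, so $\1_{\{Z_{T(u^\beta)}>0\}}\to\1_{\{Z>0\}}$ a.s.\ on $\{Z\neq 0\}$. Because $S_n/n\to\E^\alpha\log A_1>0$ a.s., one has $T(u^\beta)=\Theta(\log u)$, and a Borel--Cantelli argument using a polynomial moment of $A_1$ under $\P^\alpha$ yields $A_{T(u^\beta)}=O(\mathrm{polylog}\,u)$, whence $X_{T(u^\beta)}=o(u)$ a.s. In particular, with $v_u:=u/X_{T(u^\beta)}$, we have $v_u\to\infty$ a.s.\ on $\{Z>0\}$.

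Next, writing $\tilde X_n=X_{T(u^\beta)+n}$, $\tilde S_n=S_{T(u^\beta)+n}-S_{T(u^\beta)}$, and $\tilde K=K_\beta^\gamma(u)-T(u^\beta)$, iteration of the affine recursion yields
\[
\sum_{n=0}^{\tilde K-1}\tilde X_n \;=\; X_{T(u^\beta)}\,W^{(u)} + E^{(u)},
\]
with $W^{(u)}:=\sum_{n=0}^{\tilde K-1}e^{\tilde S_n}$ and $E^{(u)}:=\sum_{k=1}^{\tilde K-1}B_{T(u^\beta)+k}\sum_{n=k}^{\tilde K-1}e^{\tilde S_n-\tilde S_k}$. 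Under $\mathscr D$ conditionally on $\mathcal F_{T(u^\beta)}$, $\tilde S$ is a random walk with the negative-drift original law of $S$, independent of $\mathcal F_{T(u^\beta)}$; since $\tilde K\to\infty$ a.s.\ (the chain needs $\Theta(\log u)$ steps to descend from $|\tilde X_0|>u^\beta$ to level $u^\gamma$), we have $W^{(u)}\uparrow W:=\sum_{n=0}^\infty e^{\tilde S_n}<\infty$ a.s., and a Minkowski estimate of the type used in \eqref{eq: proposition 4.1 eq03} bounds the conditional $p$-moment of $E^{(u)}$ by $C(\log u)\,\|B_1\|_p\,\|W\|_p$ for every $p<\alpha$. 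Invoking \eqref{eq: tail asymptotics on the stationary distribution of AR(1) process} conditionally gives, on $\{Z_{T(u^\beta)}>0\}$,
\[
\P^{\mathscr D}\!\Bigl(\sum_{n=0}^{\tilde K-1}\tilde X_n>u\,\Big|\,\mathcal F_{T(u^\beta)}\Bigr)\;\sim\;\P(W>v_u)\;\sim\;C_\infty v_u^{-\alpha},
\]
and substituting yields $\mathscr G_+(u)=v_u^{\alpha}\,\P(W>v_u)(1+o(1))\to C_\infty$, so $\mathscr G_+(u)\toas C_\infty\1_{\{Z>0\}}$. On $\{Z_{T(u^\beta)}\le 0\}$ the principal term $X_{T(u^\beta)}W^{(u)}$ satisfies $X_{T(u^\beta)}W^{(u)}\le -u^\beta W^{(u)}<0$, so the event $\{\sum\tilde X_n>u\}$ forces $E^{(u)}>u$; combining the conditional Markov bound of order $(\log u)^p u^{-p}$ with $|X_{T(u^\beta)}/u^\beta|^{-\alpha}\le 1$ yields $\mathscr G_-(u)=O((\log u)^p u^{(1-\beta)\alpha-p})\to 0$, provided $(1-\beta)\alpha<p<\alpha$.

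For the a.s.\ boundedness of $\mathscr G_\pm$ in $u$, I would combine a Potter-type global bound $\P(W>x)\le K x^{-\alpha}$ with the same conditional Markov bound on $E^{(u)}$, using Borel--Cantelli along $u_k=k^c$ and monotonicity in $u$ to cover the continuum. The main obstacle I anticipate is the uniform control of $E^{(u)}$: the stopping time $\tilde K$ depends on the same walk $\tilde S$ that defines $E^{(u)}$, so the independence ingredient needed to apply Minkowski cleanly is delicate. I plan to handle this by splitting the sum over $n<\tilde K$ into the descent phase while $|\tilde X_n|\gtrsim u^\gamma$ (whose length is $O(\log u)$, and where each term's contribution is controllable via independence of $B_k$ and the future $A$'s) and its geometrically small continuation, mirroring the argument in the proof of Proposition~\ref{prop: neglibigle parts in the area under first return time}.
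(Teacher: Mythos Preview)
Your route differs from the paper's. You decompose $\sum_{n<\tilde K}\tilde X_n = X_{T(u^\beta)}W^{(u)} + E^{(u)}$ exactly and aim to control the additive error $E^{(u)}$; the paper instead uses the multiplicative sandwich
\[
(A_n - u^{-\gamma}|B_n|)^+ \;\leq\; \Big|\frac{X_n}{X_{n-1}}\Big| \;\leq\; A_n + u^{-\gamma}|B_n|,
\qquad T(u^\beta)\leq n < K_\beta^\gamma(u),
\]
valid because $|X_{n-1}|>u^\gamma$ on that range. This reduces everything to comparing $\sum_k e^{S_k}$ with the perturbed sums $\sum_k e^{S_k^{(u)}}$ and $\sum_k e^{\underline S_k^{(u)}}$, which the paper does by an elementary geometric-series Markov bound requiring only $\E A_1<1$ and $\E|B_1|<\infty$. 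No separate control of $E^{(u)}$ or of $W-W^{(u)}$ is needed, and the estimates are manifestly uniform in $v=X_{T(u^\beta)}/u^\beta>1$. For $\mathscr G_-$ the paper observes that on $\mathfrak E_2^\gamma(u)=\{|B_n|\leq u^\gamma,\ \forall n<K_\beta^\gamma(u)\}$ the sign of $X_n$ cannot flip while $|X_{n-1}|>u^\gamma$, so the conditional probability is zero on that event and one only needs $\P((\mathfrak E_2^\gamma(u))^c\mid X_0=vu^\beta)=o(u^{-(1-\beta)\alpha})|v|$, which is Lemma~\ref{lem: bounding sign-switching event}.

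Your approach can in principle be made to work, but the obstacle you flag is genuine and under-resolved. The bound $\|E^{(u)}\|_p=O(\log u)$ needs a tail estimate on $\tilde K$ sharp enough that $\sum_k\P(\tilde K>k\mid X_{T(u^\beta)})^{\theta}=O(\log u)$, and this depends on $|X_{T(u^\beta)}|$ (the overshoot), not just on $u$; your descent/tail split does not explain how uniformity in the starting point is obtained. More seriously, your boundedness argument has a gap: there is no monotonicity of $\mathscr G_\pm(u)$ in $u$ to interpolate between grid points $u_k=k^c$, so Borel--Cantelli alone does not yield a uniform-in-$u$ bound. The paper obtains a \emph{deterministic} constant bound directly: once the upper asymptotic is established, it covers the regime where $u^{1-\beta}/v$ is large, while for $u^{1-\beta}<vU(\epsilon)$ the trivial bound $\P(\cdot)\leq 1$ gives $\mathscr G_+(u)\leq U(\epsilon)^\alpha$. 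This deterministic bound is exactly what is used for dominated convergence in Proposition~\ref{prop: determined part in the area under first return time}, so an a.s.\ random bound would not suffice without further integrability checks.
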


Recall that $Z_n$, $\tau_d$, and $T(u)$ are defined in \eqref{eq: discrepancy process}, \eqref{eq: first return time}, and \eqref{eq: T(u) and K_beta^gamma(u)}, respectively. 
\begin{lemma}
\label{lem: integrability of Z bar}
\linksinthm{lem: integrability of Z bar} 
Suppose that Assumptions~\ref{ass: regularity condition AR(1) process} and \ref{ass: minorization AR(1) process} hold.
%Suppose that Assumption~\ref{ass: regularity condition AR(1) process} holds. 
The random variables $Z_{T(u^\beta)}^+\1_{\{T(u^\beta)<\tau_d\}}$ and $Z_{T(u^\beta)}^-\1_{\{T(u^\beta)<\tau_d\}}$ are bounded by 
\[
\notationdef{nota-bar-Z}{\bar Z} = |X_0| + \sum_{n=1}^\infty |B_n| e^{-S_n} \1_{\{n<\tau_d\}}. 
\]
In addition, %$\E |X_0|^\alpha < \infty$,  then 
$\E^\alpha [\bar Z^\alpha] < \infty$. 
\end{lemma}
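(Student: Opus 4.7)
The plan is to establish the almost sure pointwise bound first and then reduce the moment estimate for $\bar Z$ to a convergent geometric series via three ingredients: Minkowski's inequality, the defining identity of the $\alpha$-shifted measure, and Hölder's inequality combined with a geometric tail bound on $\tau_d$.

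\textit{Step 1 (pointwise domination).} On the event $\{T(u^\beta)<\tau_d\}$, every integer $1\leq k\leq T(u^\beta)$ satisfies $k<\tau_d$, so $\1_{\{k\leq T(u^\beta)\}}\1_{\{T(u^\beta)<\tau_d\}}\leq \1_{\{k<\tau_d\}}$. Applying the triangle inequality to $Z_n = X_0 + \sum_{k=1}^n B_k e^{-S_k}$ and truncating the sum at $T(u^\beta)$,
\[
|Z_{T(u^\beta)}|\1_{\{T(u^\beta)<\tau_d\}} \leq |X_0| + \sum_{k=1}^\infty |B_k|e^{-S_k}\1_{\{k<\tau_d\}} = \bar Z,
\]
which settles both $Z^+_{T(u^\beta)}\1_{\{T(u^\beta)<\tau_d\}}\leq \bar Z$ and $Z^-_{T(u^\beta)}\1_{\{T(u^\beta)<\tau_d\}}\leq \bar Z$.

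\textit{Step 2 (Minkowski and change of measure).} Viewing $\bar Z$ as an element of $L^\alpha(\P^\alpha)$ and applying Minkowski's inequality to the infinite sum,
\[
(\E^\alpha[\bar Z^\alpha])^{1/\alpha} \leq (\E^\alpha[|X_0|^\alpha])^{1/\alpha} + \sum_{n=1}^\infty \bigl(\E^\alpha[|B_n|^\alpha e^{-\alpha S_n}\1_{\{n<\tau_d\}}]\bigr)^{1/\alpha}.
\]
Because both $|B_n|^\alpha$ and the event $\{n<\tau_d\}$ are measurable with respect to $\sigma(X_0,(A_k,B_k)_{k\leq n})$, the definition of $\nu^\alpha$ as $e^{\alpha x}d\nu(x,y)$ yields the identity
\[
\E^\alpha[|B_n|^\alpha e^{-\alpha S_n}\1_{\{n<\tau_d\}}] = \E[|B_n|^\alpha \1_{\{n<\tau_d\}}].
\]
Hölder's inequality with conjugate exponents $(\alpha+\epsilon)/\alpha$ and $(\alpha+\epsilon)/\epsilon$, where $\epsilon>0$ is as in Assumption~\ref{ass: regularity condition AR(1) process}, then gives
\[
\E[|B_n|^\alpha \1_{\{n<\tau_d\}}] \leq (\E|B_1|^{\alpha+\epsilon})^{\alpha/(\alpha+\epsilon)}\, \P(\tau_d>n)^{\epsilon/(\alpha+\epsilon)}.
\]

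\textit{Step 3 (geometric tail of $\tau_d$).} By Result~\ref{res: path properties of AR(1) processes}, $\{X_n\}$ satisfies the drift condition~\eqref{condition: drift} on the bounded set $[-M,M]$ and is $\pi$-irreducible and geometrically ergodic. Any bounded Borel set is petite (as exploited in the proof of Lemma~\ref{lem: exponential decay of regeneration time}), so standard Foster--Lyapunov theory (e.g.\ Theorem~15.2.6 of \cite{meyn2009}) implies $\E[t^{\tau_{[-d',d']}}]<\infty$ for some $t>1$ and some $d'\geq d$ (enlarging $d$ if necessary so that the drift condition applies outside $[-d',d']$). Since the sublevel set $\{|X_n|\leq d\}$ is revisited no later than $[-d',d']$ in the appropriate dominating chain $Y_n = A_nY_{n-1}+|B_n|$, we obtain $\P(\tau_d>n)\leq C\rho^n$ for some $\rho\in(0,1)$.

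\textit{Step 4 (conclusion).} Substituting the bound $\P(\tau_d>n)\leq C\rho^n$ back, the series in Step~2 is dominated by a geometric series and therefore converges, yielding $\E^\alpha[\bar Z^\alpha]<\infty$.

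The main obstacle is Step~3: the lemma's hypotheses include only Assumption~\ref{ass: regularity condition AR(1) process}, not the minorization Assumption~\ref{ass: minorization AR(1) process}, so one cannot directly appeal to Lemma~\ref{lem: exponential decay of regeneration time}. The geometric tail of $\tau_d$ must be extracted from the drift condition in Result~\ref{res: path properties of AR(1) processes} alone, with some care needed to accommodate the possibility that the drift set $[-M,M]$ is strictly larger than $[-d,d]$. The remaining steps are routine.
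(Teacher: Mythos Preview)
Your proof is correct and follows essentially the same route as the paper: pointwise domination, Minkowski, the $\alpha$-shift identity $\E^\alpha[X e^{-\alpha S_n}]=\E[X]$ for $\mathcal F_n$-measurable $X$, H\"older against $\E|B_1|^{\alpha+\epsilon}$, and geometric decay of $\P(\tau_d>n)$. The paper handles your Step~3 concern by simply citing Lemma~\ref{lem: exponential decay of regeneration time} without further comment (and its H\"older exponent $\epsilon/\alpha^2$ differs slightly from yours, but either suffices since only summability is needed).
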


\linkdest{proof of prop: determined part in the area under first return time}
\begin{proof}[Proof of Proposition~\ref{prop: determined part in the area under first return time}]
\linksinpf{prop: determined part in the area under first return time}
We focus on deriving the first asymptotics, since the second one follows using similar arguments. 
Note that 
\begin{align}
\notag 
u^\alpha\P \left( \sum_{n=T(u^\beta)}^{K_\beta^\gamma(u)-1} X_n > u, T(u^\beta) < \tau_d \right) 
&=
u^\alpha \P \left( \sum_{n=T(u^\beta)}^{K_\beta^\gamma(u)-1} X_n > u, X_{T(u^\beta)}>0, T(u^\beta) < \tau_d \right)\\
\notag&\hspace{12.5pt}+ 
u^\alpha \P \left( \sum_{n=T(u^\beta)}^{K_\beta^\gamma(u)-1} X_n > u, X_{T(u^\beta)}<0, T(u^\beta) < \tau_d \right)\\ 
&= 
\textbf{(I.1)} + \textbf{(I.2)}.\label{eq: proposition 4.2 eq01}
\end{align}
We first consider \textbf{(I.1)}. 
Applying the dual change of measure ${\mathscr D_{T(u^\beta)}^\alpha}$ together with Result~\ref{res: connection between original and dual chang of measures}, we obtain that
\[
\textbf{(I.1)}
=
\E^{\mathscr D_{T(u^\beta)}^\alpha} [ g_{\tau_d-1}(X_0,\ldots,X_{\tau_d-1}) e^{-\alpha S_{T(u^\beta)}} \1_{\{T(u^\beta)<\tau_d\}} ],
\]
where
\[
g_{\tau_d-1}(X_0,X_1,\ldots, X_{\tau_d-1}) = 
\begin{cases}1 & \text{if } \sum_{n=T(u^\beta)}^{K_\beta^\gamma(u)-1} X_n>u \text{ and } X_{T(u^\beta)}>0
\\
0 & \text{otherwise}.
\end{cases}
\]
Recall the expression for $Z_n$ given in \eqref{eq: discrepancy process}.  
Note that
\begin{align}
\textbf{(I.1)}
\nonumber&= 
u^\alpha
\E^{\mathscr D_{T(u^\beta)}^\alpha} \left[ g_{\tau_d-1}(X_0,\ldots,X_{\tau_d-1})  e^{-\alpha S_{T(u^\beta)}} \1_{\{T(u^\beta)<\tau_d\}} \right]\\
\nonumber&= 
u^{\alpha}
\E^{\mathscr D_{T(u^\beta)}^\alpha} \Big[ g_{\tau_d-1}(X_0,\ldots,X_{\tau_d-1})\cdot | X_{T(u^\beta)}|^{-\alpha}
\cdot |X_{T(u^\beta)}|^\alpha \cdot e^{-\alpha S_{T(u^\beta)}} \1_{\{T(u^\beta)<\tau_d\}} \Big]\\
\label{eq: proposition 4.2 eq02}&= 
\E^{\mathscr D_{T(u^\beta)}^\alpha} \left[ \mathscr G_+(u) (Z_{T(u^\beta)}^+)^\alpha \1_{\{T(u^\beta)<\tau_d\}} \right],
\end{align}
for all $n\geq0$. 
Using Lemma~\ref{lem: almost sure convergence of G+ and G-}, Lemma~\ref{lem: integrability of Z bar}, the dominated convergence theorem
%\CR{$\mathscr G_+(u)$ needs to be dominated as well} 
%\BZ{follows from the second part of Lemma 6.6: it is bounded by a constant (in $u$) almost surely}
%
and the fact that $T(u^\beta)\to\infty$ as $u\to\infty$, we obtain that 
\begin{align*}
\lim_{u\to\infty} \textbf{(I.1)} 
&= 
\lim_{u\to\infty} \E^{\mathscr D_{T(u^\beta)}^\alpha} \left[ (Z_{T(u^\beta)}^+)^\alpha \1_{\{T(u^\beta)<\tau_d\}} \mathscr G_+(u) \right]
= 
\lim_{u\to\infty} \E^\alpha \left[ (Z_{T(u^\beta)}^+)^\alpha \1_{\{T(u^\beta)<\tau_d\}} \mathscr G_+(u) \right]\\
&= 
\E^\alpha \left[ \lim_{u\to\infty} (Z_{T(u^\beta)}^+)^\alpha \1_{\{T(u^\beta)<\tau_d\}} \mathscr G_+(u) \right]= 
\E^\alpha \left[ (Z^+)^\alpha \1_{\{\tau_d=\infty\}} C_\infty \right]\\ 
&=
C_\infty \E^\alpha \left[ (Z^+)^\alpha \1_{\{\tau_d=\infty\}} \right].
\end{align*} 
%\todo{based on this, I suspect the definition of $C_\infty$, as also asked by Remco vd Hofstad, is not correct. The terms $B_n$ should get in. And then, $C_\infty$ should be $c_+$ from Goldie's theorem, and I suspect we also need the $c_-$. And I think we need conditions that ensure $c_+, c_- \in (0,\infty)$}
Analogously, we have that 
\begin{equation}
\textbf{(I.2)}
\label{eq: proposition 4.2 eq03}= 
\E^{\mathscr D_{T(u^\beta)}^\alpha} \left[ (Z_{T(u^\beta)}^-)^\alpha \1_{\{T(u^\beta)<\tau_d\}} \mathscr G_-(u) \right]\to 0,
\quad\mbox{as}\ u\to\infty,
\end{equation}
where
$\mathscr G_-(u) $ was defined in \eqref{eq: G-}.
Using \eqref{eq: proposition 4.2 eq01}, \eqref{eq: proposition 4.2 eq02}, and \eqref{eq: proposition 4.2 eq03}, we prove the first asymptotics in Proposition~\ref{prop: determined part in the area under first return time}. 
The second one can be shown analogously. 
\end{proof}

We need the following lemmas to prove Proposition~\ref{prop: negligible parts in the area under regeneration cycle}. 
Let $Y_{n+1} = A_{n+1}Y_n + |B_{n+1}|$ and let $r$ be the first time that $Y_n$ regenerates. 
%$r$ is the first time that $Y_n$ regenerates in the set $[-d,d]$. 
%Note that $r$ is a stopping time w.r.t.\ $\{(Y_n,\eta_n')\}_{n\geq0}$---for some seuqence of Bernoulli distributed random varaibles $\eta_n'$---but not necessarily a stopping time w.r.t.\ $\{Y_n\}_{n\geq0}$. 

\begin{lemma}
\label{lem: bound on moments of regeneration time}
\linksinthm{lem: bound on moments of regeneration time} 
Suppose that Assumptions~\ref{ass: regularity condition AR(1) process} and \ref{ass: minorization AR(1) process} hold. 
Let $\epsilon>0$, and let $L>0$ be such that $\lfloor \alpha-\epsilon \rfloor \geq 1$. 
Then there exists a positive constant $c$ such that, for sufficiently large $x$, 
\[
\E [ r^{\alpha+L} \,|\, Y_0=x ] \leq c x^{\lfloor \alpha-\epsilon \rfloor}.
\]
In particular, $\E [ r^{\alpha+L} \,|\, Y_0=x ] = \mathcal O(x)$. 
\end{lemma}

\begin{lemma}\label{lem: asymptotics of P(T(u^beta))<r_1}
\linksinthm{lem: asymptotics of P(T(u^beta))<r_1} Suppose that Assumptions~\ref{ass: regularity condition AR(1) process} and \ref{ass: minorization AR(1) process} hold. 
We have that 
\[
\lim_{u\to\infty} u^{\alpha}\P ( T(u) < r_1) = \E \left[ e^{-\alpha \mathfrak X} \right] \E^\alpha \left[ |Z|^\alpha \1_{\{r_1=\infty\}} \right],
\]
where $\mathfrak X$ is the positive random variable such that 
$
\log X_{T(u)} - \log u
$ converges in distribution to $\mathfrak X$ as $u\to\infty$ under $\P^\alpha$. 
\end{lemma}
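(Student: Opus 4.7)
My plan is to apply the $\alpha$-shifted change of measure, which converts the tail computation into an overshoot-type expectation under $\P^\alpha$ whose limit factorizes into $\E[e^{-\alpha\mathfrak X}]$ and $\E^\alpha[|Z|^\alpha\1_{\{r_1=\infty\}}]$. Since $T(u)<\infty$ $\P^\alpha$-a.s.\ by Lemma~\ref{lem: asymptotics of X_n and Z_n}, the standard exponential tilting identity yields
\[
\P(T(u)<r_1) = \E^\alpha\!\left[e^{-\alpha S_{T(u)}}\1_{\{T(u)<r_1\}}\right].
\]
Using the representation $X_{T(u)}=e^{S_{T(u)}}Z_{T(u)}$ from \eqref{eq: discrepancy process}, we have $e^{-\alpha S_{T(u)}}=|Z_{T(u)}|^\alpha|X_{T(u)}|^{-\alpha}$, so
\[
u^\alpha\P(T(u)<r_1) = \E^\alpha\!\left[|Z_{T(u)}|^\alpha\,e^{-\alpha(\log|X_{T(u)}|-\log u)}\,\1_{\{T(u)<r_1\}}\right].
\]

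Next I would identify the limits of the three factors under $\P^\alpha$. Lemma~\ref{lem: asymptotics of X_n and Z_n} gives $Z_{T(u)}\toas Z$ and $T(u)\to\infty$ a.s.; the latter forces $\1_{\{T(u)<r_1\}}\toas\1_{\{r_1=\infty\}}$, because on $\{r_1<\infty\}$ we eventually have $T(u)>r_1$. For the overshoot, the event $|X_n|>u$ is equivalent to $S_n>\log u-\log|Z_n|$, and since $Z_n$ stabilizes to $Z$, $T(u)$ is asymptotically the first-passage time of the random walk $S_n$ over the level $\log u-\log|Z|$. Under $\P^\alpha$, Assumption~\ref{ass: regularity condition AR(1) process} ensures that $\log A_1$ is nonarithmetic and that $\E^\alpha\log A_1=\E[A_1^\alpha\log A_1]>0$ by convexity, so the nonarithmetic renewal theorem delivers a limiting overshoot $\mathfrak X$; hence $\log|X_{T(u)}|-\log u\Rightarrow\mathfrak X$ and $e^{-\alpha(\log|X_{T(u)}|-\log u)}\Rightarrow e^{-\alpha\mathfrak X}$.

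To obtain the product form of the limit I would condition on $\mathcal F_N$ for a large fixed $N$: on this $\sigma$-field $Z$ is approximated by $Z_N$ (with vanishing error as $N\to\infty$) and the restriction $\{r_1=\infty\}\cap\{r_1\leq N\}^c$ of the regeneration event is measurable, while the fresh increments $\{(\log A_{N+k},B_{N+k})\}_{k\geq 1}$ driving the remaining first-passage are independent of $\mathcal F_N$. A conditional renewal-theorem argument then yields the overshoot distribution with a limit that does not depend on the conditioning, and letting $N\to\infty$ produces the factorization $\E[e^{-\alpha\mathfrak X}]\,\E^\alpha[|Z|^\alpha\1_{\{r_1=\infty\}}]$. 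Dominated convergence is justified by the bound $e^{-\alpha(\log|X_{T(u)}|-\log u)}\leq 1$ (since $|X_{T(u)}|>u$) together with $|Z_{T(u)}|\leq|X_0|+\sum_{k=1}^\infty|B_k|e^{-S_k}$, whose $\alpha$th power is $\P^\alpha$-integrable by the Minkowski-type bound from the proof of Lemma~\ref{lem: integrability of Z bar}.

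The hardest step is establishing the asymptotic independence of the overshoot from $(Z,\1_{\{r_1=\infty\}})$ with sufficient uniformity in $u$: both the random barrier $\log u-\log|Z_{T(u)}|$ and the tail event $\{r_1=\infty\}$ depend on the same path as the overshoot, and one must combine the renewal theorem in a random-barrier setting with the fact that Assumption~\ref{ass: minorization AR(1) process} confines regeneration attempts to visits of $[-d,d]$, so that once $|X_n|$ has escaped this set the event $\{r_1=\infty\}$ is essentially frozen on $\mathcal F_N$ up to $\P^\alpha$-negligible events as $N\to\infty$.
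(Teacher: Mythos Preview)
Your approach is essentially the same as the paper's: both apply the $\alpha$-shifted change of measure to write $u^\alpha\P(T(u)<r_1)=\E^\alpha[|Z_{T(u)}|^\alpha|X_{T(u)}/u|^{-\alpha}\1_{\{T(u)<r_1\}}]$, then condition on $\mathcal F_n$ for a fixed $n$, use the conditional weak convergence of the overshoot $\log|X_{T(u)}|-\log u$ to $\mathfrak X$ (the paper cites Theorem~3.8 of \cite{collamore2016} for this), and pass first $u\to\infty$ then $n\to\infty$ with dominated convergence. One small correction: your dominating function $|X_0|+\sum_{k\geq 1}|B_k|e^{-S_k}$ without the indicator $\1_{\{k<r_1\}}$ is \emph{not} in $L^\alpha(\P^\alpha)$ in general, since $\E^\alpha[|B_k|^\alpha e^{-\alpha S_k}]=\E|B_1|^\alpha$ is constant in $k$; you need to retain the indicator (which is harmless, because $T(u)<r_1$ forces every index appearing in $Z_{T(u)}$ to satisfy $k<r_1$), exactly as in Lemma~\ref{lem: integrability of Z bar}.
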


\linkdest{proof of prop: negligible parts in the area under regeneration cycle}
\begin{proof}[Proof of Proposition~\ref{prop: negligible parts in the area under regeneration cycle}]
\linksinpf{prop: negligible parts in the area under regeneration cycle}
By replacing $\tau_d$ with $r_1$, the proposition can be shown using almost identical arguments as in the proof of Proposition~\ref{prop: neglibigle parts in the area under first return time}. 
Nonetheless, we need to show that 
\begin{itemize}
\item $\P(T(u^\beta)<r_1) \sim cu^{-\alpha\beta}$ for some constant $c$, and that
\item $\E [r^{\alpha+\epsilon}|Y_0=x] = \mathcal O(x)$, where $Y_{n+1} = A_{n+1}Y_n + |B_{n+1}|$ and $r-1$ is the first time that $(Y_n,\eta_n)$ returns to the set $[-d,d]\times\{1\}$. 
%$Y_n$ regenerates (see Remark~\ref{rmk: regeneration scheme} above) in the set $[-d,d]$.
\end{itemize}
For this, we use Lemmas~\ref{lem: bound on moments of regeneration time} and \ref{lem: asymptotics of P(T(u^beta))<r_1} above. 
\end{proof}

\linkdest{proof of prop: determined part in the area under regeneration cycle}
\begin{proof}[Proof of Proposition~\ref{prop: determined part in the area under regeneration cycle}]
\linksinpf{prop: determined part in the area under regeneration cycle}
Using Lemma~\ref{lem: almost sure convergence of G+ and G-}, Lemma~\ref{lem: integrability of Z bar}, the dominated convergence theorem and the fact that $T(u^\beta)\to\infty$ as $u\to\infty$, one can prove the first asymptotics. 
The second one follows by a similar analysis. 
\end{proof}

Next we provide proofs of all lemmas in this section. 
%To conclude this section, we introduce a result on bounding functionals of passage times for Markov chains. 
To show Lemma~\ref{lem: bound on moments of first return time}, we introduce a result on bounding functionals of passage times for Markov chains. 
Let $\{V_n\}_{n\geq0}$ be an $\{\mathcal F_n\}$-adapted stochastic process taking values in an unbounded subset of $\mathbb R_+$. 
Let $\{U_n\}_{n\geq0}$ be another $\{\mathcal F_n\}$-adapted stochastic process taking values in an unbounded subset of $\mathbb R_+$ such that $U_n$ is integrable for all $n\geq0$. 
Let $\tau_b^V = \inf\{ n\geq 0\colon V_n \leq b\}$ be the first time $V_n$ returning to the set $[0,b]$. 
% Let \linkdest{nota-mathcal-G}$$\mathcal G \triangleq \{f:[0,\infty)\to(0,\infty) :\ f \in C^2,\, \lim_{x\to\infty} f(x) = \infty,\, f'>0,\,\log f' \text{ is concave for sufficiently large $x$} \}.$$

\begin{result}[Theorem 2$'$ of \cite{aspandiiarov1999}]
\label{bounds of functions of passage times for Markov chains}
\linksinthmwopf{bounds of functions of passage times for Markov chains}
Suppose there exists a positive number $d$ and functions $g$ and $h$ that are positive on $(b,\infty)$, 
% \begin{itemize}
    % \item 
\[
U_n\leq h(V_n), \qquad\forall n\geq0,
\]
and    % \item 
\[
\E [U_{n+1} - U_n \,|\, \mathcal F_n] \leq -g(V_n) \qquad \text{on }\quad \{\tau_b^V > n\}.
\]
% \end{itemize}
%where $\tau = \inf \{ n\geq1 \colon V_n\leq d \}$. 
Suppose in addition that $f$ is a function on $[0,\infty]$ such that 
\begin{itemize}
\item $f\in C^2$, $f>0$, $f'>0$, $\lim_{x\to\infty} f(x) = \infty$,
\item $f$ is convex for sufficiently large $x$,
\item $\log f'$ is concave for sufficiently large $x$,
\item $f$ satisfies
\[
\liminf_{y\to\infty} \frac{g(y)}{f'\circ f^{-1}\circ h(y)} > 0,
\]
\item there exists a positive constant $c_f$ such that
\[
\varlimsup_{y\to\infty} \frac{f(2y)}{f(y)} \leq c_f.
\]
\end{itemize}
Then there exists a positive constant $c$ such that, for all $x\geq b$ 
\[
\E [ f(\tau_b^V) \,|\, V_0=x ] \leq c h(x).
\]

\end{result}

\begin{proof}[Proof of Lemma~\ref{lem: bound on moments of first return time}]
\linksinpf{lem: bound on moments of first return time} 
% Recall the definition of $Y_n$ and $\tau$ right above the statement of Lemma~\ref{lem: bound on moments of first return time}. 
We first apply Result~\ref{bounds of functions of passage times for Markov chains} with $f(y) = y^{\alpha+L}$, $g(y) = c_2 y^{\underline\alpha}$, $h(y) = y^{\underline\alpha}$ where $\underline\alpha = \lfloor \alpha-\epsilon \rfloor$, and $c_2$ is a constant that we construct below, $U_n = h(Y_n)$, $V_n = Y_n$, and $\mathcal F_n = \sigma(Y_i:\, i\leq n)$. 
% Note that this choice of $V_n$ implies that $\tau_d^V = \tau$. 
From the binomial formula, we see that there exist positive constants $c_1$ that depends on the first $(\underline\alpha-1)$-st moments of $A_1$ and $B_1$ such that, on $\{Y_n\geq 1\}$,
\begin{align*}
\E [ U_{n+1} - U_n \,|\, \mathcal F_n ] 
&\leq
(\E [A_1^{\underline\alpha}]-1)Y_{n}^{\underline\alpha} + c_1 Y_{n}^{\underline\alpha-1}.
\end{align*}
Using the fact that $0<\underline\alpha<\alpha$ and the moment generating function of $\log A_1$ is strictly convex on $[0,\alpha]$, we have $\E [A_1^{\underline\alpha}] < 1$. 
Thus, there exists a sufficiently large constant $d'$ and sufficiently small constant $c_2$ such that, on $\{Y_n>d'\}$,
\begin{align*}
\E [ U_{n+1} - U_n \,|\, \mathcal F_n ] 
&\leq
(\E [A_1^{\underline\alpha}]-1)Y_{n}^{\underline\alpha} + c_1 Y_{n}^{\underline\alpha-1} 
\leq 
- c_2 Y_{n}^{\underline\alpha}.
% \qquad\mbox{on}\ \{Y_n>d'\},
\end{align*}
As mentioned at the beginning of the proof, we set $g(y) = c_2 y^{\underline\alpha} = c_2 h(y)$ so that
\begin{align*}
\E [ U_{n+1} - U_n \,|\, \mathcal F_n ] 
\leq 
- g (Y_{n}).
% \qquad\mbox{on}\ \{Y_n>d'\},
\end{align*}
It is now straightforward to check that $f$, $g$, and $h$ satisfy all the conditions in Result~\ref{bounds of functions of passage times for Markov chains}.
% Obviously, $f$ is convex, and $f(2y)/f(y)\leq 2^{\alpha+L}$. 
% Moreover, setting $\bar\alpha = \alpha+L$ we have that
% \[
% \frac{g(y)}{f'\circ f^{-1}\circ h(y)} = \frac{c_2 h(y)}{\bar\alpha h(y)^{(\bar\alpha-1)/\bar\alpha}} = \frac{c_2}{\bar\alpha} h(y)^{1-(\bar\alpha-1)/\bar\alpha} \to \infty,
% \]
% since $\bar\alpha>\alpha>1$. 
% In view of these, we can apply 
If we set $\tilde \tau = \inf \{ n\geq1 \colon Y_n\leq d' \}$, Result~\ref{bounds of functions of passage times for Markov chains} implies  that there exists a positive constant $c_3$ such that 
\begin{equation}\label{eq: lemma 4.1 eq01}
\E [ \tilde \tau^{\alpha+L} \,|\, Y_0=x ] \leq c_3 x^{\lfloor \alpha-\epsilon \rfloor}
\end{equation}
for all $x\geq d'$.
We assume w.l.o.g.\ that $d'\geq d$. 
Note that $Y_n$ satisfies the same set assumptions as $X_n$, and hence, Lemma~\ref{lem: exponential decay of regeneration time} applies to $Y_n$ as well, and $\tau$ is bounded by the regeneration time of $Y_n$. Therefore,
% \CR{Perhaps we should instead prove this in Lemma~\ref{lem: exponential decay of regeneration time}}
we can choose a $t$ so that 
\[
\sup_{y\in[0,d']} \E [ t^{\tau} \,|\, Y_0=y ]^{1/(\alpha+L)}<\infty. 
\]
Using Minkowski's inequality we obtain that
\begin{align*}
\E [ \tau^{\alpha+L} \,|\, Y_0=x ]^{1/(\alpha+L)}&= 
\E [ (\tilde \tau + \tau - \tilde \tau)^{\alpha+L} \,|\, Y_0=x ]^{1/(\alpha+L)}\\ 
&\leq 
\E [ \tilde \tau^{\alpha+L} \,|\, Y_0=x ]^{1/(\alpha+L)} 
+ 
\E [ (\tau - \tilde \tau)^{\alpha+L} \,|\, Y_0=x ]^{1/(\alpha+L)}\\ 
&\leq 
\E [ \tilde \tau^{\alpha+L} \,|\, Y_0=x ]^{1/(\alpha+L)} 
+ 
\sup_{y\in[0,d']} \E [ \tau^{\alpha+L} \,|\, Y_0=y ]^{1/(\alpha+L)}\\
&\leq 
\E [ \tilde \tau^{\alpha+L} \,|\, Y_0=x ]^{1/(\alpha+L)} 
+ 
\sup_{y\in[0,d']} \E [ t^{\tau} \,|\, Y_0=y ]^{1/(\alpha+L)}
+
c_4,
\\
&\leq 
\E [ \tilde \tau^{\alpha+L} \,|\, Y_0=x ]^{1/(\alpha+L)} 
+ 
c_5,
%\qquad\mbox{as}\ x\to\infty,
\end{align*}
for some $c_4,c_5>0$. 
% For this choice of $t$, we have that
% \begin{equation}\label{eq: lemma 4.1 eq02}
% \E [ \tau^{\alpha+L} \,|\, Y_0=x ]^{1/(\alpha+L)}  
% \leq 
% \E [ \tilde \tau^{\alpha+L} \,|\, Y_0=x ]^{1/(\alpha+L)} 
% + 
% \mathcal O(1),\qquad\mbox{as}\ x\to\infty.
% \end{equation}
Along with \eqref{eq: lemma 4.1 eq01}, this implies that there exists a $c>0$ such that $\E [ \tau^{\alpha+L} \,|\, Y_0=x ] \leq c x^{\lfloor \alpha-\epsilon \rfloor}$ for sufficiently large $x$.
\end{proof}

% fine up to this point

The following lemma is useful in proving Lemma~\ref{lem: almost sure convergence of G+ and G-}.
Define
\begin{equation}\label{eq: E_2^gamma(u)}
\notationdef{nota-mathfrak-E-2}{\mathfrak E_2(u)} = \{ | B_n | \leq u^\gamma,\forall n \in \{1,\ldots, K_\beta^\gamma(u)\} \}. 
\end{equation}

\begin{lemma}
\label{lem: bounding sign-switching event}
\linksinthm{lem: bounding sign-switching event}
Suppose that Assumptions~\ref{ass: regularity condition AR(1) process} and \ref{ass: minorization AR(1) process} hold.
%Suppose that Assumption~\ref{ass: regularity condition AR(1) process} holds. 
Fix an arbitrary constant $v$ such that $|v|>1$. 
For any \linkdest{cond-beta-gamma-greater-than-one}$\beta+\gamma>1$ and any $\epsilon>0$ there exists a $u_0$ sufficiently large so that, for all $u \geq u_0$, 
\[
\P ( (\mathfrak E_2(u))^c \,|\, X_0 = v u^\beta) \leq \epsilon |v| u^{-(1-\beta)\alpha}.
\]
\end{lemma}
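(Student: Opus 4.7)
The plan is to reduce, via a union bound plus independence from the filtration, to the product of a Markov-type tail bound on $|B_1|$ and a sub-polynomial bound on $\E K_\beta^\gamma(u)$, and then to verify that the constraint $\beta+\gamma>1$ is exactly what is needed to make the resulting exponent in $u$ strictly below $-(1-\beta)\alpha$.

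Since $|v|>1$, conditioning on $X_0=vu^\beta$ forces $T(u^\beta)=0$, so
\[
K_\beta^\gamma(u)=\inf\{n\geq 1\colon |X_n|\leq u^\gamma\}\quad\text{and}\quad
(\mathfrak E_2^\gamma(u))^c=\{|B_n|>u^\gamma\text{ for some }1\leq n<K_\beta^\gamma(u)\}.
\]
I would first apply a union bound over $n$, and note that $\{K_\beta^\gamma(u)\geq n\}=\{|X_k|>u^\gamma,\,1\leq k\leq n-1\}$ is $\mathcal F_{n-1}$-measurable and therefore independent of $B_n$. Since $\{K_\beta^\gamma(u)>n\}\subseteq\{K_\beta^\gamma(u)\geq n\}$, this yields
\[
\P\bigl((\mathfrak E_2^\gamma(u))^c\mid X_0=vu^\beta\bigr)\leq\sum_{n\geq 1}\P(|B_n|>u^\gamma)\,\P\bigl(K_\beta^\gamma(u)\geq n\mid X_0=vu^\beta\bigr)=\P(|B_1|>u^\gamma)\,\E\bigl[K_\beta^\gamma(u)\mid X_0=vu^\beta\bigr].
\]

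For the first factor, Assumption~\ref{ass: regularity condition AR(1) process}(2) gives $\E|B_1|^{\alpha+\epsilon_0}<\infty$ for some $\epsilon_0>0$, so Markov's inequality yields $\P(|B_1|>u^\gamma)\leq C u^{-\gamma(\alpha+\epsilon_0)}$. For the second factor I would use the pathwise domination $|X_n|\leq Y_n$ with $Y_0=|v|u^\beta$ and $Y_{n+1}=A_{n+1}Y_n+|B_{n+1}|$; since for $u$ large $Y_n\leq d$ implies $|X_n|\leq u^\gamma$, this gives $K_\beta^\gamma(u)\leq\tau':=\inf\{n\geq 1\colon Y_n\leq d\}$. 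Choosing $\varepsilon<\alpha-1$ so that $\lfloor\alpha-\varepsilon\rfloor=1$, Lemma~\ref{lem: bound on moments of first return time} gives $\E[(\tau')^{\alpha+L}\mid Y_0=x]\leq cx$ for any fixed $L>0$, so Jensen's inequality produces $\E[K_\beta^\gamma(u)\mid X_0=vu^\beta]\leq c'(|v|u^\beta)^{1/(\alpha+L)}$, and the exponent $\delta:=1/(\alpha+L)$ can be made arbitrarily small by taking $L$ large.

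Combining the two estimates gives
\[
\P\bigl((\mathfrak E_2^\gamma(u))^c\mid X_0=vu^\beta\bigr)\leq C\,|v|^\delta\,u^{\beta\delta-\gamma(\alpha+\epsilon_0)}.
\]
Because $\beta+\gamma>1$, we have $\gamma(\alpha+\epsilon_0)\geq\gamma\alpha>(1-\beta)\alpha$, so choosing $\delta$ small enough that $\beta\delta<\gamma(\alpha+\epsilon_0)-(1-\beta)\alpha$ makes the right-hand side $o(u^{-(1-\beta)\alpha})$ for fixed $v$. Using $|v|^\delta\leq|v|$ (valid since $|v|>1$ and $\delta\leq 1$), the claimed bound $\epsilon|v|u^{-(1-\beta)\alpha}$ follows for all $u\geq u_0(\beta,\gamma,\epsilon,v)$. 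The main technical point is the third step, namely obtaining a bound on $\E K_\beta^\gamma(u)$ whose growth in $u$ is strictly slower than $u^{\gamma(\alpha+\epsilon_0)-(1-\beta)\alpha}$; Lemma~\ref{lem: bound on moments of first return time} combined with Jensen's inequality is enough, though a sharper $O(\log u)$ bound can be obtained by treating $\log Y_n$ as a perturbed random walk with negative drift $\E\log A_1<0$ under $\P$ and invoking standard hitting-time estimates.
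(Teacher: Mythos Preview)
Your proof is correct and takes a route that differs from the paper's, though both rely on Lemma~\ref{lem: bound on moments of first return time}. The paper introduces an intermediate time scale $u^\delta$ and splits
\[
\P\bigl((\mathfrak E_2^\gamma(u))^c \mid X_0=vu^\beta\bigr) \leq \P\bigl(\exists\, n\leq u^\delta: |B_n|>u^\gamma\bigr) + \P\bigl(\tau_d \geq u^\delta \mid X_0=vu^\beta\bigr),
\]
bounding the first piece by $u^\delta\P(|B_1|>u^\gamma)$ and the second by $u^{-(\alpha+L)\delta}\E[\tau_d^{\alpha+L}\mid X_0=vu^\beta]\leq c|v|u^{\beta-(\alpha+L)\delta}$ via Markov and Lemma~\ref{lem: bound on moments of first return time}. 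Your approach instead goes through the clean product bound $\P(|B_1|>u^\gamma)\,\E[K_\beta^\gamma(u)]$ and uses Jensen to convert the high-moment bound into a first-moment bound. Your route is a bit more streamlined and avoids the auxiliary parameter $\delta$; the paper's decomposition avoids the Jensen step and produces the linear $|v|$ factor directly without passing through $|v|^\delta\leq|v|$.

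Two minor remarks. First, since $|v|^{\delta-1}\leq 1$, your $u_0$ can in fact be taken independent of $v$, which is how the lemma is used downstream (inside expectations where $v=X_{T(u^\beta)}/u^\beta$ is random). Second, the condition ``$\varepsilon<\alpha-1$'' by itself does not force $\lfloor\alpha-\varepsilon\rfloor=1$ when $\alpha>2$; you can simply invoke the ``in particular $\mathcal O(x)$'' conclusion of Lemma~\ref{lem: bound on moments of first return time} directly.
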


\begin{proof}[Proof of Lemma~\ref{lem: almost sure convergence of G+ and G-}]
\linksinpf{lem: almost sure convergence of G+ and G-}
%\CR{Need a statement uniform on $v$. First show that $\P(X_{T(u^\beta)} > (1+\epsilon)u^\beta)$ is negligible, and then show the convergence uniformly on $v \in (1, 1+\epsilon)$.}
We first prove the statements associated with $\mathscr G_+(u)$. 
As $\1_{\{Z_{T(u^\beta)}>0\}} \toas \1_{\{Z>0\}}$ under $\P^\alpha$, it is sufficient to show that
\[
\lim_{u\to\infty} 
u^{(1-\beta)\alpha} 
\P \left( \sum_{k=T(u^\beta)}^{K_\beta^\gamma(u)-1} X_k > u \,\middle|\, \frac{X_{T(u^\beta)}}{u^\beta}=v \right) 
= C_\infty v^\alpha,\qquad\mbox{for}\ v>1.
\]
Set 
$
S_n^{(u)} \triangleq \sum_{i=1}^n \log(A_i+u^{-\gamma}\cdot|B_i|)
$
and fix $v \geq 1$.
Note that since 
\[
%\left|
\frac{X_n}{X_{n-1}}
%\right| 
\leq A_n + \frac{|B_n|}{|X_{n-1}|} < A_n + |B_n| u^{-\gamma}\qquad\mbox{on}\qquad T(u^\beta) < n < K_\beta^\gamma(u),
\]
we have that, conditional on $X_0 = vu^\beta$, 
$
|X_k| \leq vu^\beta e^{S_k^{(u)}}
$
for all $k < K_\beta^\gamma(u)$. 
Therefore, from the Markov property,
\begin{align}
\nonumber&
\P \left( \sum_{k=T(u^\beta)}^{K_\beta^\gamma(u)-1} X_k > u \,\middle|\, \frac{X_{T(u^\beta)}}{u^\beta}=v \right)
= \P \left( \sum_{k=0}^{K_\beta^\gamma(u)-1} X_k > u \,\middle|\, \frac{X_0}{u^\beta}=v \right)
\leq \P \left( \sum_{k=0}^{K_\beta^\gamma(u)-1} |X_k| > u \,\middle|\, \frac{X_0}{u^\beta}=v \right)
\\
\nonumber&\leq 
\P \left( \sum_{k=0}^{K_\beta^\gamma(u)-1} e^{S_k^{(u)}} > \frac{u^{1-\beta}}{v} \right)
\leq 
\P \left( \sum_{k=0}^\infty e^{S_k^{(u)}} > \frac{u^{1-\beta}}{v} \right)
=
\P \left( \sum_{k=0}^\infty e^{S_k} + \sum_{k=0}^\infty e^{S_k^{(u)}} - \sum_{k=0}^\infty e^{S_k} > \frac{u^{1-\beta}}{v} \right)\\
\label{eq: lemma 4.2 eq01}&\leq 
\P \left( \sum_{k=0}^\infty e^{S_k} > \frac{u^{1-\beta}}{v} - \delta \right)
+ 
\P \left( \sum_{k=0}^\infty e^{S_k^{(u)}} - \sum_{k=0}^\infty e^{S_k} > \delta \right)
\end{align}
for any $\delta>0$.
Note that from \eqref{eq: tail asymptotics on the stationary distribution of AR(1) process},
\begin{equation}\label{eq: lemma 4.2 eq02}
\P \left( \sum_{k=0}^\infty e^{S_k} > \frac{u^{1-\beta}}{v} - \delta \right) \sim C_\infty \left( \frac{u^{1-\beta}}{v} \right)^{-\alpha}.
\end{equation}
Moreover, using Markov's inequality and the fact that $S_n^{(u)}\geq S_n$ we obtain that 
\begin{align*}
u^{(1-\beta)\alpha} \P \left( \sum_{k=0}^\infty e^{S_k^{(u)}} - \sum_{k=0}^\infty e^{S_k} > \delta \right)
&\leq 
\delta^{-1} u^{(1-\beta)\alpha} \E \left[ \sum_{k=0}^\infty e^{S_k^{(u)}} - \sum_{k=0}^\infty e^{S_k} \right]\\ 
&= 
\delta^{-1} u^{(1-\beta)\alpha} \left( \sum_{k=0}^\infty \big(\E [A_1+|B_1|u^{-\gamma}]\big)^k - \sum_{k=0}^\infty \big(\E [A_1]\big)^k \right)\\ 
&= 
\delta^{-1} u^{(1-\beta)\alpha} \left( \frac{1}{1-\E A_1 - u^{-\gamma}\E|B_1|} - \frac{1}{1-\E A_1} \right)\\ 
&= 
\delta^{-1} u^{(1-\beta)\alpha} \left( \frac{u^{-\gamma}\E|B_1|}{(1-\E A_1 - u^{-\gamma}\E|B_1|)(1-\E A_1)} \right)\\ 
&= 
\mathcal O (u^{(1-\beta)\alpha-\gamma}),
\end{align*}
for sufficiently large $u$.
In the second equality, we used $\E A_1 < 1$, (which follows from $\alpha>1$) and $\E (A_1 + |B_1|u^{-\gamma}) < 1$ for sufficiently large $u$'s.
By choosing %\CR{This assumption is never stated explicitly.} %I disagree: it is mentioned in the beginning of the section, and we only need to show existence of some beta < 1%. 
$\beta$ sufficiently close to $1$ so that 
\linkdest{cond-one-minus-beta-alpha-less-than-gamma}
$
(1-\beta)\alpha < \gamma
$,
we have that 
\begin{equation}\label{eq: lemma 4.2 eq03}
u^{(1-\beta)\alpha}\P \left( \sum_{k=0}^\infty e^{S_k^{(u)}} - \sum_{k=0}^\infty e^{S_k} > \delta \right) = o(1). 
\end{equation}
Using \eqref{eq: lemma 4.2 eq01}--\eqref{eq: lemma 4.2 eq03}, an upper bound is given by, for $v>1$ 
\begin{equation}\label{eq: lemma 4.2 eq04}
\varlimsup_{u\to\infty} u^{(1-\beta)\alpha} \P \left( \sum_{k=T(u^\beta)}^{K_\beta^\gamma(u)-1} X_k > u \,\middle|\, \frac{X_{T(u^\beta)}}{u^\beta} = v \right) \leq C_\infty v^\alpha.
\end{equation}
Next, we show the corresponding lower bound. 
%Define $K_\beta^\gamma(u) = \inf \{ n\geq1 \colon |X_n| \leq u^\gamma \}$. 
By the Markov property we obtain that 
\begin{equation}\label{eq: lemma 4.2 eq05}
\P \left( \sum_{k=T(u^\beta)}^{K_\beta^\gamma(u)-1} X_k > u \,\middle|\, \frac{X_{T(u^\beta)}}{u^\beta}=v \right) 
= 
\P \left( \sum_{k=0}^{K_\beta^\gamma(u)-1} X_k > u \,\middle|\, \frac{X_0}{u^\beta}=v \right). 
\end{equation}
Set $\underline S_n^{(u)} = \sum_{i=1}^n \log (A_i - u^{-\gamma}\cdot|B_i|)^+$.
Note that, conditional on $X_0= vu^\beta$
\begin{equation*}
\left| \frac{X_n}{X_{n-1}} \right| \geq \left( A_n - \frac{|B_n|}{|X_{n-1}|} \right)^+ > (A_n - u^{-\gamma}|B_n|)^+,
\qquad \forall n \leq K_\beta^\gamma(u),
\end{equation*}
which, in turn, implies that
\begin{equation}\label{eq: lemma 4.2 eq06}
vu^\beta e^{\underline S_{k}^{(u)}} 
\leq \left| X_{k} \right|,
\qquad 
\forall k \leq K_\beta^\gamma(u). 
\end{equation}
In particular, 
$$
vu^\beta e^{\underline S_{K_\beta^\gamma(u)}^{(u)}} 
\leq \left| X_{K_\beta^\gamma(u)} \right|\leq u^\gamma.
$$
Therefore,
\begin{equation}\label{eq: lemma 4.2 eq07}
K_\beta^\gamma(u) \geq \inf \{ n\geq1 \colon \underline S_n^{(u)}\leq -\log v - (\beta-\gamma)\log u \} \triangleq  K'(u)
\end{equation}
conditional on $X_0 = vu^\beta$.
Recall that  $\mathfrak E_2(u) =  \{ | B_n | \leq u^\gamma,\forall n \in \{1,\ldots, K_\beta^\gamma(u)\} \}$, and note that on $\mathfrak E_2(u)$ and $X_0 = vu^\beta$, $X_k\geq 0$ for all $k<K_\beta^\gamma(u)$. 
To see this, let $\kappa \triangleq \inf\{k\geq0: X_k < 0\}$ and observe that $\kappa < K_\beta^\gamma(u)$ implies that $A_{\kappa-1}X_{\kappa-1} \geq 0$ and $X_\kappa < -u^\gamma$, and hence, $B_\kappa < -u^\gamma$, leading to a contradiction. 
In view of this, \eqref{eq: lemma 4.2 eq05}, and \eqref{eq: lemma 4.2 eq06}, we have
\begin{align*}
\P \left( \sum_{k=T(u^\beta)}^{K_\beta^\gamma(u)-1} X_k > u \,\middle|\, \frac{X_{T(u^\beta)}}{u^\beta}=v \right)
&= 
\P \left( \sum_{k=0}^{K_\beta^\gamma(u)-1} X_k > u \,\middle|\, \frac{X_0}{u^\beta}=v \right)
\\
&
\geq 
\P \left( \sum_{k=0}^{K_\beta^\gamma(u)-1} |X_k| > u,\, \mathfrak E_2(u) \,\middle|\, \frac{X_0}{u^\beta}=v \right)\\
&\geq 
\P \left( \sum_{k=0}^{K_\beta^\gamma(u)-1} e^{\underline S_k^{(u)}} > \frac{u^{1-\beta}}{v},\, \mathfrak E_2(u) \,\middle|\, \frac{X_0}{u^\beta}=v \right)\\  
&\geq  
\P \left( \sum_{k=0}^{K_\beta^\gamma(u)-1} e^{\underline S_k^{(u)}} > \frac{u^{1-\beta}}{v} \,\middle|\, \frac{X_0}{u^\beta}=v \right)
% \\ 
% &\hspace{12.5pt}
-
\P \left( \mathfrak E_2(u)^c \,\middle|\, \frac{X_0}{u^\beta}=v \right)\\ 
&= 
\P \left( \sum_{k=0}^{K_\beta^\gamma(u)-1} e^{\underline S_k^{(u)}} > \frac{u^{1-\beta}}{v} \,\middle|\, \frac{X_0}{u^\beta}=v \right) + o (u^{\alpha(1-\beta)}) |v|,
\end{align*}
where the last equality is from Lemma~\ref{lem: bounding sign-switching event}. 
On the other hand, from \eqref{eq: lemma 4.2 eq07},
\begin{align}
\nonumber\P \left( \sum_{k=0}^{K_\beta^\gamma(u)-1} e^{\underline S_k^{(u)}} > \frac{u^{1-\beta}}{v}  \,\middle|\, \frac{X_0}{u^\beta}=v\right)
&\geq 
\P \left( \sum_{k=0}^{K'(u)-1} e^{\underline S_k^{(u)}} > \frac{u^{1-\beta}}{v} \right)\\
&\geq 
\P \left( \sum_{k=0}^\infty e^{\underline S_k^{(u)}} > \frac{u^{1-\beta}}{v}+\delta \right) - \P \left( \sum_{k=K'(u)}^\infty e^{\underline S_k^{(u)}} >\delta \right)\label{eq: lemma 4.2 eq08}
\end{align}
for any $\delta>0$.
Note that 
\begin{align}
\P \left( \sum_{k=K'(u)}^\infty e^{\underline S_k^{(u)}} >\delta \right)
\nonumber&\leq 
\delta^{-1} \E \left[ \sum_{k=K'(u)}^\infty e^{\underline S_k^{(u)}} \right]
=
\delta^{-1} \E \left[ e^{\underline S_{K'(u)}^{(u)}}\sum_{k=0}^\infty e^{\underline S_{k+K'(u)}^{(u)}-\underline S_{K'(u)}^{(u)}} \right]\\ 
\nonumber&\leq 
\frac{u^{\gamma-\beta}}{\delta v} \E \left[ \sum_{k=0}^\infty e^{\underline S_k^{(u)}} \right]
\leq 
\frac{u^{\gamma-\beta}}{\delta} \E \left[ \sum_{k=0}^\infty e^{\underline S_k^{(u)}} \right]
\leq 
\frac{u^{\gamma-\beta}}{\delta} \E \left[ \sum_{k=0}^\infty e^{S_k} \right] = o(u^{\gamma - \beta}),
%\label{eq: lemma 4.2 eq08}
\end{align}
and hence,
\begin{equation}\label{eq: lemma 4.2 eq09}
u^{(1-\beta)\alpha} \P \left( \sum_{k=K'(u)}^\infty e^{\underline S_k^{(u)}} >\delta \right)
 = o(1), 
\qquad\mbox{for}\ 
\linkdest{cond-beta-greater-than-alpha-plus-gamma-over-alpha-plus-one}\beta>(\alpha+\gamma)/(\alpha+1),
\end{equation}
allowing us to choose a suitable value of $\beta$ since $(\alpha+\gamma)/(\alpha+1)<1$.
%\CR{assumption that $\beta>(\alpha+\gamma)/(\alpha+1)$ is never articulated in the theorem statement}
%(by choosing $\beta>(\alpha+\gamma)/(\alpha+1)$), 
Therefore, it remains to show that the first term in \eqref{eq: lemma 4.2 eq08} is lower bounded by $C_\infty u^{-(1-\beta)\alpha}v^\alpha$. 
Note that 
\begin{align}
&\nonumber
\P \left( \sum_{k=0}^\infty e^{\underline S_k^{(u)}} > \frac{u^{1-\beta}}{v}+\delta \right)\\ 
&\geq \notag
\P \left( \sum_{k=0}^\infty e^{S_k} > \frac{u^{1-\beta}}{v}+2\delta \right) - \P \left( \sum_{k=0}^\infty e^{S_k} - \sum_{k=0}^\infty e^{\underline S_k^{(u)}} > \delta \right)\\
\nonumber&\geq 
\P \left( \sum_{k=0}^\infty e^{S_k} > \frac{u^{1-\beta}}{v}+2\delta \right) - 
\delta^{-1} \E \left[ \sum_{k=0}^\infty e^{S_n} - \sum_{k=0}^\infty e^{\underline S_n^{(u)}} \right]\\
\nonumber&= 
\P \left( \sum_{k=0}^\infty e^{S_k} > \frac{u^{1-\beta}}{v}+2\delta \right) - \delta^{-1} \left( \sum_{k=0}^\infty (\E A_1)^k - \sum_{k=0}^\infty (\E (A_1 - u^{-\gamma}|B_1^*|)^+)^k \right)\\
\nonumber&= 
\P \left( \sum_{k=0}^\infty e^{S_k} > \frac{u^{1-\beta}}{v}+2\delta \right)
 - \delta^{-1} \left( \frac{1}{1-\E A_1} - \frac{1}{1-\E (A_1 - u^{-\gamma}|B_1|)^+} \right)
 \end{align}\begin{align}
\nonumber&\geq 
\P \left( \sum_{k=0}^\infty e^{S_k} > \frac{u^{1-\beta}}{v}+2\delta \right) - \delta^{-1} \frac{\E A_1 - \E (A_1 - u^{-\gamma}|B_1|)^+}{(1-\E A_1)(1-\E (A_1 - u^{-\gamma}|B_1|)^+)}\\ 
&\geq 
\P \left( \sum_{k=0}^\infty e^{S_k} > \frac{u^{1-\beta}}{v}+2\delta \right) - \delta^{-1} \frac{u^{-\gamma}\E |B_1|}{(1-\E A_1)(1-\E (A_1 - u^{-\gamma}|B_1|)^+)}.\nonumber
\end{align}
Again from \eqref{eq: tail asymptotics on the stationary distribution of AR(1) process} along with the assumption that $(1-\beta)\alpha < \gamma$, we get
\begin{equation}
\P \left( \sum_{k=0}^\infty e^{\underline S_k^{(u)}} > \frac{u^{1-\beta}}{v}+\delta \right)
\geq C_\infty u^{-(1-\beta)\alpha}v^\alpha + o(u^{-(1-\beta)\alpha}).
\label{eq: lemma 4.2 eq10}
\end{equation}
In view of \eqref{eq: lemma 4.2 eq08}--\eqref{eq: lemma 4.2 eq10}, we have that 
\[
\liminf_{u\to\infty} u^{(1-\beta)\alpha} \P \left( \sum_{k=T(u^\beta)}^{K_\beta^\gamma(u)-1} X_k > u \,\middle|\, \frac{X_{T(u^\beta)}}{u^\beta}=v \right) \geq C_\infty v^\alpha.
\]
Combining this with \eqref{eq: lemma 4.2 eq04} we have that 
\[
\lim_{u\to\infty} u^{(1-\beta)\alpha} \P \left( \sum_{k=T(u^\beta)}^{K_\beta^\gamma(u)-1} X_k > u \,\middle|\, \frac{X_{T(u^\beta)}}{u^\beta} \right) \left( \frac{X_{T(u^\beta)}}{u^\beta} \right)^{-\alpha} = C_\infty,
\]
$\P^\alpha$-almost surely. 

Next we show  boundedness of $\mathscr G_+(u)$. 
Using \eqref{eq: lemma 4.2 eq04}, for $\epsilon>0$, and by separately considering $v\leq 1+\epsilon$ and $v\geq 1+\epsilon$, there exists $U(\epsilon)$ (independent of $v$) such that
%\CR{need more argument; the convergence in \eqref{eq: lemma 4.2 eq04} should be uniform in $v$ to conclude this; seems to be fixable by considering $v> 1+\epsilon$ and $v \leq 1+ \epsilon$ separately.}\BZ{We can sketch this procedure in a sentence}%
\[
\left(\frac{u^{(1-\beta)}}{v}\right)^\alpha \P \left( \sum_{k=T(u^\beta)}^{K_\beta^\gamma(u)-1} X_k > u \,\middle|\, \frac{X_{T(u^\beta)}}{u^\beta} = v \right) \leq C_\infty + \epsilon, 
%\ \mbox{for all}\ u^{(1-\beta)}\geq v U(\epsilon).
\]
for all $u^{(1-\beta)}\geq v U(\epsilon)$. 
Moreover, for all $0<u^{(1-\beta)}<v U(\epsilon)$,
\begin{equation}\label{eq: lemma 4.2 eq11}
u^{(1-\beta)\alpha} \P \left( \sum_{k=T(u^\beta)}^{K_\beta^\gamma(u)-1} X_k > u \,\middle|\, \frac{X_{T(u^\beta)}}{u^\beta} = v \right) \leq u^{(1-\beta)\alpha} \leq v^\alpha U(\epsilon)^\alpha.
\end{equation}
Thus 
\[
u^{(1-\beta)\alpha} \P \left( \sum_{k=T(u^\beta)}^{K_\beta^\gamma(u)-1} X_k > u \,\middle|\, \frac{X_{T(u^\beta)}}{u^\beta} = v \right) \leq \max \{ C_\infty + \epsilon, U(\epsilon)^\alpha \} v^\alpha, 
%\qquad\mbox{for all}\ u>0.
\]
for all $u>0$. 
This implies that $\mathscr G_+(u) \leq \max \{ C_\infty + \epsilon, U(\epsilon)^\alpha \}<\infty$. 

Finally, we show the statements involved with $\mathscr G_-$. 
By the Markov property, it is sufficient to show that, for any arbitrary $\epsilon>0$ and $v< -1$
\[
\varlimsup_{u\to\infty} 
u^{(1-\beta)\alpha} 
\P \left( \sum_{k=0}^{K_\beta^\gamma(u)-1} X_k > u \,\middle|\, \frac{X_0}{u^\beta}=v \right) 
\leq \epsilon |v|^\alpha.
\]
%where $K_\beta^\gamma(u) = \inf\{n\geq1\colon |X_n|\leq u^\gamma\}$. 
Recall 
$
\mathfrak E_2(u) = \{ | B_n | \leq u^\gamma,\forall 1\leq n< K_\beta^\gamma(u) \}
$
defined in \eqref{eq: E_2^gamma(u)}.
We have that
\begin{align*}
\P \left( \sum_{k=0}^{K_\beta^\gamma(u)-1} X_k > u \,\middle|\, \frac{X_0}{u^\beta}=v \right)
&\leq 
\P \left( \sum_{k=0}^{K_\beta^\gamma(u)-1} X_k > u,\, \mathfrak E_2(u) \,\middle|\, \frac{X_0}{u^\beta}=v \right)
+
\P \left( \mathfrak E_2(u)^c \,\middle|\, X_0=v u^\beta \right)\\
&= 
\P \left( \mathfrak E_2(u)^c \,\middle|\, X_0=v u^\beta \right) = o( u^{-(1-\beta)\alpha} ) |v|,
\end{align*}
thanks to Lemma~\ref{lem: bounding sign-switching event}. 
The boundedness of $\mathscr G_u^-$ follows using similar arguments as in \eqref{eq: lemma 4.2 eq11}. 
\end{proof}

\begin{remark}\label{rmk: tail asymptotics w.r.t. |X_n|}
Using similar arguments as in the proof of Lemma~\ref{lem: almost sure convergence of G+ and G-}, one can show that 
\[
\lim_{u\to\infty}  u^{(1-\beta)\alpha} \P^\mathscr D \left( \sum_{n=T(u^\beta)}^{K_\beta^\gamma(u)-1} |X_n| > u \,\middle|\, \mathcal F_{T(u^\beta)} \right) 
\left| \frac{X_{T(u^\beta)}}{u^\beta} \right|^{-\alpha} = C_\infty. 
\]
As a consequence of this result, we have that 
\[
u^\alpha \P ( \sum_{n=0}^{r_1-1} |X_n| > u ) \to C_\infty \E^\alpha [|Z|^\alpha \1_{\{r_1=\infty\}}], 
\hspace{1cm} u\rightarrow\infty.
\]
\end{remark}

\linkdest{proof of lem: integrability of Z bar}
\begin{proof}[Proof of Lemma~\ref{lem: integrability of Z bar}]
\linksinpf{lem: integrability of Z bar}
Note that $Z_{T(u^\beta)}^+\1_{\{T(u^\beta)<\tau_d\}}$ and $Z_{T(u^\beta)}^-\1_{\{T(u^\beta)<\tau_d\}}$ are bounded by $|Z_{T(u^\beta)}\1_{\{T(u^\beta)<\tau_d\}}|$, for which we have that 
\[
| Z_{T(u^\beta)} \1_{\{T(u^\beta)<\tau_d\}} |
\leq 
|X_0| + \sum_{n=1}^{T(u^\beta)} |B_n| e^{-S_n} \1_{\{T(u^\beta)<\tau_d\}}
\leq 
|X_0| + \sum_{n=1}^\infty |B_n| e^{-S_n} \1_{\{n<\tau_d\}} = \bar Z. 
\]%\end{align*}
Moreover, using Minkowski's inequality we have that 
\begin{align*}
\big(\E^\alpha [\bar Z^\alpha]\big)^{1/\alpha}
&\leq 
\big(\E |X_0|^\alpha\big)^{1/\alpha}
+ 
\sum_{n=1}^\infty \big(\E^\alpha \left[ |B_n|^\alpha e^{-\alpha S_n} \1_{\{n<\tau_d\}} \right]\big)^{1/\alpha}
\\
&= 
\big(\E |X_0|^\alpha\big)^{1/\alpha}
+ 
\sum_{n=1}^\infty \big(\E \left[ |B_n|^\alpha \1_{\{n<\tau_d\}} \right]\big)^{1/\alpha}\\
&\leq 
\big(\E |X_0|^\alpha\big)^{1/\alpha}
+ 
(\E |B_1|^{\alpha+\epsilon} )^{1/(\alpha+\epsilon)} \sum_{n=1}^\infty \P (\tau_d>n)^{\epsilon /(\alpha(\alpha+\epsilon))}<\infty,
\end{align*}
where in the second last inequality we used H\"older's inequality, and the finiteness follows from the fact that
%$\tau^d$ has finite expectation.
$\P(\tau_d>n)$ decays exponentially in $n$ uniformly in $|X_0| \leq d$, as established in Lemma~\ref{lem: exponential decay of regeneration time}. 
%\CR{Dependence on the initial state?}
% \todo{I changed this argument; please verify}
\end{proof}
% \todo{The terms in $\left[ |B_n| \1_{\{n<\tau_d\}} \right]$ do not seem to be independent, so we might have to use Holders inequality. Also, I wonder for the first inequality, whether Minkowski should result in a factor $|B_n|^\alpha$ instead of $|B_n|$. If I am right on both accounts, we need to assume a finite $\alpha+\epsilon$ moment for $|B_n|$}

\linkdest{proof of lem: bound on moments of regeneration time}
\begin{proof}
[Proof of Lemma~\ref{lem: bound on moments of regeneration time}]
\linksinpf{lem: bound on moments of regeneration time}
Recall that 
$\tau = \inf\{ n\geq1 \colon Y_n \leq d\}$.
Using Minkowski's inequality we obtain that
\begin{align*}
&
\E [ r^{\alpha+L} \,|\, Y_0=x ]^{1/(\alpha+L)}
= 
\E [ (\tau + r - \tau)^{\alpha+L} \,|\, Y_0=x ]^{1/(\alpha+L)}\\ 
&\leq 
\E [ \tau^{\alpha+L} \,|\, Y_0=x ]^{1/(\alpha+L)} 
+ 
\E [ (r - \tau)^{\alpha+L} \,|\, Y_0=x ]^{1/(\alpha+L)}\\ 
&\leq 
\E [ \tau^{\alpha+L} \,|\, Y_0=x ]^{1/(\alpha+L)} 
+ 
\sup_{y\in[0,d]} \E [ r^{\alpha+L} \,|\, Y_0=y ]^{1/(\alpha+L)}\\
&\leq 
\E [ \tau^{\alpha+L} \,|\, Y_0=x ]^{1/(\alpha+L)} 
+ 
\sup_{y\in[0,d]} \E [ t^r \,|\, Y_0=y ]^{1/(\alpha+L)} 
+ 
\mathcal O(1),
\end{align*}
as $x\to\infty$, 
where, by following the arguments as in the proof of Lemma~\ref{lem: exponential decay of regeneration time}, $t$ can be chosen such that $\sup_{y\in[0,d]} \E [ t^r \,|\, Y_0=y ]<\infty$. 
For this choice of $t$, we have that 
\begin{align*}
\E [ r^{\alpha+L} \,|\, Y_0=x ]^{1/(\alpha+L)}
&\leq 
\E [ \tau^{\alpha+L} \,|\, Y_0=x ]^{1/(\alpha+L)} 
+ 
\mathcal O(1),\qquad\mbox{as}\ x\to\infty.
\end{align*}
Finally, using Lemma~\ref{lem: bound on moments of first return time} above we have  
$
\E [ r^{\alpha+L} \,|\, Y_0=x ] \leq c x^{\lfloor \alpha-\epsilon \rfloor}
$ for sufficiently large $x$.
\end{proof}

\linkdest{proof of lem: asymptotics of P(T(u^beta))<r_1}
\begin{proof}[Proof of Lemma~\ref{lem: asymptotics of P(T(u^beta))<r_1}]
\linksinpf{lem: asymptotics of P(T(u^beta))<r_1}
Note that both $|Z_{T(u)}^\alpha| \1_{\{T(u) < r_1\}}$ and $|Z_n^\alpha| \1_{\{n \leq r_1\}}$ are bounded by 
\[
\bar Z = |X_0| + \sum_{n=1}^\infty |B_n| e^{-S_n} \1_{\{n<r_1\}},
\] 
whose $\alpha$-th moment is finite thanks to Lemma~\ref{lem: integrability of Z bar}. 
Moreover, note that $\{X_n\}_{n\geq0}$ is transient in the $\alpha$-shifted measure (cf.\ Lemma~\ref{lem: asymptotics of X_n and Z_n} above), and hence, $T(u)<\infty$ a.s. 
Applying a change of measure argument, we obtain that 
\begin{align*}
u^{\alpha}\P ( T(u) < r_1)
&= 
u^{\alpha} \E^\alpha [ e^{-\alpha S_{T(u)}} \1_{\{T(u) < r_1\}} ]
= 
\E^\alpha \left[ |Z_{T(u)}|^\alpha \left| \frac{X_{T(u)}}{u} \right|^{-\alpha} \1_{\{T(u) < r_1\}} \right]\\
&= 
\E^\alpha \left[ |Z_n|^\alpha \1_{\{n\leq T(u)\}} \1_{\{n \leq r_1\}} \E^\alpha \left[ \left| \frac{X_{T(u)}}{u} \right|^{-\alpha} \,\middle|\, \mathcal F_n \right] \right]\\
&\hspace{12.5pt}+
\E^\alpha \left[ \left( |Z_{T(u)}|^\alpha \1_{\{T(u) < r_1\}} - |Z_n|^\alpha \1_{\{n\leq T(u)\}} \1_{\{n \leq r_1\}} \right) \left| \frac{X_{T(u)}}{u} \right|^{-\alpha} \right]\\
&= 
\textbf{(II.1)} + \textbf{(II.2)},
\end{align*}
where $\{\mathcal F_n\}_{n\geq0}$ is the natural filtration.
Since $(X_{T(u)}/u)^{-\alpha}\leq1$ and $T(u)\to\infty$ a.s.\ as $u\to\infty$, 
\begin{align*}
\nonumber
\lim_{n\to\infty} \lim_{u\to\infty} \textbf{(II.2)}
&\leq \notag
\lim_{n\to\infty} \lim_{u\to\infty} 
\E^\alpha \left[ |Z_{T(u)}|^\alpha \1_{\{T(u) < r_1\}} - |Z_n|^\alpha \1_{\{n\leq T(u)\}} \1_{\{n \leq r_1\}} \right]\\
\nonumber&= 
\lim_{n\to\infty} 
\E^\alpha \left[ \lim_{u\to\infty} \left( |Z_{T(u)}|^\alpha \1_{\{T(u) < r_1\}} - |Z_n|^\alpha \1_{\{n\leq T(u)\}} \1_{\{n \leq r_1\}} \right) \right]\\
&= 
\lim_{n\to\infty} 
\E^\alpha \left[ |Z|^\alpha \1_{\{r_1=\infty\}} - |Z_n|^\alpha \1_{\{n \leq r_1\}} \right]
= 0.
\end{align*}
It remains to consider \textbf{(II.1)}. 
Note that, given $\mathcal F_n$, $n\leq T(u)$, the random variable $\log |X_{T(u)}| - \log u$ converges in distribution to some positive random variable $\mathfrak X$---which is independent of $\mathcal F_n$, $n\leq T(u)$---as $u\to\infty$, under the $\alpha$-shifted measure (cf.\ e.g.\ Theorem 3.8 of \cite{collamore2016}). 
Hence we have that 
\begin{align*}
\lim_{u\to\infty} \E^\alpha \left[ \left| \frac{X_{T(u)}}{u} \right|^{-\alpha} \,\middle|\, \mathcal F_n \right] \1_{\{n\leq T(u)\}} 
= 
\E \left[ e^{-\alpha \mathfrak X} \right].
\end{align*}
Moreover, using  dominated convergence and the fact 
$
\1_{\{n\leq T(u)\}} \E^\alpha [ | X_{T(u)}/u |^{-\alpha} \,|\, \mathcal F_n ] \leq 1,
$ 
we obtain 
\begin{align*}
\lim_{n\to\infty} \lim_{u\to\infty} \textbf{(II.1)}
&= 
\lim_{n\to\infty} \E^\alpha \left[ |Z_n|^\alpha \1_{\{n \leq r_1\}} \lim_{u\to\infty} \E^\alpha \left[ \left| \frac{X_{T(u)}}{u} \right|^{-\alpha} \,\middle|\, \mathcal F_n \right] \1_{\{n\leq T(u)\}} \right]\\ 
&= 
\E \left[ e^{-\alpha \mathfrak X} \right] \lim_{n\to\infty} \E^\alpha \left[ |Z_n|^\alpha \1_{\{n \leq r_1\}} \right]
= 
\E \left[ e^{-\alpha \mathfrak X} \right] \E^\alpha \left[ |Z|^\alpha \1_{\{r_1=\infty\}} \right],
\end{align*}
completing the proof
\end{proof}

\begin{proof}[Proof of Lemma~\ref{lem: bounding sign-switching event}]
\linksinpf{lem: bounding sign-switching event}
To begin with, 
we write, for some $\delta>0$,
\begin{align*}
\P ( (\mathfrak E_2(u))^c | X_0 = v u^\beta ) 
&= 
\P ( \exists n< K_\beta^\gamma(u) \colon |B_n| > u^\gamma | X_0 = v u^\beta )\\
&\leq 
\P ( \exists n< \tau_d \colon |B_n| > u^\gamma | X_0 = v u^\beta )\\
&\leq 
\P ( \exists n\leq u^{\delta} \colon |B_n| > u^\gamma )
+ 
\P ( \tau_d \geq u^{\delta} | X_0 = v u^\beta )\\ 
&= 
\textbf{(III.1)} + \textbf{(III.2)}. 
\end{align*}
To bound \textbf{(III.1)}, we have that 
\begin{align*}
\textbf{(III.1)} 
&\leq 
u^{\delta} \P( |B_1| > u^\gamma ) 
\leq 
u^{\delta-\alpha\gamma} \E |B_1|^\alpha = o(u^{-(1-\beta)\alpha}),
\end{align*}
for \linkdest{cond-one-minus-beta-alpha-plus-delta-minus-alpha-gamma-less-than-zero}$(1-\beta)\alpha + \delta - \alpha\gamma<0$. 
Since
$
\textbf{(III.2)} 
\leq 
u^{-(\alpha+L)\delta} \E [ \tau_d^{\alpha+L} | X_0 = v u^\beta ]
$, 
it is sufficient to bound $\E [ \tau_d^{\alpha+L} | X_0 = v u^\beta ]$. 
Recall $\{Y_n\}_{n\geq0}$ is the $\mathbb R_+$-valued Markov chain defined by $Y_{n+1} = A_{n+1}Y_n + |B_{n+1}|$, for $n\geq0$, and $\tau = \inf\{ n\geq1 \colon Y_n \leq d\}$. 
Note that 
$
\E [ \tau_d^{\alpha+L} | X_0 = v u^\beta ] 
\leq 
\E [ \tau^{\alpha+L} | Y_0 = |v|u^\beta ]
$. 
Combining this with Lemma~\ref{lem: bound on moments of first return time}, we conclude that there exist $c$ and $u_0$ such that 
\begin{align*}
\textbf{(III.2)} 
&\leq 
u^{-(\alpha+L)\delta} \E [ \tau^{\alpha+L} | Y_0 = |v|u^\beta ] 
\leq 
c |v| u^\beta u^{-(\alpha+L)\delta},\qquad\forall u\geq u_0.
%= 
%o(u^{-(1-\beta)\alpha}),
\end{align*}
%for all $u\geq u_0$. 
Note that we can set $L = L(\delta,\alpha,\beta)$ to be arbitrarily large. 
Combining the estimates above for \textbf{(III.1)} and \textbf{(III.2)}, we conclude the proof. 
\end{proof}

\section{Proofs of Sections~\ref{SECTION: ONE-SIDED LARGE DEVIATIONS}~and~\ref{SECTION: TWO-SIDED LARGE DEVIATIONS}}\label{section: proofs of section 3.2 and 3.3}
Again, we briefly describe our proof strategy before diving into the technicalities. 
Define $\bar X_n' = \{ \bar X_n'(t), t\in[0,1] \}$, where
\begin{equation}\label{eq: X_n' bar}
\notationdef{nota-X-bar-n-prime}
{\bar X_n'(t)} = \frac{1}{n}\sum_{i=1}^{N(nt)} X_i'
\quad\mbox{and}\quad
\notationdef{nota-X-i-prime}
{X_i'} = \sum_{j=r_{i-1}}^{r_i-1} X_j,
\end{equation}
where $\{r_i\}_{i\geq0}$ is the sequence of regeneration times as in Remark~\ref{rmk: regeneration scheme}, and 
\begin{equation}\label{eq: N(nt)}
\notationdef{nota-N}{N(s)} = \sup \{ j\geq 0 \colon r_j-1 \leq s \}. 
\end{equation}
Thanks to Theorem~4.1 of \cite{rheeblanchetzwart2016} and Theorem~\ref{thm: tail estimates for the area under first return time/regeneration cycle} above, we are able to establish an asymptotic equivalence between $\bar X_n'$ and some random walk $\bar W_n$ that will be specified below. 
This allows us to provide a large deviations result for $\bar X_n'$, using Lemma \ref{lem: asymptotic equivalence}.
In both the one-sided and the two-sided case, we will show that the residual process $\bar X_n - \bar X_n'$ is negligible in an asymptotic sense. 

We state here three lemmata that will play key roles in the proofs of Theorems~\ref{thm: one-sided large deviations} and \ref{thm:two-sided-large-deviations}. 
Let $\bar W_n = \{ \bar W_n(t), t\in[0,1] \}$ be such that 
\begin{equation}\label{eq: auxilary random walk}
\bar W_n(t) = \frac{1}{n} \sum_{i=1}^{\lfloor nt/\E r_1 \rfloor} X_i',
\end{equation}
where $X_i'$ is as in \eqref{eq: X_n' bar}. 
We begin with stating an asymptotic equivalence between $\bar X_n'$ and $\bar W_n$, however, w.r.t.\ the $J_1$-topology, which is stronger than the $M_1'$-topology introduced in the beginning of Section~\ref{SECTION: ONE-SIDED LARGE DEVIATIONS}. 
Let $d_{J_1}$ denote the Skorokhod $J_1$ metric on $\mathbb D$, which is defined by
\[
d_{J_1}(\xi_1,\xi_2)=\inf_{\lambda\in\Lambda} ||\lambda-id||_\infty\vee||\xi_1\circ\lambda-\xi_2||_\infty,\qquad \xi_1,\xi_2\in\mathbb D,
\]
where $id$ denotes the identity mapping, $||\cdot||_\infty$ denotes the uniform metric, that is, 
$
\|x\|_\infty = \sup_{t\in[0,1]} |x(t)|
$, 
and $\Lambda$ denotes the set of all strictly increasing, continuous bijections from $[0,1]$ to itself. 
Moreover, for $j\geq 0$, define
\[
\mathbb D_{\leqslant j}^\mu = \{ \xi\in\underline{\mathbb D}_{\leqslant j}^\mu \colon \xi(0) = 0 \}
\quad\mbox{and}\quad
\mathbb D_{\ll j}^\mu = \{ \xi\in\underline{\mathbb D}_{\ll j}^\mu \colon \xi(0) = 0 \}.
\]

\begin{lemma}
\label{lem: LD for X_n' bar}
\linksinthm{lem: LD for X_n' bar}
Consider the metric space $(\mathbb D, d_{J_1})$. 
Suppose that Assumptions~\ref{ass: regularity condition AR(1) process}~and~\ref{ass: minorization AR(1) process} hold.
For any $j\geq 1$, the following holds.
\begin{enumerate}
\item If $B_1\geq 0$ and $C_+$ as in Theorem~\ref{thm: tail estimates for the area under first return time/regeneration cycle} is strictly positive, 
then the stochastic process $\bar X_n'$ is asymptotically equivalent to $\bar W_n$ w.r.t.\ $n^{-j(\alpha-1)}$ and $\mathbb D_{\leqslant j-1}^\mu$. 
\item If $C_+$ and $C_-$ as in Theorem~\ref{thm: tail estimates for the area under first return time/regeneration cycle} satisfy $C_+C_->0$, 
then the stochastic process $\bar X_n'$ is asymptotically equivalent to $\bar W_n$ w.r.t.\ $n^{-j(\alpha-1)}$ and $\mathbb D_{\ll j}^\mu$. 
\end{enumerate}
\end{lemma}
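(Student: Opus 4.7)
The strategy is to couple $\bar X_n'$ and $\bar W_n$ through the common i.i.d.\ sequence $\{X_i'\}_{i\ge 1}$. Both processes are right-continuous step functions whose $i$-th jump has identical size $X_i'/n$, differing only in temporal placement: at $(r_i-1)/n$ in $\bar X_n'$ and at the deterministic $i\E r_1/n$ in $\bar W_n$. Heuristically, their $J_1$-distance is driven by (a) the renewal fluctuation $\max_i|r_i - i\E r_1|/n$, and (b) a possible count mismatch $|N(n) - \lfloor n/\E r_1\rfloor|$. Both issues will be confined to a single high-probability renewal event.

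For a small $\eta>0$ to be calibrated later, I would introduce
\[
G_n = \Bigl\{\max_{1\le i\le 2n/\E r_1}|r_i - i\E r_1|\le \eta n\Bigr\}\cap \bigl\{|N(n) - \lfloor n/\E r_1\rfloor|\le \eta n\bigr\}.
\]
Lemma~\ref{lem: exponential decay of regeneration time} yields a finite exponential moment for $r_1$, so a standard Chernoff argument gives $\P(G_n^c)\le e^{-c(\eta) n}$, which is $o(n^{-j(\alpha-1)})$ for every fixed $j$. On $G_n$ I would construct a piecewise linear $\lambda_n\in\Lambda$ with $\lambda_n(i\E r_1/n) = (r_i-1)/n$ for $1\le i\le K_n:=\min\{N(n),\lfloor n/\E r_1\rfloor\}$, extended affinely across the (small) leftover interval. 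This yields $\|\lambda_n - \mathrm{id}\|_\infty\le 2\eta$ and $\bar X_n'\circ\lambda_n \equiv \bar W_n$ up to time $K_n\E r_1/n$, so
\[
d_{J_1}(\bar X_n',\bar W_n) \le 2\eta \vee \Delta_n, \qquad \Delta_n := \frac{1}{n}\max_{K_n<\ell\le K_n'}\Bigl|\sum_{i=K_n+1}^{\ell}X_i'\Bigr|,
\]
with $K_n' = \max\{N(n),\lfloor n/\E r_1\rfloor\}$ and $K_n'-K_n\le \eta n$ on $G_n$.

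Choosing $\eta < \delta/(4\E|X_1'|)$ absorbs the mean contribution of $\Delta_n$, reducing matters to bounding
\[
\P\bigl(\bar X_n'\in (\mathbb D\setminus\mathbb C)^{-\gamma},\,\Delta_n\ge \delta/2,\,G_n\bigr).
\]
By Theorem~\ref{thm: tail estimates for the area under first return time/regeneration cycle}, $\P(|X_1'|>x)=O(x^{-\alpha})$. The event $\{\bar X_n'\in (\mathbb D\setminus\mathbb C)^{-\gamma}\}$ forces at least $j$ of $\{X_i'\}_{i\le N(n)}$ to take atypically large values; combined with Nagaev-type estimates for centered heavy-tailed partial sums, $\{\Delta_n\ge \delta/2\}$ additionally requires at least one among the at-most-$\eta n$ \emph{unmatched} indices to carry magnitude of order $\delta n$. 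Since cycles are i.i.d.\ and the two constraints act on disjoint index sets, their probabilities effectively multiply: a union bound yields $O(n^j\cdot n^{-j\alpha})\cdot O(\eta n\cdot n^{-\alpha}) = O(n^{-(j+1)(\alpha-1)})=o(n^{-j(\alpha-1)})$, as required. The analogous bound with $\bar W_n$ in place of $\bar X_n'$ follows identically, with Theorem~4.1 of \cite{rheeblanchetzwart2016} supplying the LDP for the i.i.d.\ random walk $\bar W_n$.

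The main obstacle is the last paragraph: naive Chebyshev or Markov bounds on $\Delta_n$ yield only $O(n^{1-\alpha})$, which suffices for $j\in\{0,1\}$ but fails once $j\ge 2$. The extra factor $n^{-(\alpha-1)}$ required for larger $j$ must be extracted by intersecting with $\{\bar X_n'\in E^{-\gamma}\}$ and leveraging independence across regeneration cycles to upgrade ``$j$ large jumps in matched indices plus one more in the unmatched range'' into $j+1$ effectively independent large-value trials; care is needed to ensure the chosen threshold for ``large'' is compatible with the $\gamma$-buffer carried by $(\mathbb D\setminus\mathbb C)^{-\gamma}$. Cases~(1) and (2) of the lemma follow by the same scheme, with $\underline{\mathbb D}_{\leqslant j-1}$ replaced by $\underline{\mathbb D}_{\ll j}$ and the one-sided tail of $X_1'$ replaced by the two-sided tail $\P(|X_1'|>x)\sim (C_++C_-)x^{-\alpha}$.
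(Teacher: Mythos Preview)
Your coupling via a piecewise linear time change and your use of the exponential moment of $r_1$ to kill $G_n^c$ are both correct and match the paper. The gap is in the factorization step. The ``matched'' and ``unmatched'' index sets are \emph{not} disjoint in the way you need: when $N(n)>\lfloor n/\E r_1\rfloor$, the unmatched block $\{K_n+1,\ldots,K_n'\}=\{\lfloor n/\E r_1\rfloor+1,\ldots,N(n)\}$ sits inside $\{1,\ldots,N(n)\}$, the index set driving $\bar X_n'$. A single large $X_i'$ with $i$ in this window can simultaneously supply one of the $j$ jumps forcing $\bar X_n'\in(\mathbb D\setminus\mathbb C)^{-\gamma}$ \emph{and} make $\Delta_n\ge\delta/2$. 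So the joint event is not $(j{+}1)$-jump-rare; its probability is of order $\eta\cdot n^{-j(\alpha-1)}$ (the $\eta$ being the chance that one of the $j$ jump locations lands in an $\eta n$-wide window), not $O(n^{-(j+1)(\alpha-1)})$. With $\eta$ fixed this is not $o(n^{-j(\alpha-1)})$.

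The repair is to keep $\eta$ free, show $\limsup_n n^{j(\alpha-1)}\P(\cdots)\le c\,\eta$, and only then let $\eta\to0$. This is exactly what the paper does, but it first removes the awkward coupling through the random $N(n)$ by extending the time horizon to $[0,2]$: one checks that $d_{J_1}(\bar X_n',\mathbb D_{\ll j}^\mu)\ge\gamma$ implies $d_{J_1}^{[0,2]}(\bar W_n^{[0,2]},\mathbb D_{\ll j}^{\mu;[0,2]})\ge\gamma$ (up to an exponentially rare event), and that the residual is bounded by $\sup_{s,t\in[1-\epsilon,1+\epsilon]}|\bar W_n(s)-\bar W_n(t)|$. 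Both events are now functionals of the single deterministic-length random walk $\bar W_n^{[0,2]}$, and the joint probability is bounded by $c\epsilon\cdot n^{-j(\alpha-1)}$ directly from the large-deviations analysis in \cite{rheeblanchetzwart2016} (the intuition being that one of the $j$ big jumps must fall in the $2\epsilon$-window). Sending $\epsilon\to0$ finishes. Your scheme can be made to work along the same lines, but not with the disjointness claim or the $O(n^{-(j+1)(\alpha-1)})$ rate as stated.
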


\begin{proof}
\linksinpf{lem: LD for X_n' bar}
We only show \textit{part 2)}, since \textit{part 1)} can be proved by a similar argument. 
By Lemma~\ref{lem: asymptotic equivalence}, it is sufficient to show, for any $\delta>0$ and $\gamma > 0$, 
\begin{align}
\notag
&\varlimsup_{n\to\infty} n^{j(\alpha-1)} \P(\bar X_n'\in (\mathbb D\setminus\mathbb D_{\ll j}^\mu)^{-\gamma}, d_{J_1}(\bar X_n', \bar W_n)\geq \delta)\\
&=
\varlimsup_{n\to\infty} n^{j(\alpha-1)} \P(\bar W_n\in (\mathbb D\setminus\mathbb D_{\ll j}^\mu)^{-\gamma}, d_{J_1}(\bar X_n', \bar W_n)\geq \delta)
=0. \label{eq: lemma 5.7 eq01}
\end{align}
%Note that the following proof is modified from the proof of Theorem~4.1 in \cite{rheeblanchetzwart2016}. 
To prove \eqref{eq: lemma 5.7 eq01}, it is convenient to consider the space of paths on a longer time horizon $[0,2]$. 
Let $\bar W_n$ denote the stochastic process $\{ \bar W_n (t), t \in [0, 2] \}$ over the time horizon $[0, 2]$, and $\mathbb D_{\ll j}^{\mu;[0,2]}$ denote the space of step functions on $[0, 2]$ that corresponds to $\mathbb D_{\ll j}^\mu$. 
Let $d_{J_1}^{[0,2]}$ denote the Skorokhod $J_1$ metric on $\mathbb D^{[0,2]} = \mathbb D([0, 2], \R)$. 
%Then the limit theorem that corresponds to Theorem~4.1 of \cite{rheeblanchetzwart2016} is as follows:
Note that $d_{J_1} (\bar W_n,\mathbb D_{\ll j}^\mu) \geq \gamma$ implies that $d_{J_1}^{[0,2]} (\bar W_n^{[0,2]},\mathbb D_{\ll j}^{\mu;[0,2]}) \geq \gamma$, 
and $d_{J_1} (\bar X_n',\mathbb D_{\ll j}^\mu) \geq \gamma$ implies that either $d_{J_1}^{[0,2]} (\bar W_n^{[0,2]},\mathbb D_{\ll j}^{\mu;[0,2]}) \geq \gamma$ or $2n/\E r_1 \leq N(n)$. 
Therefore, \eqref{eq: lemma 5.7 eq01} is implied by
\begin{equation}\label{eq: lemma 5.7 eq02}
\varlimsup_{n\to\infty} n^{j(\alpha-1)} \P(d_{J_1}^{[0,2]} (\bar W_n^{[0,2]},\mathbb D_{\ll j}^{\mu;[0,2]}) \geq \gamma, d_{J_1}(\bar X_n', \bar W_n)\geq \delta)
=0.
\end{equation}
To prove \eqref{eq: lemma 5.7 eq02}, we adopt the construction of a piecewise linear nondecreasing homeomorphism $\bar \lambda_n$ from \cite[the proof of Theorem 4.1]{rheeblanchetzwart2016}. 
Let $t_0 = 0$ 
and $t_i$ be the $i$-th jump time of $N(n\cdot)$ and $t_L$ be the last jump time of $N(n\cdot)$. 
Let $L = (\lfloor n/\E r_1 \rfloor - 1)\wedge N(n)$. 
%Let $L = N(n)-1$. 
Define $\bar \lambda_n$ in such a way that $\bar \lambda_n(t) = \E r_1 N(nt) / n$ on $t_0,\ldots,t_L$, $\bar \lambda_n(1) = 1$, and $\bar \lambda_n$ is a piecewise linear interpolation in between. 
For such $\bar \lambda_n$, $\bar W_n(\bar\lambda_n(t)) = \bar X_n'(t)$ for all $t\in[0,t_L]$, and hence, 
$\| \bar W_n \circ \bar\lambda_n - \bar X_n' \|_\infty = \sup_{t\in[t_L,1]} | \bar W_n \circ \bar\lambda_n (t) - \bar X_n'(t) |$. 
Therefore, 
\begin{align}
d_{J_1} ( \bar W_n, \bar X_n' ) 
&= \notag
\inf_{\lambda \in \Lambda} \| \lambda - id \|_\infty \vee \| \bar W_n \circ \lambda - \bar X_n' \|_\infty\leq \notag
\| \bar \lambda_n - id \|_\infty \vee \| \bar W_n \circ \bar\lambda_n - \bar X_n' \|_\infty\\
&= 
\| \bar \lambda_n - id \|_\infty \vee \sup_{t\in[t_L,1]} | \bar W_n \circ \bar\lambda_n (t) - \bar X_n'(t) |.\label{eq: lemma 5.7 eq03}
\end{align}
The second term can be bounded (with high probability) as follows.
For an arbitrary $\epsilon > 0$, consider two cases: 
$\lfloor n/\E r_1-n\epsilon \rfloor < N(n) < \lfloor n/ \E r_1 \rfloor$ and 
$\lfloor n/ \E r_1 \rfloor \leq N(n) \leq \lfloor n/\E r_1+n\epsilon \rfloor$. 
Set 
\[
W_n = \sum_{i=1}^{\lfloor n/\E r_1\rfloor} X_i'.
\]
If $\lfloor n/\E r_1-n\epsilon \rfloor < N(n) < \lfloor n/ \E r_1 \rfloor$, by the construction of $\bar \lambda_n$,
\begin{align}
\sup_{t\in[t_L,1]} | \bar W_n \circ \bar\lambda_n (t) - \bar X_n'(t) |
&\leq
\sup_{s,t\in[1-\epsilon,1]} | \bar W_n(s) - \bar W_n(t) |.\label{eq: lemma 5.7 eq04}
\end{align}
On the other hand, if $\lfloor n/ \E r_1 \rfloor \leq N(n) \leq \lfloor n/\E r_1+n\epsilon \rfloor$, 
\begin{align}
\sup_{t\in[t_L,1]} | \bar W_n \circ \bar\lambda_n (t) - \bar X_n'(t) |
&\leq
\sup_{s,t\in[1,1+\epsilon]} | \bar W_n(s) - \bar W_n(t) |.\label{eq: lemma 5.7 eq05}
\end{align}
From \eqref{eq: lemma 5.7 eq04} and \eqref{eq: lemma 5.7 eq05}, we see that on the event $\{ \lfloor n/\E r_1-n\epsilon \rfloor < N(n) \leq \lfloor n/\E r_1+n\epsilon \rfloor \}$,
\begin{equation}\label{eq: lemma 5.7 eq06}
\sup_{t\in[t_L,1]} | \bar W_n \circ \bar\lambda_n (t) - \bar X_n'(t) | 
\leq 
\sup_{s,t\in[1-\epsilon,1+\epsilon]} | \bar W_n(s) - \bar W_n(t) |.
\end{equation}
Using \eqref{eq: lemma 5.7 eq03} and \eqref{eq: lemma 5.7 eq06}, we obtain that
\begin{align}
& \notag
\P(d_{J_1}^{[0,2]} (\bar W_n^{[0,2]},\mathbb D_{\ll j}^{\mu;[0,2]}) \geq \gamma, d_{J_1}(\bar X_n', \bar W_n)\geq \delta)\\
&\leq \notag
\P\left( d_{J_1}^{[0,2]} (\bar W_n^{[0,2]},\mathbb D_{\ll j}^{\mu;[0,2]}) \geq \gamma, \sup_{s,t\in[1-\epsilon,1+\epsilon]} | \bar W_n(s) - \bar W_n(t) | \geq \delta \right) \\
&\hspace{12.5pt}+
\P ( \{ \lfloor n/\E r_1-n\epsilon \rfloor < N(n) \leq \lfloor n/\E r_1+n\epsilon \rfloor \}^c )
+ 
\P ( \| \bar \lambda_n - id \|_\infty \geq \delta ).\label{eq: lemma 5.7 eq07}
\end{align}

Thanks to Cram\'er's theorem, the second term in \eqref{eq: lemma 5.7 eq07} decays geometrically. 
Moreover, using that $\bar \lambda_n(t) = \E r_1 N(nt) / n$ on $t_0,\ldots,t_L$ (and linearly interpolated in between), 
we can write the last term in \eqref{eq: lemma 5.7 eq07} as
%%, we have that 
%\begin{align*}
$\P(\|N(n\,\cdot\,)/n-\,\cdot\,/\E r_1\|_\infty>\delta)$, which converges to 0 in view of the functional law of large numbers for renewal
processes (see e.g.\ Theorem 5.10 of \cite{ChenYao}).

For the first term in \eqref{eq: lemma 5.7 eq07}, we have that (see \cite[page 21]{rheeblanchetzwart2016})
%\begin{align*}
\[
\varlimsup_{n\to\infty}
n^{j(\alpha-1)} \P\Bigg( d_{J_1}^{[0,2]} (\bar W_n^{[0,2]},\mathbb D_{\ll j}^{\mu;[0,2]}) \geq \gamma, \sup_{s,t\in[1-\epsilon,1+\epsilon]} | \bar W_n(s) - \bar W_n(t) | \geq \delta \Bigg)
\leq c \epsilon
\]%\end{align*}
for some $c > 0$, 
where the intuition behind the asymptotics above is that, given the rare event takes place, 
the random walk $\bar W_n^{[0,2]}$ must have $j$ big jumps and one of them has to occur in the time interval $[1-\epsilon,1+\epsilon]$. 
Since the choice of $\epsilon>0$ was arbitrary, \eqref{eq: lemma 5.7 eq01} is proved by letting $\epsilon\to0$. 
\end{proof}
% fine up to this point

The next two lemmata are useful for future purposes. 

\begin{lemma}
\label{lem: comparing M_1' metric and J_1 metric}
\linksinthm{lem: comparing M_1' metric and J_1 metric}
For $\xi,\zeta\in\mathbb D$, we have that $d_{M_1'}(\xi,\zeta) \leq d_{J_1}(\xi,\zeta)$. 
% \CR{missing proof/reference}
\end{lemma}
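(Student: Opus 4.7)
The plan is to convert an almost-optimal $J_1$ time change into a coupled pair of parametric representations sharing a common clock. Fix $\epsilon > 0$ and pick $\lambda \in \Lambda$ with $\|\lambda - id\|_\infty \vee \|\xi \circ \lambda - \zeta\|_\infty \leq d_{J_1}(\xi, \zeta) + \epsilon$. I would then build $(u_\zeta, v_\zeta) \in \Pi'(\zeta)$ and $(u_\xi, v_\xi) \in \Pi'(\xi)$ coupled by $v_\xi = \lambda \circ v_\zeta$, and bound $\|u_\xi - u_\zeta\|_\infty \vee \|v_\xi - v_\zeta\|_\infty$ by $\|\lambda - id\|_\infty \vee \|\xi \circ \lambda - \zeta\|_\infty$.

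For the construction, let $J \subset [0,1]$ be the countable union of $\mbox{Disc}(\zeta)$ and $\mbox{Disc}(\xi \circ \lambda)$, enumerated as $\{t_k\}$, and pick positive weights $\{\sigma_k\}$ with $\sum_k \sigma_k < \infty$. I would take $v_\zeta$ to be the continuous nondecreasing surjection $[0,1] \to [0,1]$ obtained by inverting the distribution function $t \mapsto (t + \sum_{t_k \leq t} \sigma_k)/(1+\sum_k \sigma_k)$, so that $v_\zeta$ is constant on a sub-interval $[a_k, b_k]$ at each $t_k$ and strictly increases elsewhere. Then $v_\xi := \lambda \circ v_\zeta$ is also continuous, nondecreasing, and surjective. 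Off $\bigcup_k [a_k, b_k]$ set $u_\zeta(s) := \zeta(v_\zeta(s))$ and $u_\xi(s) := (\xi \circ \lambda)(v_\zeta(s))$; on each $[a_k, b_k]$ define both $u_\zeta$ and $u_\xi$ by linear interpolation in $s$, from $\zeta(t_k^-)$ to $\zeta(t_k)$ and from $\xi(\lambda(t_k)^-)$ to $\xi(\lambda(t_k))$ respectively. Continuity of $\lambda$ gives $(\xi \circ \lambda)(t_k^-) = \xi(\lambda(t_k)^-)$, so the interpolations glue continuously to the outside definitions; within each such ``dwell'' interval the linear interpolation traverses the corresponding jump monotonically in $|x - \xi(\lambda(t_k)^-)|$ (resp.\ $|x - \zeta(t_k^-)|$), confirming $(u_\xi, v_\xi) \in \Pi'(\xi)$ and $(u_\zeta, v_\zeta) \in \Pi'(\zeta)$.

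For the metric estimate, $\|v_\xi - v_\zeta\|_\infty = \sup_s |\lambda(v_\zeta(s)) - v_\zeta(s)| \leq \|\lambda - id\|_\infty$ is immediate; off the dwell intervals $|u_\xi(s) - u_\zeta(s)| \leq \|\xi \circ \lambda - \zeta\|_\infty$; and on each $[a_k, b_k]$ the difference $u_\xi(s) - u_\zeta(s)$ is a convex combination of $\xi(\lambda(t_k)^-) - \zeta(t_k^-)$ and $\xi(\lambda(t_k)) - \zeta(t_k)$, both bounded in modulus by $\|\xi \circ \lambda - \zeta\|_\infty$ (via the identity $(\xi \circ \lambda)(t_k^-) = \xi(\lambda(t_k)^-)$ together with the general inequality $|f(t^-) - g(t^-)| \leq \|f - g\|_\infty$). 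Combining yields $d_{M_1'}(\xi, \zeta) \leq d_{J_1}(\xi, \zeta) + \epsilon$, and sending $\epsilon \downarrow 0$ gives the lemma.

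The main subtlety will be the correct choice of $J$: one must reserve a dwell interval not only at every discontinuity of $\zeta$ but also at every $t_k \in \lambda^{-1}(\mbox{Disc}(\xi))$, so that $v_\xi$ stays constant long enough for $u_\xi$ to traverse the corresponding jump of $\xi$. Summability of $\{\sigma_k\}$ keeps the construction well-defined when $\mbox{Disc}(\xi) \cup \mbox{Disc}(\zeta)$ is countably infinite, and at ``unmatched'' jumps the interpolation on the non-jumping side is trivially constant, so no extra bookkeeping is required.
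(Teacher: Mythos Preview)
Your argument is correct. The paper actually states this lemma without proof, treating the inequality $d_{M_1'}\le d_{J_1}$ as a standard fact about Skorokhod topologies, so there is nothing to compare against; your write-up supplies the details the paper omits.

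A couple of minor remarks on points you flagged yourself. First, the handling of $t=0$ works out because the paper's definition of $\mbox{Disc}(\xi)$ already uses the convention $\xi(0^-)=0$, so $0\in J$ whenever either $\zeta(0)\neq 0$ or $\xi(0)=\xi(\lambda(0))\neq 0$, and the dwell interval at $0$ then lets both $u_\zeta$ and $u_\xi$ start at $0$ as required by the extended graph. Second, your CDF inversion yields a continuous $v_\zeta$ precisely because the linear part $t\mapsto t$ makes $F$ strictly increasing, so the generalized inverse has no jumps; this is worth one explicit sentence. With those two clarifications the proof is complete.
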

\begin{proof} 
As explained in Section 2.1 of \cite{vysotsky}, $d_{M_1'}(\xi,\zeta) \leq d_{M_1}(\xi,\zeta)$. Furthermore, by Theorem 12.3.2 of \cite{whitt2002},  $d_{M_1}(\xi,\zeta) \leq d_{J_1}(\xi,\zeta)$.
\end{proof}

Recall that $\mbox{Disc}(\xi)$ is the set of discontinuities of $\xi\in\mathbb D$ and was defined in \eqref{eq: discontinuities}. 

\begin{lemma}\label{lem: implication of convergence w.r.t. M_1' metric}
If $d_{M_1'}(\xi_n,\xi) \to 0$ as $n\to\infty$, then, for each $t\in \mbox{Disc}(\xi)^c$
\[
\lim_{\delta\downarrow0}\, 
\varlimsup_{n\to\infty}\, 
\sup_{t_1\in \mathscr B_\delta(t)\cap [0,1]} | \xi_n(t_1) - \xi(t_1) | = 0.
\]
\end{lemma}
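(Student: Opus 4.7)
The plan is to exploit the parametric representations underlying $d_{M_1'}$ together with the continuity of $\xi$ at $t$. Given $\epsilon>0$, the hypothesis $\xi(t^-)=\xi(t)$ combined with the cadlag property of $\xi$ lets me choose $\delta_0>0$ so that both $|\xi(s)-\xi(t)|$ and $|\xi(s^-)-\xi(t)|$ are bounded by $\epsilon/3$ for every $s\in\mathscr B_{\delta_0}(t)\cap[0,1]$ (the bound on $\xi(s^-)$ follows from the bound on $\xi(s)$ by taking left limits). Because the quantity $\limsup_n\sup_{t_1\in\mathscr B_\delta(t)\cap[0,1]}|\xi_n(t_1)-\xi(t_1)|$ is monotone in $\delta$, it will suffice to show that for each $\epsilon>0$ there is some $\delta>0$ for which this $\limsup$ is at most $\epsilon$.

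For each $n$, I would pick $(u_n,v_n)\in\Pi'(\xi_n)$ and $(\tilde u_n,\tilde v_n)\in\Pi'(\xi)$ achieving $\|u_n-\tilde u_n\|_\infty\vee\|v_n-\tilde v_n\|_\infty<\eta_n$, where $\eta_n := d_{M_1'}(\xi_n,\xi)+1/n\downarrow 0$. For $\delta<\delta_0/2$, $n$ large enough that $\eta_n<\min\{\delta_0/2,\epsilon/3\}$, and $t_1\in\mathscr B_\delta(t)\cap[0,1]$, set $s_n(t_1):=\max\{s\in[0,1]\colon v_n(s)=t_1\}$; this maximum exists because $v_n$ is continuous and surjective onto $[0,1]$. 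The key technical observation is that the lexicographic order defining $\Gamma_{\xi_n}'$ forces $u_n$ to sweep the vertical segment at time $t_1$ from $\xi_n(t_1^-)$ monotonically to $\xi_n(t_1)$ while $v_n$ is flat, so that $u_n(s_n(t_1))=\xi_n(t_1)$.

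With this identity in hand, $|\tilde v_n(s_n(t_1))-t_1|<\eta_n$ places $\tilde v_n(s_n(t_1))$ in $\mathscr B_{\delta_0}(t)\cap[0,1]$, and $\tilde u_n(s_n(t_1))$ lies on the completed graph of $\xi$ at that time, hence between $\xi(\tilde v_n(s_n(t_1))^-)$ and $\xi(\tilde v_n(s_n(t_1)))$, both within $\epsilon/3$ of $\xi(t)$ by the choice of $\delta_0$. Chaining
\[
|\xi_n(t_1)-\xi(t_1)|\le|u_n(s_n(t_1))-\tilde u_n(s_n(t_1))|+|\tilde u_n(s_n(t_1))-\xi(t)|+|\xi(t)-\xi(t_1)|<\epsilon
\]
uniformly in $t_1\in\mathscr B_\delta(t)\cap[0,1]$ and sending $n\to\infty$, then $\delta\to 0$, then $\epsilon\to 0$, concludes the argument.

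The main obstacle is the identification $u_n(s_n(t_1))=\xi_n(t_1)$: one must carefully read off from the ordering on $\Gamma_{\xi_n}'$ (first by $t$-coordinate, then by distance from $\xi_n(\cdot^-)$) that the \emph{right} endpoint $s_n(t_1)$ of the flat stretch of $v_n$ at level $t_1$ corresponds to the \emph{right-continuous} value $\xi_n(t_1)$ rather than the left limit $\xi_n(t_1^-)$. It is also worth noting that although the parametric representations $(\tilde u_n,\tilde v_n)$ of $\xi$ may vary with $n$, this does not cause difficulty because the bound on $|\tilde u_n(s_n(t_1))-\xi(t)|$ depends only on the position of $\tilde v_n(s_n(t_1))$ relative to $t$, not on the particular parametrization chosen.
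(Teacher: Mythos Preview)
Your proof is correct and follows essentially the same approach as the paper's: pick near-optimal parametric representations, use continuity of $\xi$ at $t$ to control the spatial component of the $\xi$-parametrization in a time-neighborhood of $t$, identify $\xi_n(t_1)$ as a value of $u_n$ at a suitable parameter, and chain the three errors. The only differences are cosmetic: the paper localizes via reference parameters $\underline s,s,\overline s$ with $v(\underline s)=t-\delta/2$, $v(s)=t$, $v(\overline s)=t+\delta/2$ and uses the intermediate value theorem to trap $s_n$, whereas you take $s_n(t_1)=\max\{s:v_n(s)=t_1\}$ directly; and you are more explicit than the paper about why the graph order forces $u_n(s_n(t_1))=\xi_n(t_1)$ and about controlling $|\xi(s^-)-\xi(t)|$ via left limits.
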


\begin{proof}
Let $t\in\mbox{Disc}(\xi)^c$. 
We first prove the statement for the case where $t\in(0,1)$. 
Let $\epsilon>0$ be fixed. 
Choose $\delta = \delta(\epsilon) >0$ such that 
\begin{equation}\label{eq: lemma 6.3 eq01}
|\xi(t_1)-\xi(t)|<\epsilon,\qquad\mbox{for}\ t_1\in\mathscr B_\delta(t)\subseteq(0,1).
\end{equation}
By the definition of $M_1'$ convergence, for given $\epsilon$, there exists $n_0$, such that 
$d_{M_1'}(\xi_n,\xi) < (\delta \wedge \epsilon)/8$ for all $n \geq n_0$. 
Moreover, for each fixed $n \geq n_0$, one can find $(u_n,v_n)\in\Gamma'(\xi_n)$ and $(u,v)\in\Gamma'(\xi)$ such that 
\begin{equation}\label{eq: lemma 6.3 eq02}
\| u_n - u \|_\infty \vee \| v_n - v \|_\infty < (\delta \wedge \epsilon)/4.
%,\qquad\mbox{for all}\ n\geq n_0. 
\end{equation}
Let $\underline s$, $s$, $\overline s$ be such that $v(\underline s) = t-\delta/2$, $v(s) = t$ and $v(\overline s) = t+\delta/2$. 
Moreover, by \eqref{eq: lemma 6.3 eq02} we have that $v_n(\underline s) <t-\delta/4$ and $v_n(\overline s)>t+\delta/4$. 
%for all $n\geq n_0$. 
Thus, for all $t_1\in(t-\delta/4,t+\delta/4)$ 
%and $n\geq n_0$ 
there exists $s_n\in(\underline s,\overline s)$ such that $(u_n(s_n),v_n(s_n)) = (\xi_n(t_1),t_1)$. 
Combining this with \eqref{eq: lemma 6.3 eq01} and \eqref{eq: lemma 6.3 eq02}, we obtain that 
\begin{align*}
| \xi_n(t_1) - \xi(t_1) | 
&\leq 
| \xi_n(t_1) - \xi(t) | + | \xi(t_1) - \xi(t) |= 
| u_n(s_n) - u(s) | + | \xi(t_1) - \xi(t) |\\ 
&\leq 
| u_n(s_n) - u(s_n) | + | u(s_n) - u(s) | + \epsilon\\ 
&\leq 
(\delta\wedge\epsilon)/2 + \epsilon + \epsilon < 3\epsilon. 
\end{align*}
Finally, the case where $t\in\{0,1\}$ can be dealt with similarly. 
\end{proof}

The remainder of this section is split into two parts that deal with Theorems~\ref{thm: one-sided large deviations} and \ref{thm:two-sided-large-deviations}. 

\subsection{Proof of Theorem~\ref{thm: one-sided large deviations}}

%In this section we give the proof of Theorem~\ref{thm: one-sided large deviations}. 

%\begin{proof}[Proof of Theorem~\ref{thm: one-sided large deviations}]
%\textit{Part 1):} 
We consider the case where $B_1$ is nonnegative. 
Let us give the ``roadmap'' of proving Theorem~\ref{thm: one-sided large deviations}. 

\begin{itemize}
\item In Corollary~\ref{corol: one-sided LD for X_n' bar} below we establish a sample-path large deviations result for the aggregated process $\bar X_n'$ (see \eqref{eq: X_n' bar} above) by considering a suitably defined random walk together with utilizing Theorem~4.1 of \cite{rheeblanchetzwart2016}. 
For the $\mathbb M$-convergence in Corollary~\ref{corol: one-sided LD for X_n' bar} we need Lemma~\ref{lem: closeness of D_<j} below. 
\item In Proposition~\ref{prop: one-sided asymptotic equivalence between X_n' and X_n} we show the asymptotic equivalence between the aggregated process $\bar X_n'$ and the original process $\bar X_n$. 
Again, one technical lemma, see Lemma~\ref{lem: help lemma for one-sided LD} below, is needed. 
\item Part 1) of Theorem~\ref{thm: one-sided large deviations} follows by combining Corollary~\ref{corol: one-sided LD for X_n' bar} with Proposition~\ref{prop: one-sided asymptotic equivalence between X_n' and X_n}. 
Part 2) is a direct consequence of part 1). 
\end{itemize}  

\begin{lemma}\label{lem: closeness of D_<j}
\linksinthm{lem: closeness of D_<j}
For all $j\geq0$ and all $z\in\R$, the set 
%$\underline{\mathbb D}_{\leqslant j}$ and 
$\underline{\mathbb D}_{\leqslant j}^z$ is closed w.r.t.\ $(\mathbb D,d_{M_1'})$.
\end{lemma}

Recall that  $C_j^z$ was defined in \eqref{eq: C_j^z} for $z\in\R$. 

\begin{corollary}
\label{corol: one-sided LD for X_n' bar}
\linksinthm{corol: one-sided LD for X_n' bar}
Suppose that Assumptions~\ref{ass: regularity condition AR(1) process} and \ref{ass: minorization AR(1) process} hold. 
Moreover, let $B_1\geq 0$ and $C_+$ as in Theorem~\ref{thm: tail estimates for the area under first return time/regeneration cycle} be strictly positive. 
For any $j\geq 0$, 
\[
n^{j(\alpha-1)} \P(\bar X_n'\in\,\cdot\,)\to (C_+ \E r_1)^j C_j^{\mu}(\,\cdot\,),
\]
in $\mathbb M(\mathbb D\setminus\underline{\mathbb D}_{\leqslant j-1}^\mu)$ as $n\to\infty$. 
\end{corollary}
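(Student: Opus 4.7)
My plan is to combine three ingredients. First, the auxiliary random walk $\bar W_n$ from \eqref{eq: auxilary random walk} satisfies an $\mathbb M$-convergence statement of the same form via Theorem~4.1 of \cite{rheeblanchetzwart2016}. Second, Lemma~\ref{lem: LD for X_n' bar}(1) provides asymptotic equivalence between $\bar X_n'$ and $\bar W_n$. Third, Lemma~\ref{lem: asymptotic equivalence} transports $\mathbb M$-convergence across this equivalence. To set things up, I observe that $\underline{\mathbb D}_{\leqslant j-1}^\mu$ is $M_1'$-closed (Lemma~\ref{lem: closeness of D_<j}), so $\mathbb M(\mathbb D \setminus \underline{\mathbb D}_{\leqslant j-1}^\mu)$ is well-defined; and since $d_{M_1'}\leq d_{J_1}$ (Lemma~\ref{lem: comparing M_1' metric and J_1 metric}), the $M_1'$ topology is coarser than $J_1$, so any $J_1$-level statement about $\mathbb M$-convergence or asymptotic equivalence descends to $d_{M_1'}$ with the same limit and the same $\mathbb C$.

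For the first ingredient, the i.i.d.\ summands $X_i' = \mathfrak R_i$ have tail $\P(X_1' > u) \sim C_+ u^{-\alpha}$ by Theorem~\ref{thm: tail estimates for the area under first return time/regeneration cycle}(2) and mean $\E X_1' = \mu\,\E r_1$ by the standard regenerative identity (the stationary mean of $X_n$ equals $\mu$). Applying Theorem~4.1 of \cite{rheeblanchetzwart2016} to the unscaled partial-sum process and then carrying out the deterministic time-reparametrization $t \mapsto \lfloor nt/\E r_1\rfloor$ built into the definition of $\bar W_n$ yields
\[
n^{j(\alpha-1)}\P(\bar W_n \in \,\cdot\,) \to (C_+\E r_1)^j C_j^\mu(\,\cdot\,)
\]
in $\mathbb M(\mathbb D \setminus \mathbb D_{\leqslant j-1}^\mu)$ under $d_{J_1}$, and hence in $\mathbb M(\mathbb D \setminus \underline{\mathbb D}_{\leqslant j-1}^\mu)$ under $d_{M_1'}$, since enlarging $\mathbb C$ only shrinks the admissible test-function class.

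For the remaining ingredients, Lemma~\ref{lem: LD for X_n' bar}(1) gives asymptotic equivalence of $\bar X_n'$ and $\bar W_n$ with respect to $n^{-j(\alpha-1)}$ and $\mathbb D_{\leqslant j-1}^\mu$ in $(\mathbb D,d_{J_1})$; the remark following Definition~\ref{def: asymptotic equivalence} enlarges $\mathbb C$ to $\underline{\mathbb D}_{\leqslant j-1}^\mu$, and the coarseness comparison descends the equivalence to $d_{M_1'}$. Plugging these into Lemma~\ref{lem: asymptotic equivalence} with $X_n=\bar W_n$, $Y_n=\bar X_n'$, $\epsilon_n=n^{-j(\alpha-1)}$, and $\mathbb C=\underline{\mathbb D}_{\leqslant j-1}^\mu$ concludes the argument. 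The one place I expect real bookkeeping is the time-reparametrization in the first ingredient: the RBZ theorem is most naturally stated for the standard $S_{\lfloor n\cdot\rfloor}/n$ form, so one must carefully track how the factor $\E r_1$ enters through both the number of summands and the per-cycle mean $\E X_1' = \mu\E r_1$ in order to produce the prefactor $(C_+\E r_1)^j$.
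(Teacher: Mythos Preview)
Your proposal is correct and follows essentially the same route as the paper: apply Theorem~4.1 of \cite{rheeblanchetzwart2016} to $\bar W_n$, invoke the asymptotic equivalence of Lemma~\ref{lem: LD for X_n' bar}(1), transport via Lemma~\ref{lem: asymptotic equivalence}, and use $\mathbb D_{\leqslant j-1}^\mu\subseteq\underline{\mathbb D}_{\leqslant j-1}^\mu$ together with $d_{M_1'}\leq d_{J_1}$ (Lemma~\ref{lem: comparing M_1' metric and J_1 metric}) to descend to the $M_1'$ setting. The paper's proof is simply the terse one-line version of exactly this chain; your added remarks on the time-reparametrization bookkeeping and the identity $\E X_1'=\mu\,\E r_1$ are the right way to unpack where the prefactor $(C_+\E r_1)^j$ comes from.
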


\begin{proposition}
\label{prop: one-sided asymptotic equivalence between X_n' and X_n}
\linksinthm{prop: one-sided asymptotic equivalence between X_n' and X_n}
Suppose that Assumptions~\ref{ass: regularity condition AR(1) process} and \ref{ass: minorization AR(1) process} hold. 
If $B_1\geq 0$ and $C_+$ as in Theorem~\ref{thm: tail estimates for the area under first return time/regeneration cycle} is strictly positive, 
then $\bar X_n$ is asymptotically equivalent to $\bar X_n'$ w.r.t.\ $(n\P(X_1'\geq n))^j$ and $\underline{\mathbb D}_{\leqslant j-1}^\mu$. 
\end{proposition}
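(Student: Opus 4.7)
\emph{Strategy.} I plan to reduce $d_{M_1'}(\bar X_n,\bar X_n')$ to quantities depending only on the regeneration cycles, and then control those quantities using the exponential cycle-length tail of Lemma~\ref{lem: exponential decay of regeneration time} together with the heavy-tailed cycle-sum estimate of Theorem~\ref{thm: tail estimates for the area under first return time/regeneration cycle}.

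\emph{Pathwise $M_1'$ bound.} Because $B_1\geq 0$ forces $\bar X_n$ to be nondecreasing, on each complete cycle $i$ the process $\bar X_n$ climbs monotonically from $V_{i-1}:=\tfrac{1}{n}\sum_{k=1}^{i-1}X_k'$ to $V_i$, while $\bar X_n'$ is flat at $V_{i-1}$ and performs a single jump of size $X_i'/n$ at the right endpoint of the cycle. Monotonicity on both sides lets one parametrize the extended completed graphs inside each cycle by matching vertical level, which forces the $u$-components to coincide and produces a horizontal gap of at most $\Delta_i/n$, with $\Delta_i=r_i-r_{i-1}$. The only remaining mismatch comes from the partial last cycle, which creates a vertical gap of $Y_n/n$ at $t=1$, where $Y_n:=\sum_{k=r_{N(n)}}^{n-1}X_k$. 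Concatenating the per-cycle parametrizations yields
\[
d_{M_1'}(\bar X_n,\bar X_n')\ \leq\ \max_{1\leq i\leq N(n)+1}\frac{\Delta_i}{n}\ +\ \frac{Y_n}{n}.
\]
Hence $\{d_{M_1'}(\bar X_n,\bar X_n')\geq\delta\}\subseteq\{\max_i\Delta_i\geq\tfrac{\delta n}{2}\}\cup\{Y_n\geq\tfrac{\delta n}{2}\}$. By Lemma~\ref{lem: exponential decay of regeneration time}, $\P(\Delta_1>t)\leq c\, t_0^{-t}$ for some $t_0>1$, so a union bound gives $\P(\max_i\Delta_i\geq\tfrac{\delta n}{2})\leq c(n+1)\,t_0^{-\delta n/2}=o(n^{-k})$ for every $k$, making this first event negligible compared to $n^{-j(\alpha-1)}$.

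\emph{The residual combined with the rare-event constraint.} It remains to show, for the version with $\bar X_n'$ (the one with $\bar X_n$ being symmetric),
\[
\P\bigl(\bar X_n'\in(\mathbb D\setminus\underline{\mathbb D}_{\leqslant j-1}^\mu)^{-\gamma},\,Y_n\geq\tfrac{\delta n}{2}\bigr)\ =\ o(n^{-j(\alpha-1)}).
\]
The event on the left forces at least $j$ of the cycle sums $X_1',\dots,X_{N(n)}'$ to exceed a constant multiple of $n$ (otherwise $\bar X_n'$ would be within $M_1'$-distance $\gamma$ of an element of $\underline{\mathbb D}_{\leqslant j-1}^\mu$). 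Using Theorem~\ref{thm: tail estimates for the area under first return time/regeneration cycle} to supply $\P(X_i'>c n)\sim C_+(cn)^{-\alpha}$, a standard heavy-tailed union bound over the $O(n)$ i.i.d.\ cycles shows that the probability of at least $j$ such big cycles is $O(n^{-j(\alpha-1)})$. The residual block $Y_n$ is, by the regenerative scheme of Remark~\ref{rmk: regeneration scheme}, essentially a fresh cycle independent of the preceding ones, so the event $\{Y_n\geq\tfrac{\delta n}{2}\}$ contributes an independent factor $O(n^{-\alpha})$, yielding the joint bound $O(n^{-j(\alpha-1)-\alpha})=o(n^{-j(\alpha-1)})$. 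In the symmetric version, the residual climb of $\bar X_n$ may itself account for one of the $j$ required big features, so one only needs $j-1$ big complete cycles, yielding $O(n^{-(j-1)(\alpha-1)-\alpha})=O(n^{-j(\alpha-1)-1})=o(n^{-j(\alpha-1)})$ just as well.

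\emph{Main obstacle.} The principal difficulty is that $Y_n$ has the same tail order as a single cycle sum and is therefore not negligible on its own; the argument succeeds only because the rare-event constraint simultaneously forces many big complete cycles, so the joint probability gains the extra $n^{-\alpha}$ factor from the residual. Making the near-independence of the residual cycle from the count of big complete cycles rigorous---via the splitting chain in Remark~\ref{rmk: regeneration scheme}---and carrying out the combinatorial union bound carefully is the heart of the proof.
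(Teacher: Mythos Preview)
Your approach is essentially the paper's: the same $M_1'$ inclusion $\{d_{M_1'}(\bar X_n,\bar X_n')\geq 2\delta\}\subseteq\{\exists i\leq N(n):\Delta_i\geq n\delta\}\cup\{R_n(1)\geq\delta\}$, the same exponential bound on $\max_i\Delta_i$, and the same idea of pairing the heavy residual tail with the $j$-big-cycle constraint. Two places where your sketch is looser than the paper deserve comment.

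\emph{(i) The residual is not independent of the preceding cycles.} Your statement that $Y_n$ is ``essentially a fresh cycle independent of the preceding ones'' is false as written: $N(n)$ is determined by the cycle lengths, and the $(N(n)+1)$-st cycle is the \emph{inspected} cycle, which is size-biased and correlated with $\{N(n)=k\}$. The paper's Lemma~\ref{lem: help lemma for one-sided LD}(1) fixes this by conditioning on $\{N(n)=k\}$, replacing $\bar X_n'$ by the truncated process $\bar X_{\leqslant k,n}'$ (which is then genuinely independent of $X_{k+1}'$), dropping the remaining constraint $N(n)=k$, and summing over $k$ in a Cram\'er window $[N_\epsilon^-(n),N_\epsilon^+(n)]$ of width $2\epsilon n$. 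This produces an extra factor $\epsilon$ in the bound, and one concludes by sending $\epsilon\to 0$. You flag this as ``the heart of the proof'' but do not indicate the mechanism; the $\epsilon$-window device is what makes it work.

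\emph{(ii) The $\bar X_n$-direction needs an extra case.} Your claim that ``the residual climb may itself account for one of the $j$ required big features, so one only needs $j-1$ big complete cycles'' presupposes that $R_n$ contributes at most one feature in $M_1'$. A nondecreasing $R_n$ is close to a single jump only when its time span $(n-r_{N(n)})/n$ is small; if that span is macroscopic, $R_n$ can be bounded away from $\underline{\mathbb D}_{\leqslant 1}$ and your $(j-1)$-cycle count fails. The paper isolates this as a separate case (Lemma~\ref{lem: help lemma for one-sided LD}(2)): the event $\{R_n\in(\mathbb D\setminus\underline{\mathbb D}_{\leqslant 1})^{-\rho}\}$ forces $r_{N(n)}\leq n\rho$, which by a Cram\'er-type bound on $\sum_i\Delta_i$ has exponentially small probability and is therefore $o(n^{-j(\alpha-1)})$ for every $j$. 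With that case disposed of, your $(j-1)$-plus-residual computation goes through exactly as you wrote.
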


\linkdest{proof of thm: one-sided large deviations}
\begin{proof}[Proof of Theorem~\ref{thm: one-sided large deviations}]
\linksinpf{thm: one-sided large deviations}
Part 1) follows by combining Corollary~\ref{corol: one-sided LD for X_n' bar} with Proposition~\ref{prop: one-sided asymptotic equivalence between X_n' and X_n}. 
Part 2) is a direct consequence of part 1). 
\end{proof}

\begin{proof}[Proof of Lemma~\ref{lem: closeness of D_<j}]
\linksinpf{lem: closeness of D_<j}
We give the proof for the case where $z=0$, while the proof for $z\neq 0$ follows using similar arguments. 
%$\underline{\mathbb D}_{\leqslant j}$, while the statement for $\underline{\mathbb D}_{\leqslant j}^z$ follows using similar arguments. 
The statement is trivial for $\underline{\mathbb D}_{\leqslant 0}=\{0\}$; we focus on the case where $j\geq1$. 
Let $\xi_n,n\geq1$, be a sequence such that $\xi_n\in \underline{\mathbb D}_{\leqslant j}$, for all $n\geq1$, and $\lim_{n\to\infty} d_{M_1'}(\xi_n,\xi)=0$ for some $\xi\in\mathbb D$. 
Our goal is to prove that $\xi\in\underline{\mathbb D}_{\leqslant j}$. 
Note that by Lemma~\ref{lem: implication of convergence w.r.t. M_1' metric} above, for every $t\in \mbox{Disc}(\xi)^c\cup\{1\}$,
\begin{equation}\label{eq: lemma 6.4 eq01}
\lim_{n\to\infty} \xi_n(t) = \xi(t).
\end{equation} 
We first show that $\xi$ has at most $j$ discontinuity points. 
Assume that $| \mbox{Disc}(\xi) | \geq j+1$. 
Then there exists $0\leq t_{1,-} < t_{1,+} <\cdots< t_{j+1,-} <t_{j+1,+} \leq1$ such that $t_{i,-}$, $t_{i,+}\in \mbox{Disc}(\xi)^c\cup\{1\}$, and $|\xi(t_{i,-})-\xi(t_{i,+})| > 0$, for all $i\in\{1,\ldots,j+1\}$. 
%$0<\xi(t_1)<\xi(t_2)<\cdots<\xi(t_{j+1})$.
By \eqref{eq: lemma 6.4 eq01}, there exists $N'$ such that $|\xi_{N'}(t_{i,-})-\xi_{N'}(t_{i,+})| > 0$ for all $i\in\{1,\ldots,j+1\}$. 
This leads to the contradiction that $|\mbox{Disc}(\xi_{N'})|\leq j$. 
%Recalling that $| \mbox{Disc}(\xi) |$ is either finite or countably infinite, l
Now let $\underline t < \overline t$ be two neighbouring discontinuity points of $\xi$. 
We claim that $\xi$ is constant on $(\underline t,\overline t)$. 
To see this, assume that the opposite statement holds. 
Then there exists $t_1<t_{j+2}$ such that $\underline t<t_1<t_{j+2}<\overline t$ and $\xi(t_1)\neq \xi(t_{j+2})$.
W.l.o.g.\ we assume that $\xi(t_1) < \xi(t_{j+2})$. 
Since $\xi$ is continuous on $(\underline t,\overline t)$, there exists $t_1<t_2<\cdots<t_{j+2}$ such that 
\begin{equation}\label{eq: lemma 6.4 eq02}
\xi(t_1)<\xi(t_2)<\cdots<\xi(t_{j+2}) 
\qquad\mbox{with}\quad
\epsilon' = \min_{i\in\{1,\ldots,j+1\}} \xi(t_{i+1}) - \xi(t_i).
\end{equation}
On the other hand, for any $\epsilon>0$, by \eqref{eq: lemma 6.4 eq01} there exists $N=N(\epsilon)$ such that 
\begin{equation}\label{eq: lemma 6.4 eq03}
\xi_N(t_i) \in (\xi(t_i)-\epsilon,\xi(t_i)+\epsilon), \qquad\mbox{for all}\ i\in\{1,\ldots,j+2\}.
\end{equation}
In view of \eqref{eq: lemma 6.4 eq02} and \eqref{eq: lemma 6.4 eq03}, by choosing $\epsilon<\epsilon'$ we conclude that $\xi_N$ has at least $j+1$ discontinuity points, which leads to the contradiction that  $|\mbox{Disc}(\xi_N)|\leq j$. 
Thus we conclude that $\xi$ is constant between any two neighbouring discontinuity points. 
Similarly one can show that $\xi(t^+)-\xi(t^-)>0$ for every $t\in\mbox{Disc}(\xi)$. 
%It remains to show that $\xi$ has at most $j$ discontinuity points. 
%Assume that $| \mbox{Disc}(\xi) | \geq j+1$. 
%Then there exists $0\leq t_1<\cdots<t_{j+1}\leq1$ such that $0<\xi(t_1)<\xi(t_2)<\cdots<\xi(t_{j+1})$.
%By \eqref{eq: lemma 6.4 eq01}, there exists $N'$ such that $0<\xi_{N'}(t_1)<\xi_{N'}(t_2)<\cdots<\xi_{N'}(t_{j+1})$. 
%This leads to the contradiction that $|\mbox{Disc}(\xi_{N'})|\leq j$. 
\end{proof}

\linkdest{proof of corol: one-sided LD for X_n' bar}
\begin{proof}[Proof of Corollary~\ref{corol: one-sided LD for X_n' bar}]
\linksinpf{corol: one-sided LD for X_n' bar}
Note that $\underline{\mathbb D}_{\leqslant j}^\mu = \mathbb D_{\leqslant j}^\mu \cup \{ \xi\in\mathbb D \colon \xi(0)>0,\ \xi-\xi(0)\in\mathbb D_{\leqslant j-1}^\mu\}$.
In particular, $\mathbb D_{\leqslant j}^\mu \subseteq \underline{\mathbb D}_{\leqslant j}^\mu$.
Using Lemma~\ref{lem: comparing M_1' metric and J_1 metric}, Corollary~\ref{corol: one-sided LD for X_n' bar} is a consequence of Lemma~\ref{lem: LD for X_n' bar} and Theorem~4.1 in \cite{rheeblanchetzwart2016}. 
\end{proof}

The following lemma is essential in the proof of Proposition~\ref{prop: one-sided asymptotic equivalence between X_n' and X_n}.
Recall $\bar X_n'$ was defined in \eqref{eq: X_n' bar}.
Define 
\begin{equation}\label{eq: R_n}
R_n= \{R_n(t),t\in[0,1]\},
\quad\mbox{where}\ 
R_n(t) = \frac{1}{n} \sum_{i=r_{N(n)}}^{\lfloor nt \rfloor -1} X_i.
\end{equation}

\begin{lemma}
\label{lem: help lemma for one-sided LD}
\linksinthm{lem: help lemma for one-sided LD}
Suppose that Assumptions~\ref{ass: regularity condition AR(1) process}~and~\ref{ass: minorization AR(1) process} hold.
Moreover, let $B_1\geq 0$ and $C_+$ as in Theorem~\ref{thm: tail estimates for the area under first return time/regeneration cycle} be strictly positive. 
The following holds for any $\delta>0$, $\gamma>0$, and $j\geq 0$. 
\begin{enumerate}
%\item First we have that
%\[
%\P\left( R_n(1) \geq \delta \right) = o( n\P(X_1'\geq n) ),
%\qquad\mbox{as}\ n\to\infty. 
%\]
\item First we have that 
\[
\P ( \bar X_n'\in (\mathbb D\setminus\underline{\mathbb D}_{\leqslant j-1}^\mu)^{-\gamma},R_n(1) \geq \delta ) 
= 
o ( (n\P(X_1'\geq  n))^{j+1} ),
\quad\mbox{as}\ n\to\infty.
\] 
\item Moreover, we have that 
\[
\P ( R_n\in(\mathbb D\setminus\underline{\mathbb D}_{\leqslant 1})^{-\gamma} )
= 
o ( (n\P(X_1'\geq  n))^j ), 
\quad\mbox{as}\ n\to\infty.
\]
\end{enumerate}
\end{lemma}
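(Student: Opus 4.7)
The plan is to exploit the regeneration structure together with the super-polynomial tail of the regeneration times coming from Lemma \ref{lem: exponential decay of regeneration time}. In both parts, let $L_n = n-r_{N(n)}$ be the length of the ongoing cycle at time $n$. Since $L_n$ is dominated in distribution by a random variable with exponential moments (the full cycle length), for any prescribed $K$ one can pick $c=c(K)$ large enough that $\P(L_n>c\log n)=o(n^{-K})$. Both probabilities will be split on $\{L_n\leq c\log n\}$ and its complement, and the latter piece will be absorbed into the $o(\,\cdot\,)$ by choosing $c$ large.

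For part 1, I would decompose according to the value of $N(n)$ and invoke the strong Markov property at the stopping time $r_k$: in
\[
\P(\bar X_n'\in (\mathbb D\setminus\underline{\mathbb D}_{\leqslant j-1}^\mu)^{-\gamma},\,R_n(1)\geq\delta)
=\sum_{k\geq 0}\P(\bar X_n'\in (\mathbb D\setminus\underline{\mathbb D}_{\leqslant j-1}^\mu)^{-\gamma},\,N(n)=k,\,R_n(1)\geq\delta),
\]
the event on the first $k$ cycles is independent of the partial cycle producing $R_n(1)$, which (conditionally on $r_k=s$) is distributed as a fresh chain started from $\phi$ and truncated at time $n-s$. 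This yields a product bound whose first factor sums to $\P(\bar X_n'\in (\mathbb D\setminus\underline{\mathbb D}_{\leqslant j-1}^\mu)^{-\gamma})=\mathcal O((n\P(X_1'\geq n))^j)$ by Corollary~\ref{corol: one-sided LD for X_n' bar}. For the second factor, on $\{L_n\leq c\log n\}$ I bound $R_n(1)\leq L_n\cdot\max_i |X_i|/n$ and apply the Kesten--Goldie tail to the chain restarted from $\phi$, obtaining $\P(R_n(1)\geq\delta)=\mathcal O((\log n)^{1+\alpha}n^{-\alpha})=o(n\P(X_1'\geq n))$. Multiplying yields the required $o((n\P(X_1'\geq n))^{j+1})$.

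For part 2, the decisive point is that $R_n$ vanishes on $[0,r_{N(n)}/n]$ and accumulates its entire path inside the window $[r_{N(n)}/n,1]$, which on $\{L_n\leq c\log n\}$ has width $c\log n/n\to 0$. The candidate approximant is the single-jump path $\zeta(t)=R_n(1)^+\mathbbm 1_{[r_{N(n)}/n,\,1]}(t)\in \underline{\mathbb D}_{\leqslant 1}$. I would construct explicit parametric representations of $R_n$ and $\zeta$ to bound $d_{M_1'}(R_n,\zeta)$ by the window width plus the total negative variation $V_n$ of $R_n$. Because $A_n\geq 0$ and $B_n\geq 0$, the one-sided dynamics force $X_i$ to remain nonnegative once it becomes nonnegative, so the negative $X_i$ within the cycle are bounded by products of the form $d\cdot A_{r_{N(n)}+1}\cdots A_{r_{N(n)}+i}$, giving $V_n=\mathcal O(L_n/n)$. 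Hence on $\{L_n\leq c\log n\}$, $d_{M_1'}(R_n,\underline{\mathbb D}_{\leqslant 1})\to 0$, and the complement contributes $o(n^{-K})$ for any $K$, in particular $o((n\P(X_1'\geq n))^j)$.

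The main obstacle I anticipate is the explicit $M_1'$-approximation in part 2: one must slow the parametric representation of the staircase $R_n$ (whose many small jumps cluster inside the vanishing window) so as to synchronize with the single vertical segment of $\zeta$, while keeping the spatial discrepancy controlled by $V_n$. A secondary subtlety in part 1 is the conditioning on $\{N(n)=k,\,r_k=s\}$: the residual cycle is strictly drawn from the cycle distribution conditioned on length exceeding $n-s$, and it is precisely the exponential-tail input of Lemma~\ref{lem: exponential decay of regeneration time} that prevents this conditioning from inflating the tail of $R_n(1)$.
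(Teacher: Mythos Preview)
Your Part 1 argument has a genuine gap in the summation step. After conditioning on $\{N(n)=k,\,r_k=s\}$ and invoking the strong Markov property, the constraint $\{N(n)=k\}$ is absorbed into the second factor (it becomes $\{r_1>n-s\}$ for the fresh chain), so what remains in the first factor is $\P(\bar X_{\leqslant k,n}'\in A,\,r_k=s)$. Summing this over $k$ and $s$ does \emph{not} give $\P(\bar X_n'\in A)$: the events $\{r_k\leq n\}$ are nested, not disjoint, so the sum is potentially of order $n\cdot\P(\bar X_n'\in A)$. The paper handles this by first restricting $N(n)$ to a window $[N_\epsilon^-(n),N_\epsilon^+(n)]$ of width $2\epsilon n$ (the complement has exponentially small probability by Cram\'er), bounding $R_n(1)\leq X_{N(n)+1}'/n$ so that the second factor becomes exactly $\P(X_1'\geq n\delta)$ by independence of cycles, and then bounding the sum crudely by $2\epsilon n\cdot \P(X_1'\geq n\delta)\cdot\P(\bar X_n'\in(\mathbb D\setminus\underline{\mathbb D}_{\leqslant j-1}^\mu)^{-\gamma/2})$. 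Corollary~\ref{corol: one-sided LD for X_n' bar} and Theorem~\ref{thm: tail estimates for the area under first return time/regeneration cycle} give a bound $c\epsilon$ after normalizing by $(n\P(X_1'\geq n))^{j+1}$, and one sends $\epsilon\to0$. Your sharper bound on the residual via $L_n\leq c\log n$ and Kesten--Goldie does not circumvent this; you still need the window-and-$\epsilon$ device to control the sum over $k$.

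For Part 2 your idea is essentially the paper's---the small time-support of $R_n$ forces it $M_1'$-close to a single-jump path---but the execution is overcomplicated, and the claim $V_n=\mathcal O(L_n/n)$ is not justified: the negative part of $X_i$ in the ongoing cycle is bounded by $d\,A_1\cdots A_i$, and $\sum_i A_1\cdots A_i$ has a power-law (not bounded) tail. The paper's argument is simpler: on $\{r_{N(n)}/n>\rho\}$ with $\rho$ close enough to $1$, $R_n$ is deterministically within $\gamma$ of $\underline{\mathbb D}_{\leqslant 1}$ in $M_1'$, so that term vanishes; then $\P(r_{N(n)}\leq n\rho)$ is reduced to a Cram\'er-type lower deviation for $\sum_{i=1}^{N_\epsilon^-(n)}\Delta_i$, which decays exponentially. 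No $\log n$ truncation, no explicit parametric representation, and no control of $V_n$ are needed.
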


% fine up to this point

\linkdest{proof of prop: one-sided asymptotic equivalence between X_n' and X_n}
\begin{proof}[Proof of Proposition~\ref{prop: one-sided asymptotic equivalence between X_n' and X_n}]
\linksinpf{prop: one-sided asymptotic equivalence between X_n' and X_n}
To begin with, for $\epsilon>0$, define
\begin{equation}\label{eq: E_3^epsilon(n)}
\mathfrak E_3^\epsilon(n)
= \{ 
N_\epsilon^-(n)
< N(n) \leq 
N_\epsilon^+(n)
\},
\end{equation}
where $N_\epsilon^-(n) = \lfloor n/\E r_1-n\epsilon \rfloor$ and $N_\epsilon^+(n) = \lfloor n/\E r_1+n\epsilon \rfloor$. 
Using Cram\'er's theorem, it is easy to see that $\P(\mathfrak E_3^\epsilon(n)^c)$ decays exponentially to $0$ as $n\to\infty$.
%\\ 
%Since $X_n>0$ for all $n\geq0$, we have 
Defining $\Delta_i = r_i-r_{i-1}$, we have that 
\begin{equation}\label{eq: proposition 6.1 eq01}
\{d_{M_1'}(\bar X_n,\bar X_n')\geq 2\delta\} \subseteq \left\{ \exists\, i\leq N(n)\ s.t.\ \Delta_i\geq n\delta \right\} \cup \left\{ R_n(1) \geq \delta \right\}. 
\end{equation}
First we show that for any $j\geq0$, $\delta>0$, and $\gamma>0$, 
\[
\lim_{n\to\infty} (n\P(X_1'\geq n))^{-j} \P(\bar X_n'\in(\mathbb D\setminus\underline{\mathbb D}_{\leqslant j-1}^\mu)^{-\gamma},d_{M_1'}(\bar X_n,\bar X_n')\geq 2\delta) = 0.
\]
By \eqref{eq: proposition 6.1 eq01} we have that
\begin{align}
\nonumber&\P(\bar X_n'\in(\mathbb D\setminus\underline{\mathbb D}_{\leqslant j-1}^\mu)^{-\gamma},d_{M_1'}(\bar X_n,\bar X_n')\geq 2\delta)\\
\nonumber&\leq 
\P(\exists\, i\leq N(n)\ s.t.\ \Delta_i\geq n\delta)
+
\P ( \bar X_n'\in(\mathbb D\setminus\underline{\mathbb D}_{\leqslant j-1}^\mu)^{-\gamma},\,R_n(1) \geq \delta )\\ 
\label{eq: proposition 6.1 eq02}&=
\P(\exists\, i\leq N(n)\ s.t.\ \Delta_i\geq n\delta) + o ( (n\P(X_1'\geq n))^j ),
\end{align}
where in \eqref{eq: proposition 6.1 eq02} we used Lemma~\ref{lem: help lemma for one-sided LD}~(1) above. 
It remains to analyze the first term in \eqref{eq: proposition 6.1 eq02}. 
Note that 
\begin{align*}
\nonumber\P( \exists\, i\leq N(n)\ s.t.\ \Delta_i\geq n\delta ) &\leq
\P( \exists\, i\leq N(n)\ s.t.\ \Delta_i\geq n\delta,  \mathfrak E_3^\epsilon(n)) 
+ 
\P(\mathfrak E_3^\epsilon(n)^c)\\ 
\nonumber&=
\P( \exists\, i\leq N(n)\ s.t.\ \Delta_i\geq n\delta,  \mathfrak E_3^\epsilon(n)) 
+ 
o( (n\P(X_1'\geq n))^j )\\ 
\nonumber&\leq 
\P\left( \exists\, i\leq \lfloor n/\E \tau_1+n\epsilon \rfloor\ s.t.\ \Delta_i\geq n\delta \right) 
+ 
o( (n\P(X_1'\geq n))^j )\\ 
\nonumber&\leq 
\lfloor n/\E r_1+n\epsilon \rfloor \P(r_1\geq n\delta)
+ 
o( (n\P(X_1'\geq n))^j )\\ 
\label{eq: proposition 6.1 eq03}&= 
o( (n\P(X_1'\geq n))^j ),
\end{align*}
for any $j\geq0$. 
Next we show that 
\[
\lim_{n\to\infty} (n\P(X_1'\geq n))^{-j} \P(\bar X_n\in(\mathbb D\setminus\underline{\mathbb D}_{\leqslant j-1}^\mu)^{-\gamma},d_{M_1'}(\bar X_n,\bar X_n')\geq 2\delta) = 0.
\]
In view of the estimation right above, it is sufficient to show that
\[
\lim_{n\to\infty} (n\P(X_1'\geq n))^{-j} \P ( \bar X_n\in(\mathbb D\setminus\underline{\mathbb D}_{\leqslant j-1}^\mu)^{-\gamma},\, \bar X_n'\in (\underline{\mathbb D}_{\leqslant j-1}^\mu)_\rho,\, R_n(1) \geq \delta ) = 0,
\] 
for some $\rho>0$.
Note that 
\begin{align*}
&\P ( \bar X_n\in(\mathbb D\setminus\underline{\mathbb D}_{\leqslant j-1}^\mu)^{-\gamma},\, \bar X_n'\in (\underline{\mathbb D}_{\leqslant j-1}^\mu)_\rho,\, R_n(1) \geq \delta )\\
&=
\P ( \bar X_n\in(\mathbb D\setminus\underline{\mathbb D}_{\leqslant j-1}^\mu)^{-\gamma},\, \bar X_n' \in (\underline{\mathbb D}_{\leqslant j-1}^\mu)_\rho \cap (\mathbb D\setminus\underline{\mathbb D}_{\leqslant j-2}^\mu)^{-\rho},\, R_n(1) \geq \delta )\\
&\hspace{12.5pt}+ 
\P ( \bar X_n\in(\mathbb D\setminus\underline{\mathbb D}_{\leqslant j-1}^\mu)^{-\gamma},\, \bar X_n' \in (\underline{\mathbb D}_{\leqslant j-1}^\mu)_\rho \cap (\underline{\mathbb D}_{\leqslant j-2}^\mu)_\rho,\, R_n(1) \geq \delta )\\
&\leq 
\P ( \bar X_n' \in (\mathbb D\setminus\underline{\mathbb D}_{\leqslant j-2}^\mu)^{-\rho},\, R_n(1) \geq \delta )\\
&\quad+ 
\P ( \bar X_n\in(\mathbb D\setminus\underline{\mathbb D}_{\leqslant j-1}^\mu)^{-\gamma},\, \bar X_n' \in (\underline{\mathbb D}_{\leqslant j-2}^\mu)_\rho )\\
&=
\P ( \bar X_n\in(\mathbb D\setminus\underline{\mathbb D}_{\leqslant j-1}^\mu)^{-\gamma},\, \bar X_n' \in (\underline{\mathbb D}_{\leqslant j-2}^\mu)_\rho )
+ 
o(n^{-j(\alpha-1)}).
\end{align*}
Thus, it remains to consider the first term in the last equation. 
Combining Lemma~\ref{lem: help lemma for one-sided LD}~(2) above with the fact that 
\[
\P ( \bar X_n\in(\mathbb D\setminus\underline{\mathbb D}_{\leqslant j-1}^\mu)^{-\gamma},\, \bar X_n'\in (\underline{\mathbb D}_{\leqslant j-2}^\mu)_\rho ) 
\leq 
\P ( R_n\in(\mathbb D\setminus\underline{\mathbb D}_{\leqslant 1})^{-\rho} ) 
+ 
o( n^{-j(\alpha-1)} ),
\]
for $\rho$ small enough, we conclude the proof. 
\end{proof}

% something wrong above

\linkdest{proof of lem: help lemma for one-sided LD}
\begin{proof}[Proof of Lemma~\ref{lem: help lemma for one-sided LD}]
\linksinpf{lem: help lemma for one-sided LD}
%Recall that, for $\epsilon>0$
%$\mathfrak E_3^\epsilon(n)$ was defined in \eqref{eq: E_3^epsilon(n)}.  
\textit{Part 1):} We start showing the first equivalence. 
Defining 
$\bar X_{\leqslant k,n}' = \{\bar X_{\leqslant k,n}'(t),t\in[0,1]\}$ 
by 
$\bar X_{\leqslant k,n}'(t) = 1/n \sum_{i=1}^{N(nt)\wedge k} X_i'$, 
we have that 
\begin{align}
\nonumber&\P ( \bar X_n'\in (\mathbb D\setminus\underline{\mathbb D}_{\leqslant j-1}^\mu)^{-\gamma}, R_n(1) \geq \delta )\\ 
\nonumber&\leq 
\P \left( \bar X_n'\in (\mathbb D\setminus\underline{\mathbb D}_{\leqslant j-1}^\mu)^{-\gamma},\, \sum_{i=r_{N(n)}}^{r_{N(n)+1}-1} X_i \geq n\delta,\, \mathfrak E_3^\epsilon(n) \right) 
+ 
\P( \mathfrak E_3^\epsilon(n)^c )\\ 
\nonumber&= 
\sum_{k=N_\epsilon^-(n)}^{N_\epsilon^+(n)} \P ( \bar X_n'\in (\mathbb D\setminus\underline{\mathbb D}_{\leqslant j-1}^\mu)^{-\gamma},\, X_{N(n)+1}' \geq n\delta,\, N(n)=k ) +
o ( (n\P(X_1'\geq  n))^{j+1} )\\
\nonumber&= 
\sum_{k=N_\epsilon^-(n)}^{N_\epsilon^+(n)} \P ( \bar X_{\leqslant k,n}'\in (\mathbb D\setminus\underline{\mathbb D}_{\leqslant j-1}^\mu)^{-\gamma},\, X_{k+1}' \geq n\delta,\, N(n)=k )
+ 
o ( (n\P(X_1'\geq  n))^{j+1} )\\ 
\nonumber&\leq 
\sum_{k=N_\epsilon^-(n)}^{N_\epsilon^+(n)} \P ( \bar X_{\leqslant k,n}'\in (\mathbb D\setminus\underline{\mathbb D}_{\leqslant j-1}^\mu)^{-\gamma},\, X_{k+1}' \geq n\delta )
+ 
o ( (n\P(X_1'\geq  n))^{j+1} )\\
\nonumber&= 
\sum_{k=N_\epsilon^-(n)}^{N_\epsilon^+(n)} \P ( \bar X_{\leqslant k,n}'\in (\mathbb D\setminus\underline{\mathbb D}_{\leqslant j-1}^\mu)^{-\gamma} ) 
\P \left( X_{k+1}' \geq n\delta \right)
+ 
o ( (n\P(X_1'\geq  n))^{j+1} )\\ 
\nonumber&\leq 
\P( X_1' \geq n\delta ) \sum_{k=N_\epsilon^-(n)}^{N_\epsilon^+(n)} \P ( \bar X_n'\in (\mathbb D\setminus\underline{\mathbb D}_{\leqslant j-1}^\mu)^{-\gamma/2} )
+ 
o ( (n\P(X_1'\geq  n))^{j+1} )\\
\label{eq: lemma 6.5 eq01}&\leq
2\epsilon n \P( X_1' \geq n\delta ) \P ( \bar X_n'\in (\mathbb D\setminus\underline{\mathbb D}_{\leqslant j-1}^\mu)^{-\gamma/2} )
+ 
o ( (n\P(X_1'\geq  n))^{j+1} ).
\end{align}
It remains to consider the first term in \eqref{eq: lemma 6.5 eq01}. 
Using Corollary~\ref{corol: one-sided LD for X_n' bar}, we have that 
\begin{equation}\label{eq: lemma 6.5 eq02}
\varlimsup_{n\to\infty} (n\P(X_1'\geq n))^{-(j+1)}\, 2\epsilon n \P( X_1' \geq n\delta ) \P ( \bar X_n'\in (\mathbb D\setminus\underline{\mathbb D}_{\leqslant j-1}^\mu)^{-\gamma/2} ) 
\leq 
c \epsilon, 
\end{equation}
for some $c>0$ independent of $\epsilon$. 
Part (1) is proved using \eqref{eq: lemma 6.5 eq01} and \eqref{eq: lemma 6.5 eq02}, and letting $\epsilon\to0$.

\textit{Part 2):} Note that 
\begin{align*}
&\P\left( R_n\in(\mathbb D\setminus\underline{\mathbb D}_{\leqslant 1})^{-\gamma} \right)
\\
&= 
\P\left( R_n\in(\mathbb D\setminus\underline{\mathbb D}_{\leqslant 1})^{-\gamma},\, \frac{r_{N(n)}+1}{n}  >  \rho \right)
\P\left( R_n\in(\mathbb D\setminus\underline{\mathbb D}_{\leqslant 1})^{-\gamma},\, \frac{r_{N(n)}+1}{n}\leq \rho \right)
\end{align*}
where the first term equals zero for sufficiently large $\rho\in(0,1)$. 
Hence, it is sufficient to consider the second term which is bounded by 
\begin{align}
\nonumber\P\left( \frac{r_{N(n)}+1}{n}\leq \rho \right) 
&\leq 
\P\left( r_{N(n)}\leq n\rho \right) 
\leq 
\P\left( r_{N(n)}\leq n\rho,\, \mathfrak E_3^\epsilon(n) \right) 
+ 
\P\left( \mathfrak E_3^\epsilon(n)^c \right)\\ 
\nonumber&= 
\P\left( \sum_{i=1}^{N(n)} \Delta_i \leq n\rho,\, \mathfrak E_3^\epsilon(n) \right) 
+ 
o ( (n\P(X_1'\geq n))^j )\\ 
\nonumber&\leq 
\P\left( \sum_{i=1}^{N_\epsilon^-(n)} \Delta_i \leq n\rho \right) 
+ 
o ( (n\P(X_1'\geq n))^j )\\ 
&\leq \label{eq: lemma 6.5 eq03}
\P\left(  \sum_{i=1}^{N_\epsilon^-(n)} \frac{\Delta_i}{N_\epsilon^-(n)} \leq \frac{\rho}{1/\E r_1-\epsilon} \right) 
+ 
o ( (n\P(X_1'\geq n))^j ).
\end{align}
Note that, for every $\rho\in(0,1)$ there exists a sufficiently small $\epsilon>0$ such that 
$
\rho/(1/\E r_1 - \epsilon) < \E r_1
$. 
For this choice of $\epsilon$, the first term in \eqref{eq: lemma 6.5 eq03} decays exponentially thanks to Cram\'er's theorem. 
\end{proof}

\subsection{Proof of Theorem~\ref{thm:two-sided-large-deviations}}
\linksinpf{thm:two-sided-large-deviations} 
We consider the case where $B_1$ is a general random variable taking values in $\R$. 
The idea behind the proof of Theorem~\ref{thm:two-sided-large-deviations} is similar to the one in the one-sided case: 

\begin{itemize}
\item In Corollary~\ref{corol: two-sided LD for X_n' bar} below we establish a sample-path large deviations result for the aggregated process $\bar X_n'$ (see \eqref{eq: X_n' bar} above). 
%For the $\mathbb M$-convergence in Corrollary~\ref{corol: two-sided LD for X_n' bar} we need Lemma~\ref{lem: closeness of D_<<j}. 
\item In Proposition~\ref{prop: two-sided asymptotic equivalence between X_n' and X_n} we show the asymptotic equivalence between the aggregated process $\bar X_n'$ and the original process $\bar X_n$. 
In Lemma~\ref{lem: help lemma for two-sided LD} we deal with the technical issues appearing in Proposition~\ref{prop: two-sided asymptotic equivalence between X_n' and X_n}. 
\item Part 1) of Theorem~\ref{thm:two-sided-large-deviations} follows by combining Corollary~\ref{corol: two-sided LD for X_n' bar} with Proposition~\ref{prop: two-sided asymptotic equivalence between X_n' and X_n}. 
Part 2) is a direct consequence of part 1). 
\end{itemize} 

\begin{lemma}
\label{lem: closeness of D_<<j}
\linksinthm{lem: closeness of D_<<j}
For all $j \geq 0$ and all $z\in\R$, the set $\underline{\mathbb D}_{\ll j}^z$ is closed w.r.t.\ $(\mathbb D,d_{M_1'})$.
\end{lemma}
The proof of Lemma \ref{lem: closeness of D_<<j} is similar to the proof of Lemma \ref{lem: closeness of D_<j} and therefore omitted. 

Recall $C_{j,k}^z$ was defined in \eqref{eq: C_j,k^z}. 
Let $C_+$, $C_-$ be as in Theorem~\ref{thm: tail estimates for the area under first return time/regeneration cycle}.

\begin{corollary}
\label{corol: two-sided LD for X_n' bar}
\linksinthm{corol: two-sided LD for X_n' bar}
Suppose that Assumptions~\ref{ass: regularity condition AR(1) process}~and~\ref{ass: minorization AR(1) process} hold. 
If $C_+C_->0$,
then for any $j\geq1$
\[
n^{j(\alpha-1)} \P(\bar X_n'\in\,\cdot\,) 
\to 
(\E r_1)^j \sum\nolimits_{(l,m)\in I_{= j}} (C_+)^l (C_-)^m C_{l,m}^{\mu}(\,\cdot\,),
\]
in $\mathbb M(\mathbb D\setminus\underline{\mathbb D}_{\ll j}^\mu)$ as $n\to\infty$, 
where $I_{= j} = \{ (l,m)\in\mathbb Z_+^2 \colon l+m=j \}$. 
\end{corollary}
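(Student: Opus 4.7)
The proof follows the same strategy as that of Corollary~\ref{corol: one-sided LD for X_n' bar}. The plan is: (i) apply a two-sided sample-path large deviations principle for random walks with regularly varying increments to the auxiliary process $\bar W_n$ defined in \eqref{eq: auxilary random walk}, so as to establish the claim with $\bar W_n$ in place of $\bar X_n'$ in the $J_1$ topology; (ii) transfer the result to $\bar X_n'$ via Lemma~\ref{lem: LD for X_n' bar}~(2) and Lemma~\ref{lem: asymptotic equivalence}; (iii) upgrade from $J_1$ to $M_1'$ by means of Lemma~\ref{lem: comparing M_1' metric and J_1 metric} and Lemma~\ref{lem: closeness of D_<<j}.

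For step (i), Theorem~\ref{thm: tail estimates for the area under first return time/regeneration cycle}~(2) supplies the two-sided regularly varying tails of the i.i.d.\ increments $X_i'$ with coefficients $C_+$ and $C_-$, and the Wald-type identity $\E X_1' = \mu \E r_1$ fixes the drift of $\bar W_n$ at $\mu$. The two-sided sample-path large deviations principle for random walks with two-sided regularly varying increments (the natural analogue of Theorem~4.1 of \cite{rheeblanchetzwart2016}, in which positive and negative big jumps are counted simultaneously) then yields
\[
n^{j(\alpha-1)} \P(\bar W_n \in \cdot) \to (\E r_1)^j \sum\nolimits_{(l,m) \in I_{=j}} (C_+)^l (C_-)^m C_{l,m}^\mu(\cdot)
\]
in $\mathbb M(\mathbb D \setminus \mathbb D_{\ll j}^\mu)$ under $J_1$; the prefactor $(\E r_1)^j$ arises from the time change $\lfloor nt/\E r_1 \rfloor$, while $(C_+)^l (C_-)^m$ records the partition of the $j$ big jumps into $l$ positive and $m$ negative ones.

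Steps (ii) and (iii) are then routine. For (ii), Lemma~\ref{lem: LD for X_n' bar}~(2) provides asymptotic equivalence of $\bar X_n'$ and $\bar W_n$ with respect to $n^{-j(\alpha-1)}$ and $\mathbb D_{\ll j}^\mu$, so Lemma~\ref{lem: asymptotic equivalence} transfers the limit above to $\bar X_n'$ in the same space. For (iii), by Lemma~\ref{lem: comparing M_1' metric and J_1 metric} every $M_1'$-closed (respectively, $M_1'$-open) set bounded away from $\mathbb C$ in $M_1'$ is also $J_1$-closed (respectively, $J_1$-open) and bounded away in $J_1$, so the $J_1$-limit passes to $M_1'$; Lemma~\ref{lem: closeness of D_<<j} guarantees the closedness of $\underline{\mathbb D}_{\ll j}^\mu$ in $M_1'$, and the gap between $\mathbb D_{\ll j}^\mu$ and $\underline{\mathbb D}_{\ll j}^\mu$ is closed by the decomposition $\underline{\mathbb D}_{\ll j}^\mu = \mathbb D_{\ll j}^\mu \cup \{\xi \in \mathbb D : \xi(0) \neq 0,\ \xi - \xi(0) \in \mathbb D_{\ll j-1}^\mu\}$, exactly as in the one-sided case. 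The main obstacle is step (i): although the two-sided random walk sample-path LDP is a natural extension of the one-sided version in \cite{rheeblanchetzwart2016}, some bookkeeping is needed to verify that the limiting measure takes the claimed multinomial form with the prefactor $(\E r_1)^j (C_+)^l (C_-)^m$ and to track the time change by $\E r_1$ through the derivation.
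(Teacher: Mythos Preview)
Your proposal is correct and follows essentially the same approach the paper takes: the paper does not give a separate proof of this corollary, but its proof of the one-sided Corollary~\ref{corol: one-sided LD for X_n' bar} makes clear that the argument is exactly the three-step scheme you describe---invoke the (two-sided) sample-path LDP for random walks from \cite{rheeblanchetzwart2016} on $\bar W_n$, transfer to $\bar X_n'$ via Lemma~\ref{lem: LD for X_n' bar}~(2) and Lemma~\ref{lem: asymptotic equivalence}, and then pass from $J_1$ to $M_1'$ using Lemma~\ref{lem: comparing M_1' metric and J_1 metric} together with the inclusion $\mathbb D_{\ll j}^\mu \subseteq \underline{\mathbb D}_{\ll j}^\mu$. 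Your identification of the drift via $\E X_1' = \mu\,\E r_1$ and of the prefactor $(\E r_1)^j (C_+)^l (C_-)^m$ from the time change and the two-sided tail constants is also in line with the paper's reasoning.
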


% \begin{proof}
% Since $\mathbb D_{\ll j}^\mu \subseteq \underline{\mathbb D}_{\ll j}^\mu$, we conclude the proof by applying Lemma~\ref{Sec5lem1} and Theorem~\ref{Sec4thm1}. 
% \end{proof}

\begin{proposition}
\label{prop: two-sided asymptotic equivalence between X_n' and X_n}
\linksinthm{prop: two-sided asymptotic equivalence between X_n' and X_n}
Suppose that Assumptions~\ref{ass: regularity condition AR(1) process}~and~\ref{ass: minorization AR(1) process} hold. 
If $C_+C_->0$, then the following hold for all $j\geq 0$:
\begin{enumerate}
\item First 
\[
\lim_{n\to\infty} n^{j(\alpha-1)} \P(\bar X_n'\in(\mathbb D\setminus\underline{\mathbb D}_{\ll j}^\mu)^{-\gamma},\, d_{M_1'}(\bar X_n,\bar X_n') > \delta) = 0. 
\]
\item Assume additionally that $\E |B_1|^m<\infty$ for every $m\in\mathbb Z_+$. 
Then 
\[
\lim_{n\to\infty} n^{j(\alpha-1)} \P(\bar X_n\in(\mathbb D\setminus\underline{\mathbb D}_{\ll j}^\mu)^{-\gamma},\, d_{M_1'}(\bar X_n,\bar X_n') > \delta) = 0. 
\] 
In particular, $\bar X_n$ is asymptotically equivalent to $\bar X_n'$ w.r.t.\ $n^{-j(\alpha-1)}$ and $\underline{\mathbb D}_{\ll j}^\mu$. 
\end{enumerate}
%, where $\mu = \E B_1/(1-\E A_1)$. 
\end{proposition}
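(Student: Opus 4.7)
The plan is to adapt the argument of Proposition~\ref{prop: one-sided asymptotic equivalence between X_n' and X_n} to the two-sided setting, organizing the proof around a three-term decomposition of the event $\{d_{M_1'}(\bar X_n, \bar X_n') > \delta\}$. In the one-sided case, two sources of discrepancy sufficed: an anomalously long regeneration cycle $\{\exists\, i \leq N(n)+1 : \Delta_i \geq n\delta'\}$ and a large residual $\{|R_n(1)| \geq \delta'\}$. When $B_1$ can take both signs a third source appears, namely within-cycle overshoots -- partial sums along a cycle straying far beyond the chord joining the cycle's endpoints. This source is absent in the one-sided case because nonnegativity of $B_1$ forces the within-cycle partial sums to be monotone modulo drift, and the completed graph $\Gamma_\xi'$ underlying the $M_1'$ metric absorbs such monotone excursions into a single jump at no cost. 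The union of the three events covers $\{d_{M_1'}(\bar X_n, \bar X_n') > \delta\}$, so it suffices to bound each summand separately.

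For Part~1, the long-cycle event contributes $O(n t^{-n\delta'})$ by Lemma~\ref{lem: exponential decay of regeneration time} and a union bound. The large-residual event is handled by a two-sided analog of Lemma~\ref{lem: help lemma for one-sided LD}(1): conditioning on $N(n)=k$, using independence of $\bar X_{\leqslant k,n}'$ and the subsequent cycle sum $X_{k+1}'$, and invoking the two-sided tail $\P(|X_1'| \geq n\delta') = O(n^{-\alpha})$ from Theorem~\ref{thm: tail estimates for the area under first return time/regeneration cycle}(2) together with Corollary~\ref{corol: two-sided LD for X_n' bar}. For within-cycle overshoots, off the long-cycle event each cycle has length at most $n\delta'$, so the overshoot inside that cycle is bounded by a functional of at most $n\delta'$ i.i.d.\ pairs $(A_i,B_i)$, whose moments of all orders are finite by the hypothesis $\E|B_1|^m<\infty$ for every $m$. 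Markov's inequality with an arbitrarily large moment, combined with a union bound over at most $n$ cycles, drives this contribution below $n^{-M}$ for every $M$. Combining the three bounds with the estimate $\P(\bar X_n' \in (\mathbb D\setminus\underline{\mathbb D}_{\ll j}^\mu)^{-\gamma/2}) = O(n^{-j(\alpha-1)})$ from Corollary~\ref{corol: two-sided LD for X_n' bar} yields Part~1.

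Part~2 follows by the stratification argument used at the end of the proof of Proposition~\ref{prop: one-sided asymptotic equivalence between X_n' and X_n}. On the event that $\bar X_n' \in (\mathbb D\setminus\underline{\mathbb D}_{\ll j-1}^\mu)^{-\rho}$, Part~1 applied with $j$ replaced by $j-1$ gives the desired bound; on the complementary event, $\bar X_n'$ is close to having fewer than $j$ jumps while $\bar X_n$ remains far from $\underline{\mathbb D}_{\ll j}^\mu$, which forces the residual $R_n$ itself to contain a significant extra jump and is negligible by the analog of Lemma~\ref{lem: help lemma for one-sided LD}(2). The closure of $\underline{\mathbb D}_{\ll j}^\mu$ under $d_{M_1'}$ established in Lemma~\ref{lem: closeness of D_<<j} ensures that the neighborhood algebra used in this sandwich is well-behaved, so the triangle inequality transfers bounds from $\bar X_n'$ to $\bar X_n$ with only a linear loss in $\gamma$.

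The main obstacle is controlling the within-cycle overshoot term in a way that survives the polynomial normalization $n^{j(\alpha-1)}$ for arbitrary $j$. A finite moment assumption of a fixed order would only absorb the normalization up to a corresponding $j$, so the full-moment hypothesis $\E|B_1|^m<\infty$ for every $m\in\Z_+$ is essential and is precisely why Theorem~\ref{thm:two-sided-large-deviations} and Part~2 of this proposition require it. A secondary technical point is verifying the three-term dominance: one must argue that, off the union of the three events, the completed graphs of $\bar X_n$ and $\bar X_n'$ admit parametric representations whose sup-norm distance is at most $\delta$, which requires a piecewise-linear time change analogous to (but more intricate than) the one used in the proof of Lemma~\ref{lem: LD for X_n' bar}.
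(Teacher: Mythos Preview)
Your three-term decomposition is in the right spirit, and the paper's proof does ultimately reduce to a cycle-by-cycle analysis together with a residual term, but the mechanism you propose for the new ``within-cycle overshoot'' term does not work, and this is the essential gap.

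You argue that, on the event where every cycle has length at most $n\delta'$, the overshoot inside a cycle is a functional of at most $n\delta'$ i.i.d.\ pairs $(A_i,B_i)$ and therefore has moments of all orders, so Markov's inequality with a high enough moment gives $n^{-M}$ for every $M$. This fails because the partial sums inside a cycle are built from the chain values $X_j$, and $X_j$ is heavy-tailed with index $\alpha$ by the Kesten--Goldie estimate \eqref{eq kestengoldie}: even when $B_1$ has all moments, the multiplicative structure $X_{j}=A_jX_{j-1}+B_j$ makes $X_j$ regularly varying of index $\alpha$, so moments of order $>\alpha$ are infinite and no super-polynomial Markov bound is available. (This also explains why you find yourself invoking the full-moment hypothesis already in Part~1, where it is not assumed.)

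The paper handles the within-cycle term differently. It writes $\{d_{M_1'}(\bar X_n,\bar X_n')>\delta\}\subseteq\{\exists\,i\le N(n):d_{M_1'}(\bar X_{i,n},\bar X_{i,n}')\ge\delta\}\cup\{\|R_n\|_\infty\ge\delta\}$, where $\bar X_{i,n}$ is the partial-sum path over cycle $i$ and $\bar X_{i,n}'$ is its single-jump approximation. The key observation is geometric: in the $M_1'$ topology, a monotone excursion of $\bar X_{i,n}$ is absorbed into the completed graph of $\bar X_{i,n}'$, so $d_{M_1'}(\bar X_{i,n},\bar X_{i,n}')\ge\delta$ forces the trajectory to cross level $n^\beta$ with alternating signs at least twice before regenerating, i.e.\ $T_2(n^\beta)<r_1$. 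Since a sign change of $X_n$ at a high level can only come from a large $|B_n|$, one gets $\P(T_2(n^\beta)<r_1)=o(n^{-\alpha})$ under Assumption~\ref{ass: regularity condition AR(1) process} alone (Remark~\ref{rmk: uniform convergence}), which is what Part~1 needs; the full-moment hypothesis is used only to upgrade this to $\P(T_k(n^\beta)<r_1)=O(n^{-(k-\epsilon)\alpha})$ for all $k$ (Lemma~\ref{lem: help lemma for two-sided LD}), and this is exactly where Part~2 draws on it. Your Part~2 sketch via stratification is in the right direction, but the actual argument passes through the intermediate process $\hat X_n$ and requires the multi-level sign-switching bounds just described, not an iteration of the one-sided residual lemma.
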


We need the following lemma to prove Proposition~\ref{prop: two-sided asymptotic equivalence between X_n' and X_n}. 
Set
\[
\notationdef{nota-R-p-n}
{R_{p,n}(t)} = \frac{1}{n} \sum_{i=r_p}^{\lfloor r_{p+1} t \rfloor - 1} X_i. 
\]
Let $\notationdef{nota-T-1}{T_1(u)} = T(u) = \inf \{ n\geq0 \colon |X_n| > u \}$ and 
\[
%T_1(u) = T(u) = \inf \{ n\geq0 \colon |X_n| > u \}, 
%\quad
%\mbox{and}\quad
\notationdef{nota-T-i}
{T_{i+1}(u)} = \inf \{ n\geq T_i(u)\colon -\mbox{sign}(X_{T_i}(u)) X_n > u \}, 
\quad i\geq1. 
\]
Define $\bar X_{i,n}= \{\bar X_{i,n}(t),t\in[0,1]\}$ and $\bar X_{i,n}'= \{\bar X_{i,n}'(t),t\in[0,1]\}$ by
%, $i\geq1$ and $R_n= \{R_n(t),t\in[0,1]\}$ by 
\begin{equation}\label{eq: proposition 6.2 eq01}
\notationdef{nota-X-bar-i-n}
{\bar X_{i,n}(t)} = \frac{1}{n} \sum_{l=r_{i-1}}^{\lfloor nt \rfloor \wedge r_i - 1} X_l,
\quad\mbox{and}\quad 
\linkdest{nota-X-bar-i-n-prime}
\bar X_{i,n}'(t) = \frac{X_i'}{n} \1_{[r_i/n,1]}(t).
%\quad\mbox{and}\quad 
%R_n(t) = \frac{1}{n} \sum_{i=r_{N(n)}}^{\lfloor nt \rfloor -1} X_i,
\end{equation}
respectively. 
%Let $N_\epsilon^-(n)$ and $N_\epsilon^+(n)$ be as in \eqref{Sec5eq22}. 
%We have the following lemma. 

\begin{lemma}\label{lem: help lemma for two-sided LD}
\linksinthm{lem: help lemma for two-sided LD}
Suppose that Assumptions~\ref{ass: regularity condition AR(1) process}~and~\ref{ass: minorization AR(1) process} hold. 
Moreover, assume that $\E |B_1|^m<\infty$ for every $m\in\mathbb Z_+$. 
Let $C_+$, $C_-$ be as in Theorem~\ref{thm: tail estimates for the area under first return time/regeneration cycle} such that $C_+C_->0$. 
\begin{enumerate}
\item For any $i\geq1$, $j\geq2$, $\epsilon>0$, and $\delta>0$, there exists $c_1$, $c_2$ and $n_1$, $n_2$  (independent of $i$) respectively such that 
\begin{alignat*}{2}
&\P ( d_{M_1'}(\bar X_{i,n},\bar X_{i,n}')\geq \delta ) \leq  c_1 n^{-(2-\epsilon)\alpha},
\qquad 
&&\mbox{for all}\ n\geq n_1,\\
\mbox{and}\quad &\P ( \bar X_{i,n} \in (\mathbb D\setminus\underline{\mathbb D}_{\ll j})^{-\delta}) \leq c_2 n^{-(j-\epsilon)\alpha},
&&\mbox{for all}\ n\geq n_2.
\end{alignat*}
\item For any $j\geq1$, $\hat X_n$ is asymptotically equivalent to $\bar X_n'$ w.r.t.\ $n^{-j(\alpha-1)}$ and $\underline{\mathbb D}_{\ll j}^\mu$. 
\item For any $i\in \{ N_\epsilon^-(n),\ldots,N_\epsilon^+(n) \}$, $j\geq1$, $\delta > 0$, and $\epsilon > 0$, there exists $c$ and $n_0$ (independent of $i$) such that 
\[
\P ( R_{i,n} \in(\mathbb D\setminus\underline{\mathbb D}_{\ll j})^{-\delta} ) \leq c n^{-(j-\epsilon)\alpha},
\qquad\mbox{for all}\ n\geq n_0.
\]
\end{enumerate}
\end{lemma}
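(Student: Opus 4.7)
The central insight behind all three parts is geometric: within a single regeneration cycle, failing to be $M_1'$-close to a single step function forces the Markov chain to make (at least) two excursions of opposite signs, and more generally failing to be $M_1'$-close to a step function with fewer than $j$ jumps forces $j$ excursions of alternating signs. Quantified via the stopping times $T_l(u)$, this reduces every claim to tail estimates of the form $\P(T_j(u) < r_1) = O(u^{-(j-\epsilon)\alpha})$. The latter I would obtain by iterating Lemma~\ref{lem: asymptotics of P(T(u^beta))<r_1} (which furnishes $\P(T_1(u) < r_1) = O(u^{-\alpha})$) and applying the strong Markov property at each successive $T_l$: after each excursion to level $\gtrsim u$ the chain must first return to a bounded set (incurring a factor tight for the cycle length) before reaching the opposite level $u$, at which point the cycle looks like a fresh one starting from a value of order $u$ with the opposite sign.

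For Part (1), first inequality, I would begin by showing that $d_{M_1'}(\bar X_{i,n},\bar X_{i,n}')\geq \delta$ forces $\bar X_{i,n}$ to leave a $c\delta$-neighborhood of the segment joining $0$ and $X_i'/n$. Using parametric representations and Lemma~\ref{lem: implication of convergence w.r.t. M_1' metric} one checks that every parametric representation of $\bar X_{i,n}'$ stays on that segment, so a parametric separation of $\delta$ must correspond to a genuine excursion of the cumulative path outside $[\min(0,X_i'/n), \max(0,X_i'/n)]$ by at least $c\delta$. Since the path starts at $0$ and ends at $X_i'/n$, this implies both an excursion above $\max(0,X_i'/n)+c\delta$ and a later return below it (or the mirror configuration). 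The exponential tail of cycle lengths (Lemma~\ref{lem: exponential decay of regeneration time}) lets one translate this into the event $\{T_2(cn\delta) < r_1\}$ modulo a logarithmic factor that is absorbed into $\epsilon$, yielding the claimed $n^{-(2-\epsilon)\alpha}$ bound. The second inequality of Part (1) follows by identical reasoning, with $j$ alternating excursions in place of two, and is controlled by $\P(T_j(cn\delta)<r_1)$.

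For Part (2), I would decompose $\hat X_n - \bar X_n'$ into contributions from the individual cycles and apply a union bound over the $\sim n/\E r_1$ completed cycles (cutting off via the event $\mathfrak E_3^\epsilon(n)$ from \eqref{eq: E_3^epsilon(n)}, whose complement is exponentially small). Using the second bound of Part~(1) with exponent $j+1$ in place of $j$, the total is dominated by $n \cdot n^{-(j+1-\epsilon)\alpha}$, and the desired $o(n^{-j(\alpha-1)})$ follows from the algebraic check $1 + j(\alpha-1) < (j+1-\epsilon)\alpha$, which holds for every fixed $j\geq 1$ once $\epsilon$ is small enough. The restriction to $(\mathbb D\setminus \underline{\mathbb D}_{\ll j}^\mu)^{-\gamma}$ handles the boundary residual via a Cram\'er-type argument analogous to the one in the proof of Lemma~\ref{lem: LD for X_n' bar}. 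Part (3) is essentially a direct reuse of Part (1)'s mechanism: $R_{i,n}$ is a rescaling of a single cycle's partial-sum path, so the same $T_j$-excursion argument applies, with uniformity in $i\in \{N_\epsilon^-(n),\dots,N_\epsilon^+(n)\}$ coming from the fact that the law of each cycle is identical.

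The main obstacle I anticipate is the first claim of Part (1): rigorously extracting, from an $M_1'$ deviation of size $\delta$, a clean two-excursion statement at the correct scale in $n$. The $M_1'$ metric's tolerance for non-monotone parametrizations of jumps makes the geometric-to-probabilistic translation delicate, and I would spend most of the work formalizing the parametric-representation argument sketched above, possibly via a case analysis on the sign and magnitude of $X_i'$ relative to the running maximum and minimum of the cumulative sum within the cycle.
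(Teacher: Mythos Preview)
Your approach to Parts (1) and (3) is essentially the paper's: reduce the $M_1'$-geometric event to the sign-alternation event $\{T_j(n^\beta)<r_1\}$ and then prove $\P(T_j(u)<r_1)=O(u^{-(j-\epsilon)\alpha})$ by induction using the dual change of measure. You correctly identify the delicate step (translating an $M_1'$ deviation into alternating excursions) and the paper indeed spends most of its effort there, introducing the ``least monotone majorant'' $\Psi^\uparrow$ to make the case analysis on the sign of the excursion precise.

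There is, however, a genuine gap in your plan for Part (2). The assertion that the joint event
\[
\big\{d_{M_1'}(\hat X_n,\bar X_n')\geq\delta\big\}\cap\big\{\hat X_n\in(\mathbb D\setminus\underline{\mathbb D}_{\ll j}^\mu)^{-\gamma}\big\}
\]
forces a \emph{single} cycle to lie in $(\mathbb D\setminus\underline{\mathbb D}_{\ll j+1})^{-\delta'}$ is false: the $j$ large oscillations of $\hat X_n$ can be (and typically are) spread over $j$ distinct cycles, each contributing one large jump, while the $\delta$-deviation is caused by one of those cycles having a second, opposite-sign excursion. So the per-cycle bound you can invoke is only $n^{-(2-\epsilon)\alpha}$ (the first inequality of Part (1)), not $n^{-(j+1-\epsilon)\alpha}$. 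A naive union bound then yields $n\cdot n^{-(2-\epsilon)\alpha}$, which is $o(n^{-j(\alpha-1)})$ only when $j<\frac{2\alpha-1}{\alpha-1}$; for $\alpha\geq 2$ this fails already at $j=3$.

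The paper's fix is to exploit the \emph{joint} structure: on $\{\bar X_n'\in\mathcal D_{\geq j}^{\bar\gamma}\}$ one picks $j$ distinct cycle indices, one of which hosts the $M_1'$-bad cycle (cost $o(n^{-\alpha})$ by Remark~\ref{rmk: uniform convergence}) and the remaining $j-1$ of which have large $|X_i'|$ (cost $O(n^{-\alpha})$ each), and then multiplies by $O(n^{j})$ for the choice of indices. This gives $O(n^{j})\cdot o(n^{-\alpha})\cdot O(n^{-(j-1)\alpha})=o(n^{-j(\alpha-1)})$ for all $j$. The direction with $\hat X_n$ in the rare set is considerably harder still and requires a layered decomposition on the number of bad cycles $|I^*|$ together with the functional $\Phi_{\rho_0}$ extracting the large-jump skeleton; your sketch does not touch this.
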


\begin{remark}\label{rmk: uniform convergence}
Without the additional assumption $\E |B_1|^m<\infty$ for every $m\in\mathbb Z_+$, one can still show that $\P ( T_2(n^\beta)<r_1 ) = o(n^{-\alpha})$, by following the arguments as in the proof of Lemma~\ref{lem: help lemma for two-sided LD}. 
Hence, under Assumptions~\ref{ass: regularity condition AR(1) process}~and~\ref{ass: minorization AR(1) process}, uniformly in $i$,
\[
\lim_{n\to\infty} n^\alpha \P ( d_{M_1'}(\bar X_{i,n},\bar X_{i,n}')\geq \delta ) = 0.
\] 
\end{remark}

\linkdest{proof of prop: two-sided asymptotic equivalence between X_n' and X_n}
\begin{proof}[Proof of Proposition~\ref{prop: two-sided asymptotic equivalence between X_n' and X_n}]
\linksinpf{prop: two-sided asymptotic equivalence between X_n' and X_n}
To begin with, recall that, for $\epsilon>0$
\[
\mathfrak E_3^\epsilon(n)  
= \{ 
N_\epsilon^-(n)
\leq N(n) \leq 
N_\epsilon^+(n)
\},
\]
where $N_\epsilon^-(n) = n\lfloor 1/\E r_1-\epsilon \rfloor$ and $N_\epsilon^+(n) = n\lfloor 1/\E r_1+\epsilon \rfloor$. 
Moreover, $\P( (\mathfrak E_3^\epsilon(n))^c )$ decays exponentially to $0$ as $n\to\infty$. 
%Define $\bar X_{i,n}= \{\bar X_{i,n}(t),t\in[0,1]\}$ and $\bar X_{i,n}'= \{\bar X_{i,n}'(t),t\in[0,1]\}$ by
%%, $i\geq1$ and $R_n= \{R_n(t),t\in[0,1]\}$ by 
%\begin{equation}\label{eq: proposition 6.2 eq01}
%\bar X_{i,n}(t) = \frac{1}{n} \sum_{l=r_{i-1}}^{\lfloor nt \rfloor \wedge r_i - 1} X_l,
%\quad\mbox{and}\quad 
%\bar X_{i,n}'(t) = \frac{X_i'}{n} \1_{[r_i/n,1]}(t).
%%\quad\mbox{and}\quad 
%%R_n(t) = \frac{1}{n} \sum_{i=r_{N(n)}}^{\lfloor nt \rfloor -1} X_i,
%\end{equation}
%respectively. 
Let $R_n$ be as in \eqref{eq: R_n}.
Before we prove the two statements of the proposition, we first prove that
%Recalling $\Delta_i = r_i-r_{i-1}$, 
%we have that
\begin{equation}\label{eq: proposition 6.2 eq02}
\{d_{M_1'}(\bar X_n,\bar X_n') > \delta\}
\subseteq 
\left\{ \exists\, i\leq N(n)\ \mbox{s.t.}\ d_{M_1'}(\bar X_{i,n},\bar X_{i,n}')\geq \delta \right\} \cup \left\{ \| R_n \|_\infty \geq \delta \right\}.
\end{equation}
To see \eqref{eq: proposition 6.2 eq02}, we assume that the opposite statement holds. 
Given that the event $\{ d_{M_1'}(\bar X_{i,n},\bar X_{i,n}') < \delta \}$ takes place, there exist $(u_1^i,v_1^i)\in\Gamma'(\bar X_{i,n})$ and $(u_2^i,v_2^i)\in\Gamma'(\bar X_{i,n}')$ such that $\| u_1^i-u_2^i \|_\infty \vee \| v_1^i-v_2^i \|_\infty < \delta + \eta$. 
W.l.o.g.\ we assume that 
\begin{equation}\label{eq: proposition 6.2 eq03}
\{ s\colon v_1^i(s) = r_{i-1}/n,\, u_1^i(s) = 0 \} \cap \{ s\colon v_2^i(s) = r_{i-1}/n,\, u_2^i(s) = 0 \} \neq \emptyset,
\end{equation}
as well as
\[
\{ s\colon v_1^i(s) = r_i/n,\, u_1^i(s) = X_i'/n \} \cap \{ s\colon v_2^i(s) = r_i/n,\, u_2^i(s) = X_i'/n \} \neq \emptyset.
\]
We give here the reasoning for \eqref{eq: proposition 6.2 eq03}, where the second equation can be obtained by following the same arguments. 
Let $s_1 \in \{ s\colon v_1^i(s) = r_{i-1}/n,\, u_1^i(s) = 0 \}$ and $s_2 \in \{ s\colon v_2^i(s) = r_{i-1}/n,\, u_2^i(s) = 0 \}$. 
When $s_1=s_2$, we are done.
We assume $s_1<s_2$, otherwise one can change the role of $s_1$ and $s_2$. 
Define a new parametric representation $(\bar u_2^i,\bar v_2^i) \in \Gamma'(\bar X_{i,n}')$ by
\[
\bar v_2^i(s) 
= 
\begin{cases}
v_1(s),\quad&\mbox{for}\ s\in[0,s_1],\\
v_1(s_1),&\mbox{for}\ s\in(s_1,s_2),\\
v_2(s),&\mbox{for}\ s\in[s_2,1],
\end{cases}
\qquad
\bar u_2^i(s) 
= 
\begin{cases}
0,\quad&\mbox{for}\ s\in[0,s_1],\\
0,&\mbox{for}\ s\in(s_1,s_2),\\
u_2(s),&\mbox{for}\ s\in[s_2,1].
\end{cases}
\]
It is easy to check that indeed $(\bar u_2^i,\bar v_2^i)$ is a parametric representation of $\Gamma'(\bar X_{i,n}')$. 
Moreover, $\| u_1^i - \bar u_2^i \|_\infty = \| u_1^i - u_2^i \|_\infty < \delta + \eta$,
\begin{align*}
| v_1^i(s) - \bar v_2^i(s) | 
&= 
| v_1^i(s) - v_1^i(s_1) |
\leq  
v_1^i(s_2) - v_1^i(s_1)
= 
v_1^i(s_2) - v_2^i(s_2)
<
\delta + \eta,
%\quad
%\mbox{for}\ 
%s\in(s_1,s_2),
\end{align*}
for $s\in(s_1,s_2)$, 
and hence, $\| v_1^i - \bar v_2^i \|_\infty < \delta + \eta$. 
In view of the construction above, we can replace $v_2^i$ by $\bar v_2^i$, so that \eqref{eq: proposition 6.2 eq03} holds. 
Similarly, on the event $\{ \| R_n \|_\infty < \delta \}\subseteq \{ d_{M_1'}(R_n,0) < \delta \}$, there exist $(u_1^{N(n)+1},v_1^{N(n)+1})\in\Gamma'(R_n)$ and $(u_2^{N(n)+1},v_2^{N(n)+1})\in\Gamma'(0)$ such that 
\[
\| u_1^{N(n)+1}-u_2^{N(n)+1} \|_\infty \vee \| v_1^{N(n)+1}-v_2^{N(n)+1} \|_\infty < \delta + \eta, 
\]
and the intersection of 
\[
\{ s\colon v_1^{N(n)+1}(s) = r_{N(n)}/n,\, u_1^{N(n)+1}(s) = 0 \}
\] 
and 
\[
\{ s\colon v_2^{N(n)+1}(s) = r_{N(n)}/n,\, u_2^{N(n)+1}(s) = 0 \}
\] 
is an empty set. 
%\neq \emptyset.
%\]
Now, we pick $s_-^1 = 0$, $s_+^{N(n)+1} = 1$,
\[
s_+^i \in \{ s\colon v_1^i(s) = r_i/n,\, u_1^i(s) = X_i'/n \} \cap \{ s\colon v_2^i(s) = r_i/n,\, u_2^i(s) = X_i'/n \},
%\quad\mbox{for}\ 
%i\in\{1,\ldots,N(n)\}, 
\]
for $i\in\{1,\ldots,N(n)\}$,
and 
\[
s_-^i \in \{ s\colon v_1^i(s) = r_i/n,\, u_1^i(s) = 0 \} \cap \{ s\colon v_2^i(s) = r_i/n,\, u_2^i(s) = 0 \},
%\quad\mbox{for}\ 
%i\in\{2,\ldots,N(n)+1\}. 
\]
for $i\in\{2,\ldots,N(n)+1\}$. 
W.l.o.g.\ we assume that $s_+^i = s_-^{i+1}$; otherwise one can apply a strictly increasing, continuous bijection from $[0,1]$ to itself to the corresponding parametric representation, which preserves the uniform distance between parametric representations. 
Finally, we define parametric representations $(u_1,v_1)\in\Gamma'(\bar X_n)$ and $(u_2,v_2)\in\Gamma'(\bar X_n')$ by 
$
v_i(s) = v_i^j(s)
$, 
and 
$
u_i(s) = u_i^j(s) + \sum_{k=1}^{j-1} X_k'
$, 
for $s\in[s_-^j,s_+^j]$, $j\in\{1,\ldots,N(n)+1\}$, and $i\in\{1,2\}$. 
It is easy to check that $\| u_1-u_2 \|_\infty \vee \| v_1-v_2 \|_\infty < \delta + \eta$, 
and hence, $d(\bar X_n,\bar X_n') \leq \| u_1-u_2 \|_\infty \vee \| v_1-v_2 \|_\infty < \delta + \eta$. 
Letting $\eta\to0$ leads to the contradiction of $d_{M_1'}(\bar X_n,\bar X_n') > \delta$. We therefore conclude that (\ref{eq: proposition 6.2 eq02}) indeed holds. 

We are now ready to prove the two statements in the proposition. 

\textit{Part 1):}
For $\gamma>0$ and $j\geq 1$, define 
\begin{equation}\label{eq:D>=j,k}
\mathcal D_{\geqslant j}^{\gamma} 
= 
\{ \xi\in \mathbb D \colon |\mbox{Disc}_{\gamma}(\xi)| \geq j \},
\quad  
\mbox{Disc}_{\gamma}(\xi) = \{t\in\mbox{Disc}(\xi)\colon |\xi(t)-\xi(t^-)| \geq \gamma\}.
\end{equation}
Note that (cf.\ the proof of Lemma~2 in \cite{chen2019}), for any $L>0$, there exists a $\bar \gamma=\bar \gamma(\gamma,L)>0$ sufficiently small such that 
\begin{equation}\label{eq: proposition 6.2 eq04}
\P ( \bar X_n'\in (\mathbb D\setminus\underline{\mathbb D}_{\ll j}^\mu)^{-\gamma} \cap (\mathcal D_{\geqslant j}^{\bar \gamma}  )^c ) = o( n^{-L} ).
\end{equation}
Thus, it suffices to show that for any $j\geq1$ and any $\delta>0$ 
\[
\lim_{n\to\infty} n^{j(\alpha-1)} \P(\bar X_n'\in\mathcal D_{\geqslant j}^{\bar \gamma},\, d_{M_1'}(\bar X_n,\bar X_n')\geq 2\delta) = 0.
\]
%\end{equation}
By \eqref{eq: proposition 6.2 eq02} we have that
\begin{align*}
\P(\bar X_n'\in \mathcal D_{\geqslant j}^{\bar \gamma},\, d_{M_1'}(\bar X_n,\bar X_n')\geq 2\delta)
&\leq 
\P ( \bar X_n'\in \mathcal D_{\geqslant j}^{\bar \gamma},\, \exists\, i\leq N(n)\ s.t.\ d_{M_1'}(\bar X_{i,n},\bar X_{i,n}')\geq \delta )\\
&\hspace{12.5pt}+
\P ( \bar X_n'\in \mathcal D_{\geqslant j}^{\bar \gamma},\, \|R_n\|_\infty \geq \delta )= 
\textbf{(IV.1)} + \textbf{(IV.2)},
%o ( n^{-j(\alpha-1)} ).
\end{align*}
where  
\[
\textbf{(IV.1)}
= 
\P ( \bar X_n'\in \mathcal D_{\geqslant j}^{\bar \gamma},\, \mathfrak E_3^{\epsilon}(n) ,\, \exists\, i\leq N(n)\ s.t.\ d_{M_1'}(\bar X_{i,n},\bar X_{i,n}')\geq \delta )
+ o( n^{-j(\alpha-1)} ).
\]
For $p\in\mathbb Z_+$, let $\mathcal P(E,p)$ denote the set of all $p$-permutations of a discrete set $E$. 
Using Lemma~\ref{lem: help lemma for two-sided LD}~(1) and the fact that the blocks $\{X_{r_{i-1}},\ldots,X_{r_i}\}$, $i\geq1$ are mutually independent, we obtain that
\begin{align*}
&\P ( \bar X_n'\in \mathcal D_{\geqslant j}^{\bar \gamma},\, \mathfrak E_3^{\epsilon}(n) ,\, \exists\, i\leq N(n)\ \mbox{s.t.}
d_{M_1'}(\bar X_{i,n},\bar X_{i,n}')\geq \delta )\\
&\leq 
\P ( \exists (i_1,\ldots,i_j)\in \mathcal P(\{1,\ldots,N_\epsilon^+(n)\},j) \ \mbox{ s.t. }
d_{M_1'}(\bar X_{i_1,n},\bar X_{i_1,n}')\geq \delta,\, |X_{i_p}'|\geq n\bar \gamma,\forall 2\leq p\leq j )\\
&= 
\mathcal O(n^j) \P ( d_{M_1'}(\bar X_{i_1,n},\bar X_{i_1,n}')\geq \delta ) \P ( |X_{i_p}'|\geq n\bar \gamma )^{j-1}= 
\mathcal O(n^j) o(n^{-\alpha}) \mathcal O(n^{-(j-1)\alpha}) 
= 
o(n^{-j(\alpha-1)}), 
\end{align*}
%for all $(l,m)\in I_{=j,k}$, 
where $\P ( d_{M_1'}(\bar X_{i_1,n},\bar X_{i_1,n}')\geq \delta )$ is of order $o(n^{-\alpha})$ thanks to Remark~\ref{rmk: uniform convergence}. 
Recalling
$$
\bar X_{\leqslant m,n}' = \left\{ \frac1n \sum_{i=1}^{N(nt)\wedge m} X_i',\ t\in[0,1] \right\}
,$$
we have that 
\begin{align*}
\textbf{(IV.2)}
&\leq 
\P \left( \bar X_n'\in\mathcal D_{\geqslant j}^{\bar \gamma},\, \sum_{i=r_{N(n)}}^{r_{N(n)+1}-1} |X_i| \geq n\delta,\, \mathfrak E_3^\epsilon(n) \right) 
+ 
\P( \mathfrak E_3^\epsilon(n)^c )\\ 
&= 
\sum_{m=N_\epsilon^-(n)}^{N_\epsilon^+(n)} 
\P \left( \bar X_n'\in\mathcal D_{\geqslant j}^{\bar \gamma},\, \sum_{i=r_{N(n)}}^{r_{N(n)+1}-1} |X_i| \geq n\delta,\, N(n)=m \right)+
o ( n^{-j(\alpha-1)} )\\
&\leq 
\sum_{m=N_\epsilon^-(n)}^{N_\epsilon^+(n)} 
\P \left( \bar X_{\leqslant m,n}'\in\mathcal D_{\geqslant j}^{\bar \gamma},\, \sum_{i=r_m}^{r_{m+1}-1} |X_i| \geq n\delta \right) 
+ 
o ( n^{-j(\alpha-1)} )\\
&= 
\P \left( \sum_{i=0}^{r_1-1} |X_i| \geq n\delta \right) \sum_{m=N_\epsilon^-(n)}^{N_\epsilon^+(n)} 
\P ( \bar X_{\leqslant m,n}'\in\mathcal D_{\geqslant j}^{\bar \gamma} ) 
+ 
o ( n^{-j(\alpha-1)} )\\
&\leq 
\P \left( \sum_{i=0}^{r_1-1} |X_i| \geq n\delta \right) \sum_{m=N_\epsilon^-(n)}^{N_\epsilon^+(n)} 
\P ( \bar X_n'\in\mathcal D_{\geqslant j}^{\bar \gamma} ) 
+ 
o ( n^{-j(\alpha-1)} )\\
&\leq 
\P \left( \sum_{i=0}^{r_1-1} |X_i| \geq n\delta \right) 2\epsilon n 
\P ( \bar X_n'\in\mathcal D_{\geqslant j}^{\bar \gamma} ) 
+ 
o ( n^{-j(\alpha-1)} )\\
&= 
2\epsilon n \mathcal O(n^{-\alpha}) \mathcal O(n^{-j(\alpha-1)}) = o(n^{-j(\alpha-1)}),
\end{align*} 
where $\P ( \sum\nolimits_{i=0}^{r_1-1} |X_i| \geq n\delta )$ is of order $\mathcal O(n^{-\alpha})$ due to Remark~\ref{rmk: tail asymptotics w.r.t. |X_n|}. 

\textit{Part 2):}
In view of Part 1), it is sufficient to show that 
\[
\P ( \bar X_n\in(\mathbb D\setminus\underline{\mathbb D}_{\ll j}^\mu)^{-\gamma},\, \bar X_n' \in (\underline{\mathbb D}_{\ll j}^\mu)_{\rho/3} ) = o( n^{-j(\alpha-1)}), 
\]
for some $\rho>0$. 
%Recall that $\hat X_n = \hat X_n$, i.e., 
Noting
$
\hat X_n(t) = (1/n) \sum_{i=0}^{(\lfloor nt \rfloor \wedge r_{N(n)})-1} X_i
$ for $t\in[0,1]$, 
we have that 
\begin{align*}
& \{ \bar X_n\in(\mathbb D\setminus\underline{\mathbb D}_{\ll j}^\mu)^{-\gamma},\, \bar X_n' \in (\underline{\mathbb D}_{\ll j}^\mu)_{\rho/3} \} \\
 \hspace{5pt} &\subseteq 
\{ \bar X_n' \in (\underline{\mathbb D}_{\ll j}^\mu)_{\rho/3},\, \hat X_n \in (\mathbb D\setminus\underline{\mathbb D}_{\ll j}^\mu)^{-\rho} \}\cup 
\{ \bar X_n\in(\mathbb D\setminus\underline{\mathbb D}_{\ll j}^\mu)^{-\gamma},\, \hat X_n \in (\underline{\mathbb D}_{\ll j}^\mu)_\rho \}\\
&\subseteq 
\{ \bar X_n' \in (\underline{\mathbb D}_{\ll j}^\mu)_{\rho/3},\, \hat X_n \in (\mathbb D\setminus\underline{\mathbb D}_{\ll j}^\mu)^{-\rho} \} \cup
\{ \bar X_n\in(\mathbb D\setminus\underline{\mathbb D}_{\ll j}^\mu)^{-\gamma},\, \hat X_n \in (\underline{\mathbb D}_{\ll j-1}^\mu)_\rho \}\\   
 & \hspace{15pt}\cup 
\{ \bar X_n\in(\mathbb D\setminus\underline{\mathbb D}_{\ll j}^\mu)^{-\gamma},\, \hat X_n \in (\underline{\mathbb D}_{\ll j}^\mu)_\rho \cap (\mathbb D\setminus\underline{\mathbb D}_{\ll j-1}^\mu)^{-\rho} \}.
\end{align*}
Iterating this procedure $j+k$ times, we obtain that 
\begin{align}
\nonumber&
\{ \bar X_n\in(\mathbb D\setminus\underline{\mathbb D}_{\ll j}^\mu)^{-\gamma},\, \bar X_n' \in (\underline{\mathbb D}_{\ll j}^\mu)_{\rho/3} \}\\
\nonumber&\subseteq
\{ \bar X_n' \in (\underline{\mathbb D}_{\ll j}^\mu)_{\rho/3},\, \hat X_n \in (\mathbb D\setminus\underline{\mathbb D}_{\ll j}^\mu)^{-\rho} \}
\cup 
\{ \bar X_n\in(\mathbb D\setminus\underline{\mathbb D}_{\ll j}^\mu)^{-\gamma},\, \hat X_n \in (\underline{\mathbb D}_{0}^\mu)_\rho \}\\
\label{eq: proposition 6.2 eq05}&\hspace{12.5pt}\cup 
\bigcup_{i=1}^{j+k-1} 
\{ \bar X_n\in(\mathbb D\setminus\underline{\mathbb D}_{\ll j}^\mu)^{-\gamma},\, \hat X_n \in (\underline{\mathbb D}_{\ll j+1-i}^\mu)_\rho \cap (\mathbb D\setminus\underline{\mathbb D}_{\ll j-i}^\mu)^{-\rho} \}.
\end{align}
Now, note that 
\begin{equation} \{ \bar X_n' \in (\underline{\mathbb D}_{\ll j}^\mu)_{\rho/3},\, \hat X_n \in (\mathbb D\setminus\underline{\mathbb D}_{\ll j}^\mu)^{-\rho} \}\subseteq 
\{ \hat X_n \in (\mathbb D\setminus\underline{\mathbb D}_{\ll j}^\mu)^{-\rho}, d_{M_1'} ( \bar X_n',\hat X_n) \geq \rho/3 \}. \label{eq: proposition 6.2 eq06}
\end{equation}
Moreover, for $\rho>0$ sufficiently small, we have that
\begin{align}
\{ \bar X_n\in(\mathbb D\setminus\underline{\mathbb D}_{\ll j}^\mu)^{-\gamma},\, \hat X_n \in (\underline{\mathbb D}_{0}^\mu)_\rho \} 
\subseteq 
\{ R_n \in (\mathbb D\setminus\underline{\mathbb D}_{\ll j})^{-\rho} \}, \label{eq: proposition 6.2 eq07}
\end{align}
and that 
\begin{equation}
\notag\{ \bar X_n\in(\mathbb D\setminus\underline{\mathbb D}_{\ll j}^\mu)^{-\gamma},\, \hat X_n \in (\underline{\mathbb D}_{\ll j+1-i}^\mu)_\rho \cap (\mathbb D\setminus\underline{\mathbb D}_{\ll j-i}^\mu)^{-\rho} \} \subseteq 
\{ \hat X_n \in (\mathbb D\setminus\underline{\mathbb D}_{\ll j-i}^\mu)^{-\rho},\, R_n\in(\mathbb D\setminus\underline{\mathbb D}_{\ll i})^{-\rho} \}, \label{eq: proposition 6.2 eq08}
\end{equation}
for all $i\in\{1,\ldots,j+k-1\}$. 
In view of \eqref{eq: proposition 6.2 eq05}--\eqref{eq: proposition 6.2 eq08}, we have that 
\begin{align}
\nonumber&\P ( \bar X_n\in(\mathbb D\setminus\underline{\mathbb D}_{\ll j}^\mu)^{-\gamma},\, \bar X_n' \in (\underline{\mathbb D}_{\ll j}^\mu)_{\rho/3} )\\
\nonumber&\leq 
\P ( \hat X_n \in (\mathbb D\setminus\underline{\mathbb D}_{\ll j}^\mu)^{-\rho}, d_{M_1'} ( \bar X_n',\hat X_n) \geq \rho/3 ) 
+ 
\P ( R_n \in (\mathbb D\setminus\underline{\mathbb D}_{\ll j})^{-\rho} )\\
&\hspace{12.5pt}+
\sum_{i=1}^{j+k-1} \P ( \hat X_n \in (\mathbb D\setminus\underline{\mathbb D}_{\ll j-i}^\mu)^{-\rho},\, R_n\in(\mathbb D\setminus\underline{\mathbb D}_{\ll i})^{-\rho} ),\label{eq: proposition 6.2 eq09}
\end{align}
where the first term in the previous inequality is of order $o( n^{-j(\alpha-1)} )$ due to Lemma~\ref{lem: help lemma for two-sided LD}~(2) above. 
Turning to estimating the summation in \eqref{eq: proposition 6.2 eq09}, we define 
$R_{p,n} = \{ R_{p,n}(t), t\in[0,1] \}$ by 
\[
R_{p,n}(t) = \frac{1}{n} \sum_{i=r_p}^{\lfloor r_{p+1} t \rfloor - 1} X_i.
\] 
Using the facts that $R_{N(n),n}(t) = R_n(r_{N(n)+1}t/n)$ and $r_{N(n)+1}/n > 1$ a.s., we have that
\begin{equation}\label{eq: proposition 6.2 eq10}
R_n \in(\mathbb D\setminus\underline{\mathbb D}_{\ll i})^{-\rho}\ \Rightarrow\ R_{N(n),n} \in(\mathbb D\setminus\underline{\mathbb D}_{\ll i})^{-\rho/2}. 
\end{equation}
%We will prove \eqref{eq: proposition 6.2 eq10} in the end of the proof. 
Define
$
\bar X_{\leqslant p,n} = \{\bar X_{\leqslant p,n}(t),t\in[0,1]\}
$ 
by
$
\bar X_{\leqslant p,n}(t) = (1/n) \sum_{i=0}^{(\lfloor nt \rfloor \wedge r_{N(n)\wedge p})-1} X_i
$. 
In view of \eqref{eq: proposition 6.2 eq10}, we have that 
\begin{align*}
&\P ( \hat X_n \in (\mathbb D\setminus\underline{\mathbb D}_{\ll j-i}^\mu)^{-\rho},\, R_n\in(\mathbb D\setminus\underline{\mathbb D}_{\ll i})^{-\rho} )\\
&\leq
\P ( \hat X_n \in (\mathbb D\setminus\underline{\mathbb D}_{\ll j-i}^\mu)^{-\rho},\, R_{N(n),n} \in(\mathbb D\setminus\underline{\mathbb D}_{\ll i})^{-\rho/2} )\\
&\leq 
\P ( \hat X_n \in (\mathbb D\setminus\underline{\mathbb D}_{\ll j-i}^\mu)^{-\rho},\, R_{N(n),n} \in(\mathbb D\setminus\underline{\mathbb D}_{\ll i})^{-\rho/2},\, \mathfrak E_3^\epsilon(n) ) 
+ 
\P( \mathfrak E_3^\epsilon(n)^c )
\\
&=
\sum_{p=N_\epsilon^-(n)}^{N_\epsilon^+(n)} \P ( \hat X_n \in (\mathbb D\setminus\underline{\mathbb D}_{\ll j-i}^\mu)^{-\rho},\, R_{N(n),n} \in(\mathbb D\setminus\underline{\mathbb D}_{\ll i})^{-\rho/2},\, N(n) = p )
+ 
o( n^{-j(\alpha-1)} )\\ 
&=
\sum_{p=N_\epsilon^-(n)}^{N_\epsilon^+(n)} \P ( \bar X_{\leqslant p,n} \in (\mathbb D\setminus\underline{\mathbb D}_{\ll j-i}^\mu)^{-\rho},\, R_{p,n} \in(\mathbb D\setminus\underline{\mathbb D}_{\ll i})^{-\rho/2},\, N(n) = p )
+ 
o( n^{-j(\alpha-1)} )\end{align*}\begin{align*} 
&\leq 
\sum_{p=N_\epsilon^-(n)}^{N_\epsilon^+(n)} \P ( \bar X_{\leqslant p,n} \in (\mathbb D\setminus\underline{\mathbb D}_{\ll j-i}^\mu)^{-\rho} ) \P ( R_{p,n} \in(\mathbb D\setminus\underline{\mathbb D}_{\ll i})^{-\rho/2} ) 
+ 
o( n^{-j(\alpha-1)} )\\
&\leq 
\P ( \hat X_n \in (\mathbb D\setminus\underline{\mathbb D}_{\ll j-i}^\mu)^{-\rho/2} ) \sum_{p=N_\epsilon^-(n)}^{N_\epsilon^+(n)} \P ( R_{p,n} \in(\mathbb D\setminus\underline{\mathbb D}_{\ll i})^{-\rho/2} ) 
+ 
o( n^{-j(\alpha-1)} )\\ 
&= 
\mathcal O( n^{-(j-i)(\alpha-1)} ) 2\epsilon \mathcal O( n^{-i(\alpha-1)} ) + o( n^{-j(\alpha-1)} ),
\end{align*}
where in the final step we use Lemma~\ref{lem: help lemma for two-sided LD}~(2)--(3). 
Letting $\epsilon\to0$, we prove that the summation in \eqref{eq: proposition 6.2 eq09} is of order $o ( n^{-j(\alpha-1)} )$. 
Similarly, it can be shown that $\P ( R_n \in (\mathbb D\setminus\underline{\mathbb D}_{\ll j})^{-\rho} )$, and hence, $\P ( \bar X_n\in(\mathbb D\setminus\underline{\mathbb D}_{\ll j}^\mu)^{-\gamma},\, \bar X_n' \in (\underline{\mathbb D}_{\ll j}^\mu)_{\rho/3} )$ are of order $o ( n^{-j(\alpha-1)} )$. 
\end{proof}

%
%\begin{proof}
%Analogously to the proof of Lemma~\ref{lem: closeness of D_<j}.
%\end{proof}

\linkdest{proof of lem: help lemma for two-sided LD}
\begin{proof}[Proof of Lemma~\ref{lem: help lemma for two-sided LD}]
\linksinpf{lem: help lemma for two-sided LD}
Let $\mathbb D^s$ denote the set of all step functions in $\mathbb D$. 
Let $\mathbb D^{s,\uparrow}$ denote the set of all nondecreasing step functions in $\mathbb D$. 
Define the mapping $\Psi^\uparrow \colon \mathbb D^s\to\mathbb D^{s,\uparrow}$ by $\zeta = \Psi^\uparrow (\xi)$ and
\begin{equation}\label{eq:Psi+}
\zeta(t) = \inf \{ \zeta'(t)\in\mathbb R\colon \zeta'\in\mathbb D^{s,\uparrow},\, \zeta' \geq \xi \}, 
\qquad\mbox{for all}\ t\in[0,1].
\end{equation}
Basically, $\Psi^\uparrow (\xi)$ is the least possible nondecreasing step function such that $\Psi^\uparrow (\xi) \geq \xi$.

\textit{Part 1):}
First we show that $\P ( d_{M_1'}(\bar X_{i,n},\bar X_{i,n}') \geq \delta ) \leq \P ( T_2(n^\beta)<r_1 ) + o(n^{-(2-\epsilon)\alpha})$, for any $\beta\in(0,1)$. 
To begin with, setting $\beta^0 = (1-\beta)/2$ we have that
\begin{align*}
\P ( d_{M_1'}(\bar X_{i,n},\bar X_{i,n}') \geq \delta ) 
&\leq 
\P ( d_{M_1'}(\bar X_{i,n},\bar X_{i,n}') \geq \delta,\, r_i-r_{i-1} \leq n^{\beta_0} )+ 
\P ( r_i-r_{i-1} > n^{\beta_0} )\\
&= 
\P ( d_{M_1'}(\bar X_{i,n},\bar X_{i,n}') \geq \delta,\, r_i-r_{i-1} \leq n^{\beta_0} ) 
+ o(n^{-(2-\epsilon)\alpha}).
\end{align*}
Hence, it is sufficient to show that 
\begin{equation}\label{eq: lemma6.7 eq01}
\P ( d_{M_1'}(\bar X_{i,n},\bar X_{i,n}') \geq \delta,\, r_i-r_{i-1} \leq n^{\beta_0} ) \leq \P ( T_2(n^\beta)<r_1 ).
\end{equation}
Note that $d_{M_1'}(\bar X_{i,n},\bar X_{i,n}') \geq \delta$ implies $\| \bar X_{i,n}-\bar X_{i,n}' \|_\infty \geq \delta$, and hence,
\[
\delta 
\leq 
\sup_{k\leq r_i\wedge n} \left| \frac{1}{n} \sum_{j=r_{i-1}}^{k-1} X_j \right| 
\leq 
\sup_{k\leq r_i} \left| \frac{1}{n} \sum_{j=r_{i-1}}^{k-1} X_j \right|.
\]
It is sufficient to show that 
\[
\left\{ \sup_{k\leq r_i} \left| \frac{1}{n} \sum_{j=r_{i-1}}^{k-1} X_j \right| \geq \delta,\, d_{M_1'}(\bar X_{i,n},\bar X_{i,n}') \geq \delta,\, r_i-r_{i-1} \leq n^{\beta_0} \right\}
%\subseteq 
%\{ T_2(n^\beta) < r_1 \}.
\]
is a subset of $\{ T_2(n^\beta) < r_1 \}$. 
We distinguish between the cases 1)  $\sup_{k\leq r_i} \frac{1}{n} \sum_{j=r_{i-1}}^{k-1} X_j \geq \delta$, and 2) $\inf_{k\leq r_i} \frac{1}{n} \sum_{j=r_{i-1}}^{k-1} X_j \leq -\delta$.
%1) $\sup_{k\leq r_i} \frac{1}{n} \sum_{j=r_{i-1}}^{k-1} X_j \geq \delta$, 
%2) $\inf_{k\leq r_i} \frac{1}{n} \sum_{j=r_{i-1}}^{k-1} X_j \leq -\delta$
We focus on 1), since 2) can be dealt with by replacing $X_i$ by $-X_i$. 
Note that 
\[
\sup_{k\leq r_i\wedge n} \sum_{j=r_{i-1}}^{k-1} X_j \geq \delta n,\ r_i-r_{i-1} \leq n^{\beta_0}
\] 
implies the existence of $k_1\in\{r_{i-1},\ldots,r_i-1\}$ such that $X_{k_1} > n^{1-\beta_0} > n^\beta$. 
Now, suppose that $X_k \geq -n^\beta $ for all $k\in\{r_{i-1},\ldots,r_i-1\}$. 
Then the following statements hold.
\begin{itemize}
\item[(i)] For $n$ sufficiently large, we have
\[
\sup_{t\in[0,1]} \Psi^\uparrow(\bar X_{i,n})(t) - \sup_{t\in[0,1]} \bar X_{i,n}'(t)
\leq
n^{-1} (r_i - r_{i-1}) n^\beta \leq n^{\beta+\beta_0-1} \leq \delta/3,
\]
and hence, 
\[
\sup_{t\in[0,1]} \bar X_{i,n}'(t) 
\geq
\sup_{t\in[0,1]} \Psi^\uparrow(\bar X_{i,n})(t)  - \delta/3 \geq 2/3\delta>0.
\]
Moreover, both $\Psi^\uparrow(\bar X_{i,n})\in\mathbb D^{s,\uparrow}$ and $\bar X_{i,n}'\in\mathbb D^{s,\uparrow}$ are nonnegative functions in $\mathbb D$. 
Combining these with $r_i-r_{i-1} \leq n^{\beta_0}$, we have that, for sufficiently large $n$, 
\[
d_{M_1'}( \Psi^\uparrow(\bar X_{i,n}),\bar X_{i,n}' )\leq 
\left\{ \sup_{t\in[0,1]} \Psi^\uparrow(\bar X_{i,n})(t) - \sup_{t\in[0,1]} \bar X_{i,n}'(t) \right\} \vee (r_i - r_{i-1})/n 
\leq 
\delta/3.
\]
\item[(ii)] For $n$ sufficiently large,  
\[
d_{M_1'}( \Psi^\uparrow(\bar X_{i,n}),\bar X_{i,n} )
\leq 
\| \Psi^\uparrow(\bar X_{i,n}) - \bar X_{i,n} \|_\infty
\leq
n^{-1} (r_i - r_{i-1}) n^\beta \leq n^{\beta+\beta_0-1} \leq \delta/3. 
\]
\end{itemize}
In view of (i) and (ii), we have that 
\[
d_{M_1'}( \bar X_{i,n},\bar X_{i,n}' ) 
\leq 
d_{M_1'}( \bar X_{i,n},\Psi^\uparrow(\bar X_{i,n}) ) + d_{M_1'}( \Psi^\uparrow(\bar X_{i,n}),\bar X_{i,n}' )
\leq 
2\delta/3
,\] 
which leads to the contradiction of $d_{M_1'}( \bar X_{i,n},\bar X_{i,n}' ) \geq \delta$. 
Hence, we prove \eqref{eq: lemma6.7 eq01}. 

Next we show that $\P ( \bar X_{i,n} \in (\mathbb D\setminus\underline{\mathbb D}_{\ll k})^{-\delta} ) = \P( T_k(n^\beta) < r_1 ) + o(n^{-(k-\epsilon)\alpha})$ for any $\beta\in(0,1)$. 
First we claim that
\begin{equation}  
%\begin{align}
d(\xi, \underline{\mathbb D}_{\ll k}) > \delta
\ \Rightarrow
%\subseteq
%\{
\exists (t_0,\ldots,t_k)\ \mbox{s.t.}\ 0\leq t_0 <  \cdots < t_k \leq 1,\, |\xi(t_i) - \xi(t_{i-1})| > \delta,\, i = 1,\ldots,k. \label{eq: lemma6.7 eq15}
%\end{align}
\end{equation}
To see this, assume that the opposite holds. 
Set $s_0=0$ and 
\[
s_i = \sup \{ t \in (s_{i-1},1] \colon |\xi(t) - \xi(s_{i-1})| \leq \delta \},
\] 
for $i = 1,\ldots,k$.
Define $\zeta\in\mathbb D$ by $\zeta(t) = \xi(s_i)$ for $s_i\leq t < s_{i+1}$. 
Due to the assumption, we have $\zeta \in \underline{\mathbb D}_{\ll k}$, $d(\xi,\zeta) \leq \delta$, and hence, $d(\xi,\underline{\mathbb D}_{\ll k}) \leq \delta$. 
This leads to the contradiction of $d(\xi,\underline{\mathbb D}_{\ll k}) > \delta$. 
Thus, we proved \eqref{eq: lemma6.7 eq15}. 
Using the fact that $\P(r_1 > n\delta/2)$ decays exponentially, we are able to restrict ourselves to the case where $r_1 \leq n\delta/2$. 
Let $(t_0,\ldots,t_k)$ be as in the r.h.s.\ of \eqref{eq: lemma6.7 eq15}.
Using the fact that, under the $M_1'$ topology, jumps with the same sign ``merge'' into one jump in case they are ``close'', we conclude that $\mbox{sign}(\xi(t_i))\mbox{sign}(\xi(t_{i-1})) = -1$ for $i\in\{1,\ldots,k\}$. 
Combining this with the fact that $\P(r_1 > n^{(1-\beta)}) = o(n^{-(k-\epsilon)\alpha})$ we obtain that 
\begin{align}
\notag
\P ( \bar X_{i,n} \in (\mathbb D\setminus\underline{\mathbb D}_{\ll k})^{-\delta} )&= 
\P ( \bar X_{i,n} \in (\mathbb D\setminus\underline{\mathbb D}_{\ll k})^{-\delta},\, r_1 \leq n^{(1-\beta)} ) + \P(r_1 > n^{(1-\beta)})\\ 
&= 
\P( T_k(n^\beta) < r_1 ) + o(n^{-(k-\epsilon)\alpha})\label{eq: lemma6.7 eq02}
\end{align} 
for any $\beta\in(0,1)$.

Now, it remains to show that $\P ( T_k(u^\beta) < r_1 ) = \mathcal O( u^{-(k-\epsilon)\alpha} )$ as $u\to\infty$. 
We prove this by induction in $k$. 
For the base case we need to show $\P ( T_2(n^\beta)<r_1 ) = \mathcal O (n^{-(2-\epsilon)\alpha})$. 
Recalling  
$K_\beta^\gamma(u) = \inf \{ n>T(u^\beta)\colon |X_n| \leq u^\gamma \}$, 
%and  
%$K_{i+1}^\gamma (u^\beta) = \inf \{ n>T_i(u^\beta)\colon |X_n| \leq u^\gamma \}$. 
we have that 
\begin{align}
\P ( T_2( u^\beta ) < r_1 ) 
\nonumber&= 
\P ( T_2( u^\beta ) < K_\beta^\gamma(u)  ) 
+ 
\P ( T_1( u^\beta ) < K_\beta^\gamma(u) <  T_2( u^\beta ) < r_1 )\\ 
\label{eq: lemma6.7 eq03}&= 
\P ( T_2( u^\beta ) < K_\beta^\gamma(u)  ) 
+ 
\mathcal O( u^{-(2\beta-\gamma)\alpha} ),
\end{align}
where $\P (T_1( u^\beta ) < K_\beta^\gamma(u) <  T_2( u^\beta ) < r_1) = \mathcal O( u^{-(2\beta-\gamma)\alpha} )$ can be deduced by following the arguments as in the proof of Proposition~\ref{prop: neglibigle parts in the area under first return time}. 
Applying the dual change of measure $\mathscr D$ over the time interval $[0,T_1(u^\beta)]$, we obtain that
\begin{align}
&
u^{(2\beta-\gamma)\alpha} \P ( T_2( u^\beta ) < K_\beta^\gamma(u)  ) 
\nonumber
\\
&= 
u^{(2\beta-\gamma)\alpha} \E^\mathscr D \left[ e^{-\alpha S_{T(u^\beta)}} \1_{\{ T(u^\beta) < r_1 \}} \P^\mathscr D ( T_2( u^\beta ) < K_\beta^\gamma(u) \,|\, \mathcal F_{T(u^\beta)} ) \right]
\nonumber
\\ 
\label{eq: lemma6.7 eq04}&= 
\E^\mathscr D \left[ \1_{\{T(u^\beta) < r_1\}} u^{(\beta-\gamma)\alpha} \P^\mathscr D ( T_2( u^\beta ) < K_\beta^\gamma(u) \,|\, \mathcal F_{T(u^\beta)} ) \left| \frac{X_{T(u^\beta)}}{u^\beta Z_{T(u^\beta)}} \right|^{-\alpha} \right].
\end{align}
Recalling $\mathfrak E_2(u) = \{ | B_n | \leq u^\gamma,\forall 1\leq n < K_\beta^\gamma(u) \}$, 
%Defining $\bar{\mathfrak E}_2^\gamma(u) = \{ | B_n | \leq u^\gamma,\forall T(u^\beta) < n < r_1 \}$, 
we have that, for $|v|\geq1$
\begin{align}
\nonumber&
\P^\mathscr D ( T_2( u^\beta ) < K_\beta^\gamma(u) \,|\, X_{T(u^\beta)} = v u^\beta )\\
\nonumber&\leq 
\P^\mathscr D ( | B_n | \leq u^\gamma,\forall T(u^\beta) < n < r_1,\, T_2( u^\beta ) < K_\beta^\gamma(u) \,|\, X_{T(u^\beta)} = v u^\beta )\\
\nonumber&\hspace{12.5pt}+ 
\P^\mathscr D ( \exists T(u^\beta) < n < r_1\ s.t.\ | B_n | > u^\gamma \,|\, X_{T(u^\beta)}=v u^\beta )\\ 
&= 
\P ( (\mathfrak E_2(u))^c \,|\, X_0 = v u^\beta )
= o (u^{-(\beta-\gamma)\alpha}) v,
%\editi{\underbrace{o (u^{-(\beta-\gamma)\alpha})}_{\substack{\mbox{if}\ \gamma>\beta/(1+C),\ \mbox{where}\ C\geq1\\ \mbox{is such that}\ \E |B_1|^{C\alpha}<\infty}}} v,
\label{eq: lemma6.7 eq05}
\end{align}
where the tail estimate in \eqref{eq: lemma6.7 eq05} is obtained by following the arguments in the proof of Lemma~\ref{lem: bounding sign-switching event} and taking advantage of the additional assumption that $\E |B_1|^m<\infty$ for every $m\in\mathbb Z_+$. 
Plugging \eqref{eq: lemma6.7 eq05} into \eqref{eq: lemma6.7 eq04} and using the dominated convergence theorem, we obtain that 
\begin{align}
u^{(2\beta-\gamma)\alpha} \P ( T_2( u^\beta ) < K_\beta^\gamma(u) ) 
= 
o (1).\label{eq: lemma6.7 eq06}
\end{align}
In view of \eqref{eq: lemma6.7 eq01}, \eqref{eq: lemma6.7 eq03}, and \eqref{eq: lemma6.7 eq06}, 
\[
\P ( T_2(n^\beta)<r_1 ) = \mathcal O (n^{-(2\beta-\gamma)\alpha}) = \mathcal O (n^{-(2-\epsilon)\alpha}),
\] 
by choosing $\beta = 1 - \epsilon/3$ and $\gamma = \epsilon/3$. 
Turning to the inductive step, suppose that $\P(T_k(u^\beta) < r_1) = \mathcal O(u^{-(k-\epsilon)\alpha})$. 
Note that 
\begin{align*}
\P(T_{k+1}(u^\beta)<r_1) = 
\P(T_k(u^\beta)<K_\beta^\gamma(u)<T_{k+1}(u^\beta)<r_1) + \P(T_{k+1}(u^\beta)<K_\beta^\gamma(u)),
\end{align*}
where for the first term in the previous sum we have that 
\begin{align*}
\P(T_k(u^\beta)<K_\beta^\gamma(u)<T_{k+1}(u^\beta)<r_1)&\leq
\P(T_k(u^\beta)< r_1) \P(T(u^\beta)<r_1 | X_0 = u^\gamma)\\
&= 
\mathcal O( u^{-(k-\epsilon')\alpha} ) \mathcal O( u^{-(\beta-\gamma)\alpha} )
= 
\mathcal O( u^{-(k+1-\epsilon)\alpha} ),
\end{align*}
for suitable choice of $\beta$ and $\gamma$. 
Hence, it remains to bound $\P(T_{k+1}(u^\beta)<K_\beta^\gamma(u))$. 
Applying the dual change of measure $\mathscr D$ over the time interval $[0,T_1(u^\beta)]$, we obtain that
\begin{align}
\nonumber&u^{((k+1)\beta-\gamma)\alpha}\P(T_{k+1}(u^\beta)<K_\beta^\gamma(u))\\
%&= 
%u^{((k+1)\beta-\gamma)\alpha} \E^\mathscr D \left[ e^{-\alpha S_{T(u^\beta)}} \1_{\{ T(u^\beta) < r_1 \}} \P^\mathscr D ( T_2( u^\beta ) < K_\beta^\gamma(u) \,|\, \mathcal F_{T(u^\beta)} ) \right]\\ 
\label{eq: lemma6.7 eq07}&= 
\E^\mathscr D \left[ \1_{\{T(u^\beta) < r_1\}} u^{(k\beta-\gamma)\alpha} \P^\mathscr D ( T_{k+1}( u^\beta ) < K_\beta^\gamma(u) \,|\, \mathcal F_{T(u^\beta)} ) \left| \frac{X_{T(u^\beta)}}{u^\beta Z_{T(u^\beta)}} \right|^{-\alpha} \right].
\end{align}
Moreover, we have that, for $|v|\geq 1$,
\begin{align}
\nonumber&
\P^\mathscr D ( T_{k+1}( u^\beta ) < K_\beta^\gamma(u) \,|\, X_{T(u^\beta)} = v u^\beta )\\ 
\nonumber&\leq 
\P^\mathscr D ( \exists T(u^\beta) < n_1<\cdots<n_k < r_1\ s.t.\ | B_{n_i} | > u^\gamma,\,\forall i\leq k \,|\, X_{T(u^\beta)}=v u^\beta )\\
\nonumber&=
\P ( \exists 0 < n_1<\cdots<n_k < r_1\ s.t.\ | B_{n_i} | > u^\gamma,\,\forall i\leq k \,|\, X_0=v u^\beta )\\
\label{eq: lemma6.7 eq08}&= 
\P ( \exists 0 < n_1<\cdots<n_k < r_1\ s.t.\ | B_{n_i} | > u^\gamma,\,\forall i\leq k )
= 
o(u^{-(k\beta-\gamma)\alpha}),
\end{align}
where the tail estimate in \eqref{eq: lemma6.7 eq08} is obtained by following the arguments in the proof of Lemma~\ref{lem: bounding sign-switching event} and taking advantage of the additional assumption that $\E |B_1|^m<\infty$ for every $m\in\mathbb Z_+$. 
Combining \eqref{eq: lemma6.7 eq07} and \eqref{eq: lemma6.7 eq08} with the fact that $| X_{T(u^\beta)}/u^\beta | \leq 1$ we obtain that $\P(T_{k+1}(u^\beta)<K_\beta^\gamma(u))$, and hence, $\P(T_{k+1}(u^\beta)<r_1) $ are of order $\mathcal O(u^{-(k+1-\epsilon)\alpha})$. 

\textit{Part 2):} By a similar reasoning as in proving part (1) of Proposition~\ref{prop: two-sided asymptotic equivalence between X_n' and X_n}, we have that
\begin{align*}
&\P( \bar X_n'\in(\mathbb D\setminus\underline{\mathbb D}_{\ll j}^\mu)^{-\gamma},\, d_{M_1'}(\bar X_n',\hat X_n)\geq\delta )\\ 
&\leq 
\P ( \bar X_n'\in \mathcal D_{\geqslant j}^{\bar \gamma},\, \exists\, i\leq N(n)\ s.t.\ d_{M_1'}(\bar X_{i,n},\bar X_{i,n}')\geq \delta ) + o ( n^{-j(\alpha-1)} )\\
&= 
o ( n^{-j(\alpha-1)} ),
%,\quad\mbox{as}\ n\to\infty.
\end{align*}
where $\mathcal D_{\geqslant j}^{\bar \gamma}$ is defined as in \eqref{eq:D>=j,k}. 
It remains to show that, for any $j\geq1$, $\gamma>0$, and $\delta>0$, there exists some $\rho>0$ so that 
\[
\P( \hat X_n\in(\mathbb D\setminus\underline{\mathbb D}_{\ll j}^\mu)^{-\gamma},\, \bar X_n'\in(\underline{\mathbb D}_{\ll j}^\mu)_\rho,\ d_{M_1'}(\bar X_n',\hat X_n)\geq\delta ) = o ( n^{-j(\alpha-1)} ),
% \qquad\mbox{as}\ n\to\infty. 
\] 
as $n\to\infty$. 
Recall, for $\gamma>0$ and $j\geq 1$,
$
\mathcal D_{\geqslant j}^{\gamma} 
= 
\{ \xi\in \mathbb D \colon |\mbox{Disc}_{\gamma}(\xi)| \geq j \}
$, 
where
$
\mbox{Disc}_{\gamma}(\xi) = \{t\in\mbox{Disc}(\xi)\colon |\xi(t)-\xi(t^-)| \geq \gamma\}
$. 
Defining $\mathcal D_{= j}^{\rho} = \{ \xi\in \mathbb D \colon |\mbox{Disc}_{\gamma}(\xi)| = j \}$ for $j\in\mathbb Z$ and $\rho>0$, we have 
\begin{align}
\nonumber&
\P( \hat X_n\in(\mathbb D\setminus\underline{\mathbb D}_{\ll j}^\mu)^{-\gamma},\, \bar X_n'\in(\underline{\mathbb D}_{\ll j}^\mu)_\rho,\, d_{M_1'}(\bar X_n',\hat X_n)\geq\delta )\\
\nonumber&\leq 
\P( \bar X_n'\in \mathcal D_{\geqslant j-1}^{\rho_0} ,\, d_{M_1'}(\bar X_n',\hat X_n)\geq\delta )+
\P( \hat X_n\in(\mathbb D\setminus\underline{\mathbb D}_{\ll j}^\mu)^{-\gamma},\, \bar X_n'\in (\mathcal D_{\geqslant j-1}^{\rho_0})^c )\\
\nonumber&\leq 
\P( \bar X_n'\in \mathcal D_{\geqslant j-1}^{\rho_0},\, d_{M_1'}(\bar X_n',\hat X_n)\geq\delta )+\notag
\sum_{i=1}^{j-1} \P( \hat X_n\in(\mathbb D\setminus\underline{\mathbb D}_{\ll j}^\mu)^{-\gamma},\, \bar X_n'\in \mathcal D_{= j-i-1}^{\rho_0} )\\ 
\label{eq: lemma6.7 eq09}&= 
\P( \bar X_n'\in \mathcal D_{\geqslant j-1}^{\rho_0},\, d_{M_1'}(\bar X_n',\hat X_n)\geq\delta )
+
\sum_{i=1}^{j-1} \P ( E_j(i) ).
\end{align}
Note that 
\begin{align}
\nonumber&
\P( \bar X_n'\in \mathcal D_{\geqslant j-1}^{\rho_0},\, d_{M_1'}(\bar X_n',\hat X_n)\geq\delta )\\ 
\nonumber&\leq 
\P ( \bar X_n'\in \mathcal D_{\geqslant j-1}^{\rho_0},\, \exists\, i\leq N(n)\ s.t.\ d_{M_1'}(\bar X_{i,n},\bar X_{i,n}')\geq \delta ) + o ( n^{-j(\alpha-1)} )\\
\nonumber&= 
%\sum_{(l,m)\in I_{=j-1,k}} 
\P ( \bar X_n'\in \mathcal D_{\geqslant j-1}^{\rho_0},\, \mathfrak E_3^{\epsilon}(n) ,\, \exists\, i\leq N(n) s.t.\ d_{M_1'}(\bar X_{i,n},\bar X_{i,n}')\geq \delta ) + o( n^{-j(\alpha-1)} )\\
\nonumber&\leq 
\P ( \exists (i_0,\ldots,i_{j-2})\in \mathcal P(\{1,\ldots,N_\epsilon^+(n)\},j-1) \ \mbox{s.t.}\\ 
\nonumber&\hspace{25pt} 
d_{M_1'}(\bar X_{i_0,n},\bar X_{i_0,n}')\geq \delta,\, |X_{i_p}'|\geq n \rho_0,\forall 1\leq p\leq j-2 )\\
\label{eq: lemma6.7 eq10}&= 
\mathcal O ( n^{j-1} n^{-(2-\epsilon)\alpha} n^{-(j-2)\alpha} ) + o ( n^{-j(\alpha-1)} )
= 
o ( n^{-j(\alpha-1)} ),
\end{align}
where in \eqref{eq: lemma6.7 eq10} we use Lemma~\ref{lem: help lemma for two-sided LD}~(1) together with the fact that the blocks $\{X_{r_{i-1}},\ldots,X_{r_i}\}$, $i\geq1$, are mutually independent, and the final equivalence is obtained by setting $\epsilon<1/\alpha$. 
In view of the above computation, it remains to analyze $\P ( E_j(k) )$, $k\in\{1,\ldots,j-1\}$ as in \eqref{eq: lemma6.7 eq09}. 

Let $I^* = \{ i\leq N(n) \colon d_{M_1'}(\bar X_{i,n},\bar X_{i,n}')\geq \rho_1 \}$. 
Note that 
\begin{align*}
&\P ( \hat X_n\in(\mathbb D\setminus\underline{\mathbb D}_{\ll j}^\mu)^{-\gamma},\, \bar X_n'\in \mathcal D_{=  j-k-2}^{\rho_0} )\\
&= 
\P ( \hat X_n\in(\mathbb D\setminus\underline{\mathbb D}_{\ll j}^\mu)^{-\gamma},\, \bar X_n'\in \mathcal D_{=  j-k-2}^{\rho_0},\, |I^*| \geq (k+2)\wedge(j-k-2) )\\
&\hspace{12.5pt}+ 
\P ( \hat X_n\in(\mathbb D\setminus\underline{\mathbb D}_{\ll j}^\mu)^{-\gamma},\, \bar X_n'\in \mathcal D_{=  j-k-2}^{\rho_0},\, 1 \leq |I^*| < (k+2)\wedge(j-k-2) )\\
&\hspace{12.5pt}+ 
\P ( \hat X_n\in(\mathbb D\setminus\underline{\mathbb D}_{\ll j}^\mu)^{-\gamma},\, \bar X_n'\in \mathcal D_{=  j-k-2}^{\rho_0},\, |I^*| = 0 )\\
&= 
\textbf{(V.1)} + \textbf{(V.2)} + \textbf{(V.3)}.
\end{align*}
Suppose that $k \leq j/2 - 2$, where the case $k > j/2 - 2$ can be dealt with similarly. 
Note that 
\begin{align*}
\textbf{(V.1)}
&\leq 
\P ( \bar X_n'\in \mathcal D_{=  j-k-2}^{\rho_0},\, |I^*| \geq k+2 ,\, \mathfrak E_3^{\epsilon}(n) ) + o( n^{-j(\alpha-1)} )\\
&\leq 
\P ( \exists (i_1,\ldots,i_{j-k-2})\in \mathcal P(\{1,\ldots,N_\epsilon^+(n)\},j-k-2) \ \mbox{s.t.}\\ 
&\hspace{25pt} 
d_{M_1'}(\bar X_{i_p,n},\bar X_{i_p,n}')\geq \rho,\forall 1\leq p\leq k+2,\\ 
&\hspace{25pt} 
|X_{i_q}'|\geq n\rho_0,\forall k+3\leq q\leq j-k-2 )\\
\nonumber&\hspace{12.5pt}+ 
o( n^{-j(\alpha-1)} )\\
&= 
\mathcal O ( n^{j-k-2} n^{-(k+2)(2-\epsilon)\alpha} n^{-(j-2k-4)\alpha} ) + o ( n^{-j(\alpha-1)} )\\
&= 
\mathcal O ( n^{-j(\alpha-1)} n^{-(k+2)+(k+2)\epsilon\alpha} ) + 
o ( n^{-j(\alpha-1)} )
=o ( n^{-j(\alpha-1)} ),
\end{align*}
if $\epsilon < 1/\alpha$. 
Moreover, we have that $\textbf{(V.3)} = o ( n^{-j(\alpha-1)} )$ for $\rho_0$ sufficiently small. 
Let $I' = \{ i\leq N(n)\colon \bar X_{i,n}' \geq \rho_0 \}$. 
Turning to bounding \textbf{(V.2)} we have that 
\begin{align*}
\textbf{(V.2)}
&= 
\P ( \hat X_n\in(\mathbb D\setminus\underline{\mathbb D}_{\ll j}^\mu)^{-\gamma},\, \bar X_n'\in \mathcal D_{=  j-k-2}^{\rho_0},\, 1 \leq |I^*| \leq k+1 )\\
&= 
\sum_{k_1=1}^{k+1} \sum_{k_2=0}^{k_1} \P ( \hat X_n\in(\mathbb D\setminus\underline{\mathbb D}_{\ll j}^\mu)^{-\gamma},\, \bar X_n'\in \mathcal D_{=  j-k-2}^{\rho_0},\\ 
&\hspace{65pt} 
|I^*| = k_1,\, |I' \cap I^*| = k_2,\, \mathfrak E_3^{\epsilon}(n) )\\ 
&\hspace{12.5pt}+ 
o(n^{-j(\alpha-1)}).
\end{align*}
Defining $J = \{(l_1',\ldots,l_{k_1}')\colon \mathbf{1}^{T}(l_1',\ldots,l_{k_1}') < k+2+k_2\}$, it is now sufficient to consider 
\begin{align*}
&\P ( \hat X_n\in(\mathbb D\setminus\underline{\mathbb D}_{\ll j}^\mu)^{-\gamma},\, \bar X_n'\in \mathcal D_{=  j-k-2}^{\rho_0},\, |I^*| = k_1,\, |I' \cap I^*| = k_2,\, \mathfrak E_3^{\epsilon}(n) )\\
&\leq 
\P \Bigg ( \exists (i_1,\ldots,i_{j-k-2-k_2+k_1})\in \mathcal P(\{1,\ldots,N_\epsilon^+(n)\},j-k-2-k_2+k_1) \ \mbox{s.t.}\\ 
&\hspace{37.5pt} 
(\bar X_{i_1,n},\ldots,\bar X_{i_{k_1},n}) \in \Big( \bigcup_{(l_1,\ldots,l_{k_1})\in J} \prod_{p=1}^{k_1} \underline{\mathbb D}_{l_{i_p}} \Big)^{-\rho_2},\\  
&\hspace{37.5pt} 
|X_{i_q}'|\geq n\rho_0,\, \forall k_1+1\leq q\leq j-k-2-k_2+k_1 \Bigg)\\
&\hspace{12.5pt}+
\P \Bigg ( \hat X_n\in(\mathbb D\setminus\underline{\mathbb D}_{\ll j}^\mu)^{-\gamma},\, \bar X_n'\in \mathcal D_{=  j-k-2}^{\rho_0},\, |I^*| = k_1,\, |I' \cap I^*| = k_2,\, \mathfrak E_3^{\epsilon}(n),\\ 
&\hspace{50pt}
\exists (i_1,\ldots,i_{j-k-2-k_2+k_1})\in \mathcal P(\{1,\ldots,N_\epsilon^+(n)\},j-k-2-k_2+k_1)\\ 
&\hspace{50pt}\mbox{s.t.}\ 
(\bar X_{i_1,n},\ldots,\bar X_{i_{k_1},n}) \in \Big( \bigcup_{(l_1,\ldots,l_{k_1})\in J} \prod_{p=1}^{k_1} \underline{\mathbb D}_{l_{i_p}} \Big)_{\rho_2},\\
&\hspace{50pt} 
|X_{i_q}'|\geq n\rho_0,\, \forall k_1+1\leq q\leq j-k-2-k_2+k_1 \Bigg)\\ 
&=
\textbf{(V.2.a)} + \textbf{(V.2.b)}.
\end{align*}
Since $0\leq k_2\leq k_1 \leq k+1$ we have that 
\begin{align*}
\textbf{(V.2.a)} 
&\leq 
\mathcal O(n^{j-1}) \mathcal O(n^{-(k+2+k_2-k_1\delta)\alpha}) \mathcal O(n^{-(j-k-2-k_2)\alpha})\\
&= 
\mathcal O(n^{-j(\alpha-1)} n^{k_1\delta\alpha - 1})
= 
o (n^{-j(\alpha-1)}),
\end{align*}
for $\delta < 1/((k+1)\alpha)$. 
It remains to show that $\textbf{(V.2.b)} = o(n^{-j(\alpha-1)})$. 
To see this, for $\epsilon>0$ there exists 
\begin{equation}\label{eq: lemma6.7 eq11}
(\zeta_1,\ldots,\zeta_{k_1}) \in \bigcup_{(l_1,\ldots,l_{k_1})\in J} \prod_{p=1}^{k_1} \underline{\mathbb D}_{l_{i_p}}
\end{equation}
such that $d(\bar X_{i_p,n},\zeta_{i_p}) \leq \rho_2+\epsilon$, for all $1\leq p\leq k_1$. 
Hence, we have that 
\begin{equation}\label{eq: lemma6.7 eq12}
d\left( \hat X_n, \bar X_n'- \sum_{i\in I'\cap\{i_1,\ldots,i_{k_1}\}} \bar X_{i,n}' + \sum_{p=1}^{k_1} \zeta_{i_p} \right) \leq \rho_1\vee(\rho_2+\epsilon). 
\end{equation}
For any $c > 0$, define $\Phi_c \colon \mathbb D \to \mathbb D$ by 
\begin{equation}\label{eq: Phi_c}
\Phi_c(\xi)(t)
=
\sum_{s\in[0,t]\cap\mbox{Disc}(\xi,c)} ( \xi(s)-\xi(s^-) ), \quad \text{for } t\in[0,1],
\end{equation}
where $\mbox{Disc}(\xi,c) \{ t\in\mbox{Disc}(\xi)\colon \xi(t) - \xi(t^-) \geq c \}$. 
Now we claim that 
\begin{equation}\label{eq: lemma6.7 eq13}
\| \bar X_n' - \Phi_{\rho_0}(\bar X_n') - \mu\cdot\mbox{id} \|_\infty > \rho_3.
\end{equation}
To see this, suppose $\| \Phi_{\rho_0}(\bar X_n') - \mu\cdot\mbox{id} \|_\infty \leq \rho_3$. 
Hence, 
\begin{align}
\nonumber&d\left( \bar X_n'- \sum_{i\in I'\cap\{i_1,\ldots,i_{k_1}\}} \bar X_{i,n}' + \sum_{p=1}^{k_1} \zeta_{i_p}, \mu\cdot\mbox{id} + \sum_{p=1}^{k_1} \zeta_{i_p} + \sum_{i\in I'\setminus\{i_1,\ldots,i_{k_1}\}} \bar X_{i,n}' \right)\\
&\leq 
\left\|
\bar X_n' - \sum_{i\in I'} \bar X_{i,n}' - \mu\cdot\mbox{id} \right\|_\infty
=
\| \bar X_n' - \Phi_{\rho_0}(\bar X_n') - \mu\cdot\mbox{id} \|_\infty \leq \rho_3.\label{eq: lemma6.7 eq14}
\end{align}
In view of \eqref{eq: lemma6.7 eq12} and \eqref{eq: lemma6.7 eq14} we obtain that 
\[
d\left( \hat X_n, \mu\cdot\mbox{id} + \sum_{p=1}^{k_1} \zeta_{i_p} + \sum_{i\in I'\setminus\{i_1,\ldots,i_{k_1}\}} \bar X_{i,n}' \right)
\leq 
\rho_1 \vee (\rho_2+\epsilon) + \rho_3,
\]
where 
\[
\mu\cdot\mbox{id} + \sum_{p=1}^{k_1} \zeta_{i_p} + \sum_{i\in I'\setminus\{i_1,\ldots,i_{k_1}\}} \bar X_{i,n}' \in \underline{\mathbb D}_{\ll j}^\mu
\] 
due to \eqref{eq: lemma6.7 eq11}.
This leads to the contradiction of $\hat X_n\in(\mathbb D\setminus\underline{\mathbb D}_{\ll j}^\mu)^{-\gamma}$ by choosing $\rho_1$, $\rho_2$ and $\rho_3$ small enough. 
In view of \eqref{eq: lemma6.7 eq13} we have that 
\begin{align*}
\textbf{(V.2.b)}
&\leq 
\P\left(
\bar X_n'
\in
\left\{ \xi\in \mathbb D\colon \xi(t) - \sup_{t\in[0,1]} \Bigg| \Phi_{\rho_0}(\xi)(t) -\mu t \Bigg| > \rho_3 \right\}
\right)\\
&=
o(n^{-j(\alpha-1)}),
\end{align*}
by choosing $\rho_0$ and $\rho_3$ such that $\rho_3/\rho_0 \notin \mathbb Z$ and $\lceil \rho_3/\rho_0 \rceil > j$. 

\textit{Part 3):} 
Since
\[
\P ( r_{i+1}-r_i > r_i \delta ) \leq \P ( r_{i+1}-r_i > (n-\epsilon') \delta ) + \P ( r_i \geq n-\epsilon' ),
\] 
$\P ( r_{i+1}-r_i > r_i \delta )$ decays exponentially, for $i \in \{ N_\epsilon^-(n),\ldots,N_\epsilon^+(n) \}$. 
Combining this with \eqref{eq: lemma6.7 eq15}, we are able to utilize the argument as in \eqref{eq: lemma6.7 eq02} and obtain that 
\[
\P ( R_{i,n} \in(\mathbb D\setminus\underline{\mathbb D}_{\ll j})^{-\delta} ) 
= 
\P( T_j(n^\beta) < r_1 ) + o(n^{-(j-\epsilon)\alpha})
\]
for any $\beta \in (0,1)$. 
Since $\P ( T_j(u^\beta) < r_1 ) = \mathcal O( u^{-(j-\epsilon)\alpha} )$ for a suitable choice of $\beta$, the proof is completed. 
\end{proof}

% \section*{Acknowledgments}
% The research of BC and BZ is supported by NWO grant 639.033.413. The research of C-HR is supported by NSF grant CMMI-2146530. The authors are grateful to Remco van der Hofstad for providing detailed comments on an earlier draft of this paper. 

\bibliographystyle{plain}
\bibliography{bibliography}

\ifnavigationlinks

\newpage
\linkdest{location of reminders}
\section*{Assumptions and Conditions}
\begin{itemize}
	\item  Assumption \ref{ass: regularity condition AR(1) process}\\[-18pt]
                \begin{enumerate}
                \item $A_1 \geq 0$ a.s.\ and the law of $\log A_1$ 
                conditioned on $\{A_1>0\}$ 
                is nonarithmetic. 
                \item $\exists \alpha\in(1,\infty)$ s.t. $\E A_1^\alpha = 1$,  $\E A_1^\alpha \log ^+ A_1 < \infty$, $\E |B_1|^{\alpha+\epsilon}<\infty$ for some $\epsilon > 0$. 
                \item $\P (A_1x+B_1=x)<1$, $\forall x\in\mathbb R$.
                %\item $\P (0<A_1<1)>0$ and $\P (A_1>1)>0$.
                \end{enumerate}
                
    \item Assumption~\ref{ass: minorization AR(1) process}\\[-18pt]
    \begin{itemize}
    \item[] $\theta \phi(E\cap E_0) \leq P(x,E),\qquad \forall x\in[-d,d],\ \forall E\in\mathcal S $
    \end{itemize}                
    \item $\E^\alpha \log A_1 = \E A_1^\alpha \log A_1 > 0$

    \item     
    $X_{n+1} = A_{n+1} X_n + B_{n+1}
    \iff
    X_n = e^{S_n} \left(X_0 + \sum_{k=1}^TB_k e^{-S_k}\right) = e^{S_n} Z_n
    $

    \item \linkdest{conditions on greeks}Conditions on $\gamma, \rho, \beta$
    \begin{itemize}
        \item[\hyperlink{nota-rho}{$\bullet$}] $0<\gamma < \rho < \beta < 1$ \hfill used in the Proof of Proposition~\ref{prop: neglibigle parts in the area under first return time}
        \item[\hyperlink{cond-beta-gamma-rho-greater-than-one}{$\bullet$}] $\beta - \gamma + \rho > 1$ \hfill used in the Proof of Proposition~\ref{prop: neglibigle parts in the area under first return time}
        \item[\hyperlink{cond-beta-gamma-greater-than-one}{$\bullet$}] $\beta + \gamma > 1$ \hfill assumption for Lemma~\ref{lem: bounding sign-switching event}
        \item[\hyperlink{cond-one-minus-beta-alpha-less-than-gamma}{$\bullet$}] $(1-\beta)\alpha < \gamma$ \hfill in the proof of Lemma~\ref{lem: almost sure convergence of G+ and G-}
        \item[\hyperlink{cond-beta-greater-than-alpha-plus-gamma-over-alpha-plus-one}{$\bullet$}]
        $\beta>(\alpha+\gamma)/(\alpha+1)$
        \item[\hyperlink{cond-one-minus-beta-alpha-plus-delta-minus-alpha-gamma-less-than-zero}{$\bullet$}]
        $(1-\beta)\alpha + \delta - \alpha \gamma < 0$
        \hfill in the proof of Lemma~\ref{lem: bounding sign-switching event}  
    \end{itemize}

\end{itemize}

\newpage
\linkdest{location of notation index}
\section*{Notation Index}
\begin{itemize}
\linkdest{location, notation index A}
    \item 
        \notationidx{nota-alpha}{$\alpha$}:
    $\alpha\in(1,\infty)$ such that $\E A_1^\alpha = 1$
	\item \notationidx{nota-A-n-B-n}{$(A_n,B_n)$}: i.i.d.\ $\R^2$-valued random vectors
	
\linkdest{location, notation index B}
	\item 
	    \notationidx{nota-beta}{$\beta$}: a constant in $(0,1)$ that satisfies \hyperlink{conditions on greeks}{some conditions}

	\item 
	    \notationidx{nota-scr-B-r}{$\mathscr B_r$}: $\mathscr B_r(x) = \{ x' \colon |x-x'|<r \}$ for $x\in\mathbb R$ and $r>0$. 
	\item 
	    \notationidx{nota-frak-B}{$\mathfrak B$}: $\mathfrak B = \sum_{n=0}^{\tau_d-1} X_n$

\linkdest{location, notation index C}

	\item \notationidx{nota-C-infty}{$C_\infty$}: $C_\infty$ satisfies $\P \left( \sum_{k=0}^\infty e^{S_k} > u \right) \sim C_\infty u^{-\alpha}.$
	\item \notationidx{nota-C-+}{$C_+$}: $C_+ = C_\infty \E^\alpha [(Z^+)^\alpha \1_{\{r_1=\infty\}}]$
	\item \notationidx{nota-C--}{$C_-$}: $C_- = C_\infty \E^\alpha [(Z^-)^\alpha \1_{\{r_1=\infty\}}]$
	\item \notationidx{nota-C-j-z}{$C_j^z$}: $C_j^z (\,\cdot\,) = 
\E \left[ \nu_\alpha^j \left\{ x\in(0,\infty)^j\colon z \cdot id + \sum_{i=1}^j x_i\mathbbm 1_{U_i} \in \,\cdot \right\} \right]$, $U_i\sim$Unif[0,1] 
    \item \notationidx{nota-C-0-z}{$C_0^z$}: the Dirac measure concentrated on $z\cdot id$.

    \item \notationidx{nota-C-j-k-z}{$C_{j,k}^z$}: $C_{j,k}^z (\,\cdot\,) = 
\E \left[ \nu_\alpha^{j+k} \left\{ (x,y)\in(0,\infty)^{j+k} \colon z \cdot id + \sum_{i=1}^j x_i\mathbbm 1_{U_i} - \sum_{i=1}^j x_i\mathbbm 1_{V_i} \in \,\cdot \right\} \right]$, $U_i,V_i\sim$Unif[0,1]    
    \item \notationidx{nota-C-00-z}{$C_{0,0}^z$}: the Dirac measure concentrated on $z\cdot id$.
    
	\item \notationidx{nota-mathbb-C-r}{$\mathbb C^r$}: $\mathbb C^r = \{ x\in\mathbb S\colon d(x,\mathbb C)<r \}$

\linkdest{location, notation index D}

    \item \notationidx{nota-d}{$d$}: A number that satisfies Assumption~\ref{ass: minorization AR(1) process}, i.e., \eqref{condition: minorization} is satisfied with $\mathcal C_0 = [-d,d]$.

    \item \notationidx{nota-d-M-1-prime}{$d_{M_1'}$}: $d_{M_1'}(\xi_1,\xi_2) 
= 
\inf_{\substack{(u_i,s_i)\in\Pi'(\xi_i)\\i\in\{1,2\}}} \| u_1-u_2 \|_\infty \vee \| s_1-s_2 \|_\infty.$

	\item {\eqref{condition: drift}}: $\int_{\mathbb S} h(y) P(x,dy) \leq \gamma h(x) + \rho\1_{\mathcal C}(x), \qquad\mbox{for some}\ \gamma\in(0,1)$

% 	\item \hyperlink{nota-D-underbar-leq-j}{$\underline{\mathbb D}_{\leqslant j}$}: $\underline{\mathbb D}_{\leqslant j} = \{ \xi\in\mathbb D \colon \xi\ \mbox{piecewise constant and nondecreasing},\ |\mbox{Disc}(\xi)| \leq j \}$

	\item \notationidx{nota-D-underbar-lessthan-j}{$\underline{\mathbb D}_{< j}$}: $\underline{\mathbb D}_{< j} = \{ \xi\in\mathbb D \colon \xi\ \mbox{piecewise constant and nondecreasing},\ |\mbox{Disc}(\xi)| < j \}$
	
% 	\item \hyperlink{nota-D-underbar-leq-j-z}{$\underline{\mathbb D}_{\leqslant j}^z$}: $\underline{\mathbb D}_{\leqslant j}^z = \{\xi\in\mathbb D \colon \xi = z \cdot id + \zeta,\ \text{for some }\zeta\in\underline{\mathbb D}_{\leqslant j} \}$

	\item \notationidx{nota-D-underbar-lessthan-j-z}{$\underline{\mathbb D}_{< j}^z$}: $\underline{\mathbb D}_{< j}^z = \{\xi\in\mathbb D \colon \xi = z \cdot id + \zeta,\ \text{for some }\zeta\in\underline{\mathbb D}_{< j} \}$

    \item \notationidx{nota-D-underbar-ll-j}{$\underline{\mathbb D}_{\ll j}$}: $\underline{\mathbb D}_{\ll j}  
= 
\{ \xi\in\mathbb D \colon \xi\ \mbox{piecewise constant},\, |\mbox{Disc}(\xi)| < j \}
$

    \item \notationidx{nota-D-underbar-ll-j-z}{$\underline{\mathbb D}_{\ll j}^z $}: $\underline{\mathbb D}_{\ll j}^z = \{\xi\in\mathbb D \colon \xi = z \cdot id + \xi',\  \xi'\in\underline{\mathbb D}_{\ll j} \}$

	\item \notationidx{nota-disc}{$\disc(\cdot)$}: $\disc(\xi) = \{ t\in[0,1] \colon \xi(t) \neq \xi(t^-) \}$	

	\item 
	    \notationidx{nota-scr-D-T-alpha}{$\mathscr D_T^\alpha$}: Under $\mathscr D$, 
    	$\mathcal L(\log A_n,B_n) 
        = 
        \begin{cases}
        \nu^\alpha,\qquad&\mbox{for}\ n\leq T,\\[5pt]
        \nu,             &\mbox{for}\ n> T.
        \end{cases}
        $

\linkdest{location, notation index E}

	\item \notationidx{nota-E-P-alpha-D}{$\E^\alpha$, $\E^\mathscr D$, $\P^\alpha$, $\P^\mathscr D$}: expectation and probability w.r.t.\ $\alpha$-shifted measure and dual change of measure

    \item \notationidx{nota-mathfrak-E-1}{$\mathfrak E_1(u)$}: $\mathfrak E_1(u) = \{ \exists\, n \text{ such that } K_\beta^\gamma(u)\leq n \leq \tau_d \text{ and } |X_n| \geq u^\rho \}$
    
    \item \notationidx{nota-mathfrak-E-2}{$\mathfrak E_2(u)$}: $\mathfrak E_2(u) =  \{ | B_n | \leq u^\gamma,\forall n \in \{1,\ldots, K_\beta^\gamma(u)\} \}$

\linkdest{location, notation index F}

	\item \notationidx{nota-F-delta}{$F_\delta$}: $F_\delta = \{ x\in\mathbb S\colon d(x,F)\leq \delta \}$

\linkdest{location, notation index G}
	
    \item \notationidx{nota-gamma}{$\gamma$}: a constant in $(0,1)$ that satisfies \notationidx{conditions on greeks}{some conditions}

	\item \notationidx{nota-G--delta}{$G^{-\delta}$}: $G^{-\delta} = ((G^c)_\delta)^c = \{ x\in\mathbb S\colon d(x,G^c) > \delta \}$

    \item \notationidx{nota-mathcal-G}{$\mathcal G$}: $\mathcal G \triangleq \{f:[0,\infty)\to(0,\infty) :\ f \in C^2,\, \lim_{x\to\infty} f(x) = \infty,\, f'>0,\,\log f' \text{ is concave for sufficiently large $x$} \}.$
    
    \item \notationidx{nota-scr-G-plus}{$\mathscr G_+$}: $\mathscr G_+(u) = 
    u^{(1-\beta)\alpha} \P^{\mathscr D_{T(u^\beta)}^\alpha} \left( \sum_{n=T(u^\beta)}^{K_\beta^\gamma(u)-1} X_n>u \,\middle|\, \mathcal F_{T(u^\beta)} \right) 
    \left( \frac{X_{T(u^\beta)}}{u^\beta} \right)^{-\alpha} \1_{\{Z_{T(u^\beta)}>0\}}$

    \item \notationidx{nota-scr-G-minus}{$\mathscr G_-$}: $\mathscr G_-(u) =    
    u^{(1-\beta)\alpha} \P^{\mathscr D_{T(u^\beta)}^\alpha} \left( \sum_{n=T(u^\beta)}^{K_\beta^\gamma(u)-1} X_n>u \,\middle|\, \mathcal F_{T(u^\beta)} \right)  
    \left| \frac{X_{T(u^\beta)}}{u^\beta} \right|^{-\alpha} \1_{\{Z_{T(u^\beta)}\leq 0\}}$
    
    \item \notationidx{nota-Gamma-xi-prime}{$\Gamma_\xi'$}: $\Gamma_\xi' = \{(x,t)\in\mathbb R\times[0,1] \colon x \in [\xi(t^-)\wedge\xi(t),\xi(t^-)\vee\xi(t)] \}$: \hfill extended completed graph of $\xi$
	
\linkdest{location, notation index H}

\linkdest{location, notation index I}

	\item \notationidx{nota-I-equal-j}{$I_{=j}$}: $I_{= j} = \{ (l,m)\in\mathbb Z_+^2 \colon l+m=j \}$

\linkdest{location, notation index J}
	
	\item \notationidx{nota-mathcal-J-z-uparrow}{$\mathcal J_z^{\uparrow}$}: $\mathcal J_z^{\uparrow}(E) = \inf \{ j \colon E \cap \underline{\mathbb D}_{\leqslant j+1}^z \neq \emptyset \}$

    \item \notationidx{nota-cal-J-z}{$\mathcal J_z$}: $\mathcal J_z(E) = \inf \{ j \colon E \cap \underline{\mathbb D}_{\ll j+1}^z \neq \emptyset \}$

\linkdest{location, notation index K}

	\item \notationidx{nota-K-beta-gamma}{$K_\beta^\gamma$}: $K_\beta^\gamma(u) = \inf \{ n>T(u^\beta)\colon |X_n| \leq u^\gamma \}$ \quad($0<\gamma<\beta<1$)

\linkdest{location, notation index L}
	
	\item \notationidx{nota-log-plus}{$\log^+x$}: $\log^+x = \max\{\log x,0\}$

\linkdest{location, notation index M}

	\item {\eqref{condition: minorization}}: $\theta \1_{\mathcal C_0}(x) \phi(E\cap E_0) \leq P(x,E),\qquad x\in\mathbb S, E\in\mathcal S$
    \item \notationidx{nota-mu}{$\mu$}: $\mu = \E B_1 / (1-\E A_1)$

\linkdest{location, notation index N}

	\item \notationidx{nota-nu}{$\nu$}: $\nu = \mathcal L(\log A_n, B_n)$
	\item \notationidx{nota-nu-alpha}{$\nu^\alpha$}: $\nu^\alpha(E) = \int_E e^{\alpha x} d\nu(x,y),\qquad E\in\mathfrak B(\mathbb R^2)$
    \item \notationidx{nota-nu-gamma}{$\nu_\gamma$}: $\nu_\gamma(x,\infty) = x^{-\gamma}$	
    \item \notationidx{nota-nu-gamma-j}{$\nu_\gamma^j$}: the $j$-fold product measure of $\nu_\gamma$ restricted to $\R_+^{j\downarrow}$
    \item \notationidx{nota-N}{$N(\cdot)$}: $N(s) = \sup \{ j\geq 0 \colon r_j-1 \leq s \}$

\linkdest{location, notation index O}

\linkdest{location, notation index P}
    
	\item \notationidx{nota-pi}{$\pi$}: stationary distribution of $X_n$
	\item \notationidx{nota-Pi-prime}{$\Pi'(\xi)$}: the set of all parametric representations of $\xi\in \D$

\linkdest{location, notation index Q}

\linkdest{location, notation index R}

    \item \notationidx{nota-rho}{$\rho$}: a constant in $(0,1)$ that satisfies \hyperlink{conditions on greeks}{some conditions}

	\item \notationidx{nota-r-n}{$r_n$}:  $n$\textsuperscript{th} regeneration time of $X_k$ 
    
	\item \notationidx{nota-frak-R}{$\mathfrak R$}: $\mathfrak R = \sum_{n=0}^{r_1-1} X_n$

    \item \notationidx{nota-R-plus-j-downarrow}{$\R_+^{j\downarrow}$}: $\R_+^{j\downarrow} = \{ x\in\R^j\colon x_1 \geq \cdots \geq x_j>0 \}$

    \item \notationidx{nota-R-p-n}{$R_{p,n}(\cdot)$}: $R_{p,n}(t) = \frac{1}{n} \sum_{i=r_p}^{\lfloor r_{p+1} t \rfloor - 1} X_i$

\linkdest{location, notation index S}

    \item \notationidx{nota-S-n}{$S_n$}: $S_n = \sum_{i=1}^n \log A_i \quad \implies \quad e^{S_n} = \prod_{i=1}^n A_i$

\linkdest{location, notation index T}

	\item \notationidx{nota-T}{$T(\cdot)$}: $T(u) = \inf \{ n\geq0 \colon |X_n| > u \}$
	
	\item \notationidx{nota-T-1}{$T_1(\cdot)$}: $T_1(u) = T(u)$
	
    \item \notationidx{nota-T-i}{$T_i(\cdot)$}: $T_{i+1}(u) = \inf \{ n\geq T_i(u)\colon -\mbox{sign}(X_{T_i}(u)) X_n > u \}$, $\quad i\geq1$

	\item \notationidx{nota-tau-d}{$\tau_d$}: $\tau_d = \inf\{n\geq 1\colon |X_n| \leq d\}$
	
	\item \notationidx{nota-tau-prime}{$\tau'$}: $\tau' = \inf\{n\geq 1\colon Y_n' \leq d\}$ \hfill (local)
	
	\item \notationidx{nota-tau}{$\tau$}: $\tau = \inf\{ n\geq1 \colon Y_n \leq d\}$ %\CR{$\tau$ is used as a generic stopping time in Result~\ref{res: connection between original and dual chang of measures}.}

\linkdest{location, notation index U}

\linkdest{location, notation index V}

\linkdest{location, notation index W}

\linkdest{location, notation index X}
	
    \item \notationidx{nota-X-n}{$X_n$}: $X_{n+1} = A_{n+1} X_n + B_{n+1}$
	\item \notationidx{nota-X-bar-n}{$\bar X_n(\cdot)$}: $\bar X_n(t) = \sum_{i=0}^{\lfloor nt \rfloor-1} X_i/n$
	
	\item \notationidx{nota-X-bar-i-n}{$\bar X_{i,n}(\cdot)$}: 
	$\bar X_{i,n}(t) = \frac{1}{n} \sum_{l=r_{i-1}}^{\lfloor nt \rfloor \wedge r_i - 1} X_l,$

	\item \notationidx{nota-X-i-prime}{$X_i'$}: $X_i' = \sum_{j=r_{i-1}}^{r_i-1} X_j$
	\item \notationidx{nota-X-bar-n-prime}{$\bar X_n'(\cdot)$}: $\bar X_n'(t) = \sum_{i=1}^{N(nt)} X_i'/n$

	\item \notationidx{nota-X-bar-i-n}{$\bar X_{i,n}'(\cdot)$}: $\bar X_{i,n}'(t) = \frac{X_i'}{n} \1_{[r_i/n,1]}(t).$

\linkdest{location, notation index Y}

    \item \notationidx{nota-Y-n-prime}{$Y_n'$}: $Y_{n+1}' = A_{K_\beta^\gamma(u)+n+1} Y_n' + |B_{K_\beta^\gamma(u)+n+1}|$ for $n\geq0$. $Y_0' = u^\gamma$. \hfill (local)
    \\[5pt] 
    $\{Y'_n: n\geq 0\}$ bounds $\{X_{K_\beta^\gamma(u)+n}: n\geq 0\}$ from above.
	
	\item \notationidx{nota-Y-n}{$Y_n$}: $Y_{n+1} = A_{n+1}Y_n + |B_{n+1}|$, for $n\geq0$, and 

\linkdest{location, notation index Z}
	
	\item \notationidx{nota-Z-+}{$Z^+$}: never defined
	\item \notationidx{nota-Z--}{$Z^-$}: never defined

	\item \notationidx{nota-Z-n}{$Z_n$}: $Z_n = X_n e^{-S_n} = X_0 + \sum_{k=1}^n B_k e^{-S_k},\quad n \geq 0$
	\item \notationidx{nota-Z}{$Z$}: $Z = X_0 + \sum_{k=1}^\infty B_k e^{-S_k}$
	\item \notationidx{nota-bar-Z}{$\bar Z$}: $\bar Z = |X_0| + \sum_{n=1}^\infty |B_n| e^{-S_n} \1_{\{n<\tau_d\}} = |X_0| + \sum_{n=1}^{\tau_d-1} |B_n| e^{-S_n}$
	
\end{itemize}

\newpage
\linkdest{location of theorem tree}
\section*{Theorem Tree}
\begin{thmdependence}[leftmargin=*]
\thmtreenode{\issue}
    {Proposition}{prop: conditions for minorization}
    {0.8}{Sufficient conditions for minorization condition.
    \\\BZ{We remove the condition we didn't work out to save time}}
\item[\complete]
    \linkdest{location, thm tree part 1 of thm: tail estimates for the area under first return time/regeneration cycle}
    Theorem~\ref{thm: tail estimates for the area under first return time/regeneration cycle} Part (1) 
    \linktopf{part 1 of thm: tail estimates for the area under first return time/regeneration cycle}
    \thmsum{0.8}{Tail of $\mathfrak B$ (area upto return time to $[-d,d]$) follows power law with index $\alpha$}
    % \\[1pt]

    % \\\CR{Throughout the proof and the statements of the supporting theorems, the initial state should be made clear. $X_0 \in [-d,d]$}
    % \\[-20pt]
    \begin{thmdependence}
    \thmtreenode{\complete} 
        {Proposition}{prop: neglibigle parts in the area under first return time} 
        {0.8}{
        On $T(u^\beta) < \tau_d$,\\ 
        \hspace*{15pt} 1) the area until hitting $u^\beta$ is negligible;\\
        \hspace*{15pt} 2) the area between return times to $u^\gamma$ and $d$ is negligible.
        }
        \begin{thmdependence}
        \thmtreenode{\complete} 
            {Lemma}{lem: bound on moments of first return time} 
            {0.8}{}
            \begin{thmdependence}
                \thmtreenodewopf{}
                    {Result}{bounds of functions of passage times for Markov chains}
                    {0.8}{}
                \thmtreenode{\complete}
                    {Lemma}{lem: exponential decay of regeneration time}
                    {0.8}{}
                \begin{thmdependence}
                    \thmtreenodewopf{} 
                    {Result}{res: path properties of AR(1) processes}
                    {0.8}{}
                \end{thmdependence}
            \end{thmdependence}
        \item[]
            Corollary~4.2 of \cite{collamore2016}
        \end{thmdependence}

    \thmtreenode{\complete}
        {Proposition}{prop: determined part in the area under first return time}
        {0.85}{
        On $T(u^\beta) < \tau_d$, 
        the area between $T(u^\beta)$ and $K_\beta^\gamma(u)$ has the same tail as $\mathfrak B$.
        }
        \begin{thmdependence}
        \thmtreenode{\issue} 
            {Lemma}{lem: almost sure convergence of G+ and G-}
            {0.8}{}
            \begin{thmdependence}
            \thmtreenode{\complete} 
                {Lemma}{lem: bounding sign-switching event}
                {0.8}{$ $\BZA{I do not see the problem, and will assume there is no problem unless I hear otherwise}\CRA{Sorry, this Lemma is fine. The issue is in Lemma~\ref{lem: almost sure convergence of G+ and G-}.}}
                % \linktopf{proof of lem: bounding sign-switching event}
                % \begin{thmdependence}
                % \end{thmdependence}
                % \issue
                \begin{thmdependence}
                \thmtreeref
                    {Lemma}{lem: bound on moments of first return time} 
                \end{thmdependence}
            \item[]
                Eq. \eqref{eq: tail asymptotics on the stationary distribution of AR(1) process} (from \cite{goldie1991}) 
            \end{thmdependence}
        \thmtreenode{\issue} 
            {Lemma}{lem: integrability of Z bar}
            {0.8}{}
        \thmtreenodewopf{}
            {Remark}{a-remark-on-dual-change-of-measure}
            {0.8}{}
        \thmtreenodewopf{}
            {Result}{res: connection between original and dual chang of measures}
            {0.8}{}              
        \end{thmdependence}
    \end{thmdependence}
\bigskip
\item[-]
    \linkdest{location, thm tree part 2 of thm: tail estimates for the area under first return time/regeneration cycle}
    Theorem~\ref{thm: tail estimates for the area under first return time/regeneration cycle} Part (2) 
    \linktopf{part 2 of thm: tail estimates for the area under first return time/regeneration cycle}
    \thmsum{0.8}{Tail of $\mathfrak R$ (area under regeneration cycle) follows power law with index $\alpha$.}

% \thmtreenode{-}
%     {Part (2) of Theorem}{thm: tail estimates for the area under first return time/regeneration cycle} 
%     {0.8}{}
%     \linkdest{location, thm tree part 2 of thm: tail estimates for the area under first return time/regeneration cycle}
%     \linktopf{part 2 of thm: tail estimates for the area under first return time/regeneration cycle}
    \begin{thmdependence}
    \thmtreenode{-}
        {Proposition}{prop: negligible parts in the area under regeneration cycle}
        {0.8}{}
        
        \begin{thmdependence}
        \thmtreenode{-} 
            {Lemma}{lem: bound on moments of regeneration time}
            {0.8}{}
            
        \thmtreenode{-}
            {Lemma}{lem: asymptotics of P(T(u^beta))<r_1}
            {0.8}{}

        \end{thmdependence}
    \thmtreenode{-} 
        {Proposition}{prop: determined part in the area under regeneration cycle}
        {0.8}{}

        \begin{thmdependence}
        \thmtreeref 
            {Lemma}{lem: almost sure convergence of G+ and G-} 

        \thmtreeref 
            {Lemma}{lem: integrability of Z bar} 

        \end{thmdependence}
    \end{thmdependence}        

\bigskip
    
\thmtreenode{-} 
    {Theorem}{thm: one-sided large deviations}
    {0.8}{$\mathbb M(\D \setminus \underline{\D}_{<j}^\mu)$ convergence of $n^{j(\alpha-1)}\mathcal L(\bar X_n)$ and LDP for $\bar X_n$}

    \begin{thmdependence}
    \thmtreenode{-}
        {Corollary}{corol: one-sided LD for X_n' bar} {0.8}{}
        \begin{thmdependence}
        \thmtreenode{-}
            {Lemma}{lem: comparing M_1' metric and J_1 metric} 
            {0.8}{}
        \thmtreenode{-}
            {Lemma}{lem: LD for X_n' bar}
            {0.8}{}
            % \linktopf{proof of lem: LD for X_n' bar}
        \thmtreenode{-}
            {Lemma}{lem: closeness of D_<j}
            {0.8}{}

        \item 
            Theorem~4.1 in \cite{rheeblanchetzwart2016}. 
        \end{thmdependence}
    
    \thmtreenode{-}
        {Proposition}{prop: one-sided asymptotic equivalence between X_n' and X_n}
        {0.8}{}
        % \hyperlink{proof of prop: one-sided asymptotic equivalence between X_n' and X_n}{\pflinksymbol}
        \begin{thmdependence}
        \thmtreenode{-}
            {Lemma}{lem: help lemma for one-sided LD}
            {0.8}{}
            \begin{thmdependence}
            \thmtreenode{-}
                {Corollary}{corol: one-sided LD for X_n' bar}
                {0.8}{check: duplicate}
                % \linktopf{proof of corol: one-sided LD for X_n' bar}
            \end{thmdependence}
        \end{thmdependence}
        
    \end{thmdependence}

\bigskip    
\thmtreenode{-} 
    {Theorem}{thm:two-sided-large-deviations} 
    {0.8}{$\mathbb M(\D \setminus \underline{\D}_{\ll j}^\mu)$ convergence of $n^{j(\alpha-1)}\mathcal L(\bar X_n)$ and LDP for $\bar X_n$}
    \begin{thmdependence}
    \thmtreenode{-}   
        {Corollary}{corol: two-sided LD for X_n' bar} 
        {0.8}{}
        % \CR{missing proof}
        
    \thmtreenode{-}
        {Proposition}{prop: two-sided asymptotic equivalence between X_n' and X_n}
        {0.8}{}
        % \linktopf{proof of prop: two-sided asymptotic equivalence between X_n' and X_n}
        \begin{thmdependence}
        \thmtreenode{-}
            {Lemma}{lem: help lemma for two-sided LD}
            {0.8}{}
        \end{thmdependence}
    \end{thmdependence}

\bigskip
\thmtreenode{\issue}
    {Lemma}{lem: closeness of D_<<j} 
    {0.8}{proof missing} \BZ{It looks trivial. I think we can simply claim it, writing that it is easy to see...}

\end{thmdependence}

\newpage
\linkdest{location of theorem list}
% \section*{List of Theorems}
\listoftheorems

\newpage
\linkdest{location of equation number list}
\section*{Numbered Equations}

\eqref{eq: affine transformation}
\eqref{eq: X_n bar introduction}
\eqref{eq: Kesten and Goldie}
\eqref{eq: principle of a single big jump introduction}
\eqref{eq: tail estimates introduction}
\eqref{eq: main result introduction}
\eqref{eq:sufficient condition for minorization}
\eqref{eq: minorization}

\section*{Navigation Links}
\noindent
\hyperlink{location of theorem tree}{Theorem Tree}
\begin{itemize}
\thmtreeref
    {Proposition}{prop: conditions for minorization}

\item[] 
    \hyperlink{location, thm tree part 1 of thm: tail estimates for the area under first return time/regeneration cycle}
    {\color{gray}Theorem}~\ref{thm: tail estimates for the area under first return time/regeneration cycle} Part (1) 

\item[] 
    \hyperlink{location, thm tree part 2 of thm: tail estimates for the area under first return time/regeneration cycle}
    {\color{gray}Theorem}~\ref{thm: tail estimates for the area under first return time/regeneration cycle} Part (2) 

\thmtreeref
    {Theorem}{thm: one-sided large deviations}

\thmtreeref
    {Theorem}{thm:two-sided-large-deviations} 

\thmtreeref
    {Lemma}{lem: closeness of D_<<j} 

\end{itemize}

\noindent
\hyperlink{location of notation index}{Notation Index}
\begin{itemize}
\item[] 
    \hyperlink{location, notation index A}{A},
    \hyperlink{location, notation index B}{B},
    \hyperlink{location, notation index C}{C},
    \hyperlink{location, notation index D}{D},
    \hyperlink{location, notation index E}{E},
    \hyperlink{location, notation index F}{F},
    \hyperlink{location, notation index G}{G},
    \hyperlink{location, notation index H}{H},
    \hyperlink{location, notation index I}{I},
    \hyperlink{location, notation index J}{J},
    \hyperlink{location, notation index K}{K},
    \hyperlink{location, notation index L}{L},
    \hyperlink{location, notation index M}{M},
    \hyperlink{location, notation index N}{N},
    \hyperlink{location, notation index O}{O},
    \hyperlink{location, notation index P}{P},
    \hyperlink{location, notation index Q}{Q},
    \hyperlink{location, notation index R}{R},
    \hyperlink{location, notation index S}{S},
    \hyperlink{location, notation index T}{T},
    \hyperlink{location, notation index U}{U},
    \hyperlink{location, notation index V}{V},
    \hyperlink{location, notation index W}{W},
    \hyperlink{location, notation index X}{X},
    \hyperlink{location, notation index Y}{Y},
    \hyperlink{location, notation index Z}{Z}
\end{itemize}

\noindent
\hyperlink{location of reminders}{Assumptions and Conditions}
\bigskip

\noindent
\hyperlink{location of equation number list}{Numbered Equations}
\bigskip

\noindent
\hyperlink{location of theorem list}{Theorems}
% \bigskip

\tableofcontents
\fi

%%%%%%%%%%%%%%%%%%%%%%%%%%%%%%%%%%%%%%%%%%%%%%%%%%%%%%%%%%%%%%%%%%%
%%                                                               %%
%% Supplementary Material, if any, should be provided in         %%
%% {supplement} environment  with title and short description.   %%
%%                                                               %%
%%%%%%%%%%%%%%%%%%%%%%%%%%%%%%%%%%%%%%%%%%%%%%%%%%%%%%%%%%%%%%%%%%%

%\begin{supplement}
%\stitle{Title of Supplement A.}
%\sdescription{Short description of Supplement A.}
%\end{supplement}
%\begin{supplement}
%\stitle{Title of Supplement B.}
%\sdescription{Short description of Supplement B.}
%\end{supplement}

%%%%%%%%%%%%%%%%%%%%%%%%%%%%%%%%%%%%%%%%%%%%%%%%%%%%%%%%%%%%%%%%%%%
%%                                                               %%
%% Use the two commands below for producing your bibliography    %%
%% with bibtex, then comment again the commands and include the  %%
%% content of the .bbl file in this file below the commands.     %%
%%                                                               %%
%%%%%%%%%%%%%%%%%%%%%%%%%%%%%%%%%%%%%%%%%%%%%%%%%%%%%%%%%%%%%%%%%%%

%\bibliographystyle{amsplain}
%\bibliography{yourbibfilename}

% add below the content of your .bbl file produced by bibtex.

%%%%%%%%%%%%%%%%%%%%%%%%%%%%%%%%%%%%%%%%%%%%%%%%%%%%%%%%%%%%%%%%%%%
%%                                                               %%
%% You may add acknowledgments (optional).                       %%
%%                                                               %%
%%%%%%%%%%%%%%%%%%%%%%%%%%%%%%%%%%%%%%%%%%%%%%%%%%%%%%%%%%%%%%%%%%%
\begin{acks}
We are grateful to Remco van der Hofstad and a referee for providing many useful suggestions for improvement. 
\end{acks}

%%%%%%%%%%%%%%%%%%%%%%%%%%%%%%%%%%%%%%%%%%%%%%%%%%%%%%%%%%%%%%%%%%%
%%                                                               %%
%% You have reached the end of your document.                    %%
%%                                                               %%
%%%%%%%%%%%%%%%%%%%%%%%%%%%%%%%%%%%%%%%%%%%%%%%%%%%%%%%%%%%%%%%%%%%

\end{document}

%%%%%%%%%%%%%%%%%%%%%%%%%%%%%%%%%%%%%%%%%%%%%%%%%%%%%%%%%%%%%%%%%%%
%%                                                               %%
%% You may put below funny messages to the Managing Editor:      %%
%%                                                               %%
%%%%%%%%%%%%%%%%%%%%%%%%%%%%%%%%%%%%%%%%%%%%%%%%%%%%%%%%%%%%%%%%%%%

%% EOF